\documentclass[12pt]{amsart}
\usepackage{graphicx}
\usepackage{hyperref}
\usepackage{setspace}
\usepackage{enumerate}
\usepackage{amsmath,amsfonts,amssymb,amscd}

\newtheorem{thm}{Theorem}[section]
\newtheorem{cor}[thm]{Corollary}
\newtheorem{lem}[thm]{Lemma}
\newtheorem{prop}[thm]{Proposition}

\theoremstyle{definition}
\newtheorem{defn}[thm]{Definition}

\theoremstyle{remark}

\numberwithin{equation}{section}



\setlength{\oddsidemargin}{0cm} \setlength{\evensidemargin}{0cm}
\setlength{\marginparwidth}{0in}
\setlength{\marginparsep}{0in}
\setlength{\marginparpush}{0in}
\setlength{\topmargin}{0in}
\setlength{\headheight}{0pt}
\setlength{\headsep}{15pt}    
\setlength{\footskip}{.3in}
\setlength{\textheight}{9.2in}
\setlength{\textwidth}{16.5cm}
\setlength{\parskip}{4pt}

\begin{document}

\title[Prime solutions to polynomial equations]{Prime solutions to polynomial equations \\ in many variables and differing degrees}

\author{Shuntaro Yamagishi}
\address{Department of Mathematics \& Statistics \\
Queen's University \\
Kingston, ON\\  K7L 3N6 \\
Canada}
\email{sy46@queensu.ca}
\indent

\date{Revised on \today}

\begin{abstract}
Let $\mathbf{f} = (f_1, \ldots, f_R)$ be a system of polynomials with integer coefficients in which the degrees
need not all be the same. We provide sufficient conditions for which the system of equations $f_i (x_1, \ldots, x_n) = 0 \ (1 \leq i \leq R)$
has a solution with every coordinate a prime number. 
\end{abstract}

\subjclass[2010]
{ 11P32, 11P55 (primary); 11D45, 11D72 (secondary)}

\keywords{Hardy-Littlewood circle method, diophantine equations, prime numbers}

\maketitle

Overview. Let $\mathbf{f} = (\mathbf{f}_{d}, \ldots, \mathbf{f}_{1})$ be a system of polynomials in $\mathbb{Z}[x_1, \ldots, x_n]$, where $\mathbf{f}_{\ell} = ( f_{\ell,1} , \ldots, f_{\ell, r_{\ell} })$ is the subsystem of degree $\ell$ polynomials of $\mathbf{F}$ $(1 \leq \ell \leq d)$.
Let $F_{\ell, r}$ be the degree $\ell$ homogeneous portion of $f_{\ell, r}$.  We define $V_{\mathbf{F}_{\ell}}^*$ to be the set of points in $\mathbb{C}^n$ given by
$$
\text{rank } \left( \frac{ \partial F_{\ell,r} (\mathbf{x}) }{ \partial x_j }\right)_{ \substack{ 1 \leq r \leq r_{\ell}  \\ 1 \leq j \leq n } }  < r_{\ell},
$$
which is an affine variety over $\mathbb{C}$. Let us denote $\mathcal{B}_{\ell} (\mathbf{F}_{\ell})$ to be the codimension of $V_{\mathbf{F}_{\ell}}^*$.
In this paper we prove that provided the polynomials $\mathbf{f}$ satisfy suitable local conditions and $\mathcal{B}_{\ell}(\mathbf{F}_{\ell})$ is sufficiently large with respect to $d$ and $r_d,\ldots, r_1$ for each $1 \leq \ell \leq d$,
the system of equations $f_{\ell, r} (x_1, \ldots, x_n) = 0 \ (1 \leq \ell \leq d, 1 \leq r \leq r_{\ell})$
has a solution where each coordinate is prime. In fact we obtain the asymptotic formula for number of such solutions, counted with a logarithmic weight, under these hypotheses. We prove the statement via the Hardy-Littlewood circle method. This is a generalization of the work of B. Cook and \'{A}. Magyar \cite{CM}, where they
obtained the result when the polynomials of $\mathbf{f}$ all have degree $d$.
Hitherto, results of this type for systems of polynomial equations involving different degrees have been restricted to the diagonal case.

\section{Introduction}
Let $d \geq 1$, and let $\mathbf{f} = (\mathbf{f}_{d}, \ldots, \mathbf{f}_{1})$ be a system of polynomials in $\mathbb{Z}[x_1, \ldots, x_n]$, where
$\mathbf{f}_{\ell} = ( f_{\ell,1} , \ldots, f_{\ell, r_{\ell} })$ is the subsystem of degree $\ell$ polynomials of $\mathbf{f}$ $(1 \leq \ell \leq d)$.
We are interested in finding prime solutions, which are solutions with each coordinate a prime number,
to the equations
\begin{equation}
\label{main system}
f_{\ell,r}(x_1, \ldots, x_n) = 0  \ \ ( 1 \leq \ell \leq d, 1 \leq r \leq r_{\ell}).
\end{equation}
Let us denote $V_{\mathbf{f}, \mathbf{0}}(\mathbb{C})$ to be the affine variety in $\mathbb{C}^n$ defined by the equations (\ref{main system}).

Solving diophantine equations in primes is a fundamental problem in number theory. For example, the celebrated work of B. Green and
T. Tao \cite{GT1} on arithmetic progressions in primes can be phrased as the statement that given any $n \in \mathbb{N}$ the system of linear equations
$$
x_{i+2} - {x_{i+1}} = x_{i+1} - x_{i} \ (1 \leq i \leq n)
$$
has a prime solution $(x_1, \ldots, x_{n+2}) = (p_1, \ldots, p_{n+2})$ where $p_1< p_2 < \ldots < p_{n+2}$.
The modern results on the large scale distribution of prime solutions on
$V_{\mathbf{f}, \mathbf{0}}(\mathbb{C})$ when $\mathbf{f}$ consists only of linear polynomials,
for scenarios which do not reduce to a binary problem, is mostly summed up in the work of  B. Green and T. Tao \cite{GT}.
An example of a binary problem is bounding gaps between primes, an area which
J. Maynard \cite{M}, T. Tao (see  \cite[pp. 385]{M}), and Y. Zhang \cite{Z} made a significant progress in
by building on the work of D.A. Goldston, J.Pintz, and C.Y. Y{\i}ld{\i}r{\i}m \cite{GPY}.
In particular, it was shown in \cite{M} that one of the equations
$$
x_1 - x_2 = 2j \ (1 \leq j \leq 300)
$$
has infinitely many prime solutions.
Another binary problem of significance is the Goldbach's conjecture, which states that the equation
$$
x_1 + x_2 = N
$$
has a prime solution for every even integer $N$ greater than $2$.
It was proved by I. M. Vinogradov \cite{V} that the equation
\begin{equation}
\label{ternary goldbach}
x_1 + x_2 + x_3 = N
\end{equation}
has a prime solution for all sufficiently large odd $N \in \mathbb{N}$.
The Ternary Goldbach Problem, which is the assertion that the equation (\ref{ternary goldbach}) has a prime solution for all odd $N \in \mathbb{N}$ greater than or equal to 7, was solved by H.A. Helfgott in \cite{H1, H2}.

The examples given thus far had been for systems of linear equations.
The scenario for systems involving higher degree polynomials is also complex, and
has not been well-understood yet. Indeed, even the problem of whether a system of non-linear polynomial equations has a solution over $\mathbb{Q}$ is
`one of considerable complexity' \cite{BDLW}.

For solving non-linear equations in primes, there are results due to L. K. Hua \cite{H} for certain systems of homogeneous polynomials that are additive, for example on the system 
of the shape $x_1^j + \ldots + x_n^j = N_j \ (1 \leq j \leq d)$ where $N_j \in \mathbb{N}$. Hua also has results on the Waring-Goldbach problem, which is regarding prime solutions of the equation $x_1^d + \ldots + x_n^d = N$ where $N \in \mathbb{N}$. These results were established via the Hardy-Littlewood circle method. We refer the reader to \cite{KW} for a recent progress on the Waring-Goldbach problem due to A. V. Kumchev and T. D. Wooley. There is also \cite{C} by  S. Chow regarding prime solutions of certain diagonal equations by a transference principle approach.
For the case of regular indefinite integral quadratic forms, there is a result due to J. Liu \cite{L}.

The first result regarding prime solutions of general systems of non-linear polynomials is contained in the breakthrough of B. Cook and \'{A}. Magyar \cite{CM}, which we state in Theorem \ref{CM theorem}.
Before we can state their result we need to introduce some notations.
We also note that there is a discussion in \cite{CM} on this topic from the point of view of some recent results in sieve theory,
which the list includes \cite{BGS, DRS, LS}. We refer the reader to \cite{CM} for more details on this discussion.

Let $\ell > 1$. Let $\mathbf{G} = ( G_1, \ldots, G_{r'} )$ be a system of degree $\ell$ forms in $\mathbb{Q}[x_1, \ldots, x_n]$.
Here the term `forms' refers to homogeneous polynomials. We define the \emph{singular locus} $V_{\mathbf{G}}^*$ to be the set of points in $\mathbb{C}^n$ given by
\begin{equation}
\label{def sing loc}
\text{rank } \left( \frac{ \partial G_r(\mathbf{x}) }{ \partial x_j }\right)_{ \substack{ 1 \leq r \leq r'  \\ 1 \leq j \leq n } }  < r'.
\end{equation}
Observe that this defines an affine variety over $\mathbb{C}$.
We define the \textit{Birch rank}, $\mathcal{B}_{\ell} (\mathbf{G})$, to be the codimension of $V_{\mathbf{G}}^*$.
Given $\mathbf{g} = ( g_1, \ldots, g_{r'} )$, a system of degree $\ell$ polynomials in $\mathbb{Q}[x_1,\ldots, x_n]$,
where $G_{r}$ is the degree $\ell$ portion of $g_r \ (1 \leq r \leq r')$, we extend the notion of the Birch rank to systems of degree $\ell$ polynomials by
defining 
$$
\mathcal{B}_{\ell} (\mathbf{g}) :=  \mathcal{B}_{\ell} ( \mathbf{G}  ).
$$
When $\ell = 1$, following \cite{CM} we define $\mathcal{B}_{1} (\mathbf{g})$ to be the minimum number of non-zero coefficients
in a non-trivial linear combination
$$
\lambda_1 G_1 + \ldots + \lambda_{r'} G_{r'},
$$
where $\boldsymbol{\lambda} = (\lambda_1, \ldots, \lambda_{r'}) \in \mathbb{Q}^{r'} \backslash \{ \mathbf{0} \}.$
Clearly $\mathcal{B}_{1} (\mathbf{g}) > 0$ if and only if the linear forms $G_1, \ldots, G_{r'}$ are linearly independent over $\mathbb{Q}$.
For any $\ell \geq 1$, if $r'=0$ then we let $\mathcal{B}_{\ell} (\mathbf{g}) = + \infty$.

Let $\mathbf{F} = (\mathbf{F}_{d}, \ldots, \mathbf{F}_{1})$ be the system of homogeneous polynomials,
where for each $1 \leq \ell \leq d$, $\mathbf{F}_{\ell} = ( F_{\ell, 1}, \ldots, F_{\ell, r_{\ell}} )$ and
$F_{\ell, r}$ is the degree $\ell$ portion of $f_{\ell, r}$ in (\ref{main system}).
We let $V_{\mathbf{F}, \mathbf{0}}(\mathbb{R})$ be the set of points in  $\mathbb{R}^n$ satisfying
\begin{equation}
\label{main system homog}
F_{\ell,r}(x_1, \ldots, x_n) = 0  \ \ ( 1 \leq \ell \leq d, 1 \leq r \leq r_{\ell}).
\end{equation}

Let $\Lambda$ be the von Mangoldt function, where
$\Lambda(x)$ is $\log p$ if $x$ is a power of prime $p$, and $0$ otherwise.
Given $\mathbf{x} = (x_1, \ldots , x_n)$, we let
\begin{equation}
\label{Lambda}
\Lambda(\mathbf{x}) = \Lambda(x_1) \ldots \Lambda(x_n).
\end{equation}
We define the following quantity
$$
\mathcal{M}_{\mathbf{f}}(X) := \sum_{\mathbf{x} \in [0, X]^n} \Lambda(\mathbf{x})  \  \mathbf{1}_{V_{\mathbf{f}, \mathbf{0}}(\mathbb{C})} (\mathbf{x}),
$$
where $\mathbf{1}_{V_{\mathbf{f}, \mathbf{0}}(\mathbb{C})}$ is the characteristic function of the set ${V_{\mathbf{f}, \mathbf{0}}(\mathbb{C})}$.
Thus the quantity $\mathcal{M}_{\mathbf{f}}(X)$ is the number of solutions, counted with a logarithmic weight, of the equations (\ref{main system}) in $[0,X]^n$ whose coordinates are all prime powers.

We may now phrase the main result of B. Cook and \'{A}. Magyar in \cite{CM}, which is for the case when
the polynomials of $\mathbf{f}$ in (\ref{main system}) all have the same degree.
\begin{thm} \cite[Theorem 1]{CM}
\label{CM theorem}
Let $\mathbf{f} = \mathbf{f}_{d} = ( f_{d,1}, \ldots, f_{d, r_d} )$ be a system of degree $d$ polynomials in
$\mathbb{Z}[x_1,\ldots, x_n]$. If $ \mathcal{B}_d(\mathbf{f})$ is sufficiently large with respect to $d$ and $r_d$, then
there exist $\mathcal{C}(\mathbf{f})$, a constant which depends only on $\mathbf{f}$, and $c>0$ such that
$$
\mathcal{M}_{\mathbf{f} }(X) =  \mathcal{C}(\mathbf{f}) \  X^{n-d r_d} + O\left( \frac{X^{n-d r_d}}{(\log X)^c } \right).
$$
\end{thm}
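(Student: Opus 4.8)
The plan is to establish this asymptotic by the Hardy--Littlewood circle method. For $\boldsymbol{\alpha} = (\alpha_1, \ldots, \alpha_{r_d}) \in [0,1)^{r_d}$ write $\boldsymbol{\alpha} \cdot \mathbf{f}(\mathbf{x}) = \sum_{r=1}^{r_d} \alpha_r f_{d,r}(\mathbf{x})$ and set
\[
T(\boldsymbol{\alpha}) \;=\; \sum_{\mathbf{x} \in (0,X]^n} \Lambda(\mathbf{x})\, e\!\left( \boldsymbol{\alpha} \cdot \mathbf{f}(\mathbf{x}) \right), \qquad e(t) = e^{2\pi i t}.
\]
Since an integral point $\mathbf{x}$ lies on $V_{\mathbf{f},\mathbf{0}}(\mathbb{C})$ precisely when it satisfies all of the equations (\ref{main system}), orthogonality of additive characters gives the exact identity $\mathcal{M}_{\mathbf{f}}(X) = \int_{[0,1)^{r_d}} T(\boldsymbol{\alpha})\, d\boldsymbol{\alpha}$. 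One then dissects the torus into major arcs $\mathfrak{M}$, consisting of narrow boxes around the rationals $\mathbf{a}/q$ with $q \le (\log X)^{A}$, and the minor-arc complement $\mathfrak{m}$, and proves that $\int_{\mathfrak{M}} T(\boldsymbol{\alpha})\, d\boldsymbol{\alpha} = \mathcal{C}(\mathbf{f})\,X^{n - d r_d} + O\big(X^{n - dr_d}(\log X)^{-c}\big)$ while $\int_{\mathfrak{m}} |T(\boldsymbol{\alpha})|\, d\boldsymbol{\alpha} = O\big(X^{n - dr_d}(\log X)^{-c}\big)$.

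The decisive step, and the one I expect to be the main obstacle, is the minor-arc estimate, since there the arithmetic carried by the weight $\Lambda$ and the geometry encoded in the Birch rank have to be handled simultaneously. The first move is a $W$-trick: with $W = \prod_{p \le w} p$ for a parameter $w = w(X)$ tending to infinity slowly, one splits $T(\boldsymbol{\alpha})$ according to the residues $\mathbf{b} \bmod W$ with $(b_j, W) = 1$ and substitutes $x_j = W y_j + b_j$, which removes the influence of the small primes. On each such progression one then replaces $\Lambda$, coordinate by coordinate, via a truncated divisor identity (Vaughan's identity, or Heath--Brown's identity), thereby writing $T(\boldsymbol{\alpha})$ as a bounded linear combination of ``Type I'' linear sums and bilinear ``Type II'' sums in which the prime weight has been traded for smooth or divisor-bounded coefficients. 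Each resulting piece is an \emph{unweighted} exponential sum of $e(\boldsymbol{\alpha}\cdot\mathbf{f}(\mathbf{x}))$ over a box subject to congruence conditions, and to such a sum one applies the Weyl-differencing machinery underlying Birch's theorem: differencing $d-1$ times reduces the estimate to a count of lattice points in a box on which a system of multilinear forms assembled from the gradients of $\mathbf{F}_d$ nearly vanishes, and the hypothesis that $\mathcal{B}_d(\mathbf{f}) = \mathcal{B}_d(\mathbf{F}_d)$ be large is exactly what keeps that count small, forcing either substantial cancellation in $T(\boldsymbol{\alpha})$ or a strong rational approximation to $\boldsymbol{\alpha}$. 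As the $W$-trick and the Vaughan decomposition cost only powers of $\log X$, taking $\mathcal{B}_d(\mathbf{f})$ sufficiently large in terms of $d$ and $r_d$ yields a genuine power-of-$X$ saving in the pointwise bound for $\boldsymbol{\alpha} \in \mathfrak{m}$; integrating this bound over $\mathfrak{m}$ gives the claimed estimate with room to spare.

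For the major arcs one argues in the customary fashion: on the box around $\mathbf{a}/q$ one approximates $T(\boldsymbol{\alpha})$ by the product of a complete exponential sum in $\mathbf{x} \bmod q$ over residues coprime to $q$ and an archimedean oscillatory integral, the error being controlled by the Siegel--Walfisz theorem for primes in arithmetic progressions. Summing the complete sums over $\mathbf{a}$ and $q$ and completing the archimedean integral identifies
\[
\mathcal{C}(\mathbf{f}) \;=\; \mathfrak{S}(\mathbf{f})\, \mathfrak{I}(\mathbf{f}),
\]
where $\mathfrak{S}(\mathbf{f}) = \prod_p \sigma_p$ is the singular series built from the $p$-adic densities of (\ref{main system}) through the reduced residue classes and $\mathfrak{I}(\mathbf{f})$ is the corresponding real density supported on $V_{\mathbf{F},\mathbf{0}}(\mathbb{R})$; here the largeness of $\mathcal{B}_d(\mathbf{f})$ ensures the absolute convergence of $\mathfrak{S}(\mathbf{f})$, the convergence of the truncated singular integral to $\mathfrak{I}(\mathbf{f})$ with a power-saving error, and the negligibility of the tail of the major arcs in $q$. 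The overall error term is then governed by the major-arc approximation, in particular by the range of moduli $q$ permitted by the Siegel--Walfisz theorem, which is why the saving is only a fixed power of $\log X$ rather than of $X$. Adding the major- and minor-arc contributions produces the stated asymptotic, with $\mathcal{C}(\mathbf{f})$ depending only on $\mathbf{f}$ and any $c > 0$ below the gain from the Siegel--Walfisz range being admissible.
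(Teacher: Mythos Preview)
Your major–arc treatment is essentially the one used in \cite{CM} and in this paper (Section~\ref{section major arcs}): Lemma~\ref{Lemma 6 in CM} is exactly the Siegel--Walfisz approximation you describe, and the identification $\mathcal{C}(\mathbf{f})=\mathfrak{S}(\infty)\mu(\infty)$ together with the convergence estimates is carried out in Section~\ref{section singular series}.

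The gap is in the minor arcs. The step ``replace $\Lambda$ coordinate by coordinate via Vaughan's identity, obtaining unweighted exponential sums over boxes to which Birch's Weyl differencing applies'' does not go through for a \emph{non-diagonal} system $\mathbf{f}$. Vaughan's identity in $x_1$ alone produces sums of the shape
\[
\sum_{x_2,\ldots,x_n}\Lambda(x_2)\cdots\Lambda(x_n)\sum_{m\le M}\sum_{k}\,c(m,k)\,e\bigl(\boldsymbol{\alpha}\cdot\mathbf{f}(mk,x_2,\ldots,x_n)\bigr),
\]
and since $\mathbf{f}$ is not additive in $x_1$ the inner bilinear sum has a phase that mixes $m,k$ with $x_2,\ldots,x_n$ in a genuinely multilinear way; one cannot Cauchy--Schwarz it into an unweighted Birch-type sum without destroying the box structure in the remaining variables. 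Iterating over all $n$ coordinates only compounds the problem. This is precisely why the Cook--Magyar argument does \emph{not} use Vaughan's identity or a $W$-trick.

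What \cite{CM} (and Section~\ref{section minor arc} here) actually do is structurally different. After the preliminary reduction of Section~\ref{sec initial set up}, one isolates a small set of variables $\mathbf{w}$ carrying distinguished monomials $\mathbf{w}^{\mathbf{j}_{\ell,r}}$, and then partitions the remaining variables as $(\mathbf{y},\mathbf{z})$ via Proposition~\ref{prop decomp}. The key device is the \emph{regularization} of auxiliary systems of forms (Proposition~\ref{prop reg par}, due to \cite{CM}): this lets one fibre the sum over level sets $Z(\mathbf{H})$ and $Y(\mathbf{G};\mathbf{H})$ of bounded-complexity systems, on which the phase decouples as $\mathfrak{C}_{\ell,r}(\mathbf{w})+\mathfrak{U}_{\ell,r}(\mathbf{y})+\mathfrak{X}_{\ell,r}(\mathbf{z})$. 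A single Cauchy--Schwarz then factors $T(\boldsymbol{\alpha})$ into $S_0\cdot S_1\cdot S_2$; the $L^2$-norms of $S_1,S_2$ are controlled by showing the relevant difference systems are regular (Claim~2, via Corollary~\ref{cor Schmidt}), and only the short sum $S_0$ over the $\mathbf{w}$-variables is estimated pointwise, using Hua's bounds (Lemmas~\ref{lemma from Hua1}--\ref{lemma from Hua}) and Weyl differencing on the single isolated monomial $\mathbf{w}^{\mathbf{j}_{\ell,r}}$. Note also that the resulting minor-arc saving is only a power of $\log X$, not the power of $X$ you claim; this is forced by the use of Hua's estimate and is consistent with the final error term.
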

In this paper, we generalize Theorem \ref{CM theorem} to handle systems of polynomials in which the degrees need not all be the same.
The following is the main theorem of this paper.
\begin{thm}
\label{main thm}
Let $\mathbf{f} = ( \mathbf{f}_{d}, \ldots,\mathbf{f}_{1} )$ be a system of polynomials in
$\mathbb{Z}[x_1,\ldots, x_n]$, where $\mathbf{f}_{\ell} = ( f_{\ell, 1}, \ldots, f_{\ell, r_{\ell}} )$ is the subsystem of  degree $\ell$ polynomials of $\mathbf{f}$ $(1 \leq \ell \leq d)$. For each $1 \leq \ell \leq d$,
suppose $\mathcal{B}_{\ell}(\mathbf{f}_{\ell})$ is sufficiently large with respect to $d$ and $r_d,\ldots, r_1$.
Then there exist $\mathcal{C}(\mathbf{f})$, a constant which depends only on $\mathbf{f}$, and
$c>0$ such that
$$
\mathcal{M}_{\mathbf{f} }(X) = \mathcal{C}(\mathbf{f}) \ X^{n- \sum_{\ell = 1}^d \ell r_{\ell}} + O\left( \frac{
X^{n- \sum_{\ell = 1}^d \ell r_{\ell}}
}{(\log X)^c } \right).
$$
\end{thm}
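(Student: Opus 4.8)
The plan is to prove Theorem~\ref{main thm} by the Hardy-Littlewood circle method, following the architecture of \cite{CM} but carefully tracking the contributions of the subsystems of differing degrees. Introduce the exponential sum
\begin{equation*}
S(\boldsymbol{\alpha}) = \sum_{\mathbf{x} \in [0,X]^n} \Lambda(\mathbf{x}) \, e\!\left( \sum_{\ell=1}^d \sum_{r=1}^{r_\ell} \alpha_{\ell,r} f_{\ell,r}(\mathbf{x}) \right),
\end{equation*}
where $\boldsymbol{\alpha} = (\alpha_{\ell,r})$ ranges over the torus of dimension $R = \sum_\ell r_\ell$, so that $\mathcal{M}_{\mathbf{f}}(X) = \int_{\mathbb{T}^R} S(\boldsymbol{\alpha}) \, d\boldsymbol{\alpha}$. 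First I would set up a Davenport--Heilbronn style dissection into major arcs $\mathfrak{M}$ (tuples $\boldsymbol{\alpha}$ simultaneously close to rationals with small common denominator, where the scaling in each coordinate $\alpha_{\ell,r}$ is calibrated to $X^{-\ell}$) and minor arcs $\mathfrak{m}$. The target main term $\mathcal{C}(\mathbf{f}) X^{n - \sum \ell r_\ell}$ should emerge from $\mathfrak{M}$ as a product of a singular series $\mathfrak{S}(\mathbf{f})$ encoding the $p$-adic and prime densities and a singular integral $\mathfrak{J}(\mathbf{f})$ encoding the real density on $V_{\mathbf{F},\mathbf{0}}(\mathbb{R})$; positivity of $\mathcal{C}(\mathbf{f})$ is exactly where the ``suitable local conditions'' in the hypotheses are used.

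The heart of the argument is the minor arc estimate. The key tool is a Weyl-type / Birch-type inequality adapted to the von Mangoldt weight: if $\boldsymbol{\alpha} \in \mathfrak{m}$, then $|S(\boldsymbol{\alpha})| \ll X^{n}(\log X)^{-A}$ for any $A$, provided each Birch rank $\mathcal{B}_\ell(\mathbf{f}_\ell)$ is large enough. The natural route, and the one matching the hypothesis that the ranks are bounded below in terms of $d$ and \emph{all} of $r_d, \ldots, r_1$, is to run the Weyl differencing degree by degree from the top down: differencing the phase $d-1$ times isolates the degree-$d$ forms $\mathbf{F}_d$ and, via the geometry of $V_{\mathbf{F}_d}^*$ (Birch's passage from rank to a count of lattice points on which the forms are small), either yields cancellation or forces $\boldsymbol{\alpha}$ onto a lower-dimensional set; then one induces down to degrees $d-1, \ldots, 1$. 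Because the weight is $\Lambda$ rather than the indicator of an interval, this differencing must be combined with a transference/decomposition of $\Lambda$ into a ``Type I + Type II'' (Vaughan or Heath-Brown identity) form, so that the exponential sum over primes is replaced by bilinear sums to which the polynomial Weyl estimates apply; this is precisely the mechanism used in \cite{CM} for the single-degree case, and the multi-degree version requires doing the bookkeeping so that the loss from each degree's differencing is absorbed by that degree's rank hypothesis. I expect this step --- proving the minor-arc bound with the correct, degree-by-degree dependence of the required rank on $d$ and $r_d,\ldots,r_1$ --- to be the main obstacle, since one must ensure that handling the lower-degree forms does not destroy the savings extracted from the higher-degree ones, and that the genuinely inhomogeneous (non-leading) parts of the $f_{\ell,r}$ contribute only lower-order errors.

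On the major arcs, I would first show $S(\boldsymbol{\alpha})$ is well-approximated, for $\boldsymbol{\alpha} = \mathbf{a}/q + \boldsymbol{\beta} \in \mathfrak{M}$, by $q^{-n} \phi(q)^{-n}(\text{something})$ --- more precisely by a product of a complete exponential sum $T(\mathbf{a}/q)$ over reduced residues and an oscillatory integral $I(\boldsymbol{\beta})$ --- using the Siegel--Walfisz theorem to control $\Lambda$ in arithmetic progressions; the admissible $q$ must be kept below a power of $\log X$ for Siegel--Walfisz to apply, which is the source of the $(\log X)^{-c}$ error and of the dependence of everything on $c$. Summing $T(\mathbf{a}/q)$ over $q$ and completing the integral $I(\boldsymbol{\beta})$ over all of $\mathbb{R}^R$ produces $\mathfrak{S}(\mathbf{f}) \mathfrak{J}(\mathbf{f}) X^{n - \sum \ell r_\ell}$; convergence of the singular series and integral, together with the truncation errors, again needs the Birch ranks to be large (this is the standard requirement that $\mathcal{B}_\ell$ exceed a linear function of $\ell$ and $r_\ell$). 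Finally I would assemble the pieces: the major-arc contribution gives the main term, the minor-arc contribution is $O(X^{n-\sum \ell r_\ell}(\log X)^{-c})$ after choosing $A$ large relative to the dimension count, and the two together yield the asymptotic formula as stated. A subsidiary point worth isolating as a lemma is the reduction of the characteristic function $\mathbf{1}_{V_{\mathbf{f},\mathbf{0}}(\mathbb{C})}$ to the orthogonality integral over $\mathbb{T}^R$, which is immediate, and the verification that $V_{\mathbf{f},\mathbf{0}}(\mathbb{C})$ and $V_{\mathbf{F},\mathbf{0}}(\mathbb{R})$ interact correctly in the local factors.
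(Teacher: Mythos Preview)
Your overall architecture is right: circle method, graded major arcs with scale $X^{-\ell}$ in the $\ell$-th block, Siegel--Walfisz on the major arcs producing a singular series times a singular integral, and a minor-arc bound saving an arbitrary power of $\log X$. The major-arc sketch is essentially what the paper does.

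The gap is in the minor arcs. You write that the mechanism is a Vaughan/Heath--Brown decomposition of $\Lambda$ into Type I/II sums followed by polynomial Weyl differencing, and that ``this is precisely the mechanism used in \cite{CM}.'' It is not. For a general (non-diagonal) system of forms there is no known way to feed the bilinear sums coming out of Vaughan's identity into a Birch--Weyl differencing argument and still extract the required saving; the phase after differencing mixes the bilinear variables in a way that the standard lattice-point/rank dichotomy does not control. What \cite{CM} actually does, and what this paper extends, is a \emph{regularization and variable-splitting} argument: one first massages $\mathbf f$ so that each $f_{\ell,r}$ has a distinguished leading monomial in a small set of variables $\mathbf w$ (with $|\mathbf w|\le\sum_\ell \ell r_\ell$), then uses Proposition~\ref{prop reg par} and a new multi-degree decomposition (Proposition~\ref{prop decomp}) to partition the remaining variables into $(\mathbf y,\mathbf z)$ so that the level sets of certain auxiliary systems stratify the sum. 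The weighted sum $T(\mathbf f;\boldsymbol\alpha)$ then factors, on each stratum, as $S_0\cdot S_1\cdot S_2$; Cauchy--Schwarz reduces the minor-arc integral to a supremum of $|S_0|$ times an $L^2$ count $\mathcal W$. The $L^2$ count is handled by showing the relevant difference systems are regular in the sense of Schmidt (Corollary~\ref{cor Schmidt}), which is where the rank hypotheses on \emph{every} degree enter simultaneously. The sup of $|S_0|$ is a short exponential sum in only the $\mathbf w$ variables, with $\Lambda$-weight, and \emph{this} is where Hua's single-variable minor-arc bound (Lemma~\ref{lemma from Hua}) plus ordinary Weyl differencing suffice, because the phase in $\mathbf w$ has been arranged to isolate one coefficient $c_{\ell,r}\alpha_{\ell,r}$ on a distinguished monomial.

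So the step you flagged as ``bookkeeping'' is in fact the main new content: Proposition~\ref{prop decomp} (a multi-degree analogue of \cite[Proposition~2]{CM}) is what lets the regularization machinery run when the degrees differ, and without it the Cauchy--Schwarz/regularity strategy does not close. Your proposed Type I/II route would need an entirely different minor-arc argument, and none is presently available for general forms.
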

Of course if $r_{d-1} = \ldots = r_1 = 0$, then our Theorem \ref{main thm} recovers Theorem \ref{CM theorem}.
We also prove in Section \ref{section major arcs} that if the equations (\ref{main system}) has a non-singular solution in $\mathbb{Z}_p^{\times}$, the units of $p$-adic integers, for each prime $p$, and $V_{\mathbf{F}, \mathbf{0}}(\mathbb{R})$ has a non-singular real point in $(0,1)^n$, then
$$
\mathcal{C}(\mathbf{f}) > 0.
$$

We also present Theorem \ref{main thm 2} in Section \ref{sec concln}, where we obtain the asymptotic formula for the
number of prime solutions, counted with a logarithmic weight, instead of
solutions whose coordinates are all prime powers as in Theorem \ref{main thm}. Hitherto, the only examples in the literature of results of this type, for systems of polynomial equations involving different degrees, have been restricted to the diagonal case similar to the aforementioned result of L. K. Hua.

Theorems \ref{CM theorem} and \ref{main thm} are both obtained via the Hardy-Littlewood circle method.
In fact the method of our paper builds upon the techniques developed in \cite{CM}.
Circle method was pioneered by Hardy and Littlewood to give an asymptotic formula for the number of solutions to Waring's problem, and it has been quite effective at producing asymptotic formulas for the number of integer points of bounded height on varieties when the number of variables is sufficiently large.
The results of this type on the distribution of integer points on varieties
are provided by B. J. Birch \cite{B} and W. M. Schmidt \cite{S}.
In \cite{BHB}, T. D. Browning and D. R. Heath-Brown succeeded in generalizing the seminal work of Birch \cite{B},
and showed ``how forms of unequal degrees can be handled in an efficient manner, so as to give the results in the spirit of Birch for arbitrary systems.''
As stated in \cite{BHB}, ``Birch's original result needed the forms all to have the same degree, and there is a significant technical problem in extending the method to the general case.''
It is required in Theorem \ref{CM theorem} that the polynomials all have the same degree. 
As in the case for integer points, there are significant challenges to be overcome in generalizing the result on prime solutions of polynomial equations of equal degree to handle arbitrary systems.

The organization of the rest of the paper is as follows. In Section \ref{Sec reg lem}, we collect some definitions and results
related to the regularization process, which is an important part of the method in \cite{CM} and also of this paper.
In an application of the circle method, we consider the so-called \textit{major arcs} and \textit{minor arcs} (defined in Section \ref{sec initial set up}).
In Section \ref{sec decomp of forms}, we prove a result on decomposing a system of forms which becomes the starting point
in obtaining our minor arc estimates after the initial set-up prepared in Section \ref{sec initial set up}. We then obtain the desired minor arc estimates in Section \ref{section minor arc}. In Section \ref{sec tech 1}, we collect technical results that are necessary in obtaining our major arc estimates in Section \ref{section major arcs}. Finally, we state our conclusions and further remarks in Section \ref{sec concln}.
We also have Appendix \ref{appendix B}, where we provide proof for the results presented in Section \ref{sec tech 1}.
The work here is based on \cite{S}, and
we chose to present these technical details at the end for an easier read of the paper.


Throughout the paper we do not distinguish between the two terms `homogeneous polynomial' and `form', and
we will be using these terms interchangeably. By `rational form' we mean it is a form with coefficients in $\mathbb{Q}$.
We use $\ll$ and $\gg$ to denote Vinogradov's well-known notation.
We also use the notation $e(x)$ to denote $e^{2\pi i x}$. For $\mathbf{x} = (x_1, \ldots, x_n)$, the notation
$$
\sum_{\mathbf{x} \in [0,X]^n}
$$
means we are summing over all $\mathbf{x} \in \mathbb{Z}^n$ with $0 \leq x_i \leq X \ (1 \leq i \leq n)$.
For $q \in \mathbb{N}$, we use the numbers from $\{0, 1, \ldots, q-1 \}$ to represent the residue classes of
$\mathbb{Z}/q\mathbb{Z}$.
Finally, given $\mathbf{x} = (x_1, \ldots, x_n)$ we abuse notation slightly and let
$|\mathbf{x}| = n$ in Sections \ref{sec decomp of forms} and \ref{section minor arc},
whereas we let $|\mathbf{x}| = \max_{1 \leq i \leq n} |x_i|$ in Section \ref{sec tech 1} and onwards.
There should be no ambiguity since we are defining these notations for $|\mathbf{x}|$ again as they come up.

\textit{Acknowledgments.} The author would like to thank James Maynard and Kannan Soundararajan for their
helpful advices. The author would also like to thank the following people for helpful conversations and/or
encouragement while working on this paper: Matthew Beckett, Arunabha Biswas, Tim Browning, Francesco Cellarosi, Robert Krone, Jamie Mingo, Abdol-Reza Mansouri, M. Ram Murty, Mike Roth, Trevor Wooley, Stanley Yao Xiao, and  Serdar Y\"{u}ksel.

\section{Regularization lemmas}
\label{Sec reg lem}

In this section, we collect results from \cite{CM} and \cite{S} related to regular systems (see Definition \ref{def regular}) and the
regularization process. Given a system of rational forms $\textbf{F}$, via the regularization process we obtain
another system of forms which has at most the expected number of integer points, its number of forms is `small', and partitions the level sets
of $\textbf{F}$. This was an important component of the method in \cite{CM} used to split the
exponential sum in a controlled manner during the minor arc estimate. 

Let $\ell > 1$. Given a form $G \in \mathbb{Q}[x_1, \ldots, x_n]$ of degree $\ell$, we define the $h$-\textit{invariant},
also known as the \textit{rational Schmidt rank}, $h_{\ell}(G)$, to be the
least positive integer $h$ such that $G$ can be written identically as
\begin{equation}
\label{defn of hinv}
G = \widetilde{U}_1 \widetilde{V}_1 + \ldots + \widetilde{U}_h \widetilde{V}_h,
\end{equation}
where $\widetilde{U}_i$ and $\widetilde{V}_i$ are rational forms of positive degree $(1 \leq i \leq h)$.
Let $\mathbf{G} = ( G_1, \ldots, G_{r'} )$ be a system of degree $\ell$ forms in $\mathbb{Q}[x_1, \ldots, x_n]$. We generalize the
definition of the $h$-invariant, and
define the $h$-invariant of $\mathbf{G}$ to be
\begin{equation}
h_{\ell}(\mathbf{G}) = \min_{\boldsymbol{\mu} \in \mathbb{Q}^{r'} \backslash \{ \boldsymbol{0} \}}  h_{\ell}( \mu_{1} G_1 + \ldots + \mu_{r'} G_{r'} ).
\end{equation}
Let $\mathbf{g} = ( g_1, \ldots, g_{r'} )$ be a system of degree $\ell$ polynomials in $\mathbb{Q}[x_1,\ldots, x_n]$.
Let $G_{r}$ be the degree $\ell$ portion of $g_r \ (1 \leq r \leq r')$. We define
\begin{equation}
h_{\ell}(\mathbf{g}) := h_{\ell}(\mathbf{G}).
\end{equation}

The $h$-invariant satisfies the following property.
\begin{lem} \cite[Lemma 2.2]{XY}
\label{h ineq 1'} Let $\ell > 1$ and let $\mathbf{G} = (G_1, \ldots, G_{r'} )$ be a system of degree $\ell$ forms in $\mathbb{Q}[x_1, \ldots, x_n]$.
Suppose $h_{\ell}(\mathbf{G})>1$. Then for any  $1 \leq i \leq n$, we have
$$
h_{\ell}( \mathbf{G} ) -1   \leq h_{\ell}( \mathbf{G} |_{x_i=0} ) \leq h_{\ell}( \mathbf{G} ),
$$
where $\mathbf{G} |_{x_i=0} = ( G_1|_{x_i=0}, \ldots , G_{r'}|_{x_i = 0} )$.
\end{lem}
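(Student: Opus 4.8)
The plan is to prove the two inequalities separately, in each case by transporting a minimal decomposition $G = \widetilde{U}_1\widetilde{V}_1 + \cdots + \widetilde{U}_h\widetilde{V}_h$ of the relevant rational linear combination of forms between $\mathbf{G}$ and its restriction $\mathbf{G}|_{x_i=0}$, and controlling how the number of summands changes. Throughout I write a generic combination as $\mu_1 G_1 + \cdots + \mu_{r'} G_{r'}$ with $\boldsymbol{\mu} \in \mathbb{Q}^{r'} \setminus \{\mathbf{0}\}$, and I use that restriction to $x_i = 0$ commutes with forming such combinations, and that restricting a form of degree $e \ge 1$ to $x_i = 0$ produces either $0$ or a form of the same degree $e \ge 1$ (never a non-zero constant).

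For the upper bound $h_{\ell}(\mathbf{G}|_{x_i=0}) \le h_{\ell}(\mathbf{G})$, I would fix $\boldsymbol{\mu}$ attaining $h_{\ell}(\mathbf{G}) = h_{\ell}(\mu_1 G_1 + \cdots + \mu_{r'} G_{r'}) =: h$ together with a representation $\mu_1 G_1 + \cdots + \mu_{r'} G_{r'} = \widetilde{U}_1\widetilde{V}_1 + \cdots + \widetilde{U}_h\widetilde{V}_h$ into positive-degree rational forms. Restricting to $x_i = 0$ expresses $\mu_1 (G_1|_{x_i=0}) + \cdots + \mu_{r'}(G_{r'}|_{x_i=0})$ as the sum of the $h$ products $(\widetilde{U}_j|_{x_i=0})(\widetilde{V}_j|_{x_i=0})$; discarding those summands in which one factor vanishes leaves at most $h$ products of positive-degree rational forms. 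The hypothesis $h_{\ell}(\mathbf{G}) > 1$ forces $\mu_1 (G_1|_{x_i=0}) + \cdots + \mu_{r'}(G_{r'}|_{x_i=0}) \neq 0$ (otherwise $x_i$ would divide $\mu_1 G_1 + \cdots + \mu_{r'} G_{r'}$, forcing $h \le 1$), so its $h$-invariant is well defined and $\le h$, and hence $h_{\ell}(\mathbf{G}|_{x_i=0}) \le h$.

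For the lower bound $h_{\ell}(\mathbf{G}) - 1 \le h_{\ell}(\mathbf{G}|_{x_i=0})$, I would run the transfer in the opposite direction. Choose $\boldsymbol{\nu}$ attaining $h_{\ell}(\mathbf{G}|_{x_i=0}) =: h'$ together with a representation $\nu_1 (G_1|_{x_i=0}) + \cdots + \nu_{r'}(G_{r'}|_{x_i=0}) = \widetilde{U}_1\widetilde{V}_1 + \cdots + \widetilde{U}_{h'}\widetilde{V}_{h'}$ into positive-degree rational forms not involving $x_i$. Then $\nu_1 G_1 + \cdots + \nu_{r'} G_{r'} - (\widetilde{U}_1\widetilde{V}_1 + \cdots + \widetilde{U}_{h'}\widetilde{V}_{h'})$ vanishes on $x_i = 0$, hence equals $x_i W$ for some rational form $W$ of degree $\ell - 1$. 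Since $\ell > 1$ we have $\deg W = \ell - 1 \ge 1$, so $x_i W$ is itself a product of two positive-degree rational forms; thus $\nu_1 G_1 + \cdots + \nu_{r'} G_{r'}$ is a sum of at most $h' + 1$ such products, giving $h_{\ell}(\mathbf{G}) \le h_{\ell}(\nu_1 G_1 + \cdots + \nu_{r'} G_{r'}) \le h' + 1$.

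The computations are short, so I expect the only delicate points to be organizational: one must keep straight the minimum defining the $h$-invariant of a system, exploiting the optimal combination for whichever side it is defined on and merely observing it is an admissible competitor for the other; one must rule out the degenerate case in which a restricted combination collapses to $0$, which is precisely what $h_{\ell}(\mathbf{G}) > 1$ does; and one must confirm that the single ``error'' term $x_i W$ in the lower-bound step genuinely counts as one product, which is where the hypothesis $\ell > 1$ is essential, as it guarantees $\deg W = \ell - 1 \ge 1$.
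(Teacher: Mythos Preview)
Your proof is correct. The paper itself does not prove this lemma but simply cites it from \cite{XY}; your argument is the standard direct one---restricting a minimal decomposition for the upper bound, and lifting a minimal decomposition and absorbing the defect as a single product $x_i W$ for the lower bound---which is exactly what one expects the cited proof to contain. The only minor point worth noting is that in the lower-bound step $W$ may be zero, but then the bound $h_\ell(\mathbf{G}) \le h'$ holds a fortiori, so nothing is lost.
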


We have the following relation between the $h$-invariant and the Birch rank by combining \cite[Lemma 16.1, (10.3), (10.5), (17.1)]{S}.
\begin{lem}
Let $\ell > 1$ and let $\mathbf{G} = ( G_1, \ldots, G_{r'} )$ be a system of degree $\ell$ forms in $\mathbb{Q}[x_1,\ldots, x_n]$.
We have
\begin{equation}
\label{ineq of h and B}
h_{\ell}(\mathbf{G})\geq 2^{1-\ell} \ \mathcal{B}_{\ell}(\mathbf{G}).
\end{equation}
\end{lem}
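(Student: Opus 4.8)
\emph{Proof proposal.} The plan is to reduce the statement for a system $\mathbf{G}$ to the corresponding bound for a single form in the rational pencil spanned by $G_1,\dots,G_{r'}$, and then to prove that single--form bound by exhibiting a large affine subvariety inside the singular locus. First I would dispose of the trivial case: if $G_1,\dots,G_{r'}$ are linearly dependent over $\mathbb{Q}$, then the rows of the Jacobian $\left( \partial G_r(\mathbf{x})/\partial x_j \right)$ are everywhere dependent, so $V_{\mathbf{G}}^{*}=\mathbb{C}^n$, $\mathcal{B}_{\ell}(\mathbf{G})=0$, and the inequality holds vacuously. Otherwise, choose $\boldsymbol{\mu}=(\mu_1,\dots,\mu_{r'})\in\mathbb{Q}^{r'}\setminus\{\mathbf{0}\}$ attaining the minimum in the definition of $h_{\ell}(\mathbf{G})$ and set $G:=\mu_1 G_1+\dots+\mu_{r'}G_{r'}$, a nonzero form of degree $\ell$ with $h_{\ell}(G)=h_{\ell}(\mathbf{G})$. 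The key observation is that $\nabla G(\mathbf{x})=\mathbf{0}$ forces $\sum_{r}\mu_r\nabla G_r(\mathbf{x})=\mathbf{0}$, which is a nontrivial dependence among the rows of the Jacobian at $\mathbf{x}$, so the zero locus of $\nabla G$ is contained in $V_{\mathbf{G}}^{*}$; hence $\mathcal{B}_{\ell}(\mathbf{G})\le\mathrm{codim}\,\{\mathbf{x}\in\mathbb{C}^n:\nabla G(\mathbf{x})=\mathbf{0}\}$. It then suffices to prove that any nonzero form $G$ of degree $\ell>1$ satisfies $h_{\ell}(G)\ge\tfrac12\,\mathrm{codim}\,\{\nabla G=\mathbf{0}\}$, which is more than enough since $\tfrac12\ge 2^{1-\ell}$ for $\ell\ge2$.

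For the single--form bound, I would fix a representation $G=\widetilde{U}_1\widetilde{V}_1+\dots+\widetilde{U}_h\widetilde{V}_h$ with $h=h_{\ell}(G)$ and each $\widetilde{U}_i,\widetilde{V}_i$ a rational form of positive degree; comparing homogeneous components and discarding the terms of degree $\neq\ell$ (whose total contribution vanishes), one may assume $\deg\widetilde{U}_i+\deg\widetilde{V}_i=\ell$ for every $i$. Differentiating the identity term by term shows that each $\partial G/\partial x_k$ lies in the ideal generated by $\widetilde{U}_1,\widetilde{V}_1,\dots,\widetilde{U}_h,\widetilde{V}_h$, hence vanishes on the affine variety $\mathcal{Z}=\{\mathbf{x}\in\mathbb{C}^n:\widetilde{U}_i(\mathbf{x})=\widetilde{V}_i(\mathbf{x})=0\ \ (1\le i\le h)\}$. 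Thus $\mathcal{Z}\subseteq\{\nabla G=\mathbf{0}\}$, and since $\mathcal{Z}$ is cut out by at most $2h$ equations and contains the origin, its codimension in $\mathbb{C}^n$ is at most $2h$; therefore $\mathrm{codim}\,\{\nabla G=\mathbf{0}\}\le 2h=2\,h_{\ell}(G)$. Combined with the reduction above this yields $h_{\ell}(\mathbf{G})=h_{\ell}(G)\ge\tfrac12\,\mathcal{B}_{\ell}(\mathbf{G})\ge 2^{1-\ell}\,\mathcal{B}_{\ell}(\mathbf{G})$.

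I do not expect a serious obstacle along this route; the only points needing a little care are the passage to homogeneous terms of the correct degree and the trivial edge cases ($\mathbf{G}$ linearly dependent, or $2h\ge n$, where one simply uses $\mathrm{codim}\le n\le 2h$). It is worth remarking that this argument in fact delivers the cleaner bound $h_{\ell}(\mathbf{G})\ge\tfrac12\,\mathcal{B}_{\ell}(\mathbf{G})$, and that it runs parallel to Schmidt's comparison of the two invariants in \cite[Lemma 16.1, (10.3), (10.5), (17.1)]{S}; there the constant $2^{1-\ell}$ appears because the comparison is organised as an induction on the degree, but either formulation suffices for the present purposes.
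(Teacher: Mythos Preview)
Your argument is correct and in fact yields the sharper inequality $h_{\ell}(\mathbf{G})\ge\tfrac12\,\mathcal{B}_{\ell}(\mathbf{G})$, from which the stated bound follows immediately for $\ell\ge 2$. The reduction to a single pencil member via $\{\nabla G=\mathbf{0}\}\subseteq V_{\mathbf{G}}^{*}$ is sound, and the key step---that the common zero locus $\mathcal{Z}$ of the $2h$ forms $\widetilde U_i,\widetilde V_i$ is nonempty (it contains the origin) and hence has codimension at most $2h$, while $\mathcal{Z}\subseteq\{\nabla G=\mathbf{0}\}$ by the product rule---is a clean, self-contained piece of elementary dimension theory. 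The degree-matching remark is also fine: in any representation $G=\sum_i\widetilde U_i\widetilde V_i$ the off-degree summands cancel among themselves, so discarding them can only decrease the number of terms, which preserves the inequality you want.

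The paper does not give its own proof here; it simply invokes the chain of inequalities in Schmidt \cite[Lemma 16.1, (10.3), (10.5), (17.1)]{S}, which passes through the auxiliary invariant $g_{\ell}(\mathbf{G})$ attached to the multilinear variety $\mathbb{M}_{\ell}$ arising from Weyl differencing. That route compares $\mathcal{B}_{\ell}$ to $g_{\ell}$ and $g_{\ell}$ to $h_{\ell}$ separately, and the factor $2^{\ell-1}$ enters in the first comparison. Your direct Jacobian argument bypasses $g_{\ell}$ entirely and is both shorter and sharper for the purpose of this lemma; the Schmidt chain, on the other hand, is what one actually needs elsewhere in the paper (e.g.\ in Section~\ref{section singular series} and Appendix~\ref{appendix B}) where $g_{\ell}$ itself, not merely $\mathcal{B}_{\ell}$, governs the exponential sum estimates. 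So both approaches have their place: yours is the right way to prove \emph{this} lemma, while Schmidt's infrastructure is unavoidable for the analytic applications.
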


\begin{defn}
\label{def regular}
Let $d>1$. Let $\mathbf{u} = (\mathbf{u}_{d}, \ldots, \mathbf{u}_{1})$ be a system of polynomials in $\mathbb{Q}[x_1, \ldots, x_n]$,
where $\mathbf{u}_{\ell} = ( u_{\ell, 1}, \ldots, u_{\ell, r_{\ell}} )$ is the subsystem of degree $\ell$ polynomials of $\mathbf{u}$ $(1 \leq \ell \leq d)$.
Let $D_{\mathbf{u}} = \sum_{\ell=1}^d \ell r_{\ell}$ and $R_{\mathbf{u} }  = \sum_{\ell=1}^d r_{\ell}$.
We denote $V_{\mathbf{u}, \mathbf{0}} (\mathbb{Z})$ to be the set of solutions in $\mathbb{Z}^n$ to the equations
\begin{equation}
\label{first set of eqns u}
u_{\ell, r} (\mathbf{x}) = 0 \ ( 1 \leq \ell \leq d, 1 \leq r \leq r_{\ell}).
\end{equation}
Let us denote the equations (\ref{first set of eqns u}) by
$\mathbf{u}(\mathbf{x}) = \mathbf{0}$.
We say the system $\mathbf{u}$ is \textit{regular} if
$$
| V_{\mathbf{u}, \mathbf{0}} (\mathbb{Z}) \cap [-X,X]^n | \ll X^{n-D_{\mathbf{u}}  }.
$$
\end{defn}
Similarly as above we also define $V_{\mathbf{u}, \mathbf{0}} (\mathbb{R})$ to be the set of solutions in $\mathbb{R}^n$
of the equations $\mathbf{u}(\mathbf{x}) = \mathbf{0}$.
For a system of polynomials $\mathbf{u}$ as given in Definition \ref{def regular},
we let $\mathbf{U} = (\mathbf{U}_{d}, \ldots, \mathbf{U}_{1})$ be the system of forms
such that for each $1 \leq \ell \leq d$, we have $\mathbf{U}_{\ell} = \{ U_{\ell, 1}, \ldots, U_{\ell, r_{\ell}} \}$
where $U_{\ell, r}$ is the degree $\ell$ portion of $u_{\ell, r}$ $(1 \leq r \leq r_{\ell})$.
The following theorem is one of the main results of \cite{S} due to Schmidt.

\begin{thm}\cite[Theorem II]{S}
\label{Schmidt main}
Let $d>1$. Let $\mathbf{u} = (\mathbf{u}_{d}, \ldots, \mathbf{u}_{2})$ be a system of rational polynomials
with notations as in Definition \ref{def regular},
and also let $\mathbf{U}_{\ell}$ be the system of degree $\ell$ portions of $\mathbf{u}_{\ell} \ (2 \leq \ell \leq d)$.
If we have
$$
h_{\ell}(  \mathbf{U}_{\ell} )  
\geq d \ 2^{4 \ell}  (\ell !)  r_{\ell} R_{\mathbf{u} }  \ \ (2 \leq \ell \leq d),
$$
then the system $\mathbf{u}$ is regular.
\end{thm}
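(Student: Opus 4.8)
The plan is to run the Hardy--Littlewood circle method in the style of Birch \cite{B}, as refined by Schmidt \cite{S} to cope with forms of differing degrees, proceeding by induction on the largest degree $d$ and handling, within each step, the degrees from the top down. Write $R = R_{\mathbf{u}}$, $D = D_{\mathbf{u}}$, and $\boldsymbol{\alpha} = (\alpha_{\ell,r})_{2 \le \ell \le d,\ 1 \le r \le r_\ell} \in [0,1)^{R}$. One starts from the identity
$$
\bigl| V_{\mathbf{u}, \mathbf{0}}(\mathbb{Z}) \cap [-X, X]^n \bigr| = \int_{[0,1)^{R}} S(\boldsymbol{\alpha}) \, d\boldsymbol{\alpha},
$$
where
$$
S(\boldsymbol{\alpha}) = \sum_{\mathbf{x} \in [-X,X]^n} e\Bigl( \sum_{\ell = 2}^{d} \sum_{r = 1}^{r_\ell} \alpha_{\ell, r}\, u_{\ell, r}(\mathbf{x}) \Bigr).
$$
Because of the lower-order terms of the $u_{\ell,r}$, the sum $S(\boldsymbol{\alpha})$ is not literally a Weyl sum in the forms $\mathbf{U}_\ell$; splitting $[-X,X]^n$ into congruence classes and summing by parts reduces the estimation, up to bounded factors and negligible lower-order contributions, to that of the homogeneous sum built from $(\mathbf{U}_d, \ldots, \mathbf{U}_2)$.

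The core of the argument is a dichotomy for $S(\boldsymbol{\alpha})$ furnished by Weyl differencing, carried out degree by degree in decreasing order of $\ell$. Differencing the degree-$\ell$ part $\ell - 1$ times extracts the multilinear form attached to the $U_{\ell,r}$, and Davenport's geometry-of-numbers lemma then gives, for each $\ell$: either $|S(\boldsymbol{\alpha})| \ll X^{n - \delta_\ell}$ for some fixed $\delta_\ell > 0$, or else $(\alpha_{\ell,1}, \ldots, \alpha_{\ell,r_\ell})$ admits a simultaneous rational approximation with denominator $\ll X^{O(1)}$ whose exceptional locus of $\mathbf{x}$ lies in a variety of dimension at most $n - \mathcal{B}_\ell(\mathbf{U}_\ell)$. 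Feeding this into (\ref{ineq of h and B}), which converts the hypothesis $h_\ell(\mathbf{U}_\ell) \ge d\, 2^{4\ell}(\ell!)\, r_\ell R$ into a lower bound for $\mathcal{B}_\ell(\mathbf{U}_\ell)$, forces the second alternative to confine $\boldsymbol{\alpha}$ to a set of very small measure. The genuinely new difficulty relative to the equal-degree setting of \cite{B} is the interference between the degrees: differencing a degree-$\ell$ form produces spurious terms of every lower degree, which must be absorbed without spoiling the subsequent differencing of $\mathbf{U}_{\ell-1}, \ldots, \mathbf{U}_2$. This is handled by processing the degrees from the top down and by carrying the slightly wasteful numerical constant $d\, 2^{4\ell}(\ell!)\, r_\ell R$, which is calibrated precisely so that the cumulative loss across the $d-1$ degrees stays affordable (in particular so that $\min_\ell \delta_\ell > D$). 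I expect keeping the savings $\delta_\ell$ uniform in $X$, while the auxiliary variables attached to one degree pollute the sums attached to another, to be the principal obstacle.

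It remains to split $[0,1)^{R}$ into minor arcs $\mathfrak{m}$, on which the dichotomy yields $|S(\boldsymbol{\alpha})| \ll X^{n - \delta}$ with $\delta = \min_\ell \delta_\ell$, and major arcs $\mathfrak{M}$, the union over $\ell$ of the exceptional sets above. On $\mathfrak{m}$ the trivial bound $\int_{\mathfrak{m}} |S| \le \sup_{\mathfrak{m}} |S| \ll X^{n - \delta} \ll X^{n - D}$ suffices. On $\mathfrak{M}$ one replaces $S(\boldsymbol{\alpha})$ by its major-arc approximation --- a product of a complete exponential sum in the moduli $q_\ell$ (of size $\ll q_\ell^{n - \eta_\ell}$, with $\eta_\ell$ large again thanks to the Birch-rank lower bound) with an oscillatory integral decaying in the distances to the rationals --- and integrates; the decay exactly compensates the sizes of the arcs, giving $\int_{\mathfrak{M}} |S| \ll X^{n - D}$ (this is the contribution that produces the main term in Birch's asymptotic, but here only the upper bound is needed). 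Adding the two pieces gives $\bigl| V_{\mathbf{u}, \mathbf{0}}(\mathbb{Z}) \cap [-X,X]^n \bigr| \ll X^{n - D}$, i.e. that $\mathbf{u}$ is regular. Since the statement is quoted verbatim from \cite[Theorem II]{S}, in the present paper it is invoked by citation rather than reproved.
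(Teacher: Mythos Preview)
Your final sentence is the operative one and matches the paper exactly: Theorem~\ref{Schmidt main} is not proved in this paper but is invoked by citation to \cite[Theorem~II]{S}. The paper adds only a one-line justification for the passage from Schmidt's statement (which is for systems of \emph{forms}) to the inhomogeneous version stated here, pointing to \cite[Section~9]{S} and the ``Remark on inhomogeneous polynomials'' in \cite[pp.~262]{S}; your sketch does not mention this small gap, but it is handled by Schmidt himself.

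As for the sketch you supply of Schmidt's argument: the overall shape (Weyl differencing degree by degree from the top down, a dichotomy between power saving and diophantine approximation, then major/minor arc analysis giving the upper bound $\ll X^{n-D}$) is broadly faithful. One inaccuracy worth flagging is that Schmidt does not pass through the Birch rank via (\ref{ineq of h and B}) as you suggest; rather he works directly with the invariant $g_\ell(\mathbf{U}_\ell)$ controlling the count of integer points on the auxiliary multilinear variety $\mathbb{M}_\ell$, and relates it to $h_\ell(\mathbf{U}_\ell)$ by the inequality recorded here as (\ref{h and g}). The Birch rank plays no role in Schmidt's proof of Theorem~II.
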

Even though the statement of \cite[Theorem II]{S} is regarding systems of forms, the above Theorem \ref{Schmidt main}, which
is the inhomogeneous polynomials version, also holds by the explanation given in
\cite[Section 9]{S} and `Remark on inhomogeneous polynomials' in \cite[pp. 262]{S}.

Let us denote
\begin{equation}
\label{def rho}
\rho_{d,\ell}(t) =  d \ 2^{4 \ell}  (\ell !)  t^2  \ \ (2 \leq \ell \leq d).
\end{equation}
Then for each $2 \leq \ell \leq d$, $\rho_{d,\ell}(t)$ is an increasing function, and
$$
\rho_{d, \ell}(R_{\mathbf{u} }) \geq d \ 2^{4 \ell}  (\ell !)  r_{\ell} R_{ \mathbf{u} } .
$$

Note Theorem \ref{Schmidt main} is regarding systems of polynomials which do not contain any linear polynomials.
The following Corollary \ref{cor Schmidt} is for systems that contain linear forms as well.
We refer the reader to \cite[Corollary 3]{CM} or \cite[Corollary 3.3]{XY} for its proof.
\begin{cor}
\label{cor Schmidt}
Let $d>1$. Let $\mathbf{u} = (\mathbf{u}_{d}, \ldots, \mathbf{u}_{1})$ be a system of rational polynomials with notations as in Definition \ref{def regular},
and also let $\mathbf{U}_{\ell}$ be the system of degree $\ell$ portions of $\mathbf{u}_{\ell} \ (1 \leq \ell \leq d)$.
Suppose $\mathbf{u}_1$ only contains linear forms, in other words $\mathbf{u}_1 = \mathbf{U}_1$,
and that they are linearly independent over $\mathbb{Q}$.
For each $2 \leq \ell \leq d$, let $\rho_{d,\ell}(\cdot)$ be as in ~(\ref{def rho}).
If we have
$$
h(  \mathbf{U}_{\ell} ) \geq \rho_{d,\ell} (R_{ \mathbf{u} }  - r_1) + r_1 \ \ (2 \leq \ell \leq d),
$$
then the system $\mathbf{u}$ is regular.
\end{cor}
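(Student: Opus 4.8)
The plan is to reduce Corollary \ref{cor Schmidt} to Theorem \ref{Schmidt main} by using the $r_1$ linearly independent linear forms to eliminate $r_1$ variables, exactly as in \cite[Corollary 3]{CM}.

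First I would exploit the linear independence of $U_{1,1},\ldots,U_{1,r_1}$ over $\mathbb{Q}$: the coefficient matrix of the system $u_{1,r}(\mathbf{x})=0\ (1\le r\le r_1)$ has an invertible $r_1\times r_1$ minor, and after permuting the variables (which does not affect the box $[-X,X]^n$) we may assume it is the one on the first $r_1$ columns. Then $\mathbf{u}_1(\mathbf{x})=\mathbf{0}$ expresses $x_1,\ldots,x_{r_1}$ as affine-linear forms over $\mathbb{Q}$ in $\mathbf{y}=(y_1,\ldots,y_{n'})$, $n'=n-r_1$, via an injective affine-linear map $\Phi$ with $x_{r_1+j}=y_j$; write $\Phi_0$ for its linear part, an injective linear map $\mathbb{Q}^{n'}\to\mathbb{Q}^n$ whose image $W$ has codimension $r_1$. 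Setting $v_{\ell,r}:=u_{\ell,r}\circ\Phi$ for $2\le\ell\le d$ and $\mathbf{v}=(\mathbf{v}_d,\ldots,\mathbf{v}_2)$ (a system of rational polynomials), every point of $V_{\mathbf{u},\mathbf{0}}(\mathbb{Z})\cap[-X,X]^n$ is recovered from its last $n'$ coordinates, which lie in $V_{\mathbf{v},\mathbf{0}}(\mathbb{Z})\cap[-X,X]^{n'}$; hence $|V_{\mathbf{u},\mathbf{0}}(\mathbb{Z})\cap[-X,X]^n|\le |V_{\mathbf{v},\mathbf{0}}(\mathbb{Z})\cap[-X,X]^{n'}|$.

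Next I would track the $h$-invariants. If $r_2=\cdots=r_d=0$ the claim is trivial, since then $V_{\mathbf{u},\mathbf{0}}(\mathbb{Z})$ is contained in an affine subspace of dimension $n-r_1=n-D_{\mathbf{u}}$; so assume $R_{\mathbf{u}}-r_1\ge 1$, whence $\rho_{d,\ell}(R_{\mathbf{u}}-r_1)>1$. The degree $\ell$ portion of $v_{\ell,r}$ is $V_{\ell,r}=U_{\ell,r}\circ\Phi_0$, so $\mathbf{V}_\ell$ equals the restriction $\mathbf{U}_\ell|_W$ up to an invertible linear change of variables, under which $h_\ell$ is unchanged. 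Writing $W$ as an intersection of $r_1$ hyperplanes through the origin chosen so that the intermediate subspaces strictly decrease, and applying Lemma \ref{h ineq 1'} once for each hyperplane (after a linear change of coordinates carrying it to a coordinate hyperplane), the hypothesis $h_\ell(\mathbf{U}_\ell)\ge\rho_{d,\ell}(R_{\mathbf{u}}-r_1)+r_1$ keeps the $h$-invariant strictly above $1$ at every step, so the lemma is applicable throughout and
$$
h_\ell(\mathbf{V}_\ell)\ \ge\ h_\ell(\mathbf{U}_\ell)-r_1\ \ge\ \rho_{d,\ell}(R_{\mathbf{u}}-r_1)\ =\ \rho_{d,\ell}(R_{\mathbf{v}}),
$$
using $R_{\mathbf{v}}=R_{\mathbf{u}}-r_1$. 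In particular each $\mathbf{V}_\ell$ is a genuine system of $r_\ell$ forms of degree exactly $\ell$, and since $\rho_{d,\ell}(R_{\mathbf{v}})=d\,2^{4\ell}(\ell!)R_{\mathbf{v}}^2\ge d\,2^{4\ell}(\ell!)r_\ell R_{\mathbf{v}}$, Theorem \ref{Schmidt main} applies to $\mathbf{v}$ and gives $|V_{\mathbf{v},\mathbf{0}}(\mathbb{Z})\cap[-X,X]^{n'}|\ll X^{n'-D_{\mathbf{v}}}$. Finally $n'-D_{\mathbf{v}}=(n-r_1)-(D_{\mathbf{u}}-r_1)=n-D_{\mathbf{u}}$, which together with the inequality of the previous paragraph shows $\mathbf{u}$ is regular.

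The main obstacle is the middle step: verifying that composing with the injective linear map $\Phi_0$ drops the $h$-invariant of a system by at most the codimension $r_1$, and that the quantitative hypothesis is strong enough to keep the $h$-invariant above the threshold $1$ demanded by Lemma \ref{h ineq 1'} at each of the $r_1$ restriction steps, so that no $\mathbf{V}_\ell$ degenerates and $\mathbf{v}$ is still an honest system of polynomials of degrees $2,\ldots,d$. The remaining points---the counting inequality and the arithmetic with $D$ and $R$---are routine bookkeeping.
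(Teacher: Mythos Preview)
Your proposal is correct and follows essentially the same approach as the references \cite[Corollary 3]{CM} and \cite[Corollary 3.3]{XY} to which the paper defers: eliminate the $r_1$ variables using the linearly independent linear forms, control the drop in the $h$-invariant via Lemma \ref{h ineq 1'} applied $r_1$ times (after a suitable change of coordinates), and then invoke Theorem \ref{Schmidt main} on the reduced system in $n-r_1$ variables. The bookkeeping with $D_{\mathbf{u}}$, $R_{\mathbf{u}}$, and the verification that the hypothesis keeps $h_\ell>1$ at each restriction step is handled correctly.
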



For $\mathbf{x} = (x_1, \ldots, x_n)$, by a partition of variables $\mathbf{x} = (\mathbf{y}, \mathbf{z})$ we mean that
the set of variables of $\mathbf{y}$ and $\mathbf{z}$ partition $x_1, \ldots, x_n$.
Let $\ell > 1$. Given $\mathbf{G} = ( G_1, \ldots, G_{r'} )$, a system of degree $\ell$ forms in $\mathbb{Q}[x_1, \ldots, x_n]$, and a partition of variables $\mathbf{x} = (\mathbf{y}, \mathbf{z})$, we denote $\overline{\mathbf{G}}$ to be the system obtained by removing from $\mathbf{G}$ all forms which depend only on the $\mathbf{z}$ variables. Clearly if we have the trivial partition $\mathbf{x} = (\mathbf{y}, \mathbf{z})$, where $\mathbf{z} = \emptyset$, then $\overline{\mathbf{G}} = \mathbf{G}$.
Given a degree $\ell$ form $G(\mathbf{x})$ in $\mathbb{Q}[x_1, \ldots, x_n]$, we define the \textit{$h$-invariant with respect to $\mathbf{z}$}, $h_{\ell}(G;\mathbf{z})$, to be the smallest number $h_0$ such that $G(\mathbf{x})$
can be expressed as
$$
G(\mathbf{x}) =  G(\mathbf{y}, \mathbf{z}) = \sum_{j=1}^{h_0} \widetilde{U}_j(\mathbf{y}, \mathbf{z}) \widetilde{V}_j(\mathbf{y}, \mathbf{z}) + W_0(\mathbf{z}),
$$
where $\widetilde{U}_j$ and  $\widetilde{V}_j$ are rational forms of positive degree $(1 \leq j \leq h_0)$, and
$W_0(\mathbf{z})$ is a rational form only in the $\mathbf{z}$ variables.
We also define $h_{\ell}(\mathbf{G}; \mathbf{z})$ to be
$$
h_{\ell}(\mathbf{G}; \mathbf{z}) = \min_{\boldsymbol{\lambda} \in \mathbb{Q}^{r'} \backslash \{ \boldsymbol{0} \}}  h_{\ell}( \lambda_{1} G_1 + \ldots + \lambda_{r'} G_{r'}; \mathbf{z} ).
$$
If we have the trivial partition, then clearly we have $h_{\ell}(\mathbf{G}; \emptyset) = h_{\ell}(\mathbf{G}).$
From this definition the following lemma holds.
\begin{lem} \cite[Lemma 2]{CM}
\label{Lemma 2 in CM}
Let $\ell > 1$. Let $\mathbf{G} = (G_1, \ldots, G_{r'} )$ be a system of degree $\ell$ forms in $\mathbb{Q}[x_1, \ldots, x_n]$, and suppose we have a partition of variables $\mathbf{x} = (\mathbf{y}, \mathbf{z})$. Let $\mathbf{y}'$ be a set of variables with the same number of variables as $\mathbf{y}$.
Then we have
$$
h_{\ell}(\mathbf{G}(\mathbf{y}, \mathbf{z}), \mathbf{G}(\mathbf{y}', \mathbf{z}) ; \mathbf{z} ) = h_{\ell}( \mathbf{G} ; \mathbf{z} ),
$$
where the left hand side denotes the $h$-invariant with respect to $\mathbf{z}$ of the system
$$
( G_1(\mathbf{y}, \mathbf{z}), \ldots, G_{r'}( \mathbf{y}, \mathbf{z}), G_1(\mathbf{y}', \mathbf{z}), \ldots, G_{r'}(\mathbf{y}', \mathbf{z})).
$$
\end{lem}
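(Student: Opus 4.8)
The plan is to prove the two inequalities $h_\ell(\mathbf{G};\mathbf{z}) \leq h_\ell(\mathbf{G}(\mathbf{y},\mathbf{z}), \mathbf{G}(\mathbf{y}',\mathbf{z}); \mathbf{z})$ and the reverse, by directly manipulating the defining decompositions. The inequality $h_\ell(\mathbf{G};\mathbf{z}) \leq h_\ell(\mathbf{G}(\mathbf{y},\mathbf{z}), \mathbf{G}(\mathbf{y}',\mathbf{z}); \mathbf{z})$ is the easy direction: any nonzero linear combination of the $G_r(\mathbf{y},\mathbf{z})$ alone is in particular a nonzero linear combination of the enlarged system (take all coefficients on the $\mathbf{y}'$-copies equal to $0$), so the minimum defining the right-hand side is taken over a larger set of linear combinations and hence can only be smaller or equal. (One should check the degenerate case $r'=0$, where both sides are $+\infty$, separately but trivially.)

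For the reverse inequality $h_\ell(\mathbf{G}(\mathbf{y},\mathbf{z}), \mathbf{G}(\mathbf{y}',\mathbf{z}); \mathbf{z}) \leq h_\ell(\mathbf{G};\mathbf{z})$, I would start from a nonzero $\boldsymbol{\lambda} \in \mathbb{Q}^{r'}$ achieving $h_\ell(\mathbf{G};\mathbf{z}) = h_\ell(\sum_r \lambda_r G_r; \mathbf{z}) =: h_0$, together with a witnessing decomposition
$$
\sum_{r} \lambda_r G_r(\mathbf{x}) = \sum_{j=1}^{h_0} \widetilde U_j(\mathbf{y},\mathbf{z}) \widetilde V_j(\mathbf{y},\mathbf{z}) + W_0(\mathbf{z}).
$$
Now consider the linear combination of the enlarged system obtained by using $\boldsymbol{\lambda}$ on the $\mathbf{y}$-copies and the zero vector on the $\mathbf{y}'$-copies: this is again exactly $\sum_r \lambda_r G_r(\mathbf{y},\mathbf{z})$, which has an $h$-invariant with respect to $\mathbf{z}$ of at most $h_0$ by the displayed decomposition. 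Taking the minimum over all nonzero coefficient vectors of the enlarged system, we get $h_\ell(\mathbf{G}(\mathbf{y},\mathbf{z}), \mathbf{G}(\mathbf{y}',\mathbf{z}); \mathbf{z}) \leq h_0 = h_\ell(\mathbf{G};\mathbf{z})$, as desired. Combining the two inequalities gives equality.

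The one point that requires a little care — and the closest thing to an obstacle — is making sure the substitution $\mathbf{y} \mapsto \mathbf{y}'$ really does what one expects at the level of the decomposition: namely that $\widetilde U_j$ and $\widetilde V_j$ (which may genuinely involve the $\mathbf{y}$ variables) remain rational forms of positive degree after the relabeling, and that the form $W_0(\mathbf{z})$ is untouched since it depends only on $\mathbf{z}$. Since $\mathbf{y}'$ is just a disjoint set of variables of the same cardinality as $\mathbf{y}$, this relabeling is an isomorphism of polynomial rings fixing $\mathbb{Q}[\mathbf{z}]$ pointwise, so it preserves degrees, the positivity of degrees, and the property of being a form; hence it maps a valid $h_\ell(\,\cdot\,;\mathbf{z})$-decomposition to a valid one of the same length. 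In fact, as noted above, for the proof we only ever substitute in one direction (or not at all), so even this mild check can be done with the zero substitution on the $\mathbf{y}'$-block and no relabeling is strictly needed — the statement is essentially bookkeeping once one unwinds the definitions. $\qed$
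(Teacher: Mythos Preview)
Your two ``directions'' actually prove the same inequality. In both paragraphs you end up exhibiting the particular coefficient vector $(\boldsymbol{\lambda},\mathbf{0})$ in the enlarged system and observing that the corresponding linear combination is $\sum_r \lambda_r G_r(\mathbf{y},\mathbf{z})$; this shows
\[
h_{\ell}\bigl(\mathbf{G}(\mathbf{y},\mathbf{z}),\mathbf{G}(\mathbf{y}',\mathbf{z});\mathbf{z}\bigr)\;\leq\;h_{\ell}(\mathbf{G};\mathbf{z}),
\]
and indeed your first paragraph already says so explicitly (``the minimum defining the right-hand side is taken over a larger set of linear combinations and hence can only be smaller or equal''). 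That sentence contradicts the inequality you wrote at the start of that paragraph, and your second paragraph reproves the same easy bound.

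What is missing is the genuinely nontrivial direction: you must show that for an \emph{arbitrary} nonzero $(\boldsymbol{\lambda},\boldsymbol{\mu})\in\mathbb{Q}^{2r'}$, the form
\[
H(\mathbf{y},\mathbf{y}',\mathbf{z}) \;=\; \sum_{r}\lambda_r G_r(\mathbf{y},\mathbf{z})+\sum_{r}\mu_r G_r(\mathbf{y}',\mathbf{z})
\]
satisfies $h_{\ell}(H;\mathbf{z})\geq h_{\ell}(\mathbf{G};\mathbf{z})$. The fix is the specialization you allude to but never deploy: given a decomposition
\[
H=\sum_{j=1}^{h_0}\widetilde U_j(\mathbf{y},\mathbf{y}',\mathbf{z})\,\widetilde V_j(\mathbf{y},\mathbf{y}',\mathbf{z})+W_0(\mathbf{z}),
\]
assume without loss of generality $\boldsymbol{\lambda}\neq\mathbf{0}$ and set $\mathbf{y}'=\mathbf{0}$. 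Then $\sum_r\mu_r G_r(\mathbf{0},\mathbf{z})$ depends only on $\mathbf{z}$ and may be absorbed into $W_0$, yielding
\[
\sum_{r}\lambda_r G_r(\mathbf{y},\mathbf{z})=\sum_{j=1}^{h_0}\widetilde U_j(\mathbf{y},\mathbf{0},\mathbf{z})\,\widetilde V_j(\mathbf{y},\mathbf{0},\mathbf{z})+W_0'(\mathbf{z}),
\]
so $h_{\ell}(\mathbf{G};\mathbf{z})\leq h_{\ell}\bigl(\sum_r\lambda_r G_r;\mathbf{z}\bigr)\leq h_0$. (If instead $\boldsymbol{\mu}\neq\mathbf{0}$, set $\mathbf{y}=\mathbf{0}$ and relabel $\mathbf{y}'\to\mathbf{y}$.) This is precisely where the $W_0(\mathbf{z})$ term in the definition of $h_{\ell}(\,\cdot\,;\mathbf{z})$ does real work, and it is the step absent from your write-up.
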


In \cite{CM}, the process in the following proposition is referred to as the regularization of systems.
We will be utilizing this proposition in Section \ref{section minor arc} to obtain the minor arc estimate.

\begin{prop}\cite[Propositions 1 and 1']{CM}
\label{prop reg par}
Let $d>1$, and let $\boldsymbol{\mathcal{F}}$ be any collection of non-decreasing functions $\mathcal{F}_i: \mathbb{Z}_{\geq 0} \rightarrow \mathbb{Z}_{\geq 0} \ (2 \leq i \leq d)$. For a collection of non-negative integers $r_1, \ldots, r_d$, there exist constants
$$
C_1(r_1, \ldots, r_d, \boldsymbol{\mathcal{F}} ), \ldots , C_d(r_1, \ldots, r_d,  \boldsymbol{\mathcal{F}} )
$$
such that the following holds.

Given a system of forms $\mathbf{U} = ({\mathbf{U}}_{d}, \ldots, {\mathbf{U}}_{1})$ in $\mathbb{Z}[x_1, \ldots, x_n]$, where
$\mathbf{U}_{\ell} = (U_{\ell, 1}, \ldots, U_{\ell, r_{\ell}})$ is the subsystem of degree $\ell$ forms of $\mathbf{U}$ $(1 \leq \ell \leq d)$, and a partition of variables $\mathbf{x} = (\mathbf{y}, \mathbf{z})$, there exists a system of forms
$\mathcal{R}( \mathbf{U}) = (\mathcal{R}^{(d)}(\mathbf{U}), \ldots, \mathcal{R}^{(1)}( \mathbf{U}) )$ in $\mathbb{Q}[x_1, \ldots, x_n]$,
where $\mathcal{R}^{(\ell)}( \mathbf{U})$ is the subsystem of degree $\ell$ forms of $\mathcal{R}( \mathbf{U})$, satisfying the following.
For each $1 \leq \ell \leq d$, let $r'_{\ell}$ be the number of forms in  $\mathcal{R}^{(\ell)}( \mathbf{U})$, and let $R' = r'_1 + \ldots + r'_d$.
\newline

$(1)$ Each form of the system $\mathbf{U}$ can be written as a rational polynomial expression in the forms of the system $\mathcal{R}( \mathbf{U})$.
In particular, the level sets of $\mathcal{R}( \mathbf{U})$ partition those of $\mathbf{U}$.

$(2)$ For each $1 \leq \ell \leq d$, $r'_{\ell}$ is at most $C_{\ell}(r_1, \ldots, r_d, \boldsymbol{\mathcal{F}})$.

$(3)$ For each $2 \leq \ell \leq d$, we have $h_{\ell}( \mathcal{R}^{(\ell)}( \mathbf{U}) ) \geq \mathcal{F}_{\ell}(R')$. Moreover, the linear forms of
 $\mathcal{R}^{(1)}( \mathbf{U})$ are linearly independent over $\mathbb{Q}$.

$(4)$ Let $\overline{ \mathcal{R} }^{(\ell)} ( \mathbf{U})$ be the system obtained by removing from $\mathcal{R}^{(\ell)}( \mathbf{U})$ all forms which depend only
on the $\mathbf{z}$ variables $(1 \leq \ell \leq d)$. Then for each $2 \leq \ell \leq d$, we have $h_{\ell}(\overline{ \mathcal{R} }^{(\ell)} ( \mathbf{U}) ; \mathbf{z}) \geq \mathcal{F}_{\ell}(R')$. Furthermore, we may assume that the linear forms of $\overline{ \mathcal{R} }^{(1)} ( \mathbf{U})$ depend only on the $\mathbf{y}$ variables, and that they are linearly independent over $\mathbb{Q}$.
\end{prop}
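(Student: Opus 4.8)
I would construct $\mathcal{R}(\mathbf{U})$ by iterating an elementary reduction step on the current system, sweeping through the degrees from $d$ downwards, and I would produce the constants $C_1,\dots,C_d$ by a recursion carried along with the construction, as in Schmidt \cite{S} and Cook--Magyar \cite{CM}. Write $C=C_1+\dots+C_d$ for the constant fixed in this recursion (see below). The reduction step is the following. Suppose that in the current system, for some $2\le\ell\le d$, the degree-$\ell$ subsystem has $h$-invariant strictly less than $\mathcal{F}_\ell(C)$. By the definition of the $h$-invariant of a system there is a nonzero $\boldsymbol{\mu}\in\mathbb{Q}^{s_\ell}$ (with $s_\ell$ the current number of degree-$\ell$ forms) such that $G:=\sum_r\mu_r U_{\ell,r}$ has $h:=h_\ell(G)<\mathcal{F}_\ell(C)$, so $G=\widetilde U_1\widetilde V_1+\dots+\widetilde U_h\widetilde V_h$ with each $\widetilde U_j,\widetilde V_j$ a rational form of positive degree, hence of degree $<\ell$. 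Fixing $r_0$ with $\mu_{r_0}\ne 0$, I delete $U_{\ell,r_0}$ from the system and adjoin $\widetilde U_1,\widetilde V_1,\dots,\widetilde U_h,\widetilde V_h$. Since $G=\sum_j\widetilde U_j\widetilde V_j$ and $U_{\ell,r_0}=\mu_{r_0}^{-1}\big(G-\sum_{r\ne r_0}\mu_r U_{\ell,r}\big)$, every form of the old system is a rational polynomial expression in the forms of the new one, so the joint level sets are only refined; moreover the degree-$\ell$ count drops by one, the counts of degrees $<\ell$ increase by at most $2h$ in total, and the higher degrees are unchanged.

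Iterating the reduction step until each degree $\ell\ge 2$ has $h$-invariant at least $\mathcal{F}_\ell(C)$, and then discarding from the degree-$1$ part every linear form that is a $\mathbb{Q}$-linear combination of the others (which lowers the counts and preserves both expressibility and the level sets), I obtain a rational system $\mathcal{R}(\mathbf{U})=(\mathcal{R}^{(d)}(\mathbf{U}),\dots,\mathcal{R}^{(1)}(\mathbf{U}))$. Part (1) is immediate from the remark above; and provided $R'=r'_1+\dots+r'_d\le C$ one has $h_\ell(\mathcal{R}^{(\ell)}(\mathbf{U}))\ge\mathcal{F}_\ell(C)\ge\mathcal{F}_\ell(R')$ for $2\le\ell\le d$, while the linear forms of $\mathcal{R}^{(1)}(\mathbf{U})$ are $\mathbb{Q}$-linearly independent, which is (3).

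The part that genuinely needs an argument --- and the step I expect to be the main obstacle --- is (2), together with the inequality $R'\le C$ used just above: one must exhibit constants $C_\ell=C_\ell(r_1,\dots,r_d,\boldsymbol{\mathcal{F}})$ which bound the number of forms of each degree in the output of the process that is itself driven by the thresholds $\mathcal{F}_\ell(C_1+\dots+C_d)$. Because enlarging the thresholds forces more reduction steps and hence more forms, the $C_\ell$ must be chosen by a simultaneous (recursive) procedure. Following \cite{S} and \cite{CM} I would set this up degree by degree from the top: the degree-$d$ count never increases under the reduction step, so at most $r_d$ reductions occur at degree $d$; with bounds in hand for degrees $d,\dots,\ell+1$ one bounds the total number of degree-$\ell$ forms that can ever be created --- in terms of $r_1,\dots,r_d$ and the functions $\mathcal{F}_m$ --- hence the number of reductions at degree $\ell$, hence $C_\ell$; the finiteness input at each stage is that a reduction at a given degree strictly decreases the number of forms of that degree and leaves the larger degrees alone. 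Threading this recursion through several distinct degrees at once is exactly the technical point at which the unequal-degree setting is harder than the equal-degree case treated in \cite{CM}.

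For (4) I run the identical construction while monitoring $h_\ell(\,\cdot\,;\mathbf{z})$ in place of $h_\ell$: when the $h$-invariant with respect to $\mathbf{z}$ of the degree-$\ell$ forms that do not depend on $\mathbf{z}$ alone falls below $\mathcal{F}_\ell(C)$, a nonzero combination $G$ of those forms factors as $G=\widetilde U_1\widetilde V_1+\dots+\widetilde U_h\widetilde V_h+W_0(\mathbf{z})$ with the $\widetilde U_j,\widetilde V_j$ of positive degree and $W_0$ a form in the $\mathbf{z}$ variables only; I replace the chosen generator by the $\widetilde U_j,\widetilde V_j$ together with $W_0$, and the accounting is as before, the forms $W_0$ being exactly those removed in passing from $\mathcal{R}^{(\ell)}(\mathbf{U})$ to $\overline{\mathcal{R}}^{(\ell)}(\mathbf{U})$. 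Lemma \ref{Lemma 2 in CM} is the tool that keeps the $\mathbf{z}$-invariant under control when the argument of \cite{CM} needs to compare a system with its doubled version $(\mathbf{G}(\mathbf{y},\mathbf{z}),\mathbf{G}(\mathbf{y}',\mathbf{z}))$. Finally, to force the linear forms of $\overline{\mathcal{R}}^{(1)}(\mathbf{U})$ to depend only on $\mathbf{y}$, I split each surviving linear form $L$ as $L=L_{\mathbf{y}}+L_{\mathbf{z}}$ into its $\mathbf{y}$- and $\mathbf{z}$-parts and insert $L_{\mathbf{y}}$ and, when it is nonzero, $L_{\mathbf{z}}$ into $\mathcal{R}^{(1)}(\mathbf{U})$ in place of $L$; this refines the level sets, at most doubles the number of linear forms, and leaves in $\overline{\mathcal{R}}^{(1)}(\mathbf{U})$ only forms in the $\mathbf{y}$ variables, from which I keep a $\mathbb{Q}$-linearly independent spanning set.
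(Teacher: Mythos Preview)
The paper does not supply its own proof of this proposition: it is quoted directly from Cook--Magyar \cite[Propositions~1 and~1']{CM}, and the only argument the paper adds is the one-sentence remark after the statement that the final clause of~(4) --- that the linear forms of $\overline{\mathcal{R}}^{(1)}(\mathbf{U})$ may be taken to depend only on the $\mathbf{y}$ variables --- follows from \cite{CM} at the cost of enlarging the constants $C_i$.  Your sketch is a faithful outline of the Schmidt/Cook--Magyar regularization procedure and agrees with what \cite{CM} does: the elementary reduction step, the top-down recursion producing the constants $C_\ell$, the parallel argument for the relative invariant $h_\ell(\,\cdot\,;\mathbf{z})$, and the splitting $L=L_{\mathbf y}+L_{\mathbf z}$ of the surviving linear forms are all exactly the ingredients used there.

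One point to correct in your commentary: the recursion across several degrees is already present in \cite[Proposition~1]{CM} (it must be, since a single reduction at degree $d$ introduces forms of strictly smaller degree), so this is not where the unequal-degree difficulty of the present paper lies.  The new work in this paper is in Sections~3--5, not in the regularization lemma itself.
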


We note that the last assertion in $(4)$ regarding the linear forms of $\overline{ \mathcal{R} }^{(1)} ( \mathbf{U})$
is not stated in \cite[Proposition 1']{CM}. However, it is easy to deduce that this is indeed the case from \cite[Proposition 1']{CM}
at the expense of possibly slightly larger constants $C_i(r_1, \ldots, r_d,  \boldsymbol{\mathcal{F}} )$ $(1 \leq i \leq d)$
than in \cite[Proposition 1']{CM}.
We also note that with this assertion, it follows that every linear form of $\mathcal{R}^{(1)} ( \mathbf{U})$ is either only in the $\mathbf{y}$ variables, or only in the $\mathbf{z}$ variables.


\section{Decomposition of forms}
\label{sec decomp of forms}
In this section, we decompose a system of forms into two parts in a way that both parts have large Birch rank.
Let $d, n  \in \mathbb{N}$, and let $\textbf{F}$ be a system of forms in $\mathbb{Q}[x_1, \ldots, x_n]$ of degrees less than or equal to $d$.
We use a slightly different notation in this section compared to the previous sections in order to make the arguments
as clear as possible. We denote $\textbf{F} = ({\textbf{F}}^{(d)}, \ldots, {\textbf{F}}^{(1)})$, where ${\textbf{F}}^{(\ell)}$
is the subsystem of degree $\ell$ forms of $\textbf{F} \  (1 \leq \ell \leq d)$. For each $1 \leq \ell \leq d$, we denote the elements of ${\textbf{F}}^{(\ell)}$ by
$$
{\mathbf{F}}^{(\ell)} = ( F^{(\ell)}_1, \ldots, F^{(\ell)}_{r_{\ell}} ),
$$
where $r_{\ell}$ is the number of forms in ${\mathbf{F}}^{(\ell)}$.
Suppose we have a partition of variables $\mathbf{x} = (\mathbf{y}, \mathbf{z})$.
We define $\mathbf{F}^{(\ell)}_{\mathbf{y}, \mathbf{z} } (\mathbf{y}, \mathbf{z} )$
to be the following system of forms
\begin{eqnarray}
\label{F_y,z}
\mathbf{F}^{(\ell)}_{\mathbf{y}, \mathbf{z} } (\mathbf{y}, \mathbf{z} )
= ( F^{(\ell)}_1(\mathbf{y}, \mathbf{z}) - F^{(\ell)}_1(\mathbf{0}, \mathbf{z}), \ldots, F^{(\ell)}_{r_{\ell}}(\mathbf{y}, \mathbf{z}) - F^{(\ell)}_{r_{\ell}}(\mathbf{0}, \mathbf{z}) ).
\end{eqnarray}
Note for each $1 \leq r \leq r_{\ell}$, we have
$$
F^{(\ell)}_r(\mathbf{y}, \mathbf{z}) - F^{(\ell)}_r(\mathbf{0}, \mathbf{z}) = F^{(\ell)}_r(\mathbf{y}, \mathbf{0})
+ (F^{(\ell)}_r(\mathbf{y}, \mathbf{z}) - F^{(\ell)}_r(\mathbf{y}, \mathbf{0}) - F^{(\ell)}_r(\mathbf{0}, \mathbf{z})),
$$
and every monomial with non-zero coefficient in $(F^{(\ell)}_r(\mathbf{y}, \mathbf{z}) - F^{(\ell)}_r(\mathbf{y}, \mathbf{0}) - F^{(\ell)}_r(\mathbf{0}, \mathbf{z}))$ involves both the $\mathbf{y}$ variables and the $\mathbf{z}$ variables, in other words it can not be in terms of only the $\mathbf{y}$ variables or only the $\mathbf{z}$ variables.
For each $1 \leq \ell \leq d,$ we also define
\begin{eqnarray}
\label{F_z}
\mathbf{F}^{(\ell)}_{\mathbf{z} } (\mathbf{z})
= ( F^{(\ell)}_1(\mathbf{0}, \mathbf{z}), \ldots, F^{(\ell)}_{r_{\ell}}(\mathbf{0}, \mathbf{z}) ).
\end{eqnarray}
It should be clear from the context which partition of variables is being used when the notations (\ref{F_y,z}) and (\ref{F_z})
come up in this section. We now give an example of how these notations may be used. Let us consider $\mathbf{F}^{(\ell)}$ with $\ell > 1$.
Suppose we have partitions of variables $\mathbf{x} = (\mathbf{v}, \mathbf{z})$ and $\mathbf{z} = (\mathbf{y}, \mathbf{z}')$, and let us denote
$\mathbf{x} = (\mathbf{v}, (\mathbf{y}, \mathbf{z}') )$.
From the first partition of variables, we have $\mathbf{F}^{(\ell)}_{\mathbf{v}, \mathbf{z} } (\mathbf{v}, \mathbf{z} )$
and $\mathbf{F}^{(\ell)}_{\mathbf{z} } (\mathbf{z})$ as above. Since $\mathbf{F}^{(\ell)}_{\mathbf{z} } (\mathbf{z})$ is in terms of the $\mathbf{z}$
variables, we can consider (\ref{F_y,z}) and (\ref{F_z}) of this system with resect to the partition  $\mathbf{z} = (\mathbf{y}, \mathbf{z}')$.
We then have
$$
{(\mathbf{F}^{(\ell)}_{\mathbf{z} })}_{\mathbf{y}, \mathbf{z}' }(\mathbf{y}, \mathbf{z}')
=
\Big{(}
F^{(\ell)}_1(\mathbf{0}, (\mathbf{y}, \mathbf{z}')) - F^{(\ell)}_1(\mathbf{0}, (\mathbf{0}, \mathbf{z}')), \ldots, F^{(\ell)}_{r_{\ell}}(\mathbf{0},(\mathbf{y}, \mathbf{z}')) - F^{(\ell)}_{r_{\ell}}(\mathbf{0}, (\mathbf{0}, \mathbf{z}'))
\Big{)}
$$
and
$$
{(\mathbf{F}^{(\ell)}_{\mathbf{z} })}_{ \mathbf{z}' } (\mathbf{z}') = ( F^{(\ell)}_1(\mathbf{0}, (\mathbf{0}, \mathbf{z}') ), \ldots, F^{(\ell)}_{r_{\ell}}(\mathbf{0},  (\mathbf{0}, \mathbf{z}')  ).
$$

Given a set of variables $\mathbf{y}$, we denote $|\mathbf{y}|$ to be the number of variables of $\mathbf{y}$.
The following proposition for a system of forms of equal degree is proved in \cite{CM}. We will be using this proposition
as a base case in induction to generalize the result in Proposition \ref{prop decomp}.
\begin{prop} \cite[Proposition 2]{CM}
\label{Prop 2 in CM}
Let $C_1$ and $C_2$ be some positive integers. Let $d \geq 1$ and $\mathbf{F} = \mathbf{F}^{(d)} =(F^{(d)}_{1}, \ldots, F^{(d)}_{r_d})$ be a system of degree $d$ forms in $\mathbb{Q}[x_1, \ldots, x_n]$, where $\mathcal{B}_d(\mathbf{F})$ is sufficiently large with respect to $C_1$, $C_2$, $r_d$ and $d$. Then there exists a
partition of variables $\mathbf{x} = (\mathbf{y}, \mathbf{z})$ such that
$$
|\mathbf{y}| \leq C_{1} r_d,
$$

$$
\mathcal{B}_d (\mathbf{F}_{\mathbf{y}, \mathbf{z} } (\mathbf{y}, \mathbf{z} )  ) \geq C_{1},
\ \ \
\text{ and }
\ \ \
\mathcal{B}_d (    \mathbf{F}_{ \mathbf{z} } (\mathbf{z})   ) \geq C_{2}.
$$
\end{prop}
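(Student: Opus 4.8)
The plan is to prove Proposition \ref{Prop 2 in CM} (the restated final statement) by a direct argument on the Birch rank, exactly as in \cite{CM}. First I would recall that $\mathcal{B}_d(\mathbf{F})$ being large forces, via \eqref{ineq of h and B}, that $h_d(\mathbf{F})$ is large, and conversely a lower bound on the $h$-invariant of a subsystem gives a lower bound on its Birch rank (up to a factor $2^{\ell-1}$); so it suffices to produce a partition $\mathbf{x} = (\mathbf{y}, \mathbf{z})$ with $|\mathbf{y}|$ bounded in terms of $C_1 r_d$, with $h_d(\mathbf{F}_{\mathbf{y}, \mathbf{z}}(\mathbf{y}, \mathbf{z}))$ large (controlling the `mixed' part) and $h_d(\mathbf{F}_{\mathbf{z}}(\mathbf{z}))$ still large (controlling the part living only in the $\mathbf{z}$-variables). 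The natural mechanism is to build $\mathbf{y}$ one variable at a time: start from $\mathbf{z} = \{x_1, \ldots, x_n\}$, $\mathbf{y} = \emptyset$, and repeatedly move a variable from $\mathbf{z}$ to $\mathbf{y}$.

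The key step is the bookkeeping for why this terminates quickly. By Lemma \ref{h ineq 1'}, restricting one variable to zero drops the $h$-invariant of a degree-$d$ system by at most $1$. As long as $\mathbf{F}^{(d)}_{\mathbf{y},\mathbf{z}}(\mathbf{y},\mathbf{z})$ does \emph{not} yet have $h$-invariant $\geq C_1$ (recall $\mathbf{F}_{\mathbf{y},\mathbf{z}}$ collects exactly the genuinely mixed parts $F_r(\mathbf{y},\mathbf{z}) - F_r(\mathbf{0},\mathbf{z})$), there is a nonzero linear combination $G = \sum_r \mu_r (F_r(\mathbf{y},\mathbf{z}) - F_r(\mathbf{0},\mathbf{z}))$ with $h_d(G) < C_1$, i.e. $G = \sum_{j=1}^{h} \widetilde U_j \widetilde V_j$ with $h < C_1$. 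Each product $\widetilde U_j \widetilde V_j$ involves, among its arguments, only boundedly many of the current $\mathbf{y}$-variables-plus-at-least-one-new-variable; more precisely, since $G$ has no monomial purely in $\mathbf{z}$, some $\mathbf{z}$-variable $x_i$ appears in $G$ together with a $\mathbf{y}$-variable (or, at the first step, $G$ is nontrivially mixed once we have chosen the first split), and moving that $x_i$ into $\mathbf{y}$ strictly decreases $h_d(\mathbf{F}^{(d)}_{\mathbf{z}}(\mathbf{z}))$ by a bounded amount while making progress toward the mixed-part condition. I would organize this as: either the mixed system already has $h$-invariant $\geq C_1$ and we stop, or we extract from the witnessing decomposition of $G$ a bounded set of variables to transfer into $\mathbf{y}$; after at most $O(C_1 r_d)$ such transfers the process halts, and along the way $h_d(\mathbf{F}^{(d)}_{\mathbf{z}}(\mathbf{z}))$ has decreased by at most $O(C_1 r_d)$, so if $\mathcal{B}_d(\mathbf{F})$ (hence $h_d(\mathbf{F})$) was chosen large enough in terms of $C_1, C_2, r_d, d$, the final $\mathbf{z}$-system still satisfies $h_d \geq 2^{d-1} C_2$, whence $\mathcal{B}_d(\mathbf{F}^{(d)}_{\mathbf{z}}(\mathbf{z})) \geq C_2$ by the appropriate direction of \eqref{ineq of h and B}.

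The main obstacle I anticipate is making the `extract a bounded set of transfer variables from a single $h$-decomposition' step precise, and in particular guaranteeing that the mixed-system condition $\mathcal{B}_d(\mathbf{F}_{\mathbf{y},\mathbf{z}}) \geq C_1$ is genuinely achieved (not merely the $h$-invariant bound) while keeping $|\mathbf{y}|$ linear in $r_d$ — one has to be careful that transferring variables to kill one bad linear combination does not create new bad combinations, so the termination argument must be phrased in terms of a monotone quantity (e.g. the $h$-invariant of the mixed system is non-decreasing under transfers, or one runs a finite descent on $h_d(\mathbf{F}_{\mathbf{z}})$). Since this is precisely \cite[Proposition 2]{CM}, I would ultimately cite that argument; the novelty in this paper is only the multi-degree generalization in Proposition \ref{prop decomp}, for which the present proposition serves as the base case of an induction on the number of distinct degrees.
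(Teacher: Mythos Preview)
Your final paragraph is exactly right: the paper does not prove this proposition at all; it is quoted verbatim from \cite{CM} and used only as the base case for the induction in Proposition~\ref{prop decomp}. So simply citing \cite[Proposition~2]{CM} is precisely what the paper does.

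That said, the argument you sketch before arriving at that conclusion has a genuine gap. You claim that ``a lower bound on the $h$-invariant of a subsystem gives a lower bound on its Birch rank (up to a factor $2^{\ell-1}$)'' and later that from $h_d(\mathbf{F}_{\mathbf{z}}) \geq 2^{d-1}C_2$ one gets $\mathcal{B}_d(\mathbf{F}_{\mathbf{z}}) \geq C_2$ ``by the appropriate direction of \eqref{ineq of h and B}''. But \eqref{ineq of h and B} reads $h_\ell(\mathbf{G}) \geq 2^{1-\ell}\mathcal{B}_\ell(\mathbf{G})$, which only gives an \emph{upper} bound $\mathcal{B}_\ell \leq 2^{\ell-1} h_\ell$; there is no converse inequality available in this paper (and indeed large $h$-invariant does not in general force large Birch rank). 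So the strategy of passing to $h$-invariants, running the greedy transfer, and then converting back to Birch rank at the end cannot be completed with the tools stated here.

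The actual Cook--Magyar argument works directly with the Birch rank throughout, using the restriction estimate of Lemma~\ref{Lemma 3 in CM} (each variable moved costs at most $r_d+1$ in $\mathcal{B}_d$) together with a greedy choice of the $\mathbf{y}$-variables designed so that $\mathbf{F}(\mathbf{y},\mathbf{0})$ already has Birch rank $\geq C_1$; the mixed system $\mathbf{F}_{\mathbf{y},\mathbf{z}}$ then inherits this via $\mathcal{B}_d(\mathbf{F}_{\mathbf{y},\mathbf{z}}) \geq \mathcal{B}_d(\mathbf{F}_{\mathbf{y},\mathbf{z}}|_{\mathbf{z}=\mathbf{0}})$ (the upper bound in Lemma~\ref{Lemma 3 in CM}). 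No detour through the $h$-invariant is needed.
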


The following lemma and its corollary are also proved in \cite{CM}.
\begin{lem} \cite[Lemma 3]{CM}
\label{Lemma 3 in CM}
Let $\ell \geq 1$ and let $\mathbf{G} = (G_1, \ldots, G_{r'})$ be a system of degree $\ell$ forms in $\mathbb{Q}[x_1, \ldots, x_n]$.
Given any $1 \leq j \leq n$, we have
\begin{equation}
\label{eqn 1 in lemma 3 CM}
\mathcal{B}_{\ell}(\mathbf{G}) \geq \mathcal{B}_{\ell}(\mathbf{G}|_{x_j = 0}) \geq \mathcal{B}_{\ell}(\mathbf{G}) - r' - 1,
\end{equation}
where
$\mathbf{G}|_{x_j = 0} =  (G_1 |_{x_j = 0}, \ldots, G_{r'}|_{x_j = 0}).$
When $\ell=1$, we in fact have
$$
\mathcal{B}_1(\mathbf{G}) \geq \mathcal{B}_1(\mathbf{G}|_{x_j = 0}) \geq \mathcal{B}_1(\mathbf{G}) - 1.
$$
\end{lem}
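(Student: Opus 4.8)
The plan is to argue directly with the singular loci and their dimensions. After relabelling variables we may assume $j=n$, and we may assume $r'\geq 1$ (if $r'=0$ every quantity is $+\infty$ and there is nothing to prove). Dispose first of the case $\ell=1$, which is purely combinatorial: for a fixed $\boldsymbol{\lambda}\in\mathbb{Q}^{r'}\setminus\{\mathbf{0}\}$ the linear form $(\lambda_1 G_1+\cdots+\lambda_{r'}G_{r'})|_{x_n=0}$ is obtained from $\lambda_1 G_1+\cdots+\lambda_{r'}G_{r'}$ by deleting the coefficient of $x_n$, so its number of nonzero coefficients is either equal to, or one less than, that of $\lambda_1 G_1+\cdots+\lambda_{r'}G_{r'}$; taking the minimum over $\boldsymbol{\lambda}$ on both sides gives $\mathcal{B}_1(\mathbf{G})\geq \mathcal{B}_1(\mathbf{G}|_{x_n=0})\geq \mathcal{B}_1(\mathbf{G})-1$.

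Now assume $\ell\geq 2$. Write $J_{\mathbf{G}}(\mathbf{x})$ for the $r'\times n$ Jacobian matrix of (\ref{def sing loc}), so $V_{\mathbf{G}}^*=\{\mathbf{x}\in\mathbb{C}^n:\mathrm{rank}\,J_{\mathbf{G}}(\mathbf{x})<r'\}$ and $\mathcal{B}_\ell(\mathbf{G})=n-\dim V_{\mathbf{G}}^*$. Since the entries of $J_{\mathbf{G}}$ are forms of degree $\ell-1\geq 1$, the variety $V_{\mathbf{G}}^*$ is defined by a homogeneous ideal, hence is a cone and each of its irreducible components passes through the origin; the same will be true of the auxiliary varieties below. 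The key elementary observation is that for $\mathbf{x}'\in\mathbb{C}^{n-1}$ the first $n-1$ columns of $J_{\mathbf{G}}(\mathbf{x}',0)$ form exactly the matrix $J_{\mathbf{G}|_{x_n=0}}(\mathbf{x}')$. This already yields the upper inequality $\mathcal{B}_\ell(\mathbf{G})\geq \mathcal{B}_\ell(\mathbf{G}|_{x_n=0})$: if $(\mathbf{x}',0)\in V_{\mathbf{G}}^*$ then $\mathrm{rank}\,J_{\mathbf{G}}(\mathbf{x}',0)<r'$, so a fortiori the submatrix $J_{\mathbf{G}|_{x_n=0}}(\mathbf{x}')$ has rank $<r'$; hence $V_{\mathbf{G}}^*\cap\{x_n=0\}$, regarded in $\mathbb{C}^{n-1}$, is contained in $V_{\mathbf{G}|_{x_n=0}}^*$. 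On the other hand $\mathbf{0}\in V_{\mathbf{G}}^*\cap\{x_n=0\}$ and, by the cone property, a top-dimensional component of $V_{\mathbf{G}}^*$ meets the hyperplane $\{x_n=0\}$, so $\dim(V_{\mathbf{G}}^*\cap\{x_n=0\})\geq \dim V_{\mathbf{G}}^*-1$; comparing dimensions (accounting for the drop of the ambient space from $\mathbb{C}^n$ to $\mathbb{C}^{n-1}$) gives the claim.

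The main work is the lower inequality $\mathcal{B}_\ell(\mathbf{G}|_{x_n=0})\geq \mathcal{B}_\ell(\mathbf{G})-r'-1$, equivalently $\dim V_{\mathbf{G}|_{x_n=0}}^*\leq \dim V_{\mathbf{G}}^*+r'$, and here I would pass to incidence varieties. Set $\mathcal{I}=\{(\mathbf{x},[\boldsymbol{\lambda}])\in\mathbb{C}^n\times\mathbb{P}^{r'-1}:\boldsymbol{\lambda}^{\mathsf T}J_{\mathbf{G}}(\mathbf{x})=\mathbf{0}\}$, and let $\mathcal{I}_0$ be the analogous incidence variety for $\mathbf{G}|_{x_n=0}$ inside $\mathbb{C}^{n-1}\times\mathbb{P}^{r'-1}$; projection to the first factor maps $\mathcal{I}$ onto $V_{\mathbf{G}}^*$ and $\mathcal{I}_0$ onto $V_{\mathbf{G}|_{x_n=0}}^*$. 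Over any point the fibre of $\mathcal{I}\to V_{\mathbf{G}}^*$ is a linear subspace of $\mathbb{P}^{r'-1}$, so the fibre-dimension inequality gives $\dim\mathcal{I}\leq \dim V_{\mathbf{G}}^*+(r'-1)$. For a lower bound on $\dim\mathcal{I}$, note that $(\mathbf{x}',[\boldsymbol{\lambda}])\mapsto((\mathbf{x}',0),[\boldsymbol{\lambda}])$ identifies $\mathcal{I}_0$ with the subvariety $\mathcal{I}'$ of $\mathbb{C}^n\times\mathbb{P}^{r'-1}$ cut out by $x_n=0$ together with the vanishing of $\boldsymbol{\lambda}^{\mathsf T}$ times the first $n-1$ columns of $J_{\mathbf{G}}(\mathbf{x})$ (using the key observation above), and that $\mathcal{I}\cap\{x_n=0\}$ is obtained from $\mathcal{I}'$ by imposing the single further equation $\sum_r \lambda_r\,\partial_{x_n}G_r(\mathbf{x})=0$. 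Since $\mathcal{I}'$ is a cone in the $\mathbf{x}$-variables, each of its components contains a point with $\mathbf{x}=\mathbf{0}$, where that extra equation automatically holds, so the section drops dimension by at most one: $\dim\mathcal{I}\geq \dim(\mathcal{I}\cap\{x_n=0\})\geq \dim\mathcal{I}'-1=\dim\mathcal{I}_0-1\geq \dim V_{\mathbf{G}|_{x_n=0}}^*-1$. Combining the two bounds on $\dim\mathcal{I}$ yields $\dim V_{\mathbf{G}}^*+r'-1\geq \dim V_{\mathbf{G}|_{x_n=0}}^*-1$, and rearranging (again adjusting for the change of ambient dimension) gives $\mathcal{B}_\ell(\mathbf{G}|_{x_n=0})\geq \mathcal{B}_\ell(\mathbf{G})-r'-1$.

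I expect the last step to be the main obstacle: one must keep the chain of dimension inequalities pointing in the right direction and, above all, justify that each relevant hyperplane or hypersurface section loses at most one dimension — which is precisely where the cone structure of $V_{\mathbf{G}}^*$ and of the incidence varieties is used, rather than a naive ``codimension $\leq 1$'' appeal that would fail for components off the origin. This incidence-variety device for controlling singular loci under coordinate restriction is in the spirit of the estimates of Birch and Schmidt.
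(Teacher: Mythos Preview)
Your argument is correct. For the upper bound $\mathcal{B}_\ell(\mathbf{G})\geq \mathcal{B}_\ell(\mathbf{G}|_{x_j=0})$ and the $\ell=1$ case you do exactly what the paper does: the paper uses the containment $V_{\mathbf{G}}^*\cap\{x_j=0\}\subseteq V_{\mathbf{G}'}^*$ together with the fact that a hyperplane section drops dimension by at most one, and for $\ell=1$ simply appeals to the definition.

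Where you differ is in the lower bound $\mathcal{B}_\ell(\mathbf{G}|_{x_j=0})\geq \mathcal{B}_\ell(\mathbf{G})-r'-1$. The paper does not prove this at all; it simply cites \cite[Lemma~3]{CM}. You instead supply a self-contained proof via the incidence variety $\mathcal{I}=\{(\mathbf{x},[\boldsymbol{\lambda}]):\boldsymbol{\lambda}^{\mathsf T}J_{\mathbf{G}}(\mathbf{x})=\mathbf{0}\}$, bounding $\dim\mathcal{I}$ from above by $\dim V_{\mathbf{G}}^*+r'-1$ (trivial fibre bound) and from below by $\dim V_{\mathbf{G}|_{x_n=0}}^*-1$ (via the cone structure ensuring each component of $\mathcal{I}'$ meets the extra hypersurface $\sum_r\lambda_r\,\partial_{x_n}G_r=0$). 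This chain is sound, including the point you flag as delicate: because the defining equations are bihomogeneous of positive degree in $\mathbf{x}$, every irreducible component of $\mathcal{I}'$ contains a point with $\mathbf{x}=\mathbf{0}$, where the additional equation vanishes automatically since $\ell\geq 2$, so the section genuinely loses at most one dimension. Your write-up therefore goes beyond the paper's, which outsources this half of the lemma.
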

\begin{proof}
The lower bounds are provided in \cite[Lemma 3]{CM}, so we only provide the arguments for the upper bounds here.
We remark that due to a minor oversight the lower bound is stated to be $\mathcal{B}_{\ell}(\mathbf{G}) - r'$
in \cite[Lemma 3]{CM} instead of the lower bound given in (\ref{eqn 1 in lemma 3 CM}). However, by following through their argument it can be seen that in fact (\ref{eqn 1 in lemma 3 CM}) is the correct lower bound.
We begin by considering the case $\ell > 1$. Let us denote
$$
\mathbf{G}' = (G'_1, \ldots, G'_{r'}),
$$
where $G'_r = G_r |_{x_j = 0}$ for each $1 \leq r \leq r'$. It follows from the definition of the singular locus given in (\ref{def sing loc})
that
$$
V^*_{\mathbf{G} } \cap \{ \mathbf{x} \in \mathbb{C}^n : x_j = 0  \} \subseteq V^*_{\mathbf{G}' }.
$$
Since the dimension of $V^*_{\mathbf{G} } \cap \{ \mathbf{x} \in \mathbb{C}^n : x_j = 0  \}$
is either $\dim (V^*_{\mathbf{G} } ) - 1$ or $\dim (V^*_{\mathbf{G} } )$, we have
$$
\dim (V^*_{\mathbf{G} } ) - 1 \leq \dim (V^*_{\mathbf{G}' }),
$$
and equivalently,
$$
\mathcal{B}_{\ell}(\mathbf{G}) = n - \dim (V^*_{\mathbf{G} } ) \geq n - 1 -  \dim (V^*_{\mathbf{G}' }) = \mathcal{B}_{\ell}(\mathbf{G}|_{x_j = 0}).
$$
For the case $\ell = 1$, it follows immediately from the definition.
\end{proof}

\begin{cor} \cite[Corollary 4]{CM}
\label{Cor 4 in CM}
Let $\ell \geq 1$ and let $\mathbf{G} = (G_1, \ldots, G_{r'})$ be a system of degree $\ell$ forms in $\mathbb{Q}[x_1, \ldots, x_n]$.
If $\mathcal{H}$ is an affine linear space of co-dimension $m$, then the restriction of $\mathbf{G}$ to $\mathcal{H}$
has Birch rank at least $(\mathcal{B}_{\ell}(\mathbf{G}) - m (r' + 1))$. When $\ell = 1$, we in fact have that it is at least $(\mathcal{B}_{1}(\mathbf{G}) - m)$.
\end{cor}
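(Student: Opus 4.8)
The plan is to prove the corollary by induction on the codimension $m$, with a single application of Lemma \ref{Lemma 3 in CM} powering each step. The first move is to reduce to the case where $\mathcal{H}$ is a coordinate affine subspace, i.e. $\mathcal{H} = \{ \mathbf{x} \in \mathbb{C}^n : x_{j} = c_{j} \text{ for } j \in S \}$ for some $S \subseteq \{1,\ldots,n\}$ with $|S| = m$ and constants $c_j \in \mathbb{C}$. For $\ell > 1$ this reduction is harmless: an arbitrary affine linear space of codimension $m$ is carried onto such a coordinate slice by an invertible affine change of variables $\mathbf{x} \mapsto M\mathbf{x} + \mathbf{b}$, and since $\mathcal{B}_{\ell}$ is read off from the degree-$\ell$ homogeneous parts — which transform only by the linear map $M$ — while the rank of a Jacobian matrix is unchanged after right-multiplication by the invertible matrix $M$, the Birch rank of the system is unaffected. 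In the applications of this corollary the set $\mathcal{H}$ is in any case already a coordinate slice (restriction to a subset of variables), so this step may also be regarded as built into the hypothesis, which is the appropriate reading when $\ell = 1$ since $\mathcal{B}_1$ does depend on the coordinate system.

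The second ingredient is the observation that restricting a form to a non-homogeneous hyperplane does not alter the relevant homogeneous data. If $G \in \mathbb{Q}[x_1,\ldots,x_n]$ is a form of degree $\ell$ and $c \in \mathbb{C}$, then substituting $x_j = c$ sends each monomial of $G$ that involves $x_j$ to a monomial of strictly smaller degree, so the degree-$\ell$ portion of $G|_{x_j = c}$ is exactly $G|_{x_j = 0}$. Hence for $\mathbf{G} = (G_1,\ldots,G_{r'})$ one has $\mathcal{B}_{\ell}(\mathbf{G}|_{x_j = c}) = \mathcal{B}_{\ell}(\mathbf{G}|_{x_j = 0})$, and Lemma \ref{Lemma 3 in CM} then yields $\mathcal{B}_{\ell}(\mathbf{G}|_{x_j = c}) \geq \mathcal{B}_{\ell}(\mathbf{G}) - r' - 1$, with the sharper bound $\mathcal{B}_{1}(\mathbf{G}|_{x_j = c}) \geq \mathcal{B}_{1}(\mathbf{G}) - 1$ when $\ell = 1$.

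With these in hand the induction runs cleanly. Writing $S = \{ j_1, \ldots, j_m \}$, I would restrict one coordinate at a time; after each restriction one again has a system of at most $r'$ forms of degree $\ell$, so the bound above applies at every stage, and after $m$ steps the Birch rank has decreased by at most $m(r'+1)$, respectively by at most $m$ in the case $\ell = 1$. This is precisely the claimed lower bound for the restriction of $\mathbf{G}$ to $\mathcal{H}$; the base case $m = 0$ is vacuous.

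I do not anticipate a genuine obstacle: the statement is a direct iteration of Lemma \ref{Lemma 3 in CM}. The only point deserving care is the bookkeeping in reducing to a coordinate slice together with the fact that $\mathcal{B}_1$ is sensitive to the choice of coordinates, so that for $\ell = 1$ the corollary should be understood with $\mathcal{H}$ presented as an intersection of coordinate hyperplanes — which is exactly how it occurs in the sequel. Once this is pinned down, the remaining argument is immediate.
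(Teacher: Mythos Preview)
Your approach is correct and is the natural one: iterate Lemma \ref{Lemma 3 in CM} after reducing to a coordinate slice. The paper does not supply its own proof of this corollary; it simply cites \cite[Corollary 4]{CM}, so there is nothing further to compare against. Your caveat about $\mathcal{B}_1$ being coordinate-dependent is well taken and matches how the result is actually invoked in the paper (only for $\ell > 1$ when genuine affine changes of variables are in play).
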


We obtain the following technical result for a system of forms that is  more general than in Proposition \ref{Prop 2 in CM}.
\begin{prop}
\label{prop decomp}
Let $d, n \in \mathbb{N}$. Let $\textbf{F} = ({\textbf{F}}^{(d)}, \ldots, {\textbf{F}}^{(1)})$ be a system of forms in $\mathbb{Q}[x_1, \ldots, x_n]$,
where ${\textbf{F}}^{(i)} = ( F^{(i)}_1, \ldots, F^{(i)}_{r_{i}} )$ is the subsystem of degree $i$ forms of $\textbf{F}$ $(1 \leq i \leq d)$.
Let $C_{i,1}, C_{i,2} \ (1 \leq i \leq d)$ be positive integers.
For each $1 \leq i \leq d$, suppose $\mathcal{B}_i(\mathbf{F}^{(i)})$ is sufficiently large with respect to $C_{1,1}, \ldots, C_{d,1}$,  $C_{1,2}, \ldots, C_{d,2}$, $r_d, \ldots, r_1$, and $d$. Then there exists a partition of variables $\mathbf{x} = (\mathbf{y}, \mathbf{z})$ such that
$$
|\mathbf{y}| \leq \sum_{i=1}^d C_{i,1} r_{i},
$$
and for each $1 \leq i \leq d$, we have
$$
\mathcal{B}_i \left( {\mathbf{F}}^{(i)}_{\mathbf{y}, \mathbf{z} } (\mathbf{y}, \mathbf{z} )  \right) \geq
C_{i,1} -  (r_i + 1) \sum_{\ell = 1}^{i-1} C_{\ell,1} r_{\ell},
$$
and
$$
\mathcal{B}_i \left(    {\mathbf{F}}^{(i)}_{ \mathbf{z} } (\mathbf{z})   \right) \geq C_{i,2} -  (r_i + 1) \sum_{\ell = 1}^{i-1} C_{\ell,1} r_{\ell}.
$$
\end{prop}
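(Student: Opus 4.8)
We sketch the argument. The plan is to induct on $d$, peeling off the forms of highest degree first. The base case $d=1$ is precisely Proposition~\ref{Prop 2 in CM} applied with $d=1$, $C_1 = C_{1,1}$ and $C_2 = C_{1,2}$. So suppose $d \geq 2$ and that the proposition is already known with $d$ replaced by $d-1$. If $r_d = 0$, then $\mathbf{F}^{(d)}$ is empty, the two conclusions for $i=d$ hold trivially (every Birch rank appearing there equals $+\infty$), and we simply apply the induction hypothesis to $(\mathbf{F}^{(d-1)},\ldots,\mathbf{F}^{(1)})$; hence we may assume $r_d \geq 1$.

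First I would apply Proposition~\ref{Prop 2 in CM} to the degree-$d$ subsystem $\mathbf{F}^{(d)}$ with the constants $C_1 = C_{d,1}$ and $C_2 = C_{d,2}$; this is legitimate since $\mathcal{B}_d(\mathbf{F}^{(d)})$ is sufficiently large. It produces a partition $\mathbf{x} = (\mathbf{y}_d, \mathbf{w})$ with $|\mathbf{y}_d| \leq C_{d,1} r_d$, $\mathcal{B}_d( \mathbf{F}^{(d)}_{\mathbf{y}_d,\mathbf{w}}(\mathbf{y}_d,\mathbf{w}) ) \geq C_{d,1}$ and $\mathcal{B}_d( \mathbf{F}^{(d)}_{\mathbf{w}}(\mathbf{w}) ) \geq C_{d,2}$. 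Next, for $1 \leq i \leq d-1$ I set $\mathbf{F}^{(i),*} := \mathbf{F}^{(i)}|_{\mathbf{y}_d = \mathbf{0}}$, a system of $r_i$ degree-$i$ forms in the variables $\mathbf{w}$. By Corollary~\ref{Cor 4 in CM}, passing to the coordinate subspace $\{\mathbf{y}_d = \mathbf{0}\}$ (of codimension $|\mathbf{y}_d| \leq C_{d,1} r_d$) costs at most $C_{d,1} r_d (r_i + 1)$ in Birch rank, so each $\mathcal{B}_i(\mathbf{F}^{(i),*})$ is still sufficiently large with respect to $C_{1,1},\ldots,C_{d-1,1}$, $C_{1,2},\ldots,C_{d-1,2}$, $r_{d-1},\ldots,r_1$ and $d-1$ (the threshold supplied by the induction hypothesis is merely enlarged by the fixed amount $C_{d,1} r_d (r_i+1)$). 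Applying the induction hypothesis to the system $\mathbf{F}^{*} = (\mathbf{F}^{(d-1),*},\ldots,\mathbf{F}^{(1),*})$ in the variables $\mathbf{w}$, with the same constants $C_{i,1}, C_{i,2}$ for $1 \leq i \leq d-1$, yields a partition $\mathbf{w} = (\mathbf{y}', \mathbf{z})$ with $|\mathbf{y}'| \leq \sum_{i=1}^{d-1} C_{i,1} r_i$ and, for each $1 \leq i \leq d-1$,
$$
\mathcal{B}_i( \mathbf{F}^{(i),*}_{\mathbf{y}',\mathbf{z}}(\mathbf{y}',\mathbf{z}) ) \geq C_{i,1} - (r_i+1)\sum_{\ell=1}^{i-1} C_{\ell,1} r_\ell, \qquad \mathcal{B}_i( \mathbf{F}^{(i),*}_{\mathbf{z}}(\mathbf{z}) ) \geq C_{i,2} - (r_i+1)\sum_{\ell=1}^{i-1} C_{\ell,1} r_\ell .
$$

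I would then take $\mathbf{y} = (\mathbf{y}_d, \mathbf{y}')$ and keep $\mathbf{z}$, obtaining a partition $\mathbf{x} = (\mathbf{y},\mathbf{z})$ with $|\mathbf{y}| = |\mathbf{y}_d| + |\mathbf{y}'| \leq \sum_{i=1}^{d} C_{i,1} r_i$. What remains is to verify the Birch-rank inequalities, and the mechanism is a short list of polynomial identities obtained by expanding the definitions~(\ref{F_y,z})--(\ref{F_z}) relative to the refined partition $\mathbf{x} = (\mathbf{y}_d, \mathbf{y}', \mathbf{z})$ and using that the $F^{(i)}_r$ are homogeneous: for $1 \leq i \leq d-1$,
$$
\mathbf{F}^{(i)}_{\mathbf{y},\mathbf{z}}(\mathbf{y},\mathbf{z})|_{\mathbf{y}_d=\mathbf{0}} = \mathbf{F}^{(i),*}_{\mathbf{y}',\mathbf{z}}(\mathbf{y}',\mathbf{z}), \qquad \mathbf{F}^{(i)}_{\mathbf{z}}(\mathbf{z}) = \mathbf{F}^{(i),*}_{\mathbf{z}}(\mathbf{z}),
$$
while for $i=d$, identifying $\mathbf{w}$ with $(\mathbf{y}',\mathbf{z})$,
$$
\mathbf{F}^{(d)}_{\mathbf{y},\mathbf{z}}(\mathbf{y},\mathbf{z})|_{\mathbf{y}'=\mathbf{0}} = \mathbf{F}^{(d)}_{\mathbf{y}_d,\mathbf{w}}(\mathbf{y}_d,\mathbf{w})|_{\mathbf{y}'=\mathbf{0}}, \qquad \mathbf{F}^{(d)}_{\mathbf{z}}(\mathbf{z}) = \mathbf{F}^{(d)}_{\mathbf{w}}(\mathbf{w})|_{\mathbf{y}'=\mathbf{0}} .
$$
Granting these, for $i<d$ Lemma~\ref{Lemma 3 in CM} (restriction never raises the Birch rank) combined with the first pair of identities and the induction hypothesis gives $\mathcal{B}_i(\mathbf{F}^{(i)}_{\mathbf{y},\mathbf{z}}(\mathbf{y},\mathbf{z})) \geq \mathcal{B}_i(\mathbf{F}^{(i),*}_{\mathbf{y}',\mathbf{z}}(\mathbf{y}',\mathbf{z})) \geq C_{i,1} - (r_i+1)\sum_{\ell=1}^{i-1}C_{\ell,1}r_\ell$ and $\mathcal{B}_i(\mathbf{F}^{(i)}_{\mathbf{z}}(\mathbf{z})) = \mathcal{B}_i(\mathbf{F}^{(i),*}_{\mathbf{z}}(\mathbf{z})) \geq C_{i,2} - (r_i+1)\sum_{\ell=1}^{i-1}C_{\ell,1}r_\ell$; and for $i=d$, Corollary~\ref{Cor 4 in CM} applied to the restriction to $\{\mathbf{y}'=\mathbf{0}\}$ (codimension $|\mathbf{y}'| \leq \sum_{\ell=1}^{d-1}C_{\ell,1}r_\ell$) together with the second pair of identities gives $\mathcal{B}_d(\mathbf{F}^{(d)}_{\mathbf{y},\mathbf{z}}(\mathbf{y},\mathbf{z})) \geq C_{d,1} - (r_d+1)\sum_{\ell=1}^{d-1}C_{\ell,1}r_\ell$ and $\mathcal{B}_d(\mathbf{F}^{(d)}_{\mathbf{z}}(\mathbf{z})) \geq C_{d,2} - (r_d+1)\sum_{\ell=1}^{d-1}C_{\ell,1}r_\ell$, which are exactly the required bounds, completing the induction.

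I expect the main obstacle to be bookkeeping rather than anything conceptual. One has to fix the peeling order correctly — highest degree first, so that the loss suffered by the degree-$i$ Birch ranks is caused exactly by the variables $\mathbf{y}_{i-1},\ldots,\mathbf{y}_1$ chosen at the later stages of the recursion, which is what produces the truncated sum $\sum_{\ell=1}^{i-1}$ in the statement — and one has to check the polynomial identities above, which reconcile the ``mixed part'' and ``pure-$\mathbf{z}$ part'' taken relative to the full partition $(\mathbf{y},\mathbf{z})$ with the corresponding objects produced at each stage. Once those identities are in hand, the Birch-rank estimates are immediate from Lemma~\ref{Lemma 3 in CM} and Corollary~\ref{Cor 4 in CM}; in particular, no largeness of $\mathcal{B}_d(\mathbf{F}^{(d)}_{\mathbf{w}})$ beyond the value $C_{d,2}$ is ever used.
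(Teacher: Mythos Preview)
Your proposal is correct and follows essentially the same route as the paper: both peel off the highest degree first via Proposition~\ref{Prop 2 in CM}, then handle the remaining degrees on the restricted system, using the identities you list together with both directions of Lemma~\ref{Lemma 3 in CM} to compare Birch ranks under the two partitions. The only cosmetic difference is that the paper unwinds the recursion as an explicit downward induction on a parameter $j$ (maintaining at stage $j$ the partition $(\mathbf{v}_j,\mathbf{z}_j)$ with $\mathbf{v}_j=(\mathbf{y}_d,\ldots,\mathbf{y}_j)$ and the bounds for all $i\geq j$), whereas you package the same computation as induction on $d$ and invoke the inductive hypothesis in one step; the identities and the Birch-rank bookkeeping are the same in both versions.
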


\begin{proof}
We prove by induction the following statement: Given $2 \leq j \leq d$,
there exists a partition of variables $\mathbf{x} = (\mathbf{v}_{j}, \mathbf{z}_{j} )$, where
$\mathbf{v}_{j} = (\mathbf{y}_d, \ldots , \mathbf{y}_{j})$,
such that for each $j \leq i \leq d$ we have
$$
| \mathbf{y}_{i} | \leq C_{i,1} r_{i},
$$
$$
\mathcal{B}_i \left(  {\mathbf{F}}^{(i)}_{\mathbf{v}_{j}, \mathbf{z}_{j} } (\mathbf{v}_{j}, \mathbf{z}_{j} ) \right) \geq C_{i,1} -  (r_i + 1)  \sum_{\ell = j}^{i-1} C_{\ell,1} r_{\ell},
$$
and
$$
\mathcal{B}_i \left(  {\mathbf{F}}^{(i)}_{\mathbf{z}_{j}}(\mathbf{z}_{j})  \right) \geq C_{i,2} -  (r_i + 1) \sum_{\ell = j}^{i-1} C_{\ell,1} r_{\ell}.
$$

We begin with the base case $j=d.$ We know from Proposition \ref{Prop 2 in CM} that there exists a partition of variables
$\mathbf{x} = ( \mathbf{y}_d, \mathbf{z}_d )$ such that
$$
|\mathbf{y}_d| \leq C_{d,1} r_d,
$$

$$
\mathcal{B}_d \left( {\mathbf{F}}^{(d)}_{\mathbf{y}_d, \mathbf{z}_d } (\mathbf{y}_d, \mathbf{z}_d )  \right) \geq C_{d,1},
\ \ \
\text{ and }
\ \ \
\mathcal{B}_d  \left(    {\mathbf{F}}^{(d)}_{ \mathbf{z}_d } (\mathbf{z}_d)   \right) \geq C_{d,2}.
$$
This concludes our base case.

Suppose the statement holds for $j+1$, in other words there exists a partition of variables $\mathbf{x} = (\mathbf{v}_{j+1}, \mathbf{z}_{j+1} )$, where
$\mathbf{v}_{j+1} = (\mathbf{y}_d, \ldots , \mathbf{y}_{j+1})$,
such that for each $j+1 \leq i \leq d$ we have
$$
| \mathbf{y}_{i} | \leq C_{i,1} r_{i},
$$
\begin{equation}
\label{eqn d6}
\mathcal{B}_i \left(  {\mathbf{F}}^{(i)}_{\mathbf{v}_{j+1}, \mathbf{z}_{j+1} } (\mathbf{v}_{j+1}, \mathbf{z}_{j+1} ) \right) \geq C_{i,1} -  (r_i + 1) \sum_{\ell = j+1}^{i-1} C_{\ell,1} r_{\ell},
\end{equation}
and
\begin{equation}
\label{eqn d7}
\mathcal{B}_i \left(  {\mathbf{F}}^{(i)}_{\mathbf{z}_{j+1}}(\mathbf{z}_{j+1})  \right) \geq C_{i,2} -  (r_i + 1) \sum_{\ell = j+1}^{i-1} C_{\ell,1} r_{\ell}.
\end{equation}

First we observe that by Lemma \ref{Lemma 3 in CM} the following holds
\begin{eqnarray}
\notag
\mathcal{B}_j \left( \mathbf{F}^{(j)}_{\mathbf{z}_{j+1}} (\mathbf{z}_{j+1}) \right) = \mathcal{B}_j \left( \mathbf{F}^{(j)}(\mathbf{x}) \Big{|}_{\mathbf{v_{j+1}} = \mathbf{0}}  \right)
&\geq& \mathcal{B}_j \left( \mathbf{F}^{(j)}(\mathbf{x}) \right) - (r_j + 1)|\mathbf{v}_{j+1}|
\\
\notag
&\geq& \mathcal{B}_j \left( \mathbf{F}^{(j)}(\mathbf{x}) \right) - (r_j + 1) \sum_{i=j+1}^{d} C_{i,1} r_{i}.
\end{eqnarray}
Since $\mathcal{B}_j \left( \mathbf{F}^{(j)}(\mathbf{x}) \right)$ is sufficiently large with respect to $d$, $r_j, \ldots , r_d$, $C_{j,1}, \ldots , C_{d,1}$,
and $C_{j,2}$, we obtain from Proposition \ref{Prop 2 in CM} a partition of variables $\mathbf{z}_{j+1} = (\mathbf{y}_j, \mathbf{z}_j)$ such that
\begin{equation}
\label{eqn d8}
|\mathbf{y}_j| \leq C_{j,1} r_j,
\end{equation}
\begin{equation}
\label{eqn d4}
\mathcal{B}_{j} \left( {( {\mathbf{F}}^{(j)}_{ \mathbf{z}_{j+1} } )}_{\mathbf{y}_{j}, \mathbf{z}_{j} } (\mathbf{y}_{j}, \mathbf{z}_{j} )   \right) \geq C_{j,1},
\end{equation}
and
\begin{equation}
\label{eqn d1}
\mathcal{B}_{j} \left(    {( {\mathbf{F}}^{(j)}_{ \mathbf{z}_{j+1} } )}_{ \mathbf{z}_{j} } (\mathbf{z}_{j})   \right) \geq C_{j,2}.
\end{equation}
We denote $\mathbf{v}_j = ( \mathbf{v}_{j+1}, \mathbf{y}_j ) =  ( \mathbf{y}_{d}, \ldots , \mathbf{y}_j )$, and
consider the partition of variables $\mathbf{x} = (\mathbf{v}_{j}, \mathbf{z}_{j} )$.

Since $\mathbf{v}_{j+1} \subseteq \mathbf{v}_{j}$, we have
\begin{eqnarray}
{{F}_r}^{(j)} (\mathbf{x}) |_{ \mathbf{v}_{j} = \mathbf{0} }
&=&
\left( {{F}_r}^{(j)} (\mathbf{x}) |_{ \mathbf{v}_{j+1} = \mathbf{0} } \right) \Big{|}_{ \mathbf{v}_{j} = \mathbf{0} } \ \ \  (1 \leq r \leq r_j)
\notag
\end{eqnarray}
and consequently,
\begin{equation}
\label{Fj eqn 1}
{\mathbf{F}}^{(j)}_{ \mathbf{z}_{j} } (\mathbf{z}_{j}) = {( {\mathbf{F}}^{(j)}_{ \mathbf{z}_{j+1}} )}_{ \mathbf{z}_{j} } (\mathbf{z}_{j}).
\end{equation}
Therefore, we obtain by (\ref{eqn d1}) that
\begin{equation}
\label{eqn d10}
\mathcal{B}_{j} \left(  {\mathbf{F}}^{(j)}_{ \mathbf{z}_{j} } (\mathbf{z}_{j}) \right) \geq C_{j,2}.
\end{equation}

We have the following two decompositions for $\mathbf{F}^{(j)}(\mathbf{x})$,
\begin{eqnarray}
\notag
&&{\mathbf{F}}^{(j)}_{\mathbf{v}_{j}, \mathbf{z}_j } (\mathbf{v}_{j}, \mathbf{z}_{j} )  +     {\mathbf{F}}^{(j)}_{ \mathbf{z}_{j} } (\mathbf{z}_{j})
\\
\notag
&=&
\mathbf{F}^{(j)}_{ \mathbf{v}_{j+1}, \mathbf{z}_{j+1} } ( \mathbf{v}_{j+1}, \mathbf{z}_{j+1}) +
{( {\mathbf{F}}^{(j)}_{ \mathbf{z}_{j+1} } )}_{\mathbf{y}_{j}, \mathbf{z}_{j} } (\mathbf{y}_{j}, \mathbf{z}_{j} ) + {( {\mathbf{F}}^{(j)}_{ \mathbf{z}_{j+1}} )}_{ \mathbf{z}_{j} } (\mathbf{z}_{j}),
\end{eqnarray}
where the first decomposition is via the partition $\mathbf{x} = (\mathbf{v}_j, \mathbf{z}_j )$, and the second via the partitions
$\mathbf{x} = (\mathbf{v}_{j+1}, \mathbf{z}_{j+1} )$ and $\mathbf{z}_{j+1}  = (\mathbf{y}_{j}, \mathbf{z}_{j} )$. It follows from (\ref{Fj eqn 1})  that
\begin{eqnarray}
&& {\mathbf{F}}^{(j)}_{\mathbf{v}_{j}, \mathbf{z}_j } (\mathbf{v}_{j}, \mathbf{z}_{j} )
\label{eqn d2} =\mathbf{F}^{(j)}_{ \mathbf{v}_{j+1}, \mathbf{z}_{j+1} } ( \mathbf{v}_{j+1}, \mathbf{z}_{j+1}) +
{( {\mathbf{F}}^{(j)}_{ \mathbf{z}_{j+1} } )}_{\mathbf{y}_{j}, \mathbf{z}_{j} } (\mathbf{y}_{j}, \mathbf{z}_{j} ).
\end{eqnarray}

Since $\mathbf{v}_{j+1} \cap (\mathbf{y}_j, \mathbf{z}_j) = \emptyset$, we obtain from (\ref{eqn d2})
\begin{eqnarray}
\label{eqn d3}
\left( {\mathbf{F}}^{(j)}_{\mathbf{v}_j, \mathbf{z}_{j}} (\mathbf{v}_j, \mathbf{z}_{j})
\right)
\Big{|}_{\mathbf{v}_{j+1} = \mathbf{0}}
&=& \left(
 {( {\mathbf{F}}^{(j)}_{ \mathbf{z}_{j+1} } )}_{\mathbf{y}_{j}, \mathbf{z}_{j} } (\mathbf{y}_{j}, \mathbf{z}_{j} )
\right) \Big{|}_{\mathbf{v}_{j+1} = \mathbf{0}}
\\
\notag
&=&
{( {\mathbf{F}}^{(j)}_{ \mathbf{z}_{j+1} } )}_{\mathbf{y}_{j}, \mathbf{z}_{j} } (\mathbf{y}_{j}, \mathbf{z}_{j} ).
\end{eqnarray}
Also see the sentence after (\ref{F_y,z}) for the explanation of
$\mathbf{F}^{(j)}_{ \mathbf{v}_{j+1}, \mathbf{z}_{j+1} } ( \mathbf{v}_{j+1}, \mathbf{z}_{j+1}) \Big{|}_{\mathbf{v}_{j+1} = \mathbf{0}} = \mathbf{0}.$
Consequently, we obtain from \ref{Lemma 3 in CM}, (\ref{eqn d3}), and (\ref{eqn d4}),
\begin{eqnarray}
\label{eqn d11}
\mathcal{B}_{j} \left(  {\mathbf{F}}^{(j)}_{\mathbf{v}_j, \mathbf{z}_{j}} (\mathbf{v}_j, \mathbf{z}_{j}) \right)
&\geq&\mathcal{B}_{j} \left(
 {\mathbf{F}}^{(j)}_{\mathbf{v}_j, \mathbf{z}_{j}} (\mathbf{v}_j, \mathbf{z}_{j})  \Big{|}_{\mathbf{v}_{j+1} = \mathbf{0}} \right)
\\
\notag
&=& \mathcal{B}_{j} \left( {( {\mathbf{F}}^{(j)}_{ \mathbf{z}_{j+1} } )}_{\mathbf{y}_{j}, \mathbf{z}_{j} } (\mathbf{y}_{j}, \mathbf{z}_{j} )  \right)
\\
\notag
&\geq& C_{j,1}.
\end{eqnarray}

Let $j+1 \leq i \leq d$. 
Recall we have partitions $\mathbf{x} = (\mathbf{v}_{j+1}, \mathbf{z}_{j+1})$, $\mathbf{x} = (\mathbf{v}_{j}, \mathbf{z}_{j})$, and $\mathbf{z}_{j+1} = (\mathbf{y}_{j}, \mathbf{z}_{j})$.
Since 
$\mathbf{v}_j = (\mathbf{v}_{j+1}, \mathbf{y}_j)$,
it follows that
$$
\left( {{F}}^{(i)}_r ( \mathbf{x} ) |_{\mathbf{v}_{j+1} = \mathbf{0}} \right) \Big{|}_{\mathbf{y}_{j} = \mathbf{0}}
=
{{F}}^{(i)}_r ( \mathbf{x} ) |_{\mathbf{v}_{j} = \mathbf{0} } \ \ (1 \leq r \leq r_{i})
$$
and consequently,
\begin{equation}
\label{eqn d5}
({\mathbf{F}}^{(i)}_{ \mathbf{z}_{j+1} })_{\mathbf{z}_{j}} (\mathbf{z}_{j})
= {\mathbf{F}}^{(i)}_{ \mathbf{z}_{j+1} } (\mathbf{z}_{j+1}) \Big{|}_{\mathbf{y}_{j} = \mathbf{0}}
= {\mathbf{F}}^{(i)}_{ \mathbf{z}_{j} } (\mathbf{z}_{j}).
\end{equation}
Therefore, we obtain from (\ref{eqn d5}), Lemma \ref{Lemma 3 in CM}, (\ref{eqn d7}), and (\ref{eqn d8}),
\begin{eqnarray}
\mathcal{B}_i \left(  {\mathbf{F}}^{(i)}_{ \mathbf{z}_{j} } (\mathbf{z}_{j}) \right)
\label{eqn d12}
&\geq&
\mathcal{B}_i \left(   {\mathbf{F}}^{(i)}_{ \mathbf{z}_{j+1} } (\mathbf{z}_{j+1})  \right) - (r_i + 1) |\mathbf{y}_{j}|
\notag
\\
&\geq&
\left( C_{i,2} - (r_i + 1)\sum_{\ell = j+1}^{i-1} C_{\ell, 1} r_{\ell} \right) - (r_i + 1) |\mathbf{y}_{j}|
\notag
\\
&\geq&
 C_{i,2} - (r_i + 1) \sum_{\ell = j}^{i-1} C_{\ell, 1} r_{\ell}.
\notag
\end{eqnarray}

Also we have the following two decomposition for $\mathbf{F}^{(i)}(\mathbf{x})$,
\begin{eqnarray}
\label{eqn d9}
&&
{\mathbf{F}}^{(i)}_{\mathbf{v}_j, \mathbf{z}_j } (\mathbf{v}_j, \mathbf{z}_{j} )  +
{\mathbf{F}}^{(i)}_{ \mathbf{z}_{j} } (\mathbf{z}_{j})
\\
&=&
{\mathbf{F}}^{(i)}_{\mathbf{v}_{j+1}, \mathbf{z}_{j+1} } (\mathbf{v}_{j+1}, \mathbf{z}_{j+1} )
+({\mathbf{F}}^{(i)}_{ \mathbf{z}_{j+1} })_{\mathbf{y}_{j}, \mathbf{z}_j} (\mathbf{y}_{j}, \mathbf{z}_j)
+({\mathbf{F}}^{(i)}_{ \mathbf{z}_{j+1} })_{\mathbf{z}_{j}} (\mathbf{z}_{j}),
\notag
\end{eqnarray}
where the first decomposition is via the partition $\mathbf{x} = (\mathbf{v}_j, \mathbf{z}_j )$, and the second via the partitions
$\mathbf{x} = (\mathbf{v}_{j+1}, \mathbf{z}_{j+1} )$ and $\mathbf{z}_{j+1}  = (\mathbf{y}_{j}, \mathbf{z}_{j} )$. Therefore, 
it follows from (\ref{eqn d5}) and (\ref{eqn d9}) that
\begin{eqnarray}
\label{eqn d14}
&&  {\mathbf{F}}^{(i)}_{\mathbf{v}_j, \mathbf{z}_j } (\mathbf{v}_j, \mathbf{z}_{j} )  \Big{|}_{\mathbf{y}_{j} = \mathbf{0}}
\\
\notag
&=&
\left( {\mathbf{F}}^{(i)}_{\mathbf{v}_{j+1}, \mathbf{z}_{j+1} } (\mathbf{v}_{j+1}, \mathbf{z}_{j+1} ) +({\mathbf{F}}^{(i)}_{ \mathbf{z}_{j+1} })_{\mathbf{y}_{j}, \mathbf{z}_j} (\mathbf{y}_{j}, \mathbf{z}_j) \right) \Big{|}_{\mathbf{y}_{j} = \mathbf{0}}
\\
\notag
&=&
  {\mathbf{F}}^{(i)}_{\mathbf{v}_{j+1}, \mathbf{z}_{j+1} } (\mathbf{v}_{j+1}, \mathbf{z}_{j+1} )  \Big{|}_{\mathbf{y}_{j} = \mathbf{0} \ .}
\end{eqnarray}
Consequently, we have by Lemma \ref{Lemma 3 in CM}, (\ref{eqn d14}), (\ref{eqn d6}), and (\ref{eqn d8}),
\begin{eqnarray}
\mathcal{B}_i \left( {\mathbf{F}}^{(i)}_{\mathbf{v}_j, \mathbf{z}_j } (\mathbf{v}_j, \mathbf{z}_{j} ) \right)
&\geq& \mathcal{B}_i \left( {\mathbf{F}}^{(i)}_{\mathbf{v}_j, \mathbf{z}_j } (\mathbf{v}_j, \mathbf{z}_{j} ) \Big{|}_{\mathbf{y}_{j} = \mathbf{0}} \right)
\label{eqn d13}
\\
&\geq&
\mathcal{B}_i \left( {\mathbf{F}}^{(i)}_{\mathbf{v}_{j+1}, \mathbf{z}_{j+1} } (\mathbf{v}_{j+1}, \mathbf{z}_{j+1} ) \right) - (r_i + 1) | \mathbf{y}_j |
\notag
\\
&\geq&
\left( C_{i,1} -  (r_i + 1) \sum_{\ell = j+1}^{i-1} C_{\ell,1} r_{\ell} \right) - (r_i + 1) C_{j,1} r_j
\notag
\\
&\geq&
C_{i,1} - (r_i + 1)  \sum_{\ell = j}^{i-1} C_{\ell,1} r_{\ell}.
\notag
\end{eqnarray}
Hence, from (\ref{eqn d8}), (\ref{eqn d10}), (\ref{eqn d11}), (\ref{eqn d12}), and (\ref{eqn d13}), we see that we have completed induction.
From the $j=2$ case with $\mathbf{v}_2 = (\mathbf{y}_d, \mathbf{y}_{d-1}, \ldots, \mathbf{y}_2 )$ and $\mathbf{z}_2$,
we can proceed in the exact same manner as above to deal with the linear forms
even though the definition of the Birch rank, $\mathcal{B}_1$, is slightly different than that for the higher degrees. We can do so because Proposition \ref{Prop 2 in CM} is still applicable with $\mathcal{B}_1$ for systems of linear forms. By letting the resulting variables $\mathbf{v}_1 = (\mathbf{y}_d, \mathbf{y}_{d-1}, \ldots, \mathbf{y}_2, \mathbf{y}_1 )$
and $\mathbf{z}_1$ be $\mathbf{y}$ and $\mathbf{z}$, respectively, we
complete the proof of Proposition \ref{prop decomp}.
\end{proof}

\section{Initial set-up to prove Theorem \ref{main thm}}
\label{sec initial set up}
Let $\mathbf{f} = (\mathbf{f}_{d}, \ldots, \mathbf{f}_{1})$ be a system of polynomials in $\mathbb{Z}[x_1, \ldots, x_n]$, where
$\mathbf{f}_{\ell} = (f_{\ell, 1},$ $\ldots,$ $f_{\ell, r_{\ell}})$ is the subsystem of degree $\ell$ polynomials of $\mathbf{f}$ $(1 \leq \ell \leq d)$.
We let $\mathbf{F} = (\mathbf{F}_{d}, \ldots, \mathbf{F}_{1})$ be the system of forms
such that for each $1 \leq \ell \leq d$, we have $\mathbf{F}_{\ell} = ( F_{\ell, 1}, \ldots, F_{\ell, r_{\ell}} )$
where $F_{\ell, r}$ is the homogeneous degree $\ell$ portion of $f_{\ell, r}$ $(1 \leq r \leq r_{\ell})$.
Recall in Theorem \ref{main thm} we consider the system of equations
\begin{eqnarray}
\label{set of eqn 1}
f_{\ell,r} (\mathbf{x})= 0 \ ( 1 \leq \ell \leq d, 1 \leq r \leq r_{\ell}),
\end{eqnarray}
where for each $1 \leq \ell \leq d$, $\mathcal{B}_{\ell}( \mathbf{f}_{\ell})$ is sufficiently large with respect to $d$ and $r_d$, $\ldots$, $r_1$.
Also recall we denote the integer solutions of these equations 
by $V_{\mathbf{f}, \mathbf{0}}(\mathbb{Z})$.
In order to prove Theorem \ref{main thm}, we begin by simplifying the polynomials in (\ref{set of eqn 1}) to satisfy more properties suitable for our purposes without changing the solution set $V_{\mathbf{f}, \mathbf{0}}(\mathbb{Z})$.

By reducing the polynomials in (\ref{set of eqn 1}) without changing the solution set, we transform system (\ref{set of eqn 1}) into the following system:
\begin{eqnarray}
\label{set of eqn 2}
f_{\ell,r} (\mathbf{x})= 0 \ (1 \leq \ell \leq d, 1 \leq r \leq r_{\ell}),
\end{eqnarray}
where
for $2 \leq \ell \leq d$,
$$
f_{\ell,r} (\mathbf{x}) = c_{\ell, r} \mathbf{w}^{\mathbf{j}_{\ell, r}} + \chi_{\ell, r}(\mathbf{x}) + \widetilde{f}_{\ell,r}(\mathbf{x})
\ \ (1 \leq r \leq r_{\ell})
$$
and
$$
f_{1,r} (\mathbf{x}) = {c}_{1, r} \mathbf{w}^{\mathbf{j}_{1, r}} + \widetilde{f}_{1,r}(\mathbf{x}) \ \ (1 \leq r \leq r_1)
$$
with the following properties. Here $\mathbf{w}$ is a subset of the variables $\mathbf{x} = (x_1, \ldots, x_n)$.

$(1)$ For each $1 \leq \ell \leq d, 1 \leq r \leq r_{\ell}$, we have $c_{\ell, r} \in \mathbb{Z} \backslash \{ 0 \}$, and
$\mathbf{w}^{\mathbf{j}_{\ell, r}}$ is the leading monomial of $f_{\ell,r}(\mathbf{x})$ with respect to a graded lexicographic ordering.
We also note $\mathbf{w}^{\mathbf{j}_{\ell, r}}$ has degree $\ell$.
\newline

$(2)$ The monomials $\mathbf{w}^{\mathbf{j}_{\ell, r}}$ are distinct, and given $1 \leq \ell \leq d, 1 \leq r \leq r_{\ell}$, $\mathbf{w}^{\mathbf{j}_{\ell, r}}$ is not divisible by any one of $\mathbf{w}^{\mathbf{j}_{\ell', r'}}$ $(1 \leq \ell' < \ell, 1 \leq r' \leq r_{\ell'})$.
\newline

$(3)$ For each $2 \leq \ell \leq d, 1 \leq r \leq r_{\ell}$, the polynomial $\chi_{\ell, r}(\mathbf{x})$ has degree less than or equal to $\ell$ with coefficients in $\mathbb{Z}$. Also $\chi_{\ell, r}(\mathbf{x})$ does not contain any monomial divisible by any one of $\mathbf{w}^{\mathbf{j}_{\ell', r'}}$ $(1 \leq \ell' \leq \ell, 1 \leq r' \leq r_{\ell'})$.
\newline

$(4)$ For each $1 \leq \ell \leq d, 1 \leq r \leq r_{\ell}$, the polynomial $\widetilde{f}_{\ell,r}(\mathbf{x})$ has degree $\ell$ with coefficients in $\mathbb{Z}$. Also  $\widetilde{f}_{\ell,r}(\mathbf{x})$ does not contain any monomial divisible by any one of $\mathbf{w}^{\mathbf{j}_{\ell', r' }}$ $(1 \leq \ell' \leq \ell, 1 \leq r' \leq r_{\ell'})$.
\newline

$(5)$ For each $2 \leq \ell \leq d, 1 \leq r \leq r_{\ell}$, we have
$$
h_{\ell}(\chi_{\ell, r}) \leq C''_0,
$$
where $C''_0$ is a constant depending only on $d$ and $r_d, \ldots, r_1$.
\newline

$(6)$  For each $1 \leq \ell \leq d$, $\mathcal{B}_{  \ell}( \{ \widetilde{f}_{\ell, r} : 1 \leq r \leq r_{\ell}  \})$
is sufficiently large with respect to $d$ and $r_d, \ldots, r_1$.
\newline

$(7)$  For each $2 \leq \ell \leq d$, $h_{ \ell}( \mathbf{f}_{\ell})$
is sufficiently large with respect to $d$ and $r_d, \ldots, r_1$, and $\mathcal{B}_{1}({\mathbf{f}}_{1})$
is sufficiently large with respect to $d$ and $r_d, \ldots, r_1$.
\newline

These conditions system (\ref{set of eqn 2}) satisfies become crucial during our minor arc estimate.
Before we describe this reduction process, first we note basic properties of the Birch rank which will be utilized in this section.
Let $\ell \geq 1$ and $\mathbf{G} = \{ G_1, \ldots, G_{r''} \}$ be a system of degree $\ell$ forms in $\mathbb{Q}[x_1, \ldots, x_n]$.
Let $\kappa_1, \ldots, \kappa_{r''} \in \mathbb{Q} \backslash \{ 0 \}$. Then it follows from the definition of the Birch rank that
$$
\mathcal{B}_{\ell} ( \{  \kappa_{r} G_{r} : 1 \leq r \leq r'' \} ) = \mathcal{B}_{\ell} ( \mathbf{G} ).
$$
Let $\kappa \in \mathbb{Q}$ and $1 \leq i, j \leq r''$. Let $G'_r = G_r$ if $r \not = i$, and $G'_i = G_i + \kappa G_j$.
It also follows from the definition of the Birch rank that
$$
\mathcal{B}_{\ell} ( \{  G'_{r} : 1 \leq r \leq r'' \} ) = \mathcal{B}_{\ell} ( \mathbf{G} ).
$$

We now transform system (\ref{set of eqn 1}) into system (\ref{set of eqn 2}) beginning with $\ell =1$.
By considering the reduced row echelon form of the matrix formed by the coefficients of $ F_{1,r} , \ldots, F_{1, r_1}$, and relabeling the
variables if necessary, we reduce the linear polynomials in (\ref{set of eqn 1}) without changing the solution set to be of the shape
$$
f_{1,r} (\mathbf{x}) = x_{n - r + 1} + \widetilde{f}_{1,r}(x_{1}, \ldots, x_{n - r_1}) \  \ (1 \leq r \leq r_1),
$$
where $\widetilde{f}_{1,r}(x_{1}, \ldots, x_{n - r_1 })$ is a linear polynomial in variables $x_{1}, \ldots, x_{n - r_1}$ with rational coefficients.
Then by substituting $ x_{n - r +1} = - \widetilde{f}_{1,r}(x_{1}, \ldots, x_{n-r_1})$ into each equation in (\ref{set of eqn 1}) with $\ell > 1$, we may further reduce without changing the solution set such that for $\ell > 1$ the polynomials $f_{\ell,r}$ do not involve any of the variables $x_{n - r_1 +1}, \ldots, x_{n}$.
Let us label $w_{r} = x_{n - r +1}$ $(1 \leq r \leq r_1)$. 
By multiplying each of the resulting equation by an integer constant if necessary, we replace system (\ref{set of eqn 1}) with the following system of equations
\begin{eqnarray}
\label{set of eqn 1'}
f_{\ell,r} (\mathbf{x})= 0 \ ( 1 \leq \ell \leq d, 1 \leq r \leq r_{\ell}),
\end{eqnarray}
where
$$
f_{\ell,r} (\mathbf{x}) \in \mathbb{Z}[x_1, \ldots, x_{n-r_1} ] \ \ (  1 < \ell \leq d, 1 \leq r \leq r_{\ell}),
$$
and for each $1 \leq r \leq r_1$, we have
$$
f_{1,r} (\mathbf{x}) = {c}_{1, r} {w}_{r} + \widetilde{f}_{1,r}(x_{1}, \ldots, x_{n-r_1})
$$
with ${c}_{1, r} \in \mathbb{Z} \backslash \{ 0 \}$ and $\widetilde{f}_{1,r}(x_{1}, \ldots, x_{n-r_1}) \in \mathbb{Z}[ x_1, \ldots, x_{n - r_1}]$.
From the definition of the Birch rank, we have that $\mathcal{B}_1(\mathbf{f}_1)$ remains the same under these changes.
Therefore, it follows by Lemma \ref{Lemma 3 in CM} that
$\mathcal{B}_{1}( \{ \widetilde{f}_{1, r} : 1 \leq r \leq r_{1}  \})$
is sufficiently large with respect to $d$ and $r_d, \ldots, r_1$.
For $1 < \ell \leq d$, we can deduce from Corollary \ref{Cor 4 in CM} that we still have
$\mathcal{B}_{\ell}(\mathbf{f}_{\ell})$ sufficiently large with respect to $d$ and  $r_d$, $\ldots$, $r_1$.
Let us put a graded lexicographic ordering on the monomials formed by $x_1, \ldots, x_{n-r_1},$ $w_1, \ldots , w_{r_1}$
such that $w_r$ is the leading coefficient of $f_{1,r}$.
By denoting $\mathbf{w}^{\mathbf{j}_{1, r}} =  w_r \ (1 \leq r \leq r_{1})$, we see that the linear polynomials in (\ref{set of eqn 1'}) satisfy the conditions of system (\ref{set of eqn 2}).
We note that for us this graded lexicographic ordering is essentially on the monomials formed by $x_1, \ldots, x_{n-r_1}$ as $w_1, \ldots, w_r$
do not appear in  $f_{\ell, r}$ with $\ell > 1$.

Let us denote $B_{2} := \mathcal{B}_{2}(\mathbf{f}_{2})$ for the Birch rank of $\mathbf{f}_{2}$ in (\ref{set of eqn 1'}).
We consider $\mathbf{F}_{2} = (F_{2,1}, \ldots, F_{2, r_2})$, the system of homogenous degree $2$ portions of $\mathbf{f}_2$.
From each
$F_{2,1}, \ldots, F_{2, r_2}$,
we collect the coefficient of the monomial $x_{i_1} x_{i_2}$
and turn it into a vector in $\mathbb{Q}^{r_{2}}$. We do this for every degree $2$ monomial.
We then form a matrix by putting these vectors in columns from left to right in the decreasing order of the
degree $2$ monomials. Since $B_2 = \mathcal{B}_2(\mathbf{F}_{2})> 0$, this matrix has full rank.
We row reduce this matrix, and we denote the $r_2$ monomials where the leading $1$'s occur
to be $\mathbf{w}^{\mathbf{j}_{2, r}} \ (1 \leq r \leq r_2)$,
and label the distinct variables involved in these $r_{2}$ monomials to be
$\mathbf{w}_2 = (w_{r_1 + 1}, \ldots, w_{r_1 + K_2})$. Clearly we have $K_2 \leq 2 r_2$.

From the row reduction operations done on the coefficient matrix of $\mathbf{F}_2$, without changing the solution set we can reduce $\mathbf{f}_{2}$ to
\begin{equation}
\label{set of eqn 1''}
f_{2,r}(\mathbf{x}) = c_{2,r} \mathbf{w}^{ \mathbf{j}_{2,r} } + \widetilde{f}_{2,r}(\mathbf{x}) \ \ (1 \leq r \leq r_2),
\end{equation}
where $\mathbf{w}^{ \mathbf{j}_{2,r} }$ is the leading monomial of $f_{2,r}(\mathbf{x})$, with respect to the graded lexicographic ordering, and
none of the monomials of $\widetilde{f}_{2,r}(\mathbf{x})$ is divisible by any one of $\mathbf{w}^{\mathbf{j}_{\ell', r'}}$ $( 1 \leq \ell' \leq 2,  1 \leq r \leq r_{\ell'})$.
We have that $c_{2,r} \mathbf{w}^{ \mathbf{j}_{2,r} } + \widetilde{f}_{2,r}(\mathbf{x}) $
is a $\mathbb{Q}$-linear combination of $f_{2,1}, \ldots, f_{2, r_2}$ in (\ref{set of eqn 1'}), where the $\mathbb{Q}$-linear combination
comes from the row reduction operations applied on the coefficient matrix described above.
Thus by the basic properties of the Birch rank, it follows that
$$
\mathcal{B}_{2}( \{ c_{2,r} \mathbf{w}^{ \mathbf{j}_{2,r}   } + \widetilde{f}_{2,r} (\mathbf{x})  : 1 \leq r \leq r_{2} \} ) = B_2.
$$
It then follows from (\ref{ineq of h and B}) that the $h$-invariant of $\mathbf{f}_2$ in (\ref{set of eqn 1''}) satisfies
$$
h_2(\mathbf{f}_2) \geq 2^{1-2} B_2,
$$
and hence $h_2(\mathbf{f}_2)$ is sufficiently large with respect to $d$ and $r_d, \ldots, r_1$.
We also have by Lemma \ref{Lemma 3 in CM},
\begin{eqnarray}
\mathcal{B}_{2}( \{  \widetilde{f}_{2,r} (\mathbf{x})  : 1 \leq r \leq r_{2} \} )
&\geq& \mathcal{B}_{2}( \{  \widetilde{f}_{2,r} (\mathbf{x}) |_{ \mathbf{w}_{2} = \mathbf{0} } : 1 \leq r \leq r_{2} \} )
\notag
\\
&=& \mathcal{B}_{2}( \{ ( c_{2,r} \mathbf{w}^{ \mathbf{j}_{2,r}} +  \widetilde{f}_{2,r} (\mathbf{x})  )|_{ \mathbf{w}_{2} = \mathbf{0} }   : 1 \leq r \leq r_{2} \} )
\notag
\\
&\geq& \mathcal{B}_{2}( \{ c_{2,r} \mathbf{w}^{ \mathbf{j}_{2,r}} + \widetilde{f}_{2,r} (\mathbf{x})  : 1 \leq r \leq r_{2} \} ) - (r_2 + 1) K_2
\notag
\\
&=& B_2 - (r_2 + 1) K_2.
\notag
\end{eqnarray}
Thus $\mathcal{B}_{2}( \{  \widetilde{f}_{2,r} (\mathbf{x})  : 1 \leq r \leq r_{2} \} )$ is sufficiently large with respect to
$d$ and $r_d, \ldots, r_1$. It is also clear that $\mathbf{w}^{\mathbf{j}_{2, r}}$ is not divisible by any one of $\mathbf{w}^{\mathbf{j}_{1, r'}} = w_{r'}$ $(1 \leq r' \leq r_{1})$. Therefore, we have obtained that we can reduce the degree $2$ polynomials of system (\ref{set of eqn 1'}) without changing the solution set to satisfy the conditions of system (\ref{set of eqn 2}) with $\chi_{2,r}(\mathbf{x})$ being the zero polynomial $(1 \leq r \leq r_2)$.

Using the $\ell = 2$ case as the base case, we prove our statement by induction. Let $\ell_0 \geq 3$. Suppose we have reduced the polynomials $\mathbf{f}_{\ell}$ in (\ref{set of eqn 1'}) for each $2 \leq \ell \leq \ell_0 - 1$, without changing the solution set, to satisfy the conditions of (\ref{set of eqn 2}).
First we take the distinct variables involved in the monomials $\mathbf{w}^{ \mathbf{j}_{3,r}}$ $(1 \leq r \leq r_3)$ that have not yet appeared
in $\mathbf{w}_{2}$, and label them as $w_{ r_1 + K_2+ 1}, \ldots, w_{ r_1 + K_2+ K_3 }$. Clearly we have $K_{3} \leq 3 r_{3}$. We adjoin these variables to $\mathbf{w}_{2}$, and let $\mathbf{w}_{3} = (w_{r_1 + 1}, \ldots, w_{r_1 + K_2 + K_3 } )$.  Then we take the distinct variables involved in the monomials $\mathbf{w}^{ \mathbf{j}_{4,r}}$ $(1 \leq r \leq r_4)$ that have not yet appeared
in $\mathbf{w}_{3}$, and label them as $w_{ r_1 + K_2+ K_3 + 1}, \ldots, w_{ r_1 + K_2+ K_3 + K_4 }$. Clearly we have $K_{4} \leq 4 r_{4}$. We adjoin these variables to $\mathbf{w}_{3}$, and let
$\mathbf{w}_{4} = (w_{r_1 + 1}, \ldots, w_{r_1 + K_2 + K_3 + K_4 } )$.  We continue in this manner until we obtain
$$
\mathbf{w}_{\ell_0 - 1} = (w_{r_1 + 1}, \ldots, w_{r_1 + K_2 +  \ldots + K_{\ell_0 - 1} } ),
$$
where $K_j \leq j r_j$ $(2 \leq j \leq \ell_0 - 1)$.

For each $1 \leq r \leq r_{\ell_0 }$, we let
\begin{equation}
\label{set of eqn 6}
f_{\ell_0,r}(\mathbf{x}) = \chi''_{\ell_0,r} (\mathbf{x}) + f''_{\ell_0,r}(\mathbf{x}),
\end{equation}
where every monomial of $\chi''_{\ell_0,r} (\mathbf{x})$ is divisible by one of $\mathbf{w}^{\mathbf{j}_{\ell, r}}$ $( 2 \leq \ell < \ell_0,
1 \leq r \leq r_{\ell})$,
and none of the monomials of $f''_{\ell_0,r}(\mathbf{x})$ is divisible by any one of $\mathbf{w}^{\mathbf{j}_{\ell, r}}$ $( 2 \leq \ell < \ell_0,
1 \leq r \leq r_{\ell})$.
Since
$$
{f}_{\ell_0, r}(\mathbf{x}) |_{\mathbf{w}_{\ell_0 - 1} = \mathbf{0}} = {f}''_{\ell_0, r}(\mathbf{x})  |_{\mathbf{w}_{\ell_0 - 1} = \mathbf{0}} \ \ (1 \leq r \leq r_{\ell_0}),
$$
we have by Lemma \ref{Lemma 3 in CM} that
\begin{eqnarray}
\mathcal{B}_{\ell_0} ( \{ f''_{\ell_0, r} (\mathbf{x}) : 1 \leq r \leq r_{\ell_0} \} )
&\geq& \mathcal{B}_{\ell_0} ( \{  f''_{\ell_0,r} (\mathbf{x}) |_{ \mathbf{w}_{\ell_0-1} = \mathbf{0} } : 1 \leq r \leq r_{\ell_0} \} )
\notag
\\
&=& \mathcal{B}_{\ell_0}( \{  {f}_{\ell_0, r}(\mathbf{x}) |_{\mathbf{w}_{\ell_0 - 1}= \mathbf{0} }  : 1 \leq r \leq r_{\ell_0} \} )
\notag
\\
&\geq& \mathcal{B}_{\ell_0}(  \mathbf{f}_{\ell_0} ) - (r_{\ell_0} + 1) (K_2 + \ldots + K_{\ell_0 - 1}).
\notag
\end{eqnarray}
Consequently, we have that $\mathcal{B}_{\ell_0}( \{ {f}''_{\ell_0, r} : 1 \leq r \leq r_{\ell_0} \} )$ is sufficiently large
with respect to $d$ and $r_d, \ldots, r_1$.
Also it follows by basic facts about reduction in Gr\"{o}bner basis theory that we may write
\begin{equation}
\label{eqn for chi''}
\chi''_{\ell_0, r}(\mathbf{x}) = \chi'_{\ell_0, r}(\mathbf{x}) +
\sum_{2 \leq \ell' < \ell_0 } \sum_{1 \leq r' \leq r_{\ell'}} \zeta_{\ell_0, r : \ell', r'} (\mathbf{x}) f_{\ell', r} (\mathbf{x}),
\end{equation}
where $\chi'_{\ell_0, r}(\mathbf{x})$ is a polynomial which does not contain any monomial divisible by any one of $\mathbf{w}^{\mathbf{j}_{\ell, r}}$ $( 2 \leq \ell < \ell_0, 1 \leq r \leq r_{\ell})$. Furthermore, $\chi'_{\ell_0, r}(\mathbf{x})$ is a polynomial of degree less than or equal to $\ell_0$, and $\zeta_{\ell_0, r : \ell', r'} (\mathbf{x})$ is a polynomial of degree less than or equal to $\ell_0 - \ell'$.
We obtain by the definition of the $h$-invariant that
\begin{equation}
\label{h inv bound on chi'}
h_{\ell_0}(\chi'_{\ell_0,r}) \leq h_{\ell_0}(\chi''_{\ell_0,r} (\mathbf{x})) + h_{\ell_0} \left( \sum_{2 \leq \ell' < \ell_0 } \sum_{1 \leq r' \leq r_{\ell'}} \zeta_{\ell_0, r : \ell', r'} (\mathbf{x}) f_{\ell', r} (\mathbf{x}) \right) \leq \sum_{\ell = 2}^{\ell_0-1} r_{\ell} + \sum_{\ell = 2}^{\ell_0-1} r_{\ell}
\end{equation}
for each $1 \leq r \leq r_{\ell_0}$.
Also, via (\ref{eqn for chi''}) we can reduce $\mathbf{f}_{\ell_0}$ of (\ref{set of eqn 6}) without changing the solution set, and assume it is of the shape
\begin{equation}
\label{set of eqn 4}
f_{\ell_0,r}(\mathbf{x}) = \chi'_{\ell_0,r} (\mathbf{x}) + f''_{\ell_0,r}(\mathbf{x}) \ \ (1 \leq r \leq r_{\ell_0}),
\end{equation}
where none of the monomials of $f''_{\ell_0,r}(\mathbf{x})$ or $\chi'_{\ell_0,r} (\mathbf{x})$ is divisible by any one of
$\mathbf{w}^{\mathbf{j}_{\ell, r}}$ $( 2 \leq \ell < \ell_0, 1 \leq r \leq r_{\ell})$.

We then consider $\mathbf{F}_{\ell_0} = (F_{\ell_0,1}, \ldots, F_{\ell_0, r_{\ell_0} })$ where $F_{\ell_0,r}$ is the homogeneous degree $\ell_0$ portion of $f_{\ell_0,r}$ in (\ref{set of eqn 4}).
From each $F_{\ell_0,1}, \ldots, F_{\ell_0, r_{\ell_0}}$,
we collect the coefficient of the monomial $x_{i_1} \ldots x_{i_{\ell_0}}$
and turn it into a vector in $\mathbb{Q}^{r_{\ell_0}}$. We do this for every degree $\ell_0$ monomial.
We then form a matrix by putting these vectors in columns from left to right in the decreasing order of the
degree $\ell_0$ monomials. From the definition of the $h$-invariant, we can deduce
$$
h_{\ell_0}( \mathbf{F}_{\ell_0} ) + \sum_{r=1}^{r_{\ell_0}} h_{\ell_0}( \chi'_{\ell_0,r} (\mathbf{x}) )
\geq h_{\ell_0}( \{ f''_{\ell_0,r}(\mathbf{x}) :1 \leq r \leq r_{\ell_0} \} ).
$$
Consequently, we obtain from (\ref{ineq of h and B}) and (\ref{h inv bound on chi'}) that
\begin{eqnarray}
h_{\ell_0}( \mathbf{F}_{\ell_0} )
&\geq& 2^{1 - \ell_0} \ \mathcal{B}_{\ell_0}( \{ f''_{\ell_0,r}(\mathbf{x}) :1 \leq r \leq r_{\ell_0} \} ) -
2 r_{\ell_0} \sum_{\ell = 2}^{\ell_0 - 1} r_{\ell}.
\notag
\end{eqnarray}
Thus it follows that $h_{\ell_0}( \mathbf{F}_{\ell_0} )$ is sufficiently large with respect to
$d$ and $r_d, \ldots, r_1$. In particular, since we have $h_{\ell_0}( \mathbf{F}_{\ell_0} )> 0$, the coefficient matrix of $\mathbf{F}_{\ell_0}$ above has full rank.
We row reduce this matrix, and we denote the $r_{\ell_0}$ monomials where the leading $1$'s occur
to be $\mathbf{w}^{\mathbf{j}_{\ell_0, r}} \ (1 \leq r \leq r_{\ell_0})$.
We then take the distinct variables involved in these $r_{\ell_0}$ monomials that have not yet appeared
in $\mathbf{w}_{\ell_0 - 1}$, and label them as
$$
w_{ r_1 + K_2 + \ldots + K_{\ell_0 - 1} +  1}, \ \ldots, \  w_{r_1 + K_2 + \ldots + K_{\ell_0 - 1} +  K_{\ell_0} }.
$$
Clearly we have $K_{\ell_0} \leq \ell_0 r_{\ell_0}$.
We adjoin these variables to $\mathbf{w}_{\ell_0 - 1}$, and let
$\mathbf{w}_{\ell_0} = (w_{r_1 + 1},$ $\ldots,$ $w_{r_1 + K_2 + \ldots + K_{\ell_0} } )$.

From the row reduction operations done on the coefficient matrix, without changing the solution set we can reduce $\mathbf{f}_{\ell_0}$ to
\begin{equation}
\label{set of eqn 5}
f_{\ell_0,r}(\mathbf{x}) = c_{\ell_0,r} \mathbf{w}^{ \mathbf{j}_{\ell_0,r} } + \chi_{\ell_0,r} (\mathbf{x}) + \widetilde{f}_{\ell_0,r}(\mathbf{x}) \ \ (1 \leq r \leq r_{\ell_0}),
\end{equation}
where $\mathbf{w}^{ \mathbf{j}_{\ell_0,r} }$ is the leading monomial of $f_{\ell_0,r}$, with respect to the graded lexicographic ordering, and
none of the monomials of $\widetilde{f}_{\ell_0,r}(\mathbf{x})$ or $\chi_{\ell_0,r} (\mathbf{x})$ is divisible by any one of $\mathbf{w}^{\mathbf{j}_{\ell', r'}}$ $(1 \leq \ell' \leq \ell_0, 1 \leq r' \leq r_{\ell})$.
Also $\chi_{\ell_0,r} (\mathbf{x}) + c^{(1)}_{\ell_0,r} \mathbf{w}^{ \mathbf{j}_{\ell_0,r} }$ is a $\mathbb{Q}$-linear combination of $\chi'_{\ell_0,1}, \ldots, \chi'_{\ell_0, r_{\ell_0}}$,
and similarly $\widetilde{f}_{\ell_0,r} (\mathbf{x}) + c^{(2)}_{\ell_0,r} \mathbf{w}^{ \mathbf{j}_{\ell_0,r} }$ is a $\mathbb{Q}$-linear combination of $f''_{\ell_0,1}, \ldots, f''_{\ell_0,r_{\ell_0}}$ for some appropriate rational coefficients $c^{(1)}_{\ell_0,r}$ and $c^{(2)}_{\ell_0,r},$ where
$ c^{(1)}_{\ell_0,r} + c^{(2)}_{\ell_0,r} = c_{\ell_0,r}.$
It then follows by the definition of the $h$-invariant and (\ref{h inv bound on chi'}) that
$$
h(\chi_{\ell_0,r}) \leq 1 + \sum_{r = 1}^{ r_{\ell_0} } h_{\ell_0} ( \chi'_{\ell_0,r} )  \leq 1 + 2 r_{\ell_0}  \sum_{\ell = 2}^{\ell_0 - 1} r_{\ell} \ \ (1 \leq r \leq r_{\ell_0}).
$$
We obtained $\widetilde{f}_{\ell_0,r} (\mathbf{x}) + c^{(2)}_{\ell_0,r} \mathbf{w}^{ \mathbf{j}_{\ell_0,r} }$
as a $\mathbb{Q}$-linear combination of $f''_{\ell_0,1}, \ldots, f''_{\ell_0,r_{\ell_0}}$, where the $\mathbb{Q}$-linear combination
came from the row reduction operations applied to the coefficient matrix of $\mathbf{F}_{\ell_0}$.
Thus by the basic properties of the Birch rank, it follows that
$$
\mathcal{B}_{\ell_0}( \{  \widetilde{f}_{\ell_0,r} (\mathbf{x}) + c^{(2)}_{\ell_0,r} \mathbf{w}^{ \mathbf{j}_{\ell_0,r} } : 1 \leq r \leq r_{\ell_0} \} ) = \mathcal{B}_{\ell_0}( \{ f''_{\ell_0,r} :  1 \leq r \leq r_{\ell_0} \} ).
$$
Therefore, we obtain by Lemma \ref{Lemma 3 in CM} that
\begin{eqnarray}
&&\mathcal{B}_{\ell_0}( \{  \widetilde{f}_{\ell_0,r} (\mathbf{x})  : 1 \leq r \leq r_{\ell_0} \} )
\notag
\\
&\geq& \mathcal{B}_{\ell_0}( \{  \widetilde{f}_{\ell_0,r} (\mathbf{x}) |_{ \mathbf{w}_{\ell_0} = \mathbf{0} } : 1 \leq r \leq r_{\ell_0} \} )
\notag
\\
&=& \mathcal{B}_{\ell_0}( \{  (\widetilde{f}_{\ell_0,r} (\mathbf{x}) + c^{(2)}_{\ell_0,r} \mathbf{w}^{ \mathbf{j}_{\ell_0,r}} )|_{ \mathbf{w}_{\ell_0} = \mathbf{0} }   : 1 \leq r \leq r_{\ell_0} \} )
\notag
\\
&\geq& \mathcal{B}_{\ell_0}( \{ \widetilde{f}_{\ell_0,r} (\mathbf{x}) + c^{(2)}_{\ell_0,r} \mathbf{w}^{ \mathbf{j}_{\ell_0,r}   } : 1 \leq r \leq r_{\ell_0} \} ) - (r_{\ell_0} + 1) (K_2 + \ldots  + K_{\ell_0})
\notag
\\
&=& \mathcal{B}_{\ell_0}( \{ f''_{\ell_0,r} :  1 \leq r \leq r_{\ell_0} \} ) - (r_{\ell_0} + 1) (K_2 + \ldots  + K_{\ell_0}).
\notag
\end{eqnarray}
Thus we have that $\mathcal{B}_{\ell_0}( \{  \widetilde{f}_{\ell_0,r} (\mathbf{x})  : 1 \leq r \leq r_{\ell_0} \} )$ is sufficiently large with respect to $d$ and $r_d, \ldots, r_1$.
It then follows by a similar argument given above, to show the $h$-invariant of $\mathbf{f}_{\ell_0}$ in (\ref{set of eqn 4}) is sufficiently large, that
the $h$-invariant of $\mathbf{f}_{\ell_0}$ in (\ref{set of eqn 5}) is sufficiently large with respect to $d$ and $r_d, \ldots, r_1$.
Finally, we also have by the construction that for each $1 \leq r \leq r_{\ell_0}$, the monomial $\mathbf{w}^{\mathbf{j}_{\ell_0,r}}$ is not divisible by any one of $\mathbf{w}^{\mathbf{j}_{\ell, r}}$ $( 1 \leq \ell < \ell_0, 1 \leq r \leq r_{\ell})$.
Thus we have completed induction. Therefore, we obtain that we can transform system (\ref{set of eqn 1}) into system (\ref{set of eqn 2})
without changing the solution set. Let us adjoin $\mathbf{w}_d$ to $(w_1, \ldots, w_{r_1})$ and denote the resulting
set of variables to be $\mathbf{w} = (w_1, \ldots, w_K)$, where
\begin{equation}
K = r_1 + K_2 + \ldots + K_d \leq \sum_{\ell = 1}^{d} \ell r_{\ell}.
\end{equation}
We also add that if there are any $\ell$ with $r_{\ell} = 0$, we simply skip these cases in the above argument.

Let $\alpha_{\ell, r} \in \mathbb{R}$ $(1 \leq \ell \leq d, 1 \leq r \leq r_{\ell})$, and consider
$$
\sum_{1 \leq \ell \leq d} \sum_{1 \leq r \leq r_{\ell}} \alpha_{\ell, r} f_{\ell, r} (\mathbf{x})
$$
as a polynomial in $x_1, \ldots, x_n$ with real coefficients, where $\mathbf{f}$ is the system of polynomials in (\ref{set of eqn 2}).
Given any $1 \leq \ell \leq d$ and $1 \leq r \leq r_{\ell}$, it follows from the construction that
the coefficient of $\mathbf{w}^{\mathbf{j}_{\ell, r} }$ of the above polynomial is $c_{\ell, r}  \alpha_{\ell, r}$.
Let $\mathbf{x} = (\mathbf{w}, \mathbf{x}')$. Let us also fix $\mathbf{x}' = \mathbf{x}'_0 \in \mathbb{Z}^{n - K}$.
It is clear that if we consider
\begin{equation}
\label{discussion on the coeff}
\sum_{1 \leq \ell \leq d} \sum_{1 \leq r \leq r_{\ell}} \alpha_{\ell, r} f_{\ell, r} (\mathbf{w}, \mathbf{x}'_0)
\end{equation}
as a polynomial in $\mathbf{w}$ with real coefficients, then given $1 \leq \ell \leq d, 1 \leq r \leq r_{\ell}$ we still have that
the coefficient of $\mathbf{w}^{\mathbf{j}_{\ell, r} }$ of this polynomial is $c_{\ell, r}  \alpha_{\ell, r}$.
Furthermore, this polynomial does not contain any monomial divisible by $\mathbf{w}^{\mathbf{j}_{\ell, r} }$ other than itself.

We set $R = r_d +  \ldots + r_1$. Let $\boldsymbol{\alpha} = ( \boldsymbol{\alpha}_{d}, \ldots, \boldsymbol{\alpha}_{1}) \in \mathbb{R}^R$ where
$\boldsymbol{\alpha}_{\ell} =  ( {\alpha}_{\ell,1}, \ldots, {\alpha}_{\ell, r_{{\ell}} } ) \in \mathbb{R}^{r_{\ell}}$ $(1 \leq \ell \leq d)$. Similarly, we denote
$\mathbf{a}=(\mathbf{a}_{d}, \ldots, \mathbf{a}_{1}) \in (\mathbb{Z}/q \mathbb{Z})^{R}$, where $q \in \mathbb{N}$ and
$\mathbf{a}_{\ell}=({a}_{\ell, 1}, \ldots, {a}_{\ell, r_{\ell}}) \in (\mathbb{Z}/q \mathbb{Z})^{r_{\ell}}$ $(1 \leq \ell \leq d)$.
Let $\mathbb{T} = \mathbb{R} / \mathbb{Z}$ and $\| \beta \|$ denote the distance from $\beta \in \mathbb{R}$ to the nearest integer, which induces a metric on $\mathbb{T}$ via $d(\alpha, \beta) = \| \alpha - \beta \|$.
For a given value of $C>0$ and an integer $1 \leq q \leq (\log X)^C$, we define
$$
\mathfrak{M}_{ \mathbf{a}, q}(C) = \{  \boldsymbol{\alpha} \in \mathbb{T}^{R} : \max_{1 \leq r \leq r_{\ell}  } \| \alpha_{\ell,r} -  a_{\ell,r} / q \| \leq X^{-\ell} (\log X)^C  \ \ (1 \leq \ell \leq d) \}
$$
for each $\mathbf{a} \in (\mathbb{Z}/q \mathbb{Z})^{R}$ with $\gcd(\mathbf{a}, q) = 1$ (by which we mean that the greatest common divisor of
the numbers $a_{d,1}, \ldots , a_{1,r_1}$ and $q$ is $1$).
These arcs are disjoint for $X$ sufficiently large.

We define the \textit{major arcs} to be
$$
\mathfrak{M}(C) = \bigcup_{q \leq (\log X)^C } \bigcup_{ \substack{
\mathbf{a} \in (\mathbb{Z}/q \mathbb{Z})^{R}
\\
\gcd(\mathbf{a}, q) = 1
} }  \mathfrak{M}_{\mathbf{a}, q}(C),
$$
and define the \textit{minor arcs} to be
$$
\mathfrak{m}(C) = \mathbb{T}^{R} \backslash \mathfrak{M}(C).
$$
In other words, the major arcs is a collection of elements in $\mathbb{T}^R$ that can be simultaneously `well approximated'
by rational numbers of the same denominator $q$, where $q$ is `small'.

For a system of polynomials $\mathbf{f}$, we define
\begin{equation}
\label{def of exp sum T}
T(\mathbf{f}; \boldsymbol{\alpha}) := \sum_{\mathbf{x} \in [0, X]^n } \Lambda(\mathbf{x}) \
e \left(
\sum_{\ell = 1}^d \sum_{r=1}^{r_{\ell} } {f}_{\ell, r}(\mathbf{x}) \cdot {\alpha_{\ell,r} }
\right),
\end{equation}
where we defined $\Lambda(\mathbf{x})$ in (\ref{Lambda}).
By the orthogonality relation, we have
\begin{eqnarray}
\label{defn of MbN'}
\mathcal{M}_{\mathbf{f}}(X) &=& \sum_{\mathbf{x} \in [0, X]^n} \Lambda(\mathbf{x}) \ \mathbf{1}_{V_{\mathbf{f}, \mathbf{0}(\mathbb{C}) }}(\mathbf{x} )
\\
&=&
\int_{\mathbb{T}} \ldots \int_{\mathbb{T}} T(\mathbf{f}; \boldsymbol{\alpha})
\ \mathbf{d} \boldsymbol{\alpha}
\notag
\\
\notag
&=& \int_{\mathfrak{M}(C)} T(\mathbf{f}; \boldsymbol{\alpha} ) \ \mathbf{d} \boldsymbol{\alpha}
+ \int_{\mathfrak{m}(C)} T(\mathbf{f}; \boldsymbol{\alpha} ) \ \mathbf{d} \boldsymbol{\alpha}.
\end{eqnarray}
For the system of polynomials $\mathbf{f}$ in (\ref{set of eqn 2}), we prove the following results on the minor arcs and the major arcs.
\begin{prop}
\label{prop minor arc bound}
Let $\mathbf{f}$ be the polynomials in (\ref{set of eqn 2}).
Given any $c>0$, for sufficiently large $C>0$ we have
$$
\int_{\mathfrak{m}(C)} T(\mathbf{f}; \boldsymbol{\alpha} ) \ \mathbf{d} \boldsymbol{\alpha}
\ll
\frac{ X^{n - \sum_{\ell = 1}^d \ell r_{\ell}  } }{(\log X)^{c}}.
$$
\end{prop}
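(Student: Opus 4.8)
The plan is to prove Proposition \ref{prop minor arc bound} by establishing a Weyl-type bound for $T(\mathbf f;\boldsymbol\alpha)$ that is explicit in the quality of the rational approximation of $\boldsymbol\alpha$, and then integrating that bound termwise over a Farey-type dissection of the minor arcs. The first task is to strip off the von Mangoldt weights. Applying a combinatorial identity for $\Lambda$ (Vaughan's identity, or Heath--Brown's identity) to each of the $n$ factors in (\ref{Lambda}), one writes $T(\mathbf f;\boldsymbol\alpha)$ as a sum of $O((\log X)^{O(1)})$ pieces, each a multilinear sum with divisor-bounded coefficients of size $\ll X^{\eps}$; after pigeonholing on which variable carries the type~II bilinear structure and on the dyadic ranges of the remaining variables, and then applying Cauchy--Schwarz in the type~I variables, it suffices to estimate weightless exponential sums of the shape $\sum_{\mathbf w}\sum_{\mathbf h}e(\langle\boldsymbol\alpha,\Psi(\mathbf w,\mathbf h)\rangle)$ to which ordinary Weyl differencing applies.

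Next comes the Weyl differencing, and here the differing degrees force a departure from \cite{CM}. I would process the degree blocks from the top down, using Proposition \ref{prop decomp} with the constants $C_{i,1},C_{i,2}$ chosen large in terms of $d,r_d,\ldots,r_1$ and the target exponent: this produces a splitting $\mathbf x=(\mathbf y,\mathbf z)$ with $|\mathbf y|=O(1)$ such that for every $\ell$ both $\mathbf F^{(\ell)}_{\mathbf y,\mathbf z}$ and $\mathbf F^{(\ell)}_{\mathbf z}$ retain large Birch rank. Differencing the degree-$\ell$ equations $\ell$ times in the $\mathbf z$-variables while tracking the cross terms via the identities of Section \ref{sec decomp of forms}, one bounds $|T(\mathbf f;\boldsymbol\alpha)|^{2^{O(d)}}$ by $X^{(\mathrm{const})n}$ times the number of integer tuples $(\mathbf h^{(1)},\ldots)$ for which all coefficients of the resulting system of multilinear forms, evaluated against $\boldsymbol\alpha$, lie within $O(X^{-1})$ of integers. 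This resonant count is controlled by invoking the regularization of Proposition \ref{prop reg par} together with Schmidt's regularity theorem (Theorem \ref{Schmidt main} and Corollary \ref{cor Schmidt}): these apply because the inequality (\ref{ineq of h and B}) and the largeness built into properties $(6)$ and $(7)$ of the reduced system (\ref{set of eqn 2}) guarantee that the relevant $h$-invariants are large, while properties $(1)$--$(5)$ keep the bookkeeping under control. The outcome, after undoing the differencing and the Cauchy--Schwarz, is a bound
$$|T(\mathbf f;\boldsymbol\alpha)|\;\ll\;\frac{X^{n}(\log X)^{O(1)}}{\bigl(q\prod_{\ell,r}(1+X^{\ell}\,\|q\alpha_{\ell,r}-a_{\ell,r}\|)\bigr)^{\theta}}$$
valid whenever $\boldsymbol\alpha$ lies near $\mathbf a/q$ with $\gcd(\mathbf a,q)=1$, where $\theta>0$ can be made as large as we please by taking the Birch ranks $\mathcal{B}_{\ell}(\mathbf f_{\ell})$ large; the linear block contributes the classical Vinogradov-type estimate and is handled by the $\mathcal B_1$-hypothesis in $(7)$.

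Finally I would cover $\mathfrak m(C)$ by Farey boxes. By Dirichlet's theorem in several variables every $\boldsymbol\alpha\in\mathbb T^{R}$ has an approximation of the above kind with denominator $q$ up to a small power of $X$, and membership in $\mathfrak m(C)$ forces either $q>(\log X)^{C}$ or $\max_{\ell,r}X^{\ell}\|q\alpha_{\ell,r}-a_{\ell,r}\|\gg(\log X)^{C}$; in either case the denominator in the displayed bound is $\gg(\log X)^{C}$ throughout the box. Integrating the displayed bound over the box around $\mathbf a/q$, the integral in each variable $\beta_{\ell,r}=\alpha_{\ell,r}-a_{\ell,r}/q$ contributes $O(X^{-\ell})$, for a total factor $X^{-\sum_{\ell}\ell r_{\ell}}$; summing over the $\ll q^{R}$ admissible numerators and then over $q\ge(\log X)^{C}$ leaves $X^{\,n-\sum_{\ell}\ell r_{\ell}}(\log X)^{O(1)}\sum_{q\ge(\log X)^{C}}q^{R-\theta}$, which is $\ll X^{\,n-\sum_{\ell}\ell r_{\ell}}/(\log X)^{c}$ as soon as $\theta>R+1$ and $C$ is large in terms of $c$, $\theta$ and the implied constants. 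The region of $\mathfrak m(C)$ in which only the $\beta_{\ell,r}$ are large is treated identically, the saving then coming from the $\prod_{\ell,r}(1+X^{\ell}|\beta_{\ell,r}|)^{-\theta}$ factor rather than from $q^{-\theta}$.

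The main obstacle is the interplay between the differing degrees, the Weyl differencing, and the von Mangoldt weights. In the equal-degree case of \cite{CM} one differences a single homogeneous block and counts a single multilinear system; here the degree-$\ell$ equations must each be differenced $\ell$ times while controlling how the lower-degree parts and the already-differenced higher-degree parts contaminate the forms that arise, and one must certify that the Birch-rank largeness survives every one of these manipulations (the variable splitting of Proposition \ref{prop decomp} and the reductions of Section \ref{sec initial set up} exist precisely to make this possible). Getting $\theta$ genuinely as large as needed, uniformly through the combinatorial decomposition and the regularization, is the crux of the argument.
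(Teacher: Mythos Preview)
Your approach is genuinely different from the paper's, and the gap lies in the displayed pointwise bound
\[
|T(\mathbf f;\boldsymbol\alpha)|\ll X^{n}(\log X)^{O(1)}\Bigl(q\prod_{\ell,r}(1+X^{\ell}\|q\alpha_{\ell,r}-a_{\ell,r}\|)\Bigr)^{-\theta},
\]
which your sketch does not establish. Applying Vaughan's identity to each of the $n$ variables replaces $x_i$ by a product $m_in_i$, so the polynomial you must difference is $f_{\ell,r}(m_1n_1,\ldots,m_nn_n)$ in $2n$ variables; nothing in the hypotheses on $\mathcal{B}_{\ell}(\mathbf f_{\ell})$ controls the Birch rank (or the $h$-invariant) of this new system, and the regularization machinery of Proposition~\ref{prop reg par} together with Theorem~\ref{Schmidt main} gives you counts for the \emph{original} forms, not for these. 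In the equal-degree case \cite{CM} already avoids this route for exactly this reason, and the differing-degree situation makes it worse: the multi-degree Weyl inequality of Browning--Heath-Brown is available only for unweighted sums, and grafting it onto an $n$-fold Vaughan decomposition with Cauchy--Schwarz in the type~I directions does not yield a bound with $\theta$ arbitrarily large.

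The paper never seeks such a pointwise bound. Instead it exploits the leading-monomial structure built into system~(\ref{set of eqn 2}) (properties (1)--(5)) to isolate a \emph{fixed} set of $K\le\sum_{\ell}\ell r_{\ell}$ variables $\mathbf w$: the coefficient of $\mathbf w^{\mathbf j_{\ell,r}}$ is exactly $c_{\ell,r}\alpha_{\ell,r}$, so on $\mathfrak m(C)$ one of these lies in a one-dimensional minor arc $\mathfrak n^{(\ell)}$. The remaining variables are split as $(\mathbf y,\mathbf z)$ via Proposition~\ref{prop decomp}, and the sum over the level sets of the regularized auxiliary systems $\mathcal R_+(\Phi),\mathcal R_+(\Psi)$ factors $T$ as $\sum_{\mathbf H}\sum_{\mathbf G}S_0S_1S_2$. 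After Cauchy--Schwarz one needs only (Claim~1) $\sup_{\mathfrak m(C)}|S_0|\ll X^K(\log X)^{-c}$, which is a sum over $K=O_{d,R}(1)$ variables with a single identified ``bad'' coefficient and is handled by Hua's lemma or one-dimensional Weyl differencing, and (Claim~2) a bound on $\mathcal W=\sum\|S_1\|_2^2\|S_2\|_2^2$, which is an \emph{unweighted} count to which Corollary~\ref{cor Schmidt} applies. The von~Mangoldt weight on the $\mathbf y,\mathbf z$ variables is simply bounded by $(\log X)^{n-K}$; no combinatorial identity is used anywhere.
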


\begin{prop}
\label{prop major arc bound}
Let $\mathbf{f}$ be the polynomials in (\ref{set of eqn 2}).
Given any $c>0$, for sufficiently large $C>0$ we have
$$
\int_{\mathfrak{M}(C)} T(\mathbf{f}; \boldsymbol{\alpha} ) \ \mathbf{d} \boldsymbol{\alpha}
=  \mathcal{C}(\mathbf{f})  \  X^{n - \sum_{\ell = 1}^d \ell r_{\ell}  } + O\left( \frac{ X^{n - \sum_{\ell = 1}^d \ell r_{\ell}  } }{(\log X)^{c}} \right),
$$
where $\mathcal{C}(\mathbf{f})$ is a constant that depends only on $\mathbf{f}$.
\end{prop}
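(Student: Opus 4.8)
The plan is to carry out the standard major-arc analysis of the Hardy--Littlewood method, separating the arithmetic (local) factor from the archimedean one, and then to establish absolute convergence of the resulting singular series and singular integral by invoking the exponential-sum estimates collected in Section~\ref{sec tech 1} (proved in Appendix~\ref{appendix B}).

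\textbf{Separation on a single arc.} First I would fix $q \le (\log X)^C$ and $\mathbf{a}$ with $\gcd(\mathbf{a},q)=1$, and on $\mathfrak{M}_{\mathbf{a},q}(C)$ write $\alpha_{\ell,r} = a_{\ell,r}/q + \beta_{\ell,r}$ with $\| \beta_{\ell,r} \| \le X^{-\ell}(\log X)^C$. Splitting the sum in (\ref{def of exp sum T}) according to residue classes $\mathbf{x} \equiv \mathbf{b} \pmod q$ and using that $f_{\ell,r}(\mathbf{x}) \bmod q$ depends only on $\mathbf{b}$ gives
$$
T(\mathbf{f};\boldsymbol{\alpha}) = \sum_{\mathbf{b} \in (\mathbb{Z}/q\mathbb{Z})^n} e\!\left( \sum_{\ell,r} \frac{a_{\ell,r}\, f_{\ell,r}(\mathbf{b})}{q} \right) \sum_{\substack{\mathbf{x} \in [0,X]^n \\ \mathbf{x} \equiv \mathbf{b}\,(q)}} \Lambda(\mathbf{x})\, e\!\left( \sum_{\ell,r} \beta_{\ell,r}\, f_{\ell,r}(\mathbf{x}) \right).
$$
Since $\Lambda(x_i) \neq 0$ forces $\gcd(x_i,q)=1$ apart from a contribution of size $O(X^{n-1+\eps})$ coming from proper prime powers and from primes dividing $q$, only $\mathbf{b} \in ((\mathbb{Z}/q\mathbb{Z})^{\times})^n$ survive. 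On the slowly varying factor $e(\sum \beta_{\ell,r} f_{\ell,r})$ I would apply partial summation in each coordinate together with the Siegel--Walfisz theorem (licit since $q \le (\log X)^C$), replacing $\Lambda(x_i)\mathbf{1}_{x_i \equiv b_i (q)}$ by $\phi(q)^{-1}\mathbf{1}_{[0,X]}(x_i)$; the total-variation factors thereby introduced are of size $(\log X)^{O(1)}$ and are absorbed into a Siegel--Walfisz error $\ll X^n \exp(-c\sqrt{\log X})$. This yields
$$
T(\mathbf{f};\boldsymbol{\alpha}) = \frac{S(q,\mathbf{a})}{\phi(q)^n}\, I(\boldsymbol{\beta}) + O\!\left( X^n \exp(-c\sqrt{\log X}) \right),
$$
where $S(q,\mathbf{a}) = \sum_{\mathbf{b}} e(\sum_{\ell,r} a_{\ell,r} f_{\ell,r}(\mathbf{b})/q)$, the sum taken over $\mathbf{b} \in ((\mathbb{Z}/q\mathbb{Z})^{\times})^n$, and $I(\boldsymbol{\beta}) = \int_{[0,X]^n} e(\sum_{\ell,r} \beta_{\ell,r} f_{\ell,r}(\mathbf{t}))\, d\mathbf{t}$.

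\textbf{Assembling the arcs.} Next I would sum over $\mathbf{a}$ with $\gcd(\mathbf{a},q)=1$ and over $q \le (\log X)^C$, and integrate $I(\boldsymbol{\beta})$ over the anisotropic box $\mathfrak{B} = \prod_{\ell,r}[-X^{-\ell}(\log X)^C,\, X^{-\ell}(\log X)^C]$. The arithmetic part produces the truncated singular series $\mathfrak{S}(Q) = \sum_{q \le Q} A(q)$ with $A(q) = \phi(q)^{-n}\sum_{\gcd(\mathbf{a},q)=1} S(q,\mathbf{a})$ and $Q = (\log X)^C$; the archimedean part, after the substitutions $\mathbf{t} = X\mathbf{u}$ and $\beta_{\ell,r} = X^{-\ell}\gamma_{\ell,r}$ and the expansion $f_{\ell,r}(X\mathbf{u}) = X^\ell F_{\ell,r}(\mathbf{u}) + O(X^{\ell-1})$, produces $X^{\,n - \sum_\ell \ell r_\ell}$ times the truncated singular integral $\mathfrak{J}(Q) = \int_{[-Q,Q]^R}\int_{[0,1]^n} e(\sum_{\ell,r}\gamma_{\ell,r} F_{\ell,r}(\mathbf{u}))\, d\mathbf{u}\, d\boldsymbol{\gamma}$, up to a relative error of size $o(1)$. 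Since the number of major arcs is $\ll (\log X)^{C(R+1)}$, the accumulated Siegel--Walfisz error is $\ll X^{\,n - \sum_\ell \ell r_\ell}(\log X)^{C(R+1)}\exp(-c\sqrt{\log X})$, which for any fixed $c'>0$ is $\ll X^{\,n - \sum_\ell \ell r_\ell}(\log X)^{-c'}$.

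\textbf{Convergence and the constant.} It then remains to show that $\mathfrak{S} := \lim_{Q \to \infty}\mathfrak{S}(Q)$ and $\mathfrak{J} := \lim_{Q \to \infty}\mathfrak{J}(Q)$ exist and converge absolutely, with $|\mathfrak{S}(Q) - \mathfrak{S}| \ll Q^{-\delta}$ and $|\mathfrak{J}(Q) - \mathfrak{J}| \ll Q^{-\delta}$ for some $\delta > 0$. This identifies $\mathcal{C}(\mathbf{f}) = \mathfrak{S}\,\mathfrak{J}$, a quantity depending only on $\mathbf{f}$, and bounds the truncation errors by $\ll X^{\,n-\sum_\ell \ell r_\ell}(\log X)^{-\delta C}$, so that choosing $C$ large in terms of $c$ finishes the proof. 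For the singular series one uses that $A$ is multiplicative, $A(q_1 q_2) = A(q_1)A(q_2)$ for $\gcd(q_1,q_2)=1$, together with a bound of the form $\sum_{\gcd(\mathbf{a},q)=1}|S(q,\mathbf{a})| \ll q^{\,n-1-\delta}$; for the singular integral one uses $\big| \int_{[0,1]^n} e(\sum_{\ell,r}\gamma_{\ell,r} F_{\ell,r}(\mathbf{u}))\, d\mathbf{u} \big| \ll (\max_{\ell,r}|\gamma_{\ell,r}|)^{-\tau}$ for some $\tau > R$. Both of these are exactly the estimates furnished by Section~\ref{sec tech 1}, and they are available because the reduction carried out in Section~\ref{sec initial set up} guarantees, via conditions $(6)$ and $(7)$ there together with (\ref{ineq of h and B}), that each $\mathcal{B}_\ell(\mathbf{F}_\ell)$ (equivalently $h_\ell(\mathbf{F}_\ell)$) is sufficiently large with respect to $d$ and $r_d, \ldots, r_1$.

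\textbf{Main obstacle.} The decisive difficulty is this last step: securing the complete exponential-sum bound and the oscillatory-integral bound \emph{uniformly across the different degrees}, so that the singular series converges absolutely. In the equal-degree setting of \cite{CM} these follow from Birch's and Schmidt's estimates, but when forms of several degrees occur simultaneously in the phase $\sum_{\ell,r} a_{\ell,r} f_{\ell,r}(\mathbf{b})/q$ one must either stratify according to which degree carries the cancellation or adapt Schmidt's induction on the degree; this is precisely the content of Section~\ref{sec tech 1} and Appendix~\ref{appendix B}, developed from \cite{S}. A secondary, routine point is that the passage from $\Lambda$-weighted sums over arithmetic progressions to the archimedean integral must be made with enough uniformity in $q$ and $\boldsymbol{\beta}$; this is handled by Siegel--Walfisz exactly as in \cite{CM}, the only new feature being the anisotropy of $\mathfrak{B}$, which causes no essential difficulty.
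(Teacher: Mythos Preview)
Your outline is essentially the paper's own argument: split on each arc into a local exponential sum times an archimedean integral via Siegel--Walfisz (the paper packages this as Lemma~\ref{Lemma 6 in CM}), rescale to extract the singular integral, and then prove absolute convergence of $\mathfrak{S}$ and $\mu(\infty)$ using the estimates of Section~\ref{sec tech 1}. The identification $\mathcal{C}(\mathbf{f})=\mathfrak{S}(\infty)\mu(\infty)$ is exactly what the paper obtains.

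There is one point you gloss over that the paper treats with care. Your claimed bound $\sum_{\gcd(\mathbf{a},q)=1}|S(q,\mathbf{a})|\ll q^{\,n-1-\delta}$ does \emph{not} follow directly from Section~\ref{sec tech 1}: Corollaries~\ref{cor 15.1 in S'+} and~\ref{cor 15.1 in S+} only produce cancellation via Weyl differencing in some degree $\ell\ge 2$, so they say nothing when $q=p^t$ and $p\mid a_{\ell,r}$ for every $\ell\ge 2$ but $\gcd(\mathbf{a}_1,p)=1$. The paper handles this residual case separately (Lemma~\ref{to bound local factor+}), exploiting the special shape $f_{1,r}(\mathbf{x})=c_{1,r}x_{n-r_1+r}+\widetilde f_{1,r}(\widehat{\mathbf{x}})$ of the linear polynomials in (\ref{set of eqn 2}): the sum then factors through an explicit Ramanujan-type sum in $x_{n-r_1+r}$, which vanishes for all but boundedly many $p^t$. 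This is elementary but essential, and it is precisely here that the normal form achieved in Section~\ref{sec initial set up} (not merely the largeness of $\mathcal{B}_\ell(\mathbf{F}_\ell)$) is used in the major-arc analysis. With that case added, your argument matches the paper's.
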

We prove Proposition \ref{prop minor arc bound} in Section \ref{section minor arc}, and Proposition \ref{prop major arc bound} in Section \ref{section major arcs}.

\section{Hardy-Littlewood Circle Method: Minor Arcs}
\label{section minor arc}

\begin{proof}[Proof of Proposition \ref{prop minor arc bound}]
We consider the system of polynomials $\mathbf{f}$ in (\ref{set of eqn 2}) constructed in the previous section, which satisfies all the conditions described below (\ref{set of eqn 2}). Recall we denote $\mathbf{w} = (w_1, \ldots, w_K)$, where $K \leq d R$ and $R = \sum_{\ell = 1}^d r_{\ell}$.
We let $\widetilde{\mathbf{F}} = (\widetilde{\mathbf{F}}_d, \ldots, \widetilde{\mathbf{F}}_1)$ be the system of forms such that for each $1 \leq \ell \leq d$,
$\widetilde{\mathbf{F}}_{\ell} = ( \widetilde{F}_{\ell,1}, \ldots,  \widetilde{F}_{\ell,r_{\ell}})$ and $\widetilde{F}_{\ell,r}$
is the homogeneous degree $\ell$ portion of $\widetilde{f}_{\ell, r}$ $(1 \leq r \leq r_{\ell})$.
For each $1 \leq \ell \leq d$, we know that $\mathcal{B}_{\ell}(\widetilde{\mathbf{F}}_{\ell})$ is sufficiently large with respect to $d$ and $r_d, \ldots, r_1$.
Thus we apply Proposition \ref{prop decomp} to the system
$$
(\widetilde{\mathbf{F}}_d |_{\mathbf{w} = \mathbf{0}}, \ldots, \widetilde{ \mathbf{F} }_1 |_{\mathbf{w} = \mathbf{0}}),
$$
and denote the partition of variables of $\mathbf{x} \backslash \mathbf{w}$ we obtain by  $(\mathbf{y}, \mathbf{z})$ so that
$\mathbf{x} = (\mathbf{w}, \mathbf{y}, \mathbf{z})$.
Let
$$
\widetilde{Q}_{\ell,r} (\mathbf{y}, \mathbf{z}) =  \widetilde{F}_{\ell,r} (\mathbf{0}, \mathbf{y}, \mathbf{z}) - \widetilde{F}_{\ell,r} (\mathbf{0}, \mathbf{0}, \mathbf{z})
\ \ \ ( 2 \leq \ell \leq d, 1 \leq r \leq r_{\ell} ).
$$
Then the partition of variables $\mathbf{x} = (\mathbf{w}, \mathbf{y}, \mathbf{z})$ satisfies
\begin{equation}
\label{bound on M}
|\mathbf{y}| = M \leq \sum_{\ell=1}^d r_{\ell} C^{\bullet}_{\ell,1},
\end{equation}
and also
\begin{equation}
\label{ineq B1 1 -1}
\mathcal{B}_{\ell} \left(  \{ \widetilde{Q}_{\ell,r} ( \mathbf{y}, \mathbf{z}) : 1 \leq r \leq r_{\ell} \}  \right)
\geq C^{\bullet}_{\ell,1} - r_{\ell} \sum_{j=1}^{\ell - 1} C^{\bullet}_{j,1} r_{j}
\ \ (2 \leq \ell \leq d),
\end{equation}
\begin{equation}
\label{ineq B1 1}
\mathcal{B}_{1} \left( \{ \widetilde{F}_{1,r} ( \mathbf{0},  \mathbf{y}, \mathbf{0}) : 1 \leq r \leq r_{1}  \}  \right) \geq C^{\bullet}_{1,1},
\end{equation}
and
\begin{equation}
\label{ineq B1 1 +1}
\mathcal{B}_{\ell} \left(  \{ \widetilde{F}_{\ell,r} (\mathbf{0}, \mathbf{0}, \mathbf{z}) : 1 \leq r \leq r_{\ell} \}  \right)
\geq C^{\bullet}_{\ell,2} - r_{\ell} \sum_{j=1}^{\ell - 1} C^{\bullet}_{j,1} r_{j}
\ \ (1 \leq \ell \leq d),
\end{equation}
where $C^{\bullet}_{\ell, 1}$ and $C^{\bullet}_{\ell, 2}$ $(1 \leq \ell \leq d)$ are positive integer constants
depending only on $d$ and $r_d, \ldots, r_1$ to be chosen later.
In particular, we will make sure that the right hand side of (\ref{ineq B1 1 +1}) for $2 \leq \ell \leq d$ is
sufficiently large with respect to $d$ and $r_d, \ldots, r_1$.
For notational convenience, we label $\mathbf{y} = (y_1, \ldots, y_M)$ and $\mathbf{z} = (z_1, \ldots, z_{n - M - K})$.

We then apply Proposition \ref{prop decomp} (with $C_{1,1} = C_{1,2} = C_{2,2} =  \ldots = C_{d,2}  = 1$) to the system of forms $
( \widetilde{\mathbf{F}}_{d}(\mathbf{0}, \mathbf{0}, \mathbf{z}), \ldots,$ $\widetilde{\mathbf{F}}_{2}(\mathbf{0}, \mathbf{0}, \mathbf{z})  )$, where
$\widetilde{\mathbf{F}}_{\ell}(\mathbf{0}, \mathbf{0}, \mathbf{z}) = ( \widetilde{F}_{\ell,1} (\mathbf{0}, \mathbf{0}, \mathbf{z}), \ldots, \widetilde{F}_{\ell,r_{\ell}} (\mathbf{0}, \mathbf{0}, \mathbf{z}) )$ for each
$2 \leq \ell \leq d$. Let the partition of variables we obtain to be ${\mathbf{z}} = ( \mathbf{a}, \mathbf{b})$,
which satisfies
\begin{equation}
\label{bound on M'}
|\mathbf{a}| = M'  \leq \sum_{\ell = 2}^d r_{\ell} C^{\star}_{\ell, 1},
\end{equation}
and
\begin{eqnarray}
\label{ineq of Ptilde}
\mathcal{B}_{\ell} \left( \{  \widetilde{P}_{\ell,r} (\mathbf{a}, \mathbf{b} ) : 1 \leq r \leq r_{\ell}   \} \right)
\geq C^{\star}_{\ell, 1} - r_{\ell} \sum_{j=2}^{\ell - 1} C^{\star}_{j,1} r_{j}\ \ (2 \leq \ell < d, 1 \leq r \leq r_{\ell}),
\end{eqnarray}
where
$$
\widetilde{P}_{\ell, r} (\mathbf{a}, \mathbf{b}) = \widetilde{F}_{\ell,r} (\mathbf{0}, \mathbf{0}, \mathbf{z})
- \widetilde{F}_{\ell,r}(\mathbf{0}, \mathbf{0}, (\mathbf{0}, \mathbf{b})) \ \ (2 \leq \ell < d, 1 \leq r \leq r_{\ell}).
$$
Note we are only mentioning the statement (\ref{ineq of Ptilde}) for $2 \leq \ell < d$, because we will not be needing it for the case $\ell = d$.
Recall from (\ref{set of eqn 2}) we have for $2 \leq \ell \leq d, 1 \leq r \leq r_{\ell}$,
$$
f_{\ell,r} (\mathbf{x}) = c_{\ell, r} \mathbf{w}^{\mathbf{j}_{\ell, r}} + \chi_{\ell, r}(\mathbf{x}) + \widetilde{f}_{\ell,r}(\mathbf{x}),
$$
where
$$
h_{\ell}(\chi_{\ell, r}) \leq C''_0
$$
for some constant $C''_0$ dependent only on $d$ and $r_d, \ldots, r_1$.
Let $\chi_{\ell, r}^{(\ell)}(\mathbf{x})$ denote the degree $\ell$ portion of $\chi_{\ell, r}(\mathbf{x})$.
Then it is easy to deduce from the definition of the $h$-invariant that the quantities
$$
h_{\ell}( \chi_{\ell, r}^{(\ell)}(\mathbf{0}, \mathbf{y}, \mathbf{z}) - \chi_{\ell, r}^{(\ell)}(\mathbf{0}, \mathbf{0}, \mathbf{z})  ), \
h_{\ell}( \chi_{\ell, r}^{(\ell)}(\mathbf{0}, \mathbf{0}, \mathbf{z}) - \chi_{\ell, r}^{(\ell)}(\mathbf{0}, \mathbf{0}, (\mathbf{0}, \mathbf{b})) ), \
h_{\ell}( \chi_{\ell, r}^{(\ell)}(\mathbf{0}, \mathbf{0}, \mathbf{z}))
$$
are all bounded by $2 C''_0$.
We then let
\begin{equation}
\label{def Q}
Q_{\ell, r} (\mathbf{y}, \mathbf{z}) = \widetilde{Q}_{\ell, r} (\mathbf{y}, \mathbf{z}) + \chi_{\ell, r}^{(\ell)}(\mathbf{0}, \mathbf{y}, \mathbf{z}) - \chi_{\ell, r}^{(\ell)}(\mathbf{0}, \mathbf{0}, \mathbf{z})
\end{equation}
and
\begin{equation}
\label{def P}
P_{\ell, r} (\mathbf{a}, \mathbf{b}) = \widetilde{P}_{\ell, r} (\mathbf{a}, \mathbf{b}) + \chi_{\ell, r}^{(\ell)}(\mathbf{0}, \mathbf{0}, \mathbf{z}) - \chi_{\ell, r}^{(\ell)}(\mathbf{0}, \mathbf{0}, (\mathbf{0}, \mathbf{b})).
\end{equation}
We remark that from the definition it follows that $Q_{\ell, r} (\mathbf{y}, \mathbf{z})$
is precisely the degree $\ell$ portion of the polynomial
$f_{\ell, r}(\mathbf{0}, \mathbf{y},\mathbf{z}) - f_{\ell, r}(\mathbf{0}, \mathbf{0},\mathbf{z})$.
Clearly every monomial of $Q_{\ell, r} (\mathbf{y}, \mathbf{z})$ with non-zero coefficient contains at least one of the $\mathbf{y}$ variables, and hence
$h_{\ell}(Q_{\ell, r} (\mathbf{y}, \mathbf{z})) \leq |\mathbf{y}|$.
Similarly, $P_{\ell, r} (\mathbf{a}, \mathbf{b})$
is precisely the degree $\ell$ portion of the polynomial
$f_{\ell, r}(\mathbf{0}, \mathbf{0},\mathbf{z}) - f_{\ell, r}(\mathbf{0}, \mathbf{0},(\mathbf{0}, \mathbf{b}) )$.
Clearly every monomial of $P_{\ell, r} (\mathbf{a}, \mathbf{b})$ with non-zero coefficient contains at least one of the $\mathbf{a}$ variables, and hence
$h_{\ell}(P_{\ell, r} (\mathbf{a}, \mathbf{b})) \leq |\mathbf{a}|$.

We obtain the following three inequalities from (\ref{ineq B1 1 -1}), (\ref{ineq of Ptilde}), and
(\ref{ineq B1 1 +1}), respectively, by applying (\ref{ineq of h and B}) and the definition of the $h$-invariant with
the comment before (\ref{def Q}),
\begin{eqnarray}
\label{hinv bound1}
\\
\notag
h_{\ell} \left( \{ Q_{\ell,r} (\mathbf{y}, \mathbf{z}) : 1 \leq r \leq r_{\ell}  \} \right)
\geq 2^{1 - \ell} \left( C^{\bullet}_{\ell, 1} - r_{\ell} \sum_{j=1}^{\ell - 1} C^{\bullet}_{j,1} r_{j} \right) - 2 r_{\ell} C''_0
\ \
(2 \leq \ell \leq d),
\end{eqnarray}
\begin{eqnarray}
\label{hinv bound2}
\\
\notag
h_{\ell} \left( \{  P_{\ell,r} (\mathbf{a}, \mathbf{b} ) : 1 \leq r \leq r_{\ell}  \} \right)
\geq 2^{1 - \ell} \left( C^{\star}_{\ell, 1} - r_{\ell} \sum_{j=2}^{\ell - 1} C^{\star}_{j,1} r_{j} \right) - 2 r_{\ell} C''_0
\ \ (2 \leq \ell < d),
\end{eqnarray}
and
\begin{eqnarray}
\label{hinv bound3}
\\
\notag
h_{d} \left( \{ \widetilde{F}_{d,r} (\mathbf{0}, \mathbf{0}, \mathbf{z}) + \chi_{d, r}^{(d)}(\mathbf{0}, \mathbf{0}, \mathbf{z}) :  1 \leq r \leq r_{d}  \} \right)
\geq  2^{1 - d} \left(C^{\bullet}_{d,2} - r_{d} \sum_{j=1}^{d - 1} C^{\bullet}_{j,1} r_{j} \right) - 2 r_{d} C''_0.
\end{eqnarray}
It is clear from the definition that the degree $d$ portion of $f_{d,r}(\mathbf{0}, \mathbf{0}, \mathbf{z})$ is precisely
$\widetilde{F}_{d,r} (\mathbf{0}, \mathbf{0}, \mathbf{z}) + \chi_{d, r}^{(d)}(\mathbf{0}, \mathbf{0}, \mathbf{z})$ for each $1 \leq r \leq r_{d}$.
Thus we have
$$
F_{d,r} (\mathbf{0}, \mathbf{0}, \mathbf{z}) = \widetilde{F}_{d,r} (\mathbf{0}, \mathbf{0}, \mathbf{z}) + \chi_{d, r}^{(d)}(\mathbf{0}, \mathbf{0}, \mathbf{z})
\ \ (1 \leq r \leq r_d),
$$
and we also let
\begin{equation}
\label{def Fd z}
\mathbf{F}_d(\mathbf{0}, \mathbf{0}, \mathbf{z}) = ( F_{d,1} (\mathbf{0}, \mathbf{0}, \mathbf{z}), \ldots, F_{d,r_d} (\mathbf{0}, \mathbf{0}, \mathbf{z}) ).
\end{equation}

With the notations we have defined so far, we have
$$
F_{d, r}(\mathbf{0}, \mathbf{y}, \mathbf{z}) =  Q_{d,r}(\mathbf{y}, \mathbf{z}) + F_{d, r}(\mathbf{0}, \mathbf{0}, \mathbf{z})
\ \ (1 \leq r \leq r_d),
$$
and for each $2 \leq \ell \leq d$,
$$
F_{\ell, r}(\mathbf{0}, \mathbf{y}, \mathbf{z}) =  Q_{\ell,r}(\mathbf{y}, \mathbf{z}) +
P_{\ell,r}(\mathbf{a}, \mathbf{b}) + F_{\ell, r}(\mathbf{0}, \mathbf{0}, (\mathbf{0}, \mathbf{b}) ) \ \ (1 \leq r \leq r_{\ell}),
$$
where
$$
F_{\ell, r}(\mathbf{0}, \mathbf{0}, \mathbf{z})  = P_{\ell,r}(\mathbf{a}, \mathbf{b}) + F_{\ell, r}(\mathbf{0}, \mathbf{0}, (\mathbf{0}, \mathbf{b}) ) \ \ (1 \leq r \leq r_{\ell}).
$$

Let $2 \leq \ell \leq d$. For each $1 \leq r \leq r_{\ell}$, the partition of variables $\mathbf{x} = ( \mathbf{w}, \mathbf{y}, \mathbf{z} )$ gives the decomposition of the following shape
\begin{eqnarray}
\label{first decomp of f}
&&f_{\ell,r}(\mathbf{w}, \mathbf{y}, \mathbf{z})
\\
&=&
f_{\ell, r}(\mathbf{w}, \mathbf{0}, \mathbf{0})
+ \sum_{j=1}^{ \ell -1}  \ \sum_{1 \leq i_1 \leq \ldots \leq i_j \leq K}
\left( \sum_{k=1}^{\ell -j} \Phi^{(k)}_{\ell, r: i_1, \ldots, i_{j}} ( \mathbf{y}, \mathbf{z} )  \right) w_{i_1} \ldots w_{i_j}
\notag
\\
&+&
\sum_{j=1}^{\ell -1} \ \sum_{1 \leq t_1 \leq \ldots \leq t_{j} \leq M}
\left( \sum_{k=0}^{\ell -j} \Psi^{(k)}_{\ell, r: t_1, \ldots, t_{j}} ( \mathbf{z} )  \right) y_{t_1} \ldots y_{t_{j}} + F_{\ell, r}(\mathbf{0}, \mathbf{y}, \mathbf{0})
\notag
\\
&+& F_{\ell, r}(\mathbf{0},\mathbf{0}, \mathbf{z}) + \sum_{k=1}^{\ell - 1} G^{(k)}_{\ell, r} (\mathbf{z}),
\notag
\end{eqnarray}
which we describe below.
We note that $\Phi^{(k)}_{\ell, r:i_1, \ldots, i_{j}} ( \mathbf{y}, \mathbf{z} )$ and  $\Psi^{(k)}_{\ell,r:t_1, \ldots, t_{j}} ( \mathbf{z} ) $
are forms of degree $k$.
The above decomposition establishes the following. The term
$$
f_{\ell,r}(\mathbf{w}, \mathbf{0}, \mathbf{0})
+ \sum_{j=1}^{ \ell -1}  \ \sum_{1 \leq i_1 \leq \ldots \leq i_j \leq K}
\left( \sum_{k=1}^{\ell -j} \Phi^{(k)}_{\ell, r: i_1, \ldots, i_{j}} ( \mathbf{y}, \mathbf{z} )  \right) w_{i_1} \ldots w_{i_j}
$$
consists of all the monomials of $f_{\ell, r}(\mathbf{x})$ which involve any variables of $\mathbf{w}$, and also the constant term.
The term
\begin{equation}
\label{eqn hyaaa1}
\sum_{j=1}^{\ell -1} \ \sum_{1 \leq t_1 \leq \ldots \leq t_{j} \leq M}
\left( \sum_{k=0}^{\ell -j} \Psi^{(k)}_{\ell, r: t_1, \ldots, t_{j}} ( \mathbf{z} )  \right) y_{t_1} \ldots y_{t_{j}} + F_{\ell, r}(\mathbf{0}, \mathbf{y}, \mathbf{0})
\end{equation}
consists of all the monomials of $f_{\ell,r}(\mathbf{x})$ which involve any variables of $\mathbf{y}$
and do not involve any of the $\mathbf{w}$ variables. In other words, it is precisely $f_{\ell,r}(\mathbf{0}, \mathbf{y}, \mathbf{z}) - f_{\ell,r}(\mathbf{0}, \mathbf{0}, \mathbf{z})$.
Finally, we have the terms which only involve the $\mathbf{z}$ variables
$$
F_{\ell,r}(\mathbf{0},\mathbf{0}, \mathbf{z}) + \sum_{k=1}^{\ell - 1} G^{(k)}_{\ell,r} (\mathbf{z}),
$$
where
$G^{(k)}_{\ell,r} (\mathbf{z})$ is the homogeneous degree $k$ portion of  $f_{\ell,r}(\mathbf{0}, \mathbf{0}, \mathbf{z})$.

We denote by
\begin{eqnarray}
\Phi^{} &=& \{ \Phi^{(k)}_{\ell, r: i_1, \ldots, i_{j}} : 2 \leq \ell \leq d,  1 \leq r \leq r_{\ell}, 1 \leq j \leq \ell -1,
\notag
\\
&& \phantom{01234567890} 1 \leq i_1 \leq \ldots \leq i_j \leq K, 1 \leq k \leq \ell - j \}.
\notag
\end{eqnarray}
Note every form of $\Phi$ has degree strictly less than $d$, and involves only the $\mathbf{y}$ variables and the $\mathbf{z}$ variables.
We shall use the notation $|\Phi|$ to denote the number of forms in $\Phi$, and other instances of notation of this type should be interpreted in a similar manner. Clearly we have
$$
|\Phi| \leq  \sum_{\ell = 2}^{d} r_{\ell} \ell^2 K^{\ell} \leq \sum_{\ell = 2}^{d} R \ell^{2} (dR)^{\ell} \leq d^{d+3}R^{d+1}.
$$
Recall the function $\rho_{d, \ell}$ defined in (\ref{def rho}).
We apply Proposition \ref{prop reg par} to the system $\Phi$ with respect
to the partition of variables $(\mathbf{y}, \mathbf{z})$ and the functions $\boldsymbol{\mathcal{F}}^{} = \{\mathcal{F}^{}_2, \ldots, \mathcal{F}^{}_{d -1} \}$, where
\begin{eqnarray}
\mathcal{F}^{}_i(t) &=&  \rho_{d,  d }( 2R + 2 t ) + 2 t + 4 r_1
\notag
\\
&+&
2 R \left( d R ( R^{2} + 1 )^{d-2} 2^d \Big{(} \rho_{d, d} (2 R + 2 t) + 2 t + 4 r_1 +  2 R C''_0 \Big{)} +
d  R^3 ( R^{2} + 1 )^{d - 2}  (2 t + 1) \right),
\notag
\end{eqnarray}
for each $2 \leq i \leq d-1$,
and obtain
$\mathcal{R}(\Phi) = ( \mathcal{R}^{(d-1)}(\Phi), \ldots,\mathcal{R}^{(1)}(\Phi) )$. For each $1 \leq s \leq d-1$,
$$
\mathcal{R}^{(s)}(\Phi) = \{ A^{(s)}_i : 1 \leq i \leq | \mathcal{R}^{(s)}(\Phi)  |  \}
$$
is precisely all the degree $s$ forms of $\mathcal{R}(\Phi)$.
For each form $A^{({s})}_i \in \mathcal{R}^{(s)}(\Phi) \ (1 \leq s \leq d-1, 1 \leq i \leq | \mathcal{R}^{(s)}(\Phi) |)$, we write
\begin{equation}
\label{defn of a's}
A^{({s})}_i(\mathbf{y}, \mathbf{z}) = \sum_{k=0}^{s} \sum_{1 \leq i_1 \leq \ldots \leq i_k \leq M} \widetilde{\Psi}^{(s - k)}_{{s}, i: i_1, \ldots, i_k}(\mathbf{z})  y_{i_1} \ldots y_{i_k},
\end{equation}
where each
$\widetilde{\Psi}^{( s - k)}_{{s}, i: i_1, \ldots, i_k}(\mathbf{z})$ is a form of degree $s - k$.
Thus each form $A^{({s})}_i$ introduces at most $({s}+1) M^{s} \leq d M^{d}$ forms in $\mathbf{z}$.
Also for each $1 \leq s \leq d-1$, we denote $\overline{\mathcal{R}}^{(s)}(\Phi^{})$
to be the system obtained by removing from $\mathcal{R}^{(s)}(\Phi)$ all forms which depend only on the $\mathbf{z}$ variables.
Let $\overline{\mathcal{R}}(\Phi)= (\overline{\mathcal{R}}^{(d-1)}(\Phi),  \ldots, \overline{\mathcal{R}}^{(1)}(\Phi) )$, $R^{}_2 = \sum_{s = 1}^{d-1} \ | \overline{\mathcal{R}}^{(s)}(\Phi)|$, and $D^{}_2 = \sum_{s = 1}^{d-1} s \ | \overline{\mathcal{R}}^{(s)}(\Phi) |$. By relabeling if necessary, for each $1 \leq s \leq d-1$ we denote the
elements of $\overline{\mathcal{R}}^{(s)}(\Phi)$ by
\begin{equation}
\label{label of A bar}
\overline{\mathcal{R}}^{(s)}(\Phi) = \{ A^{(s)}_i : 1 \leq i \leq | \overline{\mathcal{R}}^{(s)}(\Phi) | \}.
\end{equation}

Let
\begin{eqnarray}
\Psi^{}
&=&
\{  F_{\ell,r} (\mathbf{0}, \mathbf{0}, (\mathbf{0}, \mathbf{b})) : 2 \leq \ell < d,  1 \leq r \leq r_{\ell}  \}
\notag
\\
&\cup& \{ G^{(k)}_{\ell,r} (\mathbf{z}) : 2 \leq \ell \leq d,  1 \leq r \leq r_{\ell}, 1 \leq k \leq \ell - 1 \}
\notag
\\
\notag
&\cup&
\{  \Psi^{(k)}_{\ell, r: t_1, \ldots, t_{j}} (\mathbf{z}): 2 \leq \ell \leq d,  1 \leq r \leq r_{\ell}, 1 \leq j \leq \ell -1,
\notag
\\
&&\phantom{1234567890123456789012345678901234} 1 \leq t_1 \leq \ldots \leq t_{j} \leq M ,   1 \leq k \leq \ell -j  \}
\notag
\\
&\cup&  \  \{ \widetilde{\Psi}^{(s - k)}_{{s}, i: i_1, \ldots, i_k}(\mathbf{z}) :1 \leq {s} \leq d -1, 1 \leq i \leq | \mathcal{R}^{(s)}(\Phi) |, 0 \leq k < {s}, 1 \leq i_1 \leq \ldots \leq i_k \leq M  \}.
\notag
\end{eqnarray}
In other words, $\Psi^{}$ is the collection of all $G^{(k)}_{\ell,r} (\mathbf{z})$, $ \Psi^{(k)}_{\ell, r : t_1, \ldots, t_{j}} (\mathbf{z})$, $\widetilde{\Psi}^{( s - k)}_{{s}, i: i_1, \ldots, i_k}(\mathbf{z})$ except the constants, and all $F_{\ell,r} (\mathbf{0}, \mathbf{0}, (\mathbf{0}, \mathbf{b}))$ but not $F_{d,r} (\mathbf{0}, \mathbf{0}, (\mathbf{0}, \mathbf{b}) )$. In particular, every form of $\Psi$ has degree strictly less than $d$.
We can see that
$$
|\Psi^{}| \leq   R + d R +  R d^2 M^{d} + |\mathcal{R}(\Phi) | d M^{d}.
$$
Furthermore, every form of $\Psi$ is only in terms of the $\mathbf{z}$ variables.

We let $\mathcal{R}(\Psi^{}) = (\mathcal{R}^{(d-1)}(\Psi), \ldots, \mathcal{R}^{(1)}(\Psi) )$ be a regularization of
$\Psi^{}$ with respect
to the functions
$\boldsymbol{\mathcal{F}}' = \{\mathcal{F}'_2, \ldots, \mathcal{F}'_{d - 1}\},$
where
\begin{eqnarray}
\mathcal{F}'_i(t) = \rho_{d,  d }( 2R + 2 t ) + 2 t + 4 r_1
+ 2 R \Big{(} d R ( R^{2} + 1 )^{d-2} 2^d \Big{(} \rho_{d, d} (2 R + 2 t) + 2 t + 4 r_1 +  2 R C''_0 \Big{)} \Big{)}
\notag
\end{eqnarray}
for each $2 \leq i \leq d -1$.
For each $1 \leq s \leq d-1$,
$$
\mathcal{R}^{(s)}(\Psi) = \{ V^{(s)}_i : 1 \leq i \leq | \mathcal{R}^{(s)}(\Psi) |   \}
$$
is precisely all the degree $s$ forms of $\mathcal{R}(\Psi)$. Let
$R_1 = \sum_{s = 1}^{d-1} \ | \mathcal{R}^{(s)}(\Psi) |$ and $D_1 = \sum_{s = 1}^{d -1} s \ | \mathcal{R}^{(s)}(\Psi) |$.

Let $\Phi^{(j)}$ denote the degree $j$ forms of $\Phi$.
It follows from Proposition \ref{prop reg par} that each $|\mathcal{R}^{(i)}(\Phi)|$ is bounded by some constant dependent only on $\boldsymbol{\mathcal{F}}$ and $|\Phi^{(d-1)}|$, $\ldots$, $|\Phi^{(1)}|$. Thus we see that $| \mathcal{R}(\Phi) |$ and $R_2$ are bounded by a constant dependent only on $d$ and $r_d, ..., r_1$. It also follows from Proposition \ref{prop reg par} that each $| \mathcal{R}^{(i)}(\Psi) |$ is bounded by some constant dependent only
on $\boldsymbol{\mathcal{F}}'$, $d$, $R$, $M$, and $| \mathcal{R}(\Phi) |$.
Thus we obtain that $R^{}_1$ is bounded by a constant dependent only on $M$, $d$ and  $r_d, \ldots, r_1$.

We first set $C^{\bullet}_{1, 1} = 2 R_2 + 1$.
We now set the values of $C^{\bullet}_{\ell, 1}$ $(2 \leq \ell \leq d)$ such that they satisfy
\begin{equation}
\label{hinv bound1'}
2^{1 - \ell} \left( C^{\bullet}_{\ell, 1} - r_{\ell }\sum_{j=1}^{\ell - 1} C^{\bullet}_{j,1} r_{j} \right) - 2 r_{\ell} C''_0
\geq \rho_{d, \ell} (2 R + 2 R_2) + 2 R_2 + 4 r_1.
\end{equation}
Since (\ref{hinv bound1'}) is equivalent to
$$
C^{\bullet}_{\ell, 1}
\geq  2^{\ell-1} \Big{(} \rho_{d, \ell} (2 R + 2 R_2) + 2 R_2 + 4 r_1 +  2 r_{\ell} C''_0 \Big{)} +  r_{\ell }\sum_{j=1}^{\ell - 1} C^{\bullet}_{j,1} r_{j},
$$
we can also make sure $C^{\bullet}_{\ell, 1}$ satisfies the additional constraint
$$
C^{\bullet}_{\ell, 1}
\leq  2^d \Big{(} \rho_{d, d} (2 R + 2 R_2) + 2 R_2 + 4 r_1 +  2 R C''_0 \Big{)} +  R^2 \sum_{j=1}^{\ell - 1} C^{\bullet}_{j,1}.
$$
It is then not difficult to show by induction that
\begin{eqnarray}
C^{\bullet}_{\ell, 1}
&\leq&  ( R^{2} + 1 )^{\ell - 2} 2^d \Big{(} \rho_{d, d} (2 R + 2 R_2) + 2 R_2 + 4 r_1 +  2 R C''_0 \Big{)} +
 R^2 ( R^{2} + 1 )^{\ell - 2} C^{\bullet}_{1, 1}.
\notag
\end{eqnarray}
In particular, $C^{\bullet}_{\ell, 1}$ is bounded by a constant dependent only on $d$ and $r_d, \ldots, r_1$.
Therefore, it follows from (\ref{bound on M}) that
\begin{eqnarray}
\label{bound on M1}
&&
\\
M &\leq& R \sum_{\ell = 1}^d C^{\bullet}_{\ell, 1}
\notag
\\
&\leq& d R ( R^{2} + 1 )^{d-2} 2^d \Big{(} \rho_{d, d} (2 R + 2 R_2) + 2 R_2 + 4 r_1 +  2 R C''_0 \Big{)} +
d R^3 ( R^{2} + 1 )^{d - 2} (2 R_2 + 1).
\notag
\end{eqnarray}
Thus it follows that $R_1$ is bounded by a constant dependent only on $d$ and $r_d, \ldots, r_1$.

We then set the values for $C^{\star}_{\ell, 1}$ $(2 \leq \ell \leq d)$ to satisfy
\begin{equation}
\label{hinv bound2'}
2^{1 - \ell} \left( C^{\star}_{\ell, 1} - r_{\ell} \sum_{j=2}^{\ell - 1} C^{\star}_{j,1} r_{j} \right) - 2 r_{\ell} C''_0 \geq
 \rho_{d, \ell} (2 R + 2 R_1) + 2 R_1 + 4 r_1.
\end{equation}
By a similar argument as for the $C^{\bullet}_{\ell, 1}$ above, we can also make sure that $C^{\star}_{\ell, 1}$ satisfies
\begin{eqnarray}
C^{\star}_{\ell, 1}
&\leq&  ( R^{2} + 1 )^{\ell - 2} 2^d \Big{(} \rho_{d, d} (2 R + 2 R_1) + 2 R_1 + 4 r_1 +  2 R C''_0 \Big{)},
\notag
\end{eqnarray}
and it follows from (\ref{bound on M'}) that
\begin{eqnarray}
\label{bound on M'1}
M' \leq d R ( R^{2} + 1 )^{d-2} 2^d \Big{(} \rho_{d, d} (2 R + 2 R_1) + 2 R_1 + 4 r_1 +  2 R C''_0 \Big{)}.
\end{eqnarray}
In particular, each $C^{\star}_{\ell, 1}$ and $M'$ are bounded by constants dependent only on $d$ and $r_d, \ldots, r_1$.
Let us set $C^{\bullet}_{1, 2} = 2 R_1 + 1$.
We then make sure that for each $2 \leq \ell \leq d$, $C^{\bullet}_{\ell, 2}$ is sufficiently large with respect to
$C^{\star}_{2, 1}, \ldots, C^{\star}_{d, 1}$, $C^{\bullet}_{1, 1}, \ldots, C^{\bullet}_{d-1, 1}$,
$r_d, \ldots, r_1$, and $d$,
and also that $C^{\bullet}_{d, 2}$ satisfies
\begin{equation}
\label{hinv bound3'}
2^{1 - d} \left(C^{\bullet}_{d,2} - r_{d} \sum_{j=1}^{d - 1} C^{\bullet}_{j,1} r_{j} \right) - 2 r_{d} C''_0
\geq  \rho_{d, d} (2 R + 2 R_1) + 2 R_1 + 4 r_1.
\end{equation}
We note that the three inequalities (\ref{hinv bound1'}), (\ref{hinv bound2'}) and (\ref{hinv bound3'}) provide lower bounds for the $h$-invariants in (\ref{hinv bound1}), (\ref{hinv bound2}), and (\ref{hinv bound3}), respectively.

We now decompose the linear polynomials.
From Proposition \ref{prop reg par}, we know that every linear form of $\mathcal{R}^{(1)}(\Phi)$ is either only in the $\mathbf{y}$ variables, or only in the $\mathbf{z}$ variables. First we consider the linear forms of
$\overline{\mathcal{R}}^{(1)}(\Phi) = \{ A^{(1)}_i(\mathbf{y}) : 1 \leq i \leq | \overline{\mathcal{R}}^{(1)}(\Phi) | \}$, which we know to be linearly independent over $\mathbb{Q}$ and involve only the $\mathbf{y}$ variables.
By considering
their linear combinations, we may assume without loss of generality that  these linear forms are of the shape
$$
A^{(1)}_i(\mathbf{y}) = y_i + A'_i(y_{| \overline{\mathcal{R}}^{(1)}(\Phi) | + 1}, \ldots, y_M) \  \ (1 \leq i \leq | \overline{\mathcal{R}}^{(1)}(\Phi) |),
$$
where  $A'_i(y_{| \overline{\mathcal{R}}^{(1)}(\Phi) | + 1}, \ldots, y_M)$ is a linear form in the variables $y_{| \overline{\mathcal{R}}^{(1)}(\Phi) | + 1}, \ldots, y_M$
with coefficients in $\mathbb{Q}$. By (\ref{ineq B1 1}) and Lemma \ref{Lemma 3 in CM}, we have
$$
\mathcal{B}_{1}( \
\Big{ \{ } \widetilde{F}_{1,r}(\mathbf{0}, \mathbf{y}, \mathbf{0}) \Big{|}_{ y_i = 0 \ (1 \leq i \leq | \overline{\mathcal{R}}^{(1)}(\Phi) |)  }  : 1 \leq r \leq r_1 \Big{ \}}
\ )
\geq C^{\bullet}_{1,1} - | \overline{\mathcal{R}}^{(1)}(\Phi) | \geq R_2 + 1 > 0.
$$
Therefore, we can find $r_1$ variables from $y_{| \overline{\mathcal{R}}^{(1)}(\Phi) | + 1}, \ldots, y_M$
such that the $r_1 \times r_1$ matrix, where the $r$-th row consists of the coefficients of $\widetilde{F}_{1,r}(\mathbf{0}, \mathbf{y}, \mathbf{0})$
of these $r_1$ variables, is invertible. Let us denote these variables by $\widetilde{y}_1, \ldots, \widetilde{y}_{r_1}$, and let $\widetilde{\mathbf{y}} = (\widetilde{y}_1, \ldots, \widetilde{y}_{r_1})$.
We can then write
$$
\widetilde{F}_{1,r}(\mathbf{0}, \mathbf{y}, \mathbf{0}) =  {g}_{r,1} \widetilde{y}_1 + \ldots  +  {g}_{r,r_1} \widetilde{y}_{r_1}
+ \widetilde{F}_{1,r}(\mathbf{0}, \mathbf{y}, \mathbf{0})|_{\mathbf{\widetilde{y}} = \mathbf{0}},
$$
where ${g}_{r,1}, \ldots, {g}_{r,r_1} \in \mathbb{Z}.$
Let $\mathcal{R}_+^{(1)}(\Phi)$ be a maximal linearly independent (over $\mathbb{Q}$) subset of
$$
{\mathcal{R}}^{(1)}(\Phi) \cup \{\widetilde{F}_{1,1}(\mathbf{0}, \mathbf{y}, \mathbf{0})|_{\mathbf{\widetilde{y}} = \mathbf{0}}, \ldots, \widetilde{F}_{1,r_1}(\mathbf{0}, \mathbf{y}, \mathbf{0})|_{\mathbf{\widetilde{y}} = \mathbf{0}}   \}.
$$
The important thing to note is that by our construction, we have that
the set of linear forms
$$
\{ g_{r,1} \widetilde{y}_1 + \ldots  + g_{r,r_1} \widetilde{y}_{r_1} : 1 \leq r \leq r_1 \} \cup \overline{\mathcal{R}}_+^{(1)}(\Phi)
$$
is linearly independent over $\mathbb{Q}$. Here $\overline{\mathcal{R}}_+^{(1)}(\Phi)$ is the set of forms obtained by
removing from $\mathcal{R}_+^{(1)}(\Phi) $ all forms that depend only on the $\mathbf{z}$ variables.
Note we also have $| \overline{\mathcal{R}}_+^{(1)}(\Phi) |
\leq | \overline{\mathcal{R}}^{(1)}(\Phi) | + r_1$.

We also decompose the linear forms $\widetilde{F}_{1,r}(\mathbf{0}, \mathbf{0}, \mathbf{z})$ in a similar manner.
First we consider the linear forms of ${\mathcal{R}}^{(1)}(\Psi) = \{ V^{(1)}_i(\mathbf{z}) : 1 \leq i \leq | {\mathcal{R}}^{(1)}(\Psi) | \}$, which we know to be linearly independent over $\mathbb{Q}$ and involve only the $\mathbf{z}$ variables. By considering their linear combinations, we may assume without loss of generality that these linear forms are of the shape
$$
V^{(1)}_i (\mathbf{z}) = z_i + V'_i(z_{| {\mathcal{R}}^{(1)}(\Psi) | + 1}, \ldots, z_{n-M-K})  \  \ (1 \leq i \leq | {\mathcal{R}}^{(1)}(\Psi) |),
$$
where  $V'_i(z_{| {\mathcal{R}}^{(1)}(\Psi) | + 1}, \ldots, z_{n-M-K})$ is a linear form in the variables $z_{| {\mathcal{R}}^{(1)}(\Psi) | + 1}, \ldots, z_{n-M-K}$
with coefficients in $\mathbb{Q}$.
By (\ref{ineq B1 1 +1}) and Lemma \ref{Lemma 3 in CM}, we have
$$
\mathcal{B}_{1}( \
\Big{ \{ } \widetilde{F}_{1,r}(\mathbf{0}, \mathbf{0}, \mathbf{z}) \Big{|}_{ z_i = 0 \ (1 \leq i \leq |{\mathcal{R}}^{(1)}(\Psi)| )  } : 1 \leq r \leq r_1  \Big{ \}} \ )
\geq C^{\bullet}_{1,2} - |{\mathcal{R}}^{(1)}(\Psi)| \geq R_1 + 1 > 0.
$$
Therefore, we can find $r_1$ variables from $z_{| {\mathcal{R}}^{(1)}(\Psi) | + 1}, \ldots, z_{n-M-K}$
such that the $r_1 \times r_1$ matrix, where the $r$-th row consists of the coefficients of $\widetilde{F}_{1,r}(\mathbf{0}, \mathbf{0}, \mathbf{z})$
of these $r_1$ variables, is invertible. Let us denote these variables by $\widetilde{z}_1, \ldots, \widetilde{z}_{r_1}$, and let
$\mathbf{\widetilde{z}} = (\widetilde{z}_1, \ldots, \widetilde{z}_{r_1})$.
We can then write
$$
\widetilde{F}_{1,r}(\mathbf{0}, \mathbf{0}, \mathbf{z}) = {g}'_{r,1} \widetilde{z}_1 + \ldots  + {g}'_{r,r_1} \widetilde{z}_{r_1}
+ \widetilde{F}_{1,r}(\mathbf{0}, \mathbf{0}, \mathbf{z})|_{\mathbf{\widetilde{z}} = \mathbf{0}},
$$
where ${g}'_{r,1}, \ldots ,  {g}'_{r,r_1} \in \mathbb{Z}$.
Let $\mathcal{R}_+^{(1)}(\Psi)$ be a maximal linearly independent (over $\mathbb{Q}$) subset of
$$
{\mathcal{R}}^{(1)}(\Psi) \cup \{\widetilde{F}_{1,1}(\mathbf{0}, \mathbf{0}, \mathbf{z})|_{\mathbf{\widetilde{z}} = \mathbf{0}}, \ldots, \widetilde{F}_{1,r_1}(\mathbf{0}, \mathbf{0}, \mathbf{z})|_{\mathbf{\widetilde{z}} = \mathbf{0}}   \}.
$$
The important thing to note is that by our construction, we have that
the set of linear forms
$$
\{ g'_{r,1} \widetilde{z}_1 + \ldots  + g'_{r,r_1} \widetilde{z}_{r_1} : 1 \leq r \leq r_1 \} \cup \mathcal{R}_+^{(1)}(\Psi)
$$
is linearly independent over $\mathbb{Q}$.  We also have that
$| \mathcal{R}_+^{(1)}(\Psi) | \leq | {\mathcal{R}}^{(1)}(\Psi) | + r_1$.

We replace $\mathcal{R}^{(1)}(\Phi)$ of $\mathcal{R}(\Phi)$ with $\mathcal{R}_+^{(1)}(\Phi)$
and refer to the resulting set of forms as $\mathcal{R}_+(\Phi)$.
It follows easily from the construction that the linear forms of $\mathcal{R}^{(1)}_+(\Phi)$ are either only in the  $\mathbf{y}$ variables, or
only in the $\mathbf{z}$ variables.
We denote
$$
\mathcal{R}_+(\Phi) = (\mathcal{R}^{(d-1)}(\Phi), \ldots, \mathcal{R}^{(2)}(\Phi), \mathcal{R}^{(1)}_+(\Phi) ),
$$
and by abusing notation slightly let
$$
\mathcal{R}^{(1)}_+(\Phi) = \{ A^{(1)}_i : 1 \leq i \leq | \mathcal{R}^{(1)}_+(\Phi) | \}   \text{ and  } \
\overline{\mathcal{R}}^{(1)}_+(\Phi) = \{ A^{(1)}_i(\mathbf{y}) : 1 \leq i \leq | \overline{\mathcal{R}}^{(1)}_+(\Phi) | \}.
$$
We then define $\overline{\mathcal{R}}_+(\Phi)$, $R'_{2}$, and $D'_{2}$ for $\mathcal{R}_+(\Phi)$ in an analogous manner as
$\overline{\mathcal{R}}(\Phi)$, $R_{2}$, and $D_{2}$ for  $\mathcal{R}(\Phi)$, respectively.
Similarly, we replace $\mathcal{R}^{(1)}(\Psi)$ of $\mathcal{R}(\Psi)$ with $\mathcal{R}_+^{(1)}(\Psi)$ and refer to the resulting set of forms as $\mathcal{R}_+(\Psi)$.
We denote
$$
\mathcal{R}_+(\Psi) = (\mathcal{R}^{(d-1)}(\Psi), \ldots, \mathcal{R}^{(2)}(\Psi), \mathcal{R}^{(1)}_+(\Psi) ),
$$
and by abusing notation slightly let
$$
\mathcal{R}^{(1)}_+(\Psi) = \{ V^{(1)}_i(\mathbf{z}) : 1 \leq i \leq | \mathcal{R}^{(1)}_+(\Psi) | \}.
$$
We also define $R'_{1}$ and $D'_{1}$ for $\mathcal{R}_+(\Psi)$ in an analogous manner as
$R_{1}$ and $D_{1}$ for $\mathcal{R}(\Psi)$, respectively. It then follows that we have $R'_2 \leq R_2 + r_1$ and
$R'_1 \leq R_1 + r_1$.

For each $\mathbf{H} \in \mathbb{Z}^{ R'_1 }$, we define the following set
$$
Z(\mathbf{H}) = \{ \mathbf{z} \in [0, X]^{n-M-K} \cap \mathbb{Z}^{n -M -K} : \mathcal{R}_+(\Psi) (\mathbf{z}) = \mathbf{H}^{} \}.
$$
By ${\mathcal{R}_+}(\Psi) ( \mathbf{z}) = \mathbf{H}$, we mean that
$V^{(s)}_i(\mathbf{z}) = H_{s,i}$, where $H_{s,i}$ is the corresponding term of $\mathbf{H}$,
for every $V^{(s)}_i \in {\mathcal{R}_+}(\Psi)$. Other instances of notation of this type should be interpreted in a similar manner.
By Proposition \ref{prop reg par}, we know that each of the polynomials $F_{\ell, r} (\mathbf{0}, \mathbf{0}, (\mathbf{0}, \mathbf{b}))$
$(2 \leq \ell < d, 1 \leq r \leq r_{\ell})$ and $G^{(k)}_{\ell,r} (\mathbf{z})$
in ~(\ref{first decomp of f}) can be expressed as a rational polynomial in the forms of $\mathcal{R}_+(\Psi^{})$.
Let us denote
$$
F_{\ell,r} (\mathbf{0}, \mathbf{0}, (\mathbf{0}, \mathbf{b})) = {c}^{\sharp}_{\ell, r}( \mathcal{R}_+(\Psi^{}) ),
$$
and
$$
G^{(k)}_{\ell,r} ( \mathbf{z}) = {c}^{\flat}_{\ell, r : k}( \mathcal{R}_+(\Psi^{}) ),
$$
where ${c}^{\sharp}_{\ell, r}$ and ${c}^{\flat}_{\ell, r: k}$ are rational polynomials in $R'_1$ variables.
Therefore, for any $\mathbf{z}_0 = (\mathbf{a}_0, \mathbf{b}_0) \in Z(\mathbf{H})$ we have
$$
F_{\ell,r}(\mathbf{0}, \mathbf{0}, (\mathbf{0}, \mathbf{b}_0)) = {c}^{\sharp}_{\ell, r} ( \mathbf{H}^{} )
\ \ \text{ and } \ \
G^{(k)}_{\ell, r} ( \mathbf{z}_0) = {c}^{\flat}_{\ell, r: k}( \mathbf{H} ).
$$
We also know that $\widetilde{F}_{1,r}(\mathbf{0}, \mathbf{0}, \mathbf{z})|_{\mathbf{\widetilde{z}} = \mathbf{0}}$ is constant on
$Z(\mathbf{H})$, and we denote this constant value by ${c}^{\sharp}_{1, r} ( \mathbf{H}^{} )$.

Similarly, we know that each of the polynomials $\Psi^{(k)}_{\ell,r: t_1, \ldots, t_{j}} (\mathbf{z})$ in ~(\ref{first decomp of f}) can be expressed as a rational polynomial in the forms of $\mathcal{R}_+(\Psi^{})$. Let us denote
\begin{equation}
\label{eqn hyaaa2}
\Psi^{(k)}_{\ell, r: t_1, \ldots, t_{j}} (\mathbf{z}) = {\hat{c} }^{(k)}_{\ell, r :  t_1, \ldots, t_{j}} ( \mathcal{R}_+(\Psi^{}) ),
\end{equation}
where ${\hat{c} }^{(k)}_{ \ell, r : t_1, \ldots, t_{j}} $ is a rational polynomial in $R'_1$ variables.
Therefore, for any $\mathbf{z}_0 \in Z(\mathbf{H})$ we have
$$
\Psi^{(k)}_{\ell, r : t_1, \ldots, t_{j}}(\mathbf{z}_0) = {\hat{c} }^{(k)}_{\ell, r :  t_1, \ldots, t_{j}} ( \mathbf{H}^{} ).
$$

Since each of the forms $\widetilde{\Psi}^{( s - k)}_{s,i: i_1, \ldots, i_k}(\mathbf{z})$ in (\ref{defn of a's}) can be expressed as a rational polynomial
in the forms of $\mathcal{R}_+(\Psi^{})$, let us denote
$$
\widetilde{\Psi}^{( s - k)}_{s,i: i_1, \ldots, i_k}(\mathbf{z}) = \widetilde{c}^{(s - k)}_{{s}, i: i_1, \ldots, i_k}(\mathcal{R}_+(\Psi^{})),
$$
where each $\widetilde{c}^{( s - k)}_{s, i: i_1, \ldots, i_k}$ is a rational polynomial in $R'_1$ variables.
Therefore, for each $A^{(s)}_i$ with  
$1 < s \leq d-1$ and $1 \leq i \leq | \mathcal{R}^{(s)}(\Phi) |$,  
we can write
\begin{equation}
\label{def of a's 2}
A^{({ s})}_i(\mathbf{y}, \mathbf{z}) = \sum_{k=0}^{s} \sum_{1 \leq i_1 \leq \ldots \leq i_k \leq M} \widetilde{c}^{( s - k)}_{s, i: i_1, \ldots, i_k}(\mathcal{R}_+(\Psi^{})) y_{i_1} \ldots y_{i_k}.
\end{equation}
Consequently, we can define the following polynomial for each $1 < s \leq d-1$ and $1 \leq i \leq |\mathcal{R}^{(s)}(\Phi) |$,
\begin{equation}
\label{def of a's 3}
A^{({s})}_i(\mathbf{y}, Z(\mathbf{H})  ) = \sum_{k=0}^{s} \sum_{1 \leq i_1 \leq \ldots \leq i_k \leq M} \widetilde{c}^{( s - k)}_{s,i: i_1, \ldots, i_k}(\mathbf{H}^{} ) y_{i_1} \ldots y_{i_k},
\end{equation}
so that given any $\mathbf{z}_0 \in Z(H)$ we have
$$
A^{({s})}_i(\mathbf{y}, \mathbf{z}_0  ) = A^{({ s})}_i(\mathbf{y}, Z(\mathbf{H}) ).
$$
We then define
\begin{eqnarray}
\overline{\mathcal{R}}_+(\Phi)(\mathbf{y}, Z(\mathbf{H}) ) &=& \{ A^{({s})}_i(\mathbf{y}, Z(\mathbf{H})  ) :
2 \leq s \leq d - 1, 1 \leq i \leq | \overline{\mathcal{R}}^{(s)}(\Phi)  | \}
\cup 
\overline{\mathcal{R}}_+^{(1)}(\Phi),
\notag
\end{eqnarray}
which is a system consisting of $R'_2$ polynomials (with possible repetitions).

For each $\mathbf{G} \in \mathbb{Z}^{R'_2}$, we let
$$
Y(\mathbf{G};\mathbf{H}) = \{ \mathbf{y} \in [0, X]^{ M } \cap \mathbb{Z}^{M} : \overline{\mathcal{R}}_+(\Phi) (\mathbf{y}, Z(\mathbf{H}) ) = \mathbf{G} \}.
$$
It follows from the definition of $\overline{\mathcal{R}}^{(1)}_+(\Phi)$ that for each $1 \leq r \leq r_1$ the polynomial $\widetilde{F}_{1,r}(\mathbf{0}, \mathbf{y}, \mathbf{0})|_{\mathbf{\widetilde{y}} = \mathbf{0}}$
is constant on $Y(\mathbf{G};\mathbf{H})$, and we denote this constant value by $c'_{1,r}(\mathbf{G}, \mathbf{H})$.

Recall $\Phi$ is the collection of all $\Phi^{(k)}_{\ell, r : i_1, \ldots, i_{j}} ( \mathbf{y}, \mathbf{z} )$ in ~(\ref{first decomp of f}), and
that each $\Phi^{(k)}_{\ell, r : i_1, \ldots, i_{j}} ( \mathbf{y}, \mathbf{z} )$ can be expressed as
a rational polynomial in the forms of $\mathcal{R}_+(\Phi)$.
It follows from our definition that the forms of $\mathcal{R}_+(\Phi)$ which depend only on the $\mathbf{z}$ variables are constant on $Z(\mathbf{H})$,
and the remaining forms, which are precisely the forms of $\overline{\mathcal{R}}_+(\Phi)$, are constant on $Y(\mathbf{G};\mathbf{H})\times Z(\mathbf{H})$.
Thus 
each $\Phi^{(k)}_{\ell, r: i_1, \ldots, i_{j}} ( \mathbf{y}, \mathbf{z} )$
is constant on $
Y(\mathbf{G};\mathbf{H}) \times Z(\mathbf{H})$, and we denote this constant value by
${c}^{(k)}_{\ell, r: i_1, \ldots, i_{j}} ( \mathbf{G}, \mathbf{H} )$. Let $2 \leq \ell < d$ and $1 \leq r \leq r_{\ell}$.
Therefore, for any choice of $\mathbf{z} = (\mathbf{a}, \mathbf{b} )\in Z(\mathbf{H})$ and $ \mathbf{y} \in  Y(\mathbf{G};\mathbf{H})$, the polynomial ${f}_{\ell,r}(\mathbf{x})$ takes the following
shape
\begin{eqnarray}
&&{f}_{\ell,r}( \mathbf{w}, \mathbf{y}, \mathbf{z} )
\label{decom of b with coeff}
\\
&=&
{f}_{\ell,r}( \mathbf{w}, \mathbf{0}, \mathbf{0})
+ \sum_{j=1}^{\ell -1}  \ \sum_{1 \leq i_1 \leq \ldots \leq i_j \leq K}
\left( \sum_{k=1}^{\ell -j} {c}^{(k)}_{\ell, r: i_1, \ldots, i_{j}} ( \mathbf{G}, \mathbf{H} )  \right) w_{i_1} \ldots w_{i_j}
\notag
\\
&+&
\sum_{j=1}^{\ell -1} \ \sum_{1 \leq t_1 \leq \ldots \leq t_{j} \leq M}
\left( \sum_{k=0}^{\ell - j} {\hat{c} }^{(k)}_{\ell, r:  t_1, \ldots, t_{j}} ( \mathbf{H} )  \right) y_{t_1} \ldots y_{t_{j}}
+ {F}_{\ell,r}( \mathbf{0}, \mathbf{y}, \mathbf{0} )
\notag
\\
&+& {P}_{\ell, r}(\mathbf{a}, \mathbf{b}) +
{c}^{\sharp}_{\ell, r} ( \mathbf{H}^{} )
+
\sum_{k=1}^{\ell - 1} {c}^{\flat}_{\ell, r: k}( \mathbf{H} ).
\notag
\end{eqnarray}
When $\ell = d$, we replace the term ${P}_{ \ell, r }(\mathbf{a}, \mathbf{b}) +
{c}^{\sharp}_{\ell, r} ( \mathbf{H}^{} )$ in (\ref{decom of b with coeff}) with $F_{d, r}(\mathbf{0}, \mathbf{0}, \mathbf{z})$.
Similarly, when $\ell = 1$ and $1 \leq r \leq r_{1}$, for any choice of $\mathbf{z} = (\mathbf{a}, \mathbf{b} )\in Z(\mathbf{H})$ and $ \mathbf{y} \in  Y(\mathbf{G};\mathbf{H})$, the polynomial ${f}_{1,r}(\mathbf{x})$ takes the following shape
\begin{eqnarray}
{f}_{1,r}(\mathbf{x}) &=& c_{1,r} w_r + \widetilde{f}_{1,r} (\mathbf{w}, \mathbf{0}, \mathbf{0})
+ ({g}_{r,1} \widetilde{y}_1 + \ldots  +  {g}_{r,r_1} \widetilde{y}_{r_1})
\notag
\\
&+& c'_{1,r}(\mathbf{G}, \mathbf{H})
+ ({g}'_{r,1} \widetilde{z}_1 + \ldots  +  {g}'_{r,r_1} \widetilde{z}_{r_1})  + {c}^{\sharp}_{1, r} ( \mathbf{H}^{} )
\notag,
\end{eqnarray}
where $\widetilde{f}_{1,r}$ is defined in (\ref{set of eqn 2}).

For each $2 \leq \ell \leq d, 1 \leq r \leq r_{\ell}$, we label
\begin{eqnarray}
\mathfrak{C}_{\ell,r} (\mathbf{w}, \mathbf{G}, \mathbf{H}  ) &=& {f}_{\ell,r} ( \mathbf{w}, \mathbf{0}, \mathbf{0})
+ \sum_{j=1}^{\ell -1}  \ \sum_{1 \leq i_1 \leq \ldots \leq i_j \leq K}
\left( \sum_{k=1}^{\ell -j} {c}^{(k)}_{\ell, r: i_1, \ldots, i_{j}} ( \mathbf{G}, \mathbf{H} )  \right) w_{i_1} \ldots w_{i_j},
\notag
\end{eqnarray}
and
\begin{equation}
\label{eqn hyaaa3}
\mathfrak{U}_{\ell,r} (\mathbf{y}, \mathbf{H} ) = \sum_{j=1}^{\ell -1} \ \sum_{1 \leq t_1 \leq \ldots \leq t_{j} \leq M}
\left( \sum_{k=0}^{\ell - j} {\hat{c} }^{(k)}_{\ell, r:  t_1, \ldots, t_{j}} ( \mathbf{H} )  \right) y_{t_1} \ldots y_{t_{j}}
+ {F}_{\ell,r}( \mathbf{0}, \mathbf{y}, \mathbf{0} ).
\end{equation}
We let
$$
\mathfrak{X}_{\ell,r} (\mathbf{a}, \mathbf{b}, \mathbf{H} ) = {P}_{\ell,r}(\mathbf{a}, \mathbf{b}) +
{c}^{\sharp}_{\ell, r} ( \mathbf{H}^{} )
+
\sum_{k=1}^{\ell - 1} {c}^{\flat}_{\ell, r: k}( \mathbf{H} ) \ \ (2 \leq \ell <  d, 1 \leq r \leq r_{\ell}),
\notag
$$
and also
$$
\mathfrak{X}_{d,r} (\mathbf{a}, \mathbf{b}, \mathbf{H} ) = F_{d, r}(\mathbf{0}, \mathbf{0}, \mathbf{z})
+
\sum_{k=1}^{d - 1} {c}^{\flat}_{d, r: k}( \mathbf{H} ) \ \ (1 \leq r \leq r_d).
\notag
$$
Then for $\mathbf{z} = (\mathbf{a}, \mathbf{b}) \in Z(\mathbf{H})$ and $ \mathbf{y} \in  Y(\mathbf{G};\mathbf{H})$, we have
$$
{f}_{\ell,r} (\mathbf{w}, \mathbf{y}, \mathbf{z}  ) =  \mathfrak{C}_{\ell,r} (\mathbf{w}, \mathbf{G}, \mathbf{H}  ) +
\mathfrak{U}_{\ell,r} (\mathbf{y}, \mathbf{H} ) +\mathfrak{X}_{\ell,r} (\mathbf{a}, \mathbf{b}, \mathbf{H} )
\ \ \ (2 \leq \ell \leq d, 1 \leq r \leq r_{\ell}).
$$

We define the following three exponential sums,
\begin{eqnarray}
\notag
S_0( \boldsymbol{\alpha}, \mathbf{G}, \mathbf{H} ) &=&
\sum_{ \mathbf{w} \in [0,X]^K } \Lambda(\mathbf{w})  \
e  \Big{(} \sum_{1 \leq r \leq r_1}  \alpha_{1,r} (c_{1,r} w_r + \widetilde{f}_{1,r} (\mathbf{w}, \mathbf{0}, \mathbf{0})
 )
\\
\notag
&+& \sum_{2 \leq \ell \leq d} \sum_{1 \leq r \leq r_{\ell}} \alpha_{\ell,r}  \cdot \mathfrak{C}_{\ell,r} (\mathbf{w}, \mathbf{G}, \mathbf{H}  )  \Big{)},
\end{eqnarray}
\begin{eqnarray}
\notag
S_1( \boldsymbol{\alpha}, \mathbf{G}, \mathbf{H}) &=& \sum_{\mathbf{y} \in Y(\mathbf{G};\mathbf{H})} \Lambda (\mathbf{y}) \  e \Big{(}
\sum_{1 \leq r \leq r_1} \alpha_{1,r} (
{g}_{r,1} \widetilde{y}_1 + \ldots  +  {g}_{r,r_1} \widetilde{y}_{r_1}
+  c'_{1,r}(\mathbf{G}, \mathbf{H}) )
\\
&+& \sum_{2 \leq \ell \leq d} \sum_{1 \leq r \leq r_{\ell}} \alpha_{\ell,r}  \cdot  \mathfrak{U}_{\ell,r} (\mathbf{y}, \mathbf{H}  )  \Big{)},
\notag
\end{eqnarray}
and
\begin{eqnarray}
\notag
S_2( \boldsymbol{\alpha}, \mathbf{H} ) &=&  \sum_{\mathbf{z} = (\mathbf{a}, \mathbf{b}) \in Z(\mathbf{H}) } \Lambda (\mathbf{z})  \
e \Big{(} \sum_{1 \leq r \leq r_1}
\alpha_{1,r} ({g}'_{r,1} \widetilde{z}_1 + \ldots  +  {g}'_{r,r_1} \widetilde{z}_{r_1} + {c}^{\sharp}_{1, r} ( \mathbf{H}^{} ))
\\
&+&
\sum_{2 \leq \ell \leq d} \sum_{1 \leq r \leq r_{\ell}}
{\alpha}_{\ell,r} \cdot  \mathfrak{X}_{\ell,r} (\mathbf{a}, \mathbf{b}, \mathbf{H} )  \Big{)}.
\notag
\end{eqnarray}

Let
\begin{eqnarray}
\mathcal{L}_1(X) 
= \{ \mathbf{H} \in \mathbb{Z}^{R'_1 } : Z(\mathbf{H}) \not = \emptyset \},
\notag
\end{eqnarray}
and for each $\mathbf{H} \in \mathcal{L}_1(X)$,  let
\begin{eqnarray}
\mathcal{L}_2(X ; \mathbf{H}) 
= \{ \mathbf{G} \in \mathbb{Z}^{R'_2 } : Y(\mathbf{G}, \mathbf{H} ) \not = \emptyset \}.
\notag
\end{eqnarray}
We have the following bounds on the cardinalities of these sets,
$$
|\mathcal{L}_1(X)| \ll X^{D'_1} \  \mbox{ and } \  |\mathcal{L}_2(X; \mathbf{H})| \ll X^{D'_2}.
$$
It is not difficult to deduce the first inequality. The implicit constant in the second inequality is independent of $\mathbf{H}$, and to see this we note
that given $A^{(s)}_i$ with $1 < s \leq d-1$ and $1 \leq i \leq | \overline{\mathcal{R}}^{(s)}(\Phi) |$,
we have
\begin{equation}
\notag
|A^{(s)}_i(\mathbf{y}, \mathbf{z})| = \Big{|} \sum_{k=0}^{s} \sum_{1 \leq i_1 \leq \ldots \leq i_k \leq M} \widetilde{\Psi}^{(s - k)}_{s, i: i_1, \ldots, i_k}(\mathbf{z}) y_{i_1} \ldots y_{i_k}  \Big{|} \ll X^s
\end{equation}
for any $(\mathbf{y}, \mathbf{z}) \in [0, X]^{n - K} \cap \mathbb{Z}^{n- K}$, and similarly for the linear forms
of $\overline{\mathcal{R}}^{(1)}_+(\Phi)$.
Therefore, we obtain by applying the Cauchy-Schwarz inequality
\begin{eqnarray}
\label{minor arc ineq 1}
&& \Big{|} \int_{\mathfrak{m}(C)} T(\mathbf{f}; \boldsymbol{\alpha} ) \ \mathbf{d} \boldsymbol{\alpha}  \Big{|}^2
\\
&\leq&
\Big{|}
\sum_{\mathbf{H} \in \mathcal{L}_1(X)} \sum_{\mathbf{G} \in \mathcal{L}_2(X ; \mathbf{H})} \int_{\mathfrak{m}(C)} \
\sum_{\substack{ \mathbf{w} \in [0,X]^K \\  \mathbf{y} \in Y(\mathbf{G}; \mathbf{H}) \\ \mathbf{z} = (\mathbf{a}, \mathbf{b}) \in Z(\mathbf{H})  }}
\Lambda(\mathbf{w}) \Lambda(\mathbf{y}) \Lambda(\mathbf{z})  \cdot
\notag
\\
&&e \Big{(} \sum_{1 \leq r \leq r_1}  \alpha_{1,r} (c_{1,r} w_r + \widetilde{f}_{1,r} (\mathbf{w}, \mathbf{0}, \mathbf{0})
+ ({g}_{r,1} \widetilde{y}_1 + \ldots  +  {g}_{r,r_1} \widetilde{y}_{r_1}) + c'_{1,r}(\mathbf{G}, \mathbf{H})
\notag
\\
\notag
&& + ({g}'_{r,1} \widetilde{z}_1 + \ldots  +  {g}'_{r,r_1} \widetilde{z}_{r_1})   + {c}^{\sharp}_{1, r} ( \mathbf{H}^{} )  ) \Big{)} \cdot
\\
\notag
&&e \left( \sum_{2 \leq \ell \leq d} \sum_{1 \leq r \leq r_{\ell}}
 \alpha_{\ell,r} \cdot (\mathfrak{C}_{\ell,r} (\mathbf{w}, \mathbf{G}, \mathbf{H}  ) +
 \mathfrak{U}_{\ell,r} (\mathbf{y}, \mathbf{H} ) + \mathfrak{X}_{\ell,r} (\mathbf{a}, \mathbf{b}, \mathbf{H} )   \  )
\right)
\  \mathbf{d} \boldsymbol{\alpha}  \Big{|}^2
\notag
\end{eqnarray}
\begin{eqnarray}
\notag
&\ll&
X^{D'_1 + D'_2}
\sum_{\mathbf{H} \in \mathcal{L}_1(X)} \sum_{\mathbf{G} \in \mathcal{L}_2(X ; \mathbf{H})} \Big{|}  \int_{\mathfrak{m}(C)}
  S_0 ( \boldsymbol{\alpha}, \mathbf{G}, \mathbf{H} )
S_1( \boldsymbol{\alpha}, \mathbf{G}, \mathbf{H} )S_2( \boldsymbol{\alpha}, \mathbf{H} )
 \  \mathbf{d}  \boldsymbol{\alpha} \Big{|}^2
\notag
\\
&\ll&
X^{D'_1 + D'_2} \  \left( \sup_{\substack {\mathbf{H} \in \mathcal{L}_1(X) \\ \mathbf{G} \in \mathcal{L}_2(X ; \mathbf{H}) }}  \sup_{\boldsymbol{\alpha} \in \mathfrak{m}(C) } | S_0 ( \boldsymbol{\alpha}, \mathbf{G}, \mathbf{H} ) |^2 \right)   \sum_{\mathbf{H} \in \mathcal{L}_1(X)} \sum_{\mathbf{G} \in \mathcal{L}_2(X ; \mathbf{H})}   \|S_1(\cdot, \mathbf{G}, \mathbf{H} ) \|_2^2 \ \| S_2(\cdot, \mathbf{H} ) \|_2^2,
\notag
\end{eqnarray}
where $\| \cdot \|_2$ denotes the $L^2$-norm on $[0,1]^R$.

By the orthogonality relation, it follows that
\begin{eqnarray}
\notag
\|S_1(\cdot, \mathbf{G}, \mathbf{H} ) \|_2^2 \ \| S_2(\cdot, \mathbf{H} ) \|_2^2
&\leq&
(\log X)^{2n - 2K} \mathcal{N}^{}_1 (\mathbf{G}; \mathbf{H}) \mathcal{N}^{}_2 (\mathbf{H}),
\notag
\end{eqnarray}
where
\begin{eqnarray}
\notag
&&\mathcal{N}^{ }_1 (\mathbf{G}; \mathbf{H}) =  | \{ (\mathbf{y}, \mathbf{y}' )  \in Y(\mathbf{G}; \mathbf{H}) \times Y(\mathbf{G}; \mathbf{H}):
\mathfrak{U}_{\ell,r} (\mathbf{y}, \mathbf{H}  ) = \mathfrak{U}_{\ell,r} (\mathbf{y}', \mathbf{H}  ) \ \  (2 \leq \ell \leq d, 1 \leq r \leq r_{\ell}),
\\
&&{g}_{r,1} \widetilde{y}_1 + \ldots  +  {g}_{r,r_1} \widetilde{y}_{r_1} = {g}_{r,1} \widetilde{y}'_1 + \ldots  +  {g}_{r,r_1} \widetilde{y}'_{r_1}
\ \ (1 \leq r \leq r_1)
\} |,
\notag
\end{eqnarray}
and with $\mathbf{z} = (\mathbf{a}, \mathbf{b})$ and $\mathbf{z}' = (\mathbf{a}', \mathbf{b}')$,
\begin{eqnarray}
&&\mathcal{N}_2 (\mathbf{H}) =  |\{ (\mathbf{z}, \mathbf{z}')  \in Z(\mathbf{H}) \times Z(\mathbf{H}):
\mathfrak{X}_{\ell,r} (\mathbf{a}, \mathbf{b}, \mathbf{H} )  = \mathfrak{X}_{\ell,r} (\mathbf{a}', \mathbf{b}', \mathbf{H} ) \ \   (2 \leq \ell \leq d, 1 \leq r \leq r_{\ell} ),
\notag
\\
&&{g}'_{r,1} \widetilde{z}_1 + \ldots  +  {g}'_{r,r_1} \widetilde{z}_{r_1} = {g}'_{r,1} \widetilde{z}'_1 + \ldots  +  {g}'_{r,r_1} \widetilde{z}'_{r_1}
\ \ (1 \leq r \leq r_1)
\}|.
\notag
\end{eqnarray}
Here $\widetilde{y}'_i$ $(1 \leq i \leq r_1)$ are $r_1$ of the $\mathbf{y}'$ variables in the exact same way
$\widetilde{y}_i \ (1 \leq i \leq r_1)$ are $r_1$ of the $\mathbf{y}$ variables. Similarly,
$\widetilde{z}'_i$ $(1 \leq i \leq r_1)$ are $r_1$ of the $\mathbf{z}'$ variables in the exact same way
$\widetilde{z}_i \ (1 \leq i \leq r_1)$ are $r_1$ of the $\mathbf{z}$ variables. Other instances of notation
of this type should be interpreted in a similar manner.

With these notations, we may further bound ~(\ref{minor arc ineq 1}) as follows
\begin{eqnarray}
\label{minor arc ineq 2}
\\
\notag
\Big{|} \int_{\mathfrak{m}(C)} T(\mathbf{f}; \boldsymbol{\alpha} ) \ \mathbf{d} \boldsymbol{\alpha}  \Big{|}^2
\ll
(\log X)^{2n  - 2K}  X^{D'_1+D'_2} \ \left( \sup_{\substack {\mathbf{H} \in \mathcal{L}_1(X) \\ \mathbf{G} \in \mathcal{L}_2(X ; \mathbf{H}) }}
\sup_{ \boldsymbol{\alpha} \in \mathfrak{m}(C) } | S_0 ( \boldsymbol{\alpha}, \mathbf{G}, \mathbf{H} ) |^2 \right)
\ \mathcal{W},
\end{eqnarray}
where
$$
\mathcal{W} = \sum_{\mathbf{H} \in \mathcal{L}_1(X)} \sum_{\mathbf{G} \in \mathcal{L}_2(X; \mathbf{H})} \mathcal{N}_1 (\mathbf{G}; \mathbf{H})  \mathcal{N}_2 (\mathbf{H}).
$$
We can express $\mathcal{W}$ as the number of solutions $\mathbf{y}, \mathbf{y}'  \in [0,X]^M \cap \mathbb{Z}^M$ and  $\mathbf{z} = (\mathbf{a}, \mathbf{b}), \mathbf{z}'= (\mathbf{a}', \mathbf{b}') \in [0,X]^{n-M-K} \cap \mathbb{Z}^{n-M-K} $
of the system
\begin{eqnarray}
{\mathcal{R}_+}(\Psi) ( \mathbf{z}) &=& {\mathcal{R}_+}(\Psi) (\mathbf{z}') = \mathbf{H}
\label{first system}
\\
\overline{\mathcal{R}}_+(\Phi) (\mathbf{y},  Z(\mathbf{H})) &=& \overline{\mathcal{R}}_+(\Phi) (\mathbf{y}', Z(\mathbf{H}))= \mathbf{G}
\notag
\\
\mathfrak{U}_{\ell,r} (\mathbf{y}, \mathbf{H}  )  &=&  \mathfrak{U}_{\ell,r} (\mathbf{y}', \mathbf{H}  ) \ \   ( 2 \leq \ell \leq d, 1 \leq r \leq r_{\ell} )
\notag
\\
{g}_{r,1} \widetilde{y}_1 + \ldots  +  {g}_{r,r_1} \widetilde{y}_{r_1} &=& {g}_{r,1} \widetilde{y}'_1 + \ldots  +  {g}_{r,r_1} \widetilde{y}'_{r_1}
\ \ (1 \leq r \leq r_1)
\notag
\\
\mathfrak{X}_{\ell,r} (\mathbf{a}, \mathbf{b}, \mathbf{H} )  &=& \mathfrak{X}_{\ell,r} (\mathbf{a}', \mathbf{b}', \mathbf{H} ) \ \ (2 \leq \ell \leq d, 1 \leq r \leq r_{\ell} )
\notag
\\
{g}'_{r,1} \widetilde{z}_1 + \ldots  +  {g}'_{r,r_1} \widetilde{z}_{r_1} &=& {g}'_{r,1} \widetilde{z}'_1 + \ldots  +  {g}'_{r,r_1} \widetilde{z}'_{r_1}
\ \ (1 \leq r \leq r_1)
\notag
\end{eqnarray}
for any $\mathbf{H}\in \mathcal{L}_1(X)$ and $\mathbf{G}\in \mathcal{L}_2(X; \mathbf{H})$.
By ${\mathcal{R}_+}(\Psi) ( \mathbf{z}) = {\mathcal{R}_+}(\Psi) (\mathbf{z}') = \mathbf{H}$, we mean that
$V^{(s)}_i(\mathbf{z}) = V^{(s)}_i(\mathbf{z}') = H_{s,i}$, where $H_{s,i}$ is the corresponding term of $\mathbf{H}$,
for every $V^{(s)}_i \in {\mathcal{R}_+}(\Psi)$. The second set of equations in (\ref{first system}) should be interpreted in
a similar manner.

We know that the system of polynomials  $\overline{\mathcal{R}}_+(\Phi) (\mathbf{y},  Z(\mathbf{H}))$ is identical to $\overline{\mathcal{R}}_+(\Phi) (\mathbf{y},  \mathbf{z}_0)$ for any choice of $\mathbf{z}_0 \in Z(\mathbf{H})$. 
Similarly, it follows from (\ref{eqn hyaaa1}), (\ref{eqn hyaaa2}), and (\ref{eqn hyaaa3}) that the polynomial
$\mathfrak{U}_{\ell,r} (\mathbf{y}, \mathbf{H})$ is identical to $f_{\ell,r}( \mathbf{0}, \mathbf{y}, \mathbf{z}_0) - f_{\ell,r}( \mathbf{0}, \mathbf{0}, \mathbf{z}_0)$ for any choice of $\mathbf{z}_0 \in Z(\mathbf{H})$.
Furthermore, for $2 \leq \ell < d$ we know that each term of $\mathfrak{X}_{\ell,r} (\mathbf{a}, \mathbf{b}, \mathbf{H} )$ except for $P_{\ell,r} (\mathbf{a}, {\mathbf{b}} )$
is constant on $\mathbf{z} = (\mathbf{a}, \mathbf{b}) \in Z(\mathbf{H})$.
Therefore, since $\mathcal{R}_+(\Psi) (\mathbf{z}) = \mathbf{H}$
implies $\mathbf{z} \in Z(\mathbf{H})$, we can rearrange the system ~(\ref{first system}) and
deduce that $\mathcal{W}$ is the number of solutions $\mathbf{y}, \mathbf{y}'  \in [0,X]^M \cap \mathbb{Z}^M$
and $\mathbf{z}, \mathbf{z}' \in [0,X]^{n-M-K} \cap \mathbb{Z}^{n-M-K} $ of the following system
\begin{eqnarray}
\label{second system}
{\mathcal{R}_+}(\Psi) ( \mathbf{z}) &=& {\mathcal{R}_+}(\Psi) (\mathbf{z}')
\\
\overline{\mathcal{R}}_+(\Phi) (\mathbf{y},  \mathbf{z} ) &=& \overline{\mathcal{R}}_+(\Phi) (\mathbf{y}', \mathbf{z} )
\notag
\\
f_{\ell,r} ( \mathbf{0}, \mathbf{y}, \mathbf{z}) - f_{\ell,r}( \mathbf{0}, \mathbf{0}, \mathbf{z})   &=&  f_{\ell,r}( \mathbf{0}, \mathbf{y}', \mathbf{z}) - f_{\ell,r}( \mathbf{0}, \mathbf{0}, \mathbf{z})
\notag
\ \ ( 2 \leq \ell \leq d, 1 \leq r \leq r_{\ell} )
\notag
\\
{g}_{r,1} \widetilde{y}_1 + \ldots  +  {g}_{r,r_1} \widetilde{y}_{r_1} &=& {g}_{r,1} \widetilde{y}'_1 + \ldots  +  {g}_{r,r_1} \widetilde{y}'_{r_1}
\ \ (1 \leq r \leq r_1)
\notag
\\
{F}_{d,r}( \mathbf{0}, \mathbf{0}, \mathbf{z} ) &=& {F}_{d,r}( \mathbf{0}, \mathbf{0}, \mathbf{z}' ) \ \ (1 \leq r \leq r_{d} )
\notag
\\
P_{\ell,r}(\mathbf{a}, {\mathbf{b}} ) &=& P_{\ell,r}(\mathbf{a}', {\mathbf{b}}' ) \ \ (2 \leq \ell < d, 1 \leq r \leq r_{\ell} )
\notag
\\
{g}'_{r,1} \widetilde{z}_1 + \ldots  +  {g}'_{r,r_1} \widetilde{z}_{r_1} &=& {g}'_{r,1} \widetilde{z}'_1 + \ldots  +  {g}'_{r,r_1} \widetilde{z}'_{r_1}
\ \ (1 \leq r \leq r_1).
\notag
\end{eqnarray}

Our result then follows from the following two claims.

Claim 1: Given any $c>0$, for sufficiently large $C > 0$ we have
$$
\sup_{\substack {\mathbf{H} \in \mathcal{L}_1(X) \\ \mathbf{G} \in \mathcal{L}_2(X ; \mathbf{H}) }}  \sup_{\boldsymbol{\alpha} \in \mathfrak{m}(C) } | S_0 (\boldsymbol{\alpha}, \mathbf{G}, \mathbf{H} ) |
\ll \frac{X^{K}}{(\log X)^c}.
$$

Claim 2: We have the following bound on $\mathcal{W}$,
$$
\mathcal{W} \ll X^{2n - 2K - 2 \sum_{\ell = 1}^d \ell r_{\ell} - D'_1 - D'_2}.
$$

Let $c > 0$. By substituting the bounds from the two claims above into ~(\ref{minor arc ineq 2}), we obtain that
for sufficiently large $C > 0$ we have
$$
\int_{\mathfrak{m}(C)} T(\mathbf{f}; \boldsymbol{\alpha} ) \ \mathbf{d} \boldsymbol{\alpha}
\ll
\frac{X^{n -  \sum_{\ell = 1}^d \ell r_{\ell}} }{(\log X)^{c}},
$$
which is the bound in the statement of this proposition. Therefore, we are only left to prove Claims $1$ and $2$ to establish our proposition.
We now present the proof of Claim 2.
Claim $1$ is obtained via Weyl differencing, which is a technique based on the Cauchy-Schwarz inequality, and we prove it in Section \ref{sec claim 1} after the proof of Claim 2.

From ~(\ref{second system}), we can write
$$
\mathcal{W} = \sum_{\mathbf{z} = (\mathbf{a}, \mathbf{b})  \in   [0,X]^{n-M-K} } \mathcal{W}'_1(\mathbf{z}) \cdot \mathcal{W}'_2(\mathbf{z}),
$$
where $\mathcal{W}'_1(\mathbf{z})$ is the number of solutions $\mathbf{y}, \mathbf{y}' \in [0,X]^{M} \cap \mathbb{Z}^{M}$
to the system
\begin{eqnarray}
\overline{\mathcal{R}}_+(\Phi) (\mathbf{y}, \mathbf{z}) &=& \overline{\mathcal{R}}_+(\Phi) (\mathbf{y}', \mathbf{z}),
\notag
\\
f_{\ell,r}( \mathbf{0}, \mathbf{y}, \mathbf{z}) - f_{\ell,r}( \mathbf{0}, \mathbf{0}, \mathbf{z})   &=&  f_{\ell,r}( \mathbf{0}, \mathbf{y}', \mathbf{z}) - f_{\ell,r}( \mathbf{0}, \mathbf{0}, \mathbf{z})
\ \  ( 2 \leq \ell \leq d, 1 \leq r \leq r_{\ell} )
\notag
\\
{g}_{r,1} \widetilde{y}_1 + \ldots  +  {g}_{r,r_1} \widetilde{y}_{r_1} &=& {g}_{r,1} \widetilde{y}'_1 + \ldots  +  {g}_{r,r_1} \widetilde{y}'_{r_1}
\ \ (1 \leq r \leq r_1),
\notag
\end{eqnarray}
and $\mathcal{W}'_2(\mathbf{z})$ is the number of solutions $\mathbf{z}' = (\mathbf{a}', \mathbf{b}') \in  [0,X]^{n-M-K} \cap \mathbb{Z}^{n-M-K}$ to the system
\begin{eqnarray}
\mathcal{R}_+(\Psi) ( \mathbf{z}) &=& \mathcal{R}_+(\Psi) (\mathbf{z}')
\notag
\\
{F}_{d,r}( \mathbf{0}, \mathbf{0}, \mathbf{z} ) &=& {F}_{d,r}( \mathbf{0}, \mathbf{0}, \mathbf{z}' ) \ \ ( 1 \leq r \leq r_{d} )
\notag
\\
P_{\ell,r}(\mathbf{a},  {\mathbf{b}} ) &=& P_{\ell,r}(\mathbf{a}',  {\mathbf{b}}' ) \ \ (2 \leq \ell < d, 1 \leq r \leq r_{\ell} )
\notag
\\
{g}'_{r,1} \widetilde{z}_1 + \ldots  +  {g}'_{r,r_1} \widetilde{z}_{r_1} &=& {g}'_{r,1} \widetilde{z}'_1 + \ldots  +  {g}'_{r,r_1} \widetilde{z}'_{r_1}
\ \ (1 \leq r \leq r_1).
\notag
\end{eqnarray}
Define $\mathcal{W}_i :=\sum_{\mathbf{z}} \mathcal{W}'_i (\mathbf{z})^2 \ (i=1, 2)$ so that
we have $\mathcal{W}^2 \leq \mathcal{W}_1 \mathcal{W}_2$ by the Cauchy-Schwarz inequality.
We estimate $\mathcal{W}_1$ and $\mathcal{W}_2$ in Sections \ref{estimate for W_1} and \ref{estimate for W_2}, respectively.
In Section \ref{estimate for W_1}, we prove $\mathcal{W}_1  \ll X^{n +  3M -  K - 2\sum_{\ell = 1}^d \ell r_{\ell} - 2D'_2}$,
and in Section \ref{estimate for W_2} we prove $\mathcal{W}_2  \ll X^{3(n - M -  K) - 2\sum_{\ell = 1}^d \ell r_{\ell} - 2D'_1}$.
Combining these bounds for $\mathcal{W}_1$ and $\mathcal{W}_2$, we obtain
$$
\mathcal{W} \leq \mathcal{W}_1^{1/2} \mathcal{W}_2^{1/2} \ll X^{2n -2 K - 2\sum_{\ell = 1}^d \ell r_{\ell} - D'_1 - D'_2},
$$
which proves Claim 2.

\subsection{Estimate for $\mathcal{W}_1$}
\label{estimate for W_1}
We first estimate $\mathcal{W}_1$, which
we can deduce to be the number of solutions $\mathbf{y}, \mathbf{y}', \mathbf{v}, \mathbf{v}' \in [0,X]^M \cap \mathbb{Z}^M$
and $\mathbf{z} \in [0,X]^{n-M-K} \cap \mathbb{Z}^{n-M-K}$ satisfying the equations
\begin{eqnarray}
\label{system 1 in claim 2}
f_{\ell,r} ( \mathbf{0}, \mathbf{y}, \mathbf{z})
-
f_{\ell,r} ( \mathbf{0}, \mathbf{y}', \mathbf{z})
&=& 0 \ \ (2 \leq \ell \leq d, 1 \leq r \leq r_{\ell})
\\
f_{\ell,r}( \mathbf{0}, \mathbf{v}, \mathbf{z})
-
f_{\ell,r}( \mathbf{0}, \mathbf{v}', \mathbf{z})  &=& 0\ \ (2 \leq \ell \leq d, 1 \leq r \leq r_{\ell})
\notag
\\
\sum_{i=1}^{r_1 }{g}_{r,i} \widetilde{y}_i -  \sum_{i=1}^{r_1 }{g}_{r,i} \widetilde{y}'_i &=& 0 \ \ (1 \leq r \leq r_1)
\notag
\\
\sum_{i=1}^{r_1 }{g}_{r,i} \widetilde{v}_i -  \sum_{i=1}^{r_1 }{g}_{r,i} \widetilde{v}'_i &=& 0 \ \ (1 \leq r \leq r_1)
\notag
\\
\overline{\mathcal{R}}_+(\Phi) (\mathbf{y}, \mathbf{z}) - \overline{\mathcal{R}}_+(\Phi) (\mathbf{y}', \mathbf{z}) &=&  \mathbf{0}
\notag
\\
\overline{\mathcal{R}}_+(\Phi) (\mathbf{v}, \mathbf{z}) - \overline{\mathcal{R}}_+(\Phi) (\mathbf{v}', \mathbf{z}) &=&  \mathbf{0}.
\notag
\end{eqnarray}
Let $\overline{\mathcal{R}}_+^{(i)}(\Phi)$ denote the degree $i$ forms of $\overline{\mathcal{R}}_+(\Phi)$ $(1 \leq i < d)$.
By $\overline{\mathcal{R}}_+(\Phi) (\mathbf{y}, \mathbf{z}) - \overline{\mathcal{R}}_+(\Phi) (\mathbf{y}', \mathbf{z})$,
we mean the system of forms where its degree $i$ forms are
$$
\overline{\mathcal{R}}_+^{(i)}(\Phi) (\mathbf{y}, \mathbf{z}) - \overline{\mathcal{R}}_+^{(i)}(\Phi) (\mathbf{y}', \mathbf{z})
=
\{ {A}_j^{(i)}(\mathbf{y}, \mathbf{z}) - {A}_j^{(i)}(\mathbf{y}', \mathbf{z}) : 1 \leq j \leq  | \overline{\mathcal{R}}_+^{(i)}(\Phi)|  \},
$$
for each $1 \leq i \leq d-1$. Recall we have $\overline{\mathcal{R}}_+^{(i)}(\Phi) =\overline{\mathcal{R}}^{(i)}(\Phi)$ for $2 \leq i \leq d-1$.
We also define
$$
\overline{\mathcal{R}}_+(\Phi)^{} (\mathbf{v}, \mathbf{z}) - \overline{\mathcal{R}}_+(\Phi)^{} (\mathbf{v}', \mathbf{z})
$$
in a similar manner.

We consider the $h$-invariant of the system of polynomials on the left hand side of (\ref{system 1 in claim 2}), and show that
it is a regular system.
Recall we defined $Q_{\ell,r} (\mathbf{y}, \mathbf{z} )$ in (\ref{def Q})
and also remarked that it is the degree $\ell$ portion of the polynomial $f_{\ell,r}( \mathbf{0}, \mathbf{y}, \mathbf{z}) - f_{\ell,r}( \mathbf{0}, \mathbf{0}, \mathbf{z}).$ Therefore, the degree $\ell$ portion of the polynomial
$f_{\ell,r}( \mathbf{0}, \mathbf{y}, \mathbf{z})-f_{\ell,r}( \mathbf{0}, \mathbf{y}', \mathbf{z})$
is precisely $Q_{\ell,r} (\mathbf{y}, \mathbf{z} )- Q_{\ell,r} (\mathbf{y}', \mathbf{z} )$.
Thus the degree $d$ portions of the degree $d$ polynomials of the system (\ref{system 1 in claim 2}) are
$Q_{d,r}(\mathbf{y}, \mathbf{z}) - Q_{d,r}(\mathbf{y}', \mathbf{z})$, $Q_{d,r}(\mathbf{v}, \mathbf{z}) - Q_{d,r}(\mathbf{v}', \mathbf{z})$
$(1 \leq r \leq r_d)$. We let $h_d$ be the $h$-invariant of these degree $d$ forms.
Suppose for some $\boldsymbol{\lambda}, \boldsymbol{\mu} \in \mathbb{Q}^{r_d}$, not both $\mathbf{0}$, we have
\begin{eqnarray}
\label{eqn 1}
\\
\notag
\sum_{r=1}^{r_d} \lambda_r \cdot (Q_{d,r}(\mathbf{y}, \mathbf{z}) - Q_{d,r}(\mathbf{y}', \mathbf{z}) )
+ \mu_r \cdot (Q_{d,r}(\mathbf{v}, \mathbf{z}) - Q_{d,r}(\mathbf{v}', \mathbf{z}) )
= \sum_{j=1}^{h_d} \widetilde{U}_j \cdot \widetilde{V}_j,
\end{eqnarray}
where $\widetilde{U}_j = \widetilde{U}_j( \mathbf{y}, \mathbf{y}', \mathbf{v}, \mathbf{v}', \mathbf{z} )$ and $\widetilde{V}_j =
\widetilde{V}_j( \mathbf{y}, \mathbf{y}', \mathbf{v}, \mathbf{v}', \mathbf{z} )$ are rational forms of positive degree $(1 \leq j \leq h_d)$.
Without loss of generality, suppose $\boldsymbol{\lambda}  \not = \mathbf{0}$. If we set
$\mathbf{v} = \mathbf{v}' = \mathbf{y}' = \mathbf{0}$,
then the above equation (\ref{eqn 1}) becomes
\begin{eqnarray}
\sum_{r=1}^{r_d} \lambda_r \cdot Q_{d,r}(\mathbf{y}, \mathbf{z})
= \sum_{j=1}^{h_d}  \widetilde{U}_j (\mathbf{y}, \mathbf{0}, \mathbf{0}, \mathbf{0}, \mathbf{z}) \cdot \widetilde{V}_j (\mathbf{y}, \mathbf{0}, \mathbf{0}, \mathbf{0}, \mathbf{z}).
\notag
\end{eqnarray}
Therefore, we obtain from (\ref{hinv bound1}) and (\ref{hinv bound1'}) that
\begin{eqnarray}
h_d &\geq& h_d \left(  \{ Q_{d,r}(\mathbf{y}, \mathbf{z}) : 1 \leq r \leq r_d \}  \right)
\notag
\\
&\geq& \rho_{d,d}( 2R + 2 R_2) + 2 R_2 + 4 r_1
\notag
\\
&\geq& \rho_{d,d}( 2R + 2 R'_2 - 2 | \overline{\mathcal{R}}_+^{(1)}(\Phi)| - 2 r_1) + 2 | \overline{\mathcal{R}}_+^{(1)}(\Phi) | + 2 r_1.
\notag
\end{eqnarray}

We now estimate the $h$-invariant of the degree $\ell$ polynomials of the system ~(\ref{system 1 in claim 2}) for each $2 \leq \ell \leq d-1$.
The degree $\ell$ portion of the degree $\ell$ polynomials of the system (\ref{system 1 in claim 2}) are precisely
$Q_{\ell,r}(\mathbf{y}, \mathbf{z}) - Q_{\ell,r}(\mathbf{y}', \mathbf{z})$, $Q_{\ell,r}(\mathbf{v}, \mathbf{z}) - Q_{\ell, r}(\mathbf{v}', \mathbf{z})$
$(1 \leq r \leq r_{\ell})$, and the forms of
$\overline{\mathcal{R}}_+^{(\ell)}(\Phi) (\mathbf{y}, \mathbf{z}) - \overline{\mathcal{R}}_+^{(\ell)}(\Phi) (\mathbf{y}', \mathbf{z})$
and $\overline{\mathcal{R}}_+^{(\ell)}(\Phi) (\mathbf{v}, \mathbf{z}) - \overline{\mathcal{R}}_+^{(\ell)}(\Phi) (\mathbf{v}', \mathbf{z})$. We let
$h_{\ell}$ be the $h$-invariant of these degree $\ell$ forms.
Suppose for some $\boldsymbol{\lambda}, \boldsymbol{\mu} \in \mathbb{Q}^{r_{\ell}}$ and $\boldsymbol{\gamma},\boldsymbol{\gamma}'  \in \mathbb{Q}^{|\overline{\mathcal{R}}^{(\ell)}(\Phi)|}$, not all zero vectors, we have
\begin{eqnarray}
\label{eqn 2''}
&&\sum_{r=1}^{r_{\ell}} \lambda_r \cdot (Q_{\ell,r}(\mathbf{y}, \mathbf{z}) - Q_{\ell, r}(\mathbf{y}', \mathbf{z}) )
+ \mu_r \cdot (Q_{\ell,r}(\mathbf{v}, \mathbf{z}) - Q_{\ell,r}(\mathbf{v}', \mathbf{z}) )
\\
&+&
 \sum_{1 \leq j \leq |\overline{\mathcal{R}}^{(\ell)}(\Phi) |} \gamma_j ( {A}_j^{(\ell)}(\mathbf{y}, \mathbf{z}) - {A}_j^{(\ell)}(\mathbf{y}', \mathbf{z}) )
+  \gamma'_j ( {A}_j^{(\ell)}(\mathbf{v}, \mathbf{z}) - {A}_j^{(\ell)}(\mathbf{v}', \mathbf{z}) )
= \sum_{j=1}^{h_{\ell}} \widetilde{U}_j \cdot \widetilde{V}_j,
\notag
\end{eqnarray}
where $\widetilde{U}_j = \widetilde{U}_j( \mathbf{y}, \mathbf{y}', \mathbf{v}, \mathbf{v}', \mathbf{z} )$ and $\widetilde{V}_j =
\widetilde{V}_j( \mathbf{y}, \mathbf{y}', \mathbf{v}, \mathbf{v}', \mathbf{z} )$ are rational forms of positive degree $(1 \leq j \leq h_{\ell})$.
We must consider two cases, $\boldsymbol{\gamma} = \boldsymbol{\gamma}' = \mathbf{0}$ and at least one of $\boldsymbol{\gamma}$
and $\boldsymbol{\gamma}'$ not being a zero vector.
If $\boldsymbol{\gamma} = \boldsymbol{\gamma}' =  \mathbf{0}$, then at least one of $\boldsymbol{\lambda}$ or $\boldsymbol{\mu}$
is not a zero vector. Without loss of generality, suppose $\boldsymbol{\lambda} \not = \mathbf{0}$.
Then by setting $\mathbf{v} = \mathbf{v}' = \mathbf{y}' =  \mathbf{0}$, we have
$$
\sum_{r=1}^{r_{\ell}} \lambda_r Q_{\ell,r}(\mathbf{y}, \mathbf{z})
= \sum_{j=1}^{h_{\ell}} \widetilde{U}_j (\mathbf{y}, \mathbf{0}, \mathbf{0}, \mathbf{0}, \mathbf{z})  \cdot \widetilde{V}_j (\mathbf{y}, \mathbf{0}, \mathbf{0}, \mathbf{0}, \mathbf{z} ).
$$
Consequently, we obtain from (\ref{hinv bound1}) and (\ref{hinv bound1'}) that
\begin{eqnarray}
h_{\ell} \geq h_{\ell}( \{ Q_{\ell,r}(\mathbf{y}, \mathbf{z}) : 1 \leq r \leq r_{\ell} \}  )
\geq \rho_{d,  \ell } ( 2R + 2 R_2 ) + 2 R_2 + 4 r_1.
\notag
\end{eqnarray}

For the second case, suppose without loss of generality that $\boldsymbol{\gamma} \not = \mathbf{0}$.
First we set $\mathbf{v} = \mathbf{v}' = \mathbf{0}$ and simplify the equation (\ref{eqn 2''}) to
\begin{eqnarray}
\label{eqn 1'}
&&\sum_{r=1}^{r_{\ell}} \lambda_r \cdot (Q_{\ell,r}(\mathbf{y}, \mathbf{z}) - Q_{\ell,r}(\mathbf{y}', \mathbf{z}) )
+ \sum_{1 \leq j \leq |\overline{\mathcal{R}}^{(\ell)}(\Phi) |} \gamma_j ( {A}_j^{(\ell)}(\mathbf{y}, \mathbf{z}) - {A}_j^{(\ell)}(\mathbf{y}', \mathbf{z}) )
\\
&=& \sum_{j=1}^{h_{\ell}} \widetilde{U}_j (\mathbf{y}, \mathbf{y}', \mathbf{0}, \mathbf{0}, \mathbf{z}) \cdot \widetilde{V}_j (\mathbf{y}, \mathbf{y}', \mathbf{0}, \mathbf{0}, \mathbf{z}).
\notag
\end{eqnarray}
Recall every monomial of $Q_{\ell,r}(\mathbf{y}, \mathbf{z})$ contains at least one of the $\mathbf{y}$ variables.
Thus it follows from the definition of the $h$-invariant, (\ref{bound on M}), and (\ref{bound on M1}) that
\begin{eqnarray}
&&h_{\ell}( Q_{\ell,r}(\mathbf{y}, \mathbf{z}) )
\notag
\\
&\leq& M
\notag
\\
&\leq& d R ( R^{2} + 1 )^{d-2} 2^d \Big{(} \rho_{d, d} (2 R + 2 R_2) + 2 R_2 + 4 r_1 +  2 R C''_0 \Big{)} +
d  R^3 ( R^{2} + 1 )^{d - 2}  (2 R_2 + 1).
\notag
\end{eqnarray}
Therefore, by moving the term $\sum_{r=1}^{r_{\ell}} \lambda_r \cdot (Q_{\ell,r}(\mathbf{y}, \mathbf{z}) - Q_{\ell,r}(\mathbf{y}', \mathbf{z}) )$ to the right hand side of the equation (\ref{eqn 1'}), we obtain via Lemma \ref{Lemma 2 in CM} and $(4)$ of Proposition \ref{prop reg par} that
\begin{eqnarray}
&&
\\
&&
h_{\ell} + 2 r_{\ell} M
\notag
\\
&\geq&
h_{\ell} \left( \overline{\mathcal{R}}_+^{(\ell)}(\Phi) (\mathbf{y}, \mathbf{z}) - \overline{\mathcal{R}}_+^{(\ell)}(\Phi) (\mathbf{y}', \mathbf{z}) \right)
\notag
\\
&\geq&
h_{\ell} \left( \overline{\mathcal{R}}_+^{(\ell)}(\Phi) (\mathbf{y}, \mathbf{z}) - \overline{\mathcal{R}}_+^{(\ell)}(\Phi) (\mathbf{y}', \mathbf{z}) ; \mathbf{z}  \right)
\notag
\\
&\geq&
h_{\ell} \left( \overline{\mathcal{R}}_+^{(\ell)}(\Phi) (\mathbf{y}, \mathbf{z}) , \overline{\mathcal{R}}_+^{(\ell)}(\Phi) (\mathbf{y}', \mathbf{z}) ; \mathbf{z}  \right)
\notag
\\
&=&
h_{\ell} \left( \overline{\mathcal{R}}^{(\ell)}(\Phi) (\mathbf{y}, \mathbf{z}) , \overline{\mathcal{R}}^{(\ell)}(\Phi) (\mathbf{y}', \mathbf{z}) ; \mathbf{z}  \right)
\notag
\\
&=&
h_{\ell} \left( \overline{\mathcal{R}}^{(\ell)}(\Phi) (\mathbf{y}, \mathbf{z}) ; \mathbf{z}  \right)
\notag
\\
&\geq&
\mathcal{F}_{\ell}( R_2)
\notag
\\
&=&
\rho_{d,  d }( 2R + 2 R_2 ) + 2 R_2 + 4 r_1
\notag
\\
&+& 2 R \left( d R ( R^{2} + 1 )^{d-2} 2^d \Big{(} \rho_{d, d} (2 R + 2 R_2) + 2 R_2 + 4 r_1 +  2 R C''_0 \Big{)} +
d  R^3 ( R^{2} + 1 )^{d - 2}  (2 R_2 + 1) \right).
\notag
\end{eqnarray}
Thus it follows that
\begin{eqnarray}
h_{\ell}
\geq
\rho_{d, d}(2R + 2 R_2 ) + 2 R_2 + 4 r_1
\geq
\rho_{d, \ell}(2R + 2 R_2 ) + 2 R_2 + 4 r_1.
\notag
\end{eqnarray}
Therefore, in either case we obtain
$$
h_{\ell} \geq \rho_{d, \ell}(2R + 2 R_2 ) + 2 R_2 + 4 r_1 \geq \rho_{d,\ell}( 2R + 2 R'_2 - 2 | \overline{\mathcal{R}}_+^{(1)}(\Phi) | - 2 r_1)
+ 2 | \overline{\mathcal{R}}_+^{(1)}(\Phi) | + 2 r_1.
$$

Finally, we also have to show that the linear forms of the system (\ref{system 1 in claim 2}) are linearly independent over $\mathbb{Q}$.
Recall the linear forms of
$$
\Big{\{}  \sum_{i=1}^{r_1 }{g}_{r,i} \widetilde{y}_i : 1 \leq r \leq r_1 \Big{\}} \bigcup \  \overline{\mathcal{R}}_+^{(1)}(\Phi) (\mathbf{y}, \mathbf{z})
$$ are linearly independent over $\mathbb{Q}$,
and by construction they are only in the $\mathbf{y}$ variables.
It is then a basic exercise in linear algebra to verify that the linear forms of
\begin{eqnarray}
&&\overline{\mathcal{R}}_+^{(1)}(\Phi) (\mathbf{y}, \mathbf{z}) - \overline{\mathcal{R}}_+^{(1)}(\Phi) (\mathbf{y}', \mathbf{z}) \
\bigcup
\ \overline{\mathcal{R}}_+^{(1)}(\Phi) (\mathbf{v}, \mathbf{z}) - \overline{\mathcal{R}}_+^{(1)}(\Phi) (\mathbf{v}', \mathbf{z})
\\
\notag
&\bigcup& \Big{\{}  \sum_{i=1}^{r_1 }{g}_{r,i} \widetilde{y}_i -  \sum_{i=1}^{r_1 }{g}_{r,i} \widetilde{y}'_i : 1 \leq r \leq r_1  \Big{\}}
\bigcup \Big{\{}  \sum_{i=1}^{r_1 }{g}_{r,i} \widetilde{v}_i -  \sum_{i=1}^{r_1 }{g}_{r,i} \widetilde{v}'_i  : 1 \leq r \leq r_1 \Big{\}}
\end{eqnarray}
are linearly independent over $\mathbb{Q}$.

Therefore, we obtain from Corollary \ref{cor Schmidt} that
$$
\mathcal{W}_1 \ll X^{n + 3M - K - 2 \sum_{\ell = 1}^d \ell r_{\ell} - 2 D'_2}.
$$

\subsection{Estimate for $\mathcal{W}_2$}
\label{estimate for W_2} We now estimate $\mathcal{W}_2$,
which we can deduce to be the number of solutions $\mathbf{z}, \mathbf{z}', \mathbf{z}'' \in [0,X]^{n-M-K } \cap \mathbb{Z}^{n-M-K}$
satisfying the equations
\begin{eqnarray}
\label{system 2 in claim 2}
{F}_{d,r} ( \mathbf{0}, \mathbf{0},  \mathbf{z} ) - {F}_{d,r} ( \mathbf{0}, \mathbf{0},  \mathbf{z}' ) &=& 0 \ \ ( 1 \leq r \leq r_{d})
\\
{F}_{d,r}( \mathbf{0}, \mathbf{0},  \mathbf{z} ) - {F}_{d,r} ( \mathbf{0}, \mathbf{0},  \mathbf{z}'' )  &=& 0 \ \ ( 1 \leq r \leq r_{d})
\notag
\\
\notag
{P}_{\ell,r}( \mathbf{a}, \mathbf{b} ) - {P}_{\ell,r} ( \mathbf{a}', \mathbf{b}') &=& 0 \ \ (2 \leq \ell < d, 1 \leq r \leq r_{\ell})
\\
{P}_{\ell,r}(\mathbf{a}, \mathbf{b}) - {P}_{\ell,r} ( \mathbf{a}'', \mathbf{b}'' )  &=& 0 \ \ (2 \leq \ell < d, 1 \leq r \leq r_{\ell})
\notag
\\
\sum_{i=1}^{r_1 }{g}'_{r,i} \widetilde{z}_i -  \sum_{i=1}^{r_1 }{g}'_{r,i} \widetilde{z}'_i &=& 0 \ \ (1 \leq r \leq r_1)
\notag
\\
\sum_{i=1}^{r_1 }{g}'_{r,i} \widetilde{z}_i -  \sum_{i=1}^{r_1 }{g}'_{r,i} \widetilde{z}''_i &=& 0 \ \ (1 \leq r \leq r_1)
\notag
\\
\mathcal{R}_+(\Psi) ( \mathbf{z}) - \mathcal{R}_+(\Psi) (\mathbf{z}') &=& \mathbf{0}
\notag
\\
\mathcal{R}_+(\Psi) ( \mathbf{z}) - \mathcal{R}_+(\Psi) (\mathbf{z}'') &=& \mathbf{0},
\notag
\end{eqnarray}
where $\mathbf{z} = (\mathbf{a}, \mathbf{b}), \mathbf{z}' = (\mathbf{a}', \mathbf{b}')$, and $\mathbf{z}'' = (\mathbf{a}'', \mathbf{b}'')$.
Here the notations $\mathcal{R}_+(\Psi) ( \mathbf{z}) - \mathcal{R}_+(\Psi) (\mathbf{z}')$ and $\mathcal{R}_+(\Psi) ( \mathbf{z}) - \mathcal{R}_+(\Psi) (\mathbf{z}'')$ should be interpreted in a similar manner as in Section \ref{estimate for W_1}.

We consider the $h$-invariant of the system of forms on the left hand side of ~(\ref{system 2 in claim 2}), and show that
it is a regular system. The degree $d$ forms of the system ~(\ref{system 2 in claim 2})
are precisely $ {F}_{d,r} ( \mathbf{0}, \mathbf{0},  \mathbf{z} ) - {F}_{d,r} ( \mathbf{0}, \mathbf{0},  \mathbf{z}' )$
and ${F}_{d,r}( \mathbf{0}, \mathbf{0},  \mathbf{z} ) - {F}_{d,r} ( \mathbf{0}, \mathbf{0},  \mathbf{z}'' )$ $(1 \leq r \leq r_{d})$,
and we let $h_d$ be the $h$-invariant of these degree $d$ forms.
Suppose for some $\boldsymbol{\lambda}, \boldsymbol{\mu} \in \mathbb{Q}^{r_d}$, not both $\mathbf{0}$, we have
\begin{eqnarray}
\label{eqn f_M U V}
&&\sum_{r=1}^{r_d} \lambda_r \cdot ( {F}_{d,r}( \mathbf{0}, \mathbf{0},  \mathbf{z} ) - {F}_{d,r}( \mathbf{0}, \mathbf{0},  \mathbf{z}' ) )
+ \sum_{r=1}^{r_d}  \mu_r \cdot ( {F}_{d,r}( \mathbf{0}, \mathbf{0},  \mathbf{z} ) - {F}_{d,r}( \mathbf{0}, \mathbf{0},  \mathbf{z}'' )  )
\\
&=& \sum_{j=1}^{h_d} \widetilde{U}_j \cdot \widetilde{V}_j,
\notag
\end{eqnarray}
where $\widetilde{U}_j = \widetilde{U}_j( \mathbf{z}, \mathbf{z}', \mathbf{z}'' )$ and $\widetilde{V}_j =
\widetilde{V}_j( \mathbf{z}, \mathbf{z}', \mathbf{z}'' )$ are rational forms of positive degree $(1 \leq j \leq h_d)$.
We consider two cases, $(\boldsymbol{\lambda} + \boldsymbol{\mu} ) \not = \mathbf{0}$ and $( \boldsymbol{\lambda} + \boldsymbol{\mu} ) = \mathbf{0}$.
Suppose $(\boldsymbol{\lambda} + \boldsymbol{\mu} ) \not = \mathbf{0}$. If we set $\mathbf{z}' = \mathbf{z}'' = \mathbf{0}$, then the above equation
(\ref{eqn f_M U V}) becomes
\begin{eqnarray}
\sum_{r=1}^{r_d} ({\lambda}_r + {\mu}_r ) \cdot  F_{d,r}( \mathbf{0}, \mathbf{0},  \mathbf{z} )
=  \sum_{j=1}^{h_d} \widetilde{U}_j (\mathbf{z}, \mathbf{0}, \mathbf{0} ) \cdot \widetilde{V}_j (\mathbf{z}, \mathbf{0}, \mathbf{0} ).
\notag
\end{eqnarray}
Thus we obtain
$$
h_d \geq h_d( \mathbf{F}_{d}(\mathbf{0}, \mathbf{0}, \mathbf{z}) ).
$$
On the other hand, suppose $( \boldsymbol{\lambda} + \boldsymbol{\mu} ) = \mathbf{0}$, then the above equation ~(\ref{eqn f_M U V}) simplifies to
$$
- \sum_{r=1}^{r_d} \lambda_r \cdot ( {F}_{d,r} ( \mathbf{0}, \mathbf{0},  \mathbf{z}' ) - {F}_{d,r} ( \mathbf{0}, \mathbf{0},  \mathbf{z}'' ) )
= \sum_{j=1}^{h_d} \widetilde{U}_j \cdot \widetilde{V}_j.
$$
From this equation, 
by setting $\mathbf{z}'' = \mathbf{0}$ we obtain
$$
h_d \geq h_d( \mathbf{F}_{d}(\mathbf{0}, \mathbf{0}, \mathbf{z}) ).
$$
Therefore, in either case we obtain from (\ref{hinv bound3}), (\ref{def Fd z}), and (\ref{hinv bound3'}) that
$$
h_d \geq h_d( \mathbf{F}_{d}(\mathbf{0}, \mathbf{0}, \mathbf{z}) )  \geq \rho_{d,d}( 2R + 2  R_1) + 2 R_1 + 4 r_1 \geq
\rho_{d,d}( 2R + 2  R'_1 - 2 | \mathcal{R}^{(1)}_+ (\Psi) | - 2 r_1 ) + 2 |  \mathcal{R}^{(1)}_+ (\Psi) | + 2 r_1.
$$

We now estimate the $h$-invariant of the degree $\ell$ forms of the system (\ref{system 2 in claim 2}) for each $2 \leq \ell < d$.
Recall we have $\mathcal{R}_+ (\Psi) = ( \mathcal{R}^{(d-1)}(\Psi), \ldots,  \mathcal{R}^{(2)}(\Psi),  \mathcal{R}^{(1)}_+ (\Psi))$.
The degree $\ell$ forms of the system (\ref{system 2 in claim 2}) are precisely
$P_{\ell,r}(\mathbf{a}, \mathbf{b}) - P_{\ell,r}(\mathbf{a}', \mathbf{b}')$, $P_{\ell,r}(\mathbf{a}, \mathbf{b}) - P_{\ell, r}(\mathbf{a}'', \mathbf{b}'')$
$(1 \leq r \leq r_{\ell})$, and the forms of
$\mathcal{R}^{(\ell)}(\Psi) (\mathbf{z}) - \mathcal{R}^{(\ell)}(\Psi) (\mathbf{z}')$
and $\mathcal{R}^{(\ell)}(\Psi) (\mathbf{z}) - \mathcal{R}^{(\ell)}(\Psi) (\mathbf{z}'')$. We let
$h_{\ell}$ be the $h$-invariant of these degree $\ell$ forms.
Then for some $\boldsymbol{\lambda}, \boldsymbol{\mu} \in \mathbb{Q}^{r_{\ell}}$ and $\boldsymbol{\gamma}, \boldsymbol{\gamma}' \in \mathbb{Q}^{| {\mathcal{R}}^{(\ell)} (\Psi) |}$, not all zero vectors, we have
\begin{eqnarray}
\label{eqn f_M U V 1}
&&\sum_{r = 1}^{r_{\ell}} \lambda_{r} (P_{\ell,r} (\mathbf{a}, \mathbf{b}) - P_{\ell,r} (\mathbf{a}', \mathbf{b}' )
+  \mu_{r} (P_{\ell,r} (\mathbf{a}, \mathbf{b}) - P_{\ell,r} (\mathbf{a}'', \mathbf{b}'') )
\\ \notag
&+&\sum_{j=1}^{| {\mathcal{R}}^{(\ell)} (\Psi)  |} \gamma_j \cdot ( V^{(\ell)}_j (\mathbf{z}) - V^{(\ell)}_j (\mathbf{z}' ) ) +  \gamma'_j \cdot ( V^{(\ell)}_j (\mathbf{z}) - V^{(\ell)}_j (\mathbf{z}'' ) )= \sum_{j=1}^{h_{\ell}} \widetilde{U}_j \cdot \widetilde{V}_j,
\end{eqnarray}
where $\widetilde{U}_j = \widetilde{U}_j( \mathbf{z}, \mathbf{z}', \mathbf{z}'')$ and $\widetilde{V}_j =
\widetilde{V}_j( \mathbf{z}, \mathbf{z}', \mathbf{z}'')$ are rational forms of positive degree $(1 \leq j \leq h_{\ell})$.
We consider two cases, $\boldsymbol{\gamma} = \boldsymbol{\gamma}' = \mathbf{0}$ and
at least one of $\boldsymbol{\gamma} $ and $ \boldsymbol{\gamma}'$ is not a zero vector.

First we suppose that $\boldsymbol{\gamma} = \boldsymbol{\gamma}' = \mathbf{0}$. In this case, at leat one of $\boldsymbol{\lambda}$
and $\boldsymbol{\mu}$ is not a zero vector. Without loss of generality, suppose $\boldsymbol{\lambda} \not = \mathbf{0}$.
Then by setting $\mathbf{z} = \mathbf{z}''$ and $\mathbf{z}' = \mathbf{0}$,
the equation (\ref{eqn f_M U V 1}) becomes
$$
\sum_{1 \leq r \leq r_{\ell}} \lambda_{r} P_{\ell,r} (\mathbf{a}, \mathbf{b})
= \sum_{j=1}^{h_{\ell}} \widetilde{U}_j (\mathbf{z}, \mathbf{0}, \mathbf{z}) \cdot \widetilde{V}_j(\mathbf{z}, \mathbf{0}, \mathbf{z}).
$$
Therefore, it follows from (\ref{hinv bound2}) and (\ref{hinv bound2'}) that
\begin{eqnarray}
\notag
h_{\ell} &\geq& h_{\ell} (  \{ P_{\ell,r} (\mathbf{a}, \mathbf{b}) : 1 \leq r \leq r_{\ell} \} )
\\
&\geq& \rho_{d,\ell}( 2R + 2  R_1) + 2 R_1 + 4 r_1
\notag
\\
&\geq& \rho_{d,\ell}(2R + 2R'_1 - 2|\mathcal{R}_+^{(1)}(\Psi)|- 2 r_1) + 2|\mathcal{R}_+^{(1)}(\Psi)|+ 2 r_1.
\notag
\end{eqnarray}

Next we suppose at least one of $\boldsymbol{\gamma} $ and $ \boldsymbol{\gamma}'$ is not a zero vector. Without loss
of generality, suppose $\boldsymbol{\gamma} \not = \mathbf{0}$. We consider two further subcases,
$(\boldsymbol{\gamma} + \boldsymbol{\gamma}') \not = \mathbf{0}$ and $(\boldsymbol{\gamma} + \boldsymbol{\gamma}')  = \mathbf{0}$.

Suppose $(\boldsymbol{\gamma} + \boldsymbol{\gamma}') \not = \mathbf{0}$. In this case, we set $\mathbf{z}' = \mathbf{z}'' = \mathbf{0}$, and the equation ~(\ref{eqn f_M U V 1}) simplifies to
\begin{eqnarray}
\label{eqn 2}
\\
\notag
\sum_{1 \leq r \leq r_{\ell}} (\lambda_{r} +  \mu_{r}) P_{\ell,r} (\mathbf{a}, \mathbf{b})
 +
\sum_{j=1}^{|\mathcal{R}^{(\ell)}(\Psi)| } ( \gamma_j + \gamma'_j ) \cdot  V^{(\ell)}_j (\mathbf{z}) = \sum_{j=1}^{h_{\ell}} \widetilde{U}_j( \mathbf{z}, \mathbf{0}, \mathbf{0}) \cdot  \widetilde{V}_j ( \mathbf{z}, \mathbf{0}, \mathbf{0}).
\end{eqnarray}
Recall every monomial of $P_{\ell,r} (\mathbf{a}, \mathbf{b})$ contains at least one of the $\mathbf{a}$ variables.
Thus it follows from the definition of the $h$-invariant, (\ref{bound on M'}), and (\ref{bound on M'1}) that
$$
h_{\ell}(P_{\ell,r} (\mathbf{a}, \mathbf{b})) \leq M' \leq d R ( R^{2} + 1 )^{d-2} 2^d \Big{(} \rho_{d, d} (2 R + 2 R_1) + 2 R_1 + 4 r_1 +  2 R C''_0 \Big{)}.
$$
Therefore, by moving the term $
\sum_{1 \leq r \leq r_{\ell}} (\lambda_{r} +  \mu_{r}) P_{\ell,r} (\mathbf{a}, \mathbf{b})$
to the right hand side of the equation (\ref{eqn 2}),
we obtain via $(3)$ of Proposition \ref{prop reg par} that
\begin{eqnarray}
h_{\ell} + M'r_{\ell} &\geq&  h_{\ell}( \mathcal{R}^{(\ell)}(\Psi) )
\notag
\\
&\geq& \mathcal{F}'_{\ell}(R_1)
\notag
\\
&=& \rho_{d,d}(2R + 2R_1) + 2 R_1 + 4 r_1
\notag
\\
&+& 2 R \Big{(} d R ( R^{2} + 1 )^{d-2} 2^d \Big{(} \rho_{d, d} (2 R + 2 R_1) + 2 R_1 + 4 r_1 +  2 R C''_0 \Big{)} \Big{)}.
\notag
\end{eqnarray}
Thus we obtain
\begin{eqnarray}
\\
\notag
h_{\ell} \geq  \rho_{d,d}(2R + 2R_1) + 2 R_1 + 4 r_1 \geq \rho_{d,\ell}(2R + 2R'_1 - 2|\mathcal{R}_+^{(1)}(\Psi)|- 2 r_1) + 2|\mathcal{R}_+^{(1)}(\Psi)|+ 2 r_1.
\end{eqnarray}

On the other hand, we now suppose $(\boldsymbol{\gamma} + \boldsymbol{\gamma}') = \mathbf{0}$.
By setting $\mathbf{z} =  \mathbf{z}'' = \mathbf{0}$, the equation ~(\ref{eqn f_M U V 1}) simplifies to
\begin{eqnarray}
- \sum_{r = 1}^{r_{\ell}} \lambda_r \cdot P_{\ell,r} (\mathbf{a}', \mathbf{b}' )
- \sum_{j=1}^{|\mathcal{R}^{(\ell)}(\Psi)|}  \gamma_j \cdot    V^{(\ell)}_j (\mathbf{z}')  = \sum_{j=1}^{h_{\ell}}
\widetilde{U}_j (\mathbf{0}, \mathbf{z}', \mathbf{0}) \cdot \widetilde{V}_j (\mathbf{0}, \mathbf{z}', \mathbf{0}).
\notag
\end{eqnarray}
Then by a similar argument as above, we have
\begin{eqnarray}
h_{\ell} + M' r_{\ell} &\geq& h_{\ell}(\mathcal{R}^{(\ell)}(\Psi) )
\notag
\\
&\geq& \mathcal{F}'_{\ell}(R_1)
\notag
\\
&=& \rho_{d,d}(2R + 2R_1) + 2 R_1 + 4 r_1
\notag
\\
&+&2 R \Big{(} d R ( R^{2} + 1 )^{d-2} 2^d \Big{(} \rho_{d, d} (2 R + 2 R_1) + 2 R_1 + 4 r_1 +  2 R C''_0 \Big{)} \Big{)}.
\notag
\end{eqnarray}
Therefore, we also obtain
\begin{eqnarray}
\\
\notag
h_{\ell} &\geq& \rho_{d,d}(2R + 2R_1) + 2 R_1 + 4 r_1  \geq \rho_{d,\ell}(2R + 2R'_1 - 2|\mathcal{R}_+^{(1)}(\Psi)|- 2 r_1) + 2|\mathcal{R}_+^{(1)}(\Psi)|+ 2 r_1
\end{eqnarray}
in this case.

We also have to show that the linear forms of the system ~(\ref{system 2 in claim 2}),
\begin{eqnarray}
\label{system of linear forms2}
&&\{ \mathcal{R}_+^{(1)}(\Psi)(\mathbf{z}) - \mathcal{R}_+^{(1)}(\Psi)(\mathbf{z}') \} \bigcup \  \{ \mathcal{R}_+^{(1)}(\Psi)(\mathbf{z}) - \mathcal{R}_+^{(1)}(\Psi)(\mathbf{z}'') \}
\\
\notag
&\bigcup& \Big{\{}  \sum_{i=1}^{r_1 }{g}'_{r,i} \widetilde{z}_i -  \sum_{i=1}^{r_1 }{g}'_{r,i} \widetilde{z}'_i : 1 \leq r \leq r_1  \Big{\}}
\bigcup \ \Big{\{}  \sum_{i=1}^{r_1 }{g}'_{r,i} \widetilde{z}_i -  \sum_{i=1}^{r_1 }{g}'_{r,i} \widetilde{z}''_i  : 1 \leq r \leq r_1  \Big{\}},
\end{eqnarray}
are linearly independent over $\mathbb{Q}$.
Recall the linear forms of
$$
\mathcal{R}_+^{(1)}(\Psi)(\mathbf{z})  \bigcup \   \Big{\{}  \sum_{i=1}^{r_1 }{g}'_{r,i} \widetilde{z}_i : 1 \leq r \leq r_1  \Big{\}}
$$ are linearly independent over $\mathbb{Q}$.
Using this fact, the verification of linear independence over $\mathbb{Q}$ of the system of linear forms ~(\ref{system of linear forms2}) is a
basic exercise in linear algebra.

Therefore, we obtain by Corollary \ref{cor Schmidt} that
$$
\mathcal{W}_2 \ll X^{3(n -M - K) - 2 \sum_{\ell = 1}^d \ell r_{\ell} - 2 D'_1}.
$$

\subsection{Proof of Claim 1}
\label{sec claim 1}
Recall we defined
\begin{eqnarray}
\notag
S_0( \boldsymbol{\alpha}, \mathbf{G}, \mathbf{H} ) &=&
\sum_{ \mathbf{w} \in [0,X]^K } \Lambda(\mathbf{w})
\ e \Big{(} \sum_{1 \leq r \leq r_1}  \alpha_{1,r} (c_{1,r} \mathbf{w}^{\mathbf{j}_{1, r}} + \widetilde{f}_{1,r} (\mathbf{w}, \mathbf{0}, \mathbf{0})
 )
\\
\label{def S_0'}
&+& \sum_{2 \leq \ell \leq d} \sum_{1 \leq r \leq r_{\ell}} \alpha_{\ell,r}  \cdot \mathfrak{C}_{\ell,r} (\mathbf{w}, \mathbf{G}, \mathbf{H}  )  \Big{)},
\end{eqnarray}
where
$$
\mathfrak{C}_{\ell,r} (\mathbf{w}, \mathbf{G}, \mathbf{H}  ) = {f}_{\ell,r} ( \mathbf{w}, \mathbf{0}, \mathbf{0})
+ \sum_{j=1}^{\ell -1}  \ \sum_{1 \leq i_1 \leq \ldots \leq i_j \leq K}
\left( \sum_{k=1}^{\ell -j} {c}^{(k)}_{\ell, r: i_1, \ldots, i_{j}} ( \mathbf{G}, \mathbf{H} )  \right) w_{i_1} \ldots w_{i_j}.
$$
Also recall we defined the monomials $\mathbf{w}^{\mathbf{j}_{\ell, r}}$ $(1 \leq \ell \leq d, 1 \leq r \leq r_{\ell})$ in (\ref{set of eqn 2}).
If we consider the expression in the exponent of (\ref{def S_0'}),
$$
\sum_{1 \leq r \leq r_1}  \alpha_{1,r} (c_{1,r} \mathbf{w}^{\mathbf{j}_{1, r}} + \widetilde{f}_{1,r} (\mathbf{w}, \mathbf{0}, \mathbf{0}) )
+ \sum_{2 \leq \ell \leq d} \sum_{1 \leq r \leq r_{\ell}} \alpha_{\ell,r}  \cdot \mathfrak{C}_{\ell,r} (\mathbf{w}, \mathbf{G}, \mathbf{H}  ),
$$
as a polynomial in $\mathbf{w}$ with real coefficients, then it follows from the discussion after (\ref{discussion on the coeff})
that the coefficient of $\mathbf{w}^{\mathbf{j}_{\ell, r}}$ of this polynomial is $c_{\ell, r} \alpha_{\ell, r}$.
Furthermore, this polynomial does not contain any monomial divisible by $\mathbf{w}^{\mathbf{j}_{\ell, r} }$ other than itself.

We need to introduce few definitions and lemmas before we can begin with the proof of Claim 1.
Let $1 \leq \ell \leq d$, $q \in \mathbb{N}$, and  $a_{\ell} \in \mathbb{Z} / q \mathbb{Z}$. For $q > 1$ we define
$$
\mathfrak{N}^{(\ell)}_{a_{ \ell}, q} (C_0) =\{ \xi_{\ell} \in [0,1) : | \xi_{\ell} -  a_{\ell}/q | \leq (\log X)^{C_0} X^{-\ell}  \},
$$
and when $q=1$ we let
$$
\mathfrak{N}^{(\ell)}_{0, 1} (C_0) =\{ \xi_{\ell} \in [0,1) :  \min \{ |\xi_{\ell} |,  |\xi_{\ell} - 1 | \} \leq (\log X)^{C_0} X^{-\ell}  \}.
$$
We set
$$
\mathfrak{N}(C_0) = \bigcup_{q \leq (\log X)^{C_0}} \  \bigcup_{ \substack{ \gcd (a_{d}, \ldots, a_{1},q) = 1  \\ a_{d}, \ldots, a_1 \in \mathbb{Z} / q \mathbb{Z}}  }
\mathfrak{N}^{(d)}_{a_{d}, q} (C_0) \times \ldots \times \mathfrak{N}^{(1)}_{a_{1}, q} (C_0),
$$
and denote
$$
\mathfrak{n}(C_0) = [0,1)^d \backslash \mathfrak{N}(C_0).
$$
Let $\mathbb{U}_q$ be the group of units in $\mathbb{Z}/q \mathbb{Z}$. When $q=1$ we let $\mathbb{U}_1 = \{0\}$.
Let us also denote
$$
\mathfrak{n}^{(\ell)}(C_0) = [0,1) \backslash  \left( \bigcup_{q \leq (\log X)^{C_0}} \bigcup_{a_{\ell} \in \mathbb{U}_q }
\mathfrak{N}^{(\ell)}_{a_{\ell}, q} (C_0) \right).
$$
Suppose $\boldsymbol{\xi} = (\xi_d, \ldots, \xi_1) \in [0,1)^d$ satisfies $\xi_{\ell} \in \mathfrak{n}^{(\ell)}(C_0)$ for some $1 \leq \ell \leq d$.
Then it is clear that $\boldsymbol{\xi} \in \mathfrak{n}(C_0)$.

We have the following lemma which is a special case of \cite[Ch.VI, \S 1, Theorem 10]{H}.
\begin{lem}\cite[Ch.VI, \S 1, Theorem 10]{H}
\label{lemma from Hua1}
Let $\ell \geq 1$, $\alpha_{\ell-1}, \ldots , \alpha_1, \alpha_0 \in \mathbb{R}$, and $\gcd(a,q) = 1$ with $(\log X)^{\sigma} < q \leq X^{\ell}(\log X)^{-\sigma}$.
Suppose we have $\sigma_0 > 0$ such that $\sigma \geq 2^{6 \ell} (\sigma_0 + 1)$.
Then we have
$$
\sum_{\substack{p \leq X \\ p \text{ prime}}} e \left( \frac{a}{q} p^{\ell} + \alpha_{\ell - 1}p^{\ell - 1} + \ldots +  \alpha_{1}p + \alpha_0 \right)
\ll \frac{X}{(\log X)^{\sigma_0}},
$$
where the implicit constant depends only on $\ell$.
\end{lem}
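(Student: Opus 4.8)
The plan is to recover this as the classical Vinogradov-type estimate for exponential sums over primes with a polynomial argument. Write $P(n) = \frac{a}{q}n^{\ell} + \alpha_{\ell-1}n^{\ell-1} + \ldots + \alpha_1 n + \alpha_0$, so the quantity to be bounded is $\sum_{p \leq X} e(P(p))$. By partial summation, and since the contribution of proper prime powers is $O(\sqrt{X}\log X)$, it suffices to estimate $\sum_{n \leq X} \Lambda(n) e(P(n))$. The first step is to apply Vaughan's identity to $\Lambda$, which expresses this sum as a bounded number of \emph{Type I} sums $\sum_{m} c_m \sum_{n} e(P(mn))$ and \emph{Type II} sums $\sum_{m}\sum_{n} c_m d_n e(P(mn))$, with coefficients of size $O((\log X)^{O(1)})$ supported on suitable dyadic ranges in $m$ and $n$, and with the inner $n$-sum in the Type I case being an unbroken interval.

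For the Type I contribution, the inner sum over $n$ has polynomial phase $P(mn)$, whose degree-$\ell$ coefficient is $m^{\ell}a/q$; since $\gcd(a,q)=1$ and $m$ runs over a short initial segment, one applies Weyl's inequality (equivalently, $\ell-1$ van der Corput differencing steps followed by a linear sum estimate) to the inner sum and then sums over $m$ trivially. For the Type II contribution one applies Cauchy--Schwarz in the outer variable, opens the square to obtain $\sum_m \sum_{n_1,n_2} d_{n_1}\overline{d_{n_2}}\,e(P(mn_1)-P(mn_2))$, and for $n_1 \neq n_2$ applies Weyl's inequality in $m$ to the phase $P(mn_1)-P(mn_2)$, which has degree $\ell$ in $m$ with leading coefficient $(n_1^{\ell}-n_2^{\ell})a/q \not\equiv 0$; the diagonal $n_1 = n_2$ and the non-representable pairs are handled by trivial and divisor-function bounds. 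In every case the relevant Weyl factor is $(q^{-1} + X^{-\ell} + qX^{-\ell})^{1/2^{\ell-1}}$, and the hypothesis $(\log X)^{\sigma} < q \leq X^{\ell}(\log X)^{-\sigma}$ forces this factor to be $\ll (\log X)^{-\sigma/2^{\ell-1}}$. Summing the two types, one obtains a bound of the form $X(\log X)^{O(1) - \sigma/2^{O(\ell)}}$.

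The main obstacle is the bookkeeping needed to pass from the medium-range hypothesis on $q$ to the precise inequality $\sigma \geq 2^{6\ell}(\sigma_0+1)$: one must check that the power of $\log X$ saved survives taking the $2^{\ell-1}$-th root in Weyl's inequality, the further square root from the Cauchy--Schwarz step in the Type II estimate, and the $(\log X)^{O(1)}$ losses coming from Vaughan's identity and from the divisor sums, and still leaves a net saving of at least $\sigma_0$. All the implicit constants depend only on $\ell$ (the differencing only ever exploits the leading coefficient of the phase, so the bound is uniform in $\alpha_{\ell-1},\ldots,\alpha_0$), and the worst case is the Type II sum with the maximal number of differencing steps; the exponent $2^{6\ell}$ is a comfortable and standard choice. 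Since this is precisely \cite[Ch.~VI, \S 1, Theorem 10]{H}, one may alternatively simply invoke that reference.
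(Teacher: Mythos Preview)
The paper gives no proof of this lemma at all; it simply records it as a special case of \cite[Ch.~VI, \S 1, Theorem~10]{H} and moves on. Your closing sentence --- that one may just invoke that reference --- is therefore exactly what the paper does, and nothing more is required.

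Your sketch via Vaughan's identity plus Weyl differencing is a correct modern route to the estimate, though it is not Hua's own argument: Hua's book predates Vaughan's identity and uses Vinogradov's original combinatorial decomposition of $\Lambda$ instead. The two approaches are morally equivalent (both reduce to Type~I and Type~II sums controlled by Weyl's inequality), and both deliver a saving of $(\log X)^{-\sigma/2^{O(\ell)}}$ from the hypothesis on $q$, with the constant $2^{6\ell}$ simply absorbing the bookkeeping losses. One small point worth flagging in your Type~I and Type~II analyses: when the leading coefficient becomes $m^{\ell}a/q$ or $(n_1^{\ell}-n_2^{\ell})a/q$, the effective denominator for Weyl's inequality is $q/\gcd(\cdot,q)$ rather than $q$ itself, and one must check that this reduced denominator still lies in the useful range; this is routine (the bad $m$ or $(n_1,n_2)$ are sparse and handled by divisor bounds, as you note) but is the one place where a reader might want a sentence of justification.
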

From this lemma we can obtain the following, which is essentially a special case of \cite[Ch.X, \S 5, Lemma 10.8]{H}.
\begin{lem}\cite[Ch.X, \S 5, Lemma 10.8]{H}
\label{lemma from Hua}
Suppose $\ell \geq 1$ and $\alpha_{\ell}, \ldots , \alpha_1 \in \mathbb{R}$.
Let
$$
T_1 (\alpha_{\ell}, \ldots , \alpha_1) = \sum_{x \in [0,X]} \Lambda(x) e(\alpha_{\ell} x^{\ell} + \ldots + \alpha_1 x ).
$$
Given any $c_0 > 0$, for sufficiently large $C_0 > 0$ we have
$$
|T_1 (\alpha_{\ell}, \ldots , \alpha_1)| \ll \frac{X}{(\log X)^{c_0} }
$$
for any $\alpha_{\ell}, \ldots , \alpha_1 \in \mathbb{R}$ with $\alpha_{\ell} \in \mathfrak{n}^{(\ell)}(C_0)$.
Here the implicit constant depends only on $\ell$.
\end{lem}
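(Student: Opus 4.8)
The plan is to deduce Lemma~\ref{lemma from Hua} from Lemma~\ref{lemma from Hua1} along the standard route: pass from the von Mangoldt--weighted sum to a genuine sum over primes, locate a Dirichlet approximation to the leading coefficient $\alpha_\ell$ whose denominator is forced to lie in the right window by the hypothesis $\alpha_\ell\in\mathfrak{n}^{(\ell)}(C_0)$, and then strip off the logarithmic weight and the perturbation of the leading coefficient by two partial summations. Write $P(t)=\alpha_\ell t^\ell+\dots+\alpha_1 t$. Since $\Lambda$ is supported on prime powers and $\Lambda(p^k)=\log p$, the contribution of the proper prime powers is $\ll\sum_{k\geq 2}\theta(X^{1/k})\ll X^{1/2}\log X$, so it suffices to show $\sum_{p\leq X}(\log p)\,e(P(p))\ll X(\log X)^{-c_0}$.

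Apply Dirichlet's theorem to $\alpha_\ell$ with $Q=X^\ell(\log X)^{-C_0}$: there are $a,q$ with $\gcd(a,q)=1$, $1\leq q\leq Q$ and $|\alpha_\ell-a/q|<1/(qQ)=(\log X)^{C_0}/(qX^\ell)$. If $q\leq(\log X)^{C_0}$ then $|\alpha_\ell-a/q|<(\log X)^{C_0}X^{-\ell}$, which (checking separately the case $q=1$, where $\mathbb{U}_1=\{0\}$ and $\mathfrak{N}^{(\ell)}_{0,1}(C_0)$ is the neighbourhood of the integers) places $\alpha_\ell$ in one of the arcs excluded from $\mathfrak{n}^{(\ell)}(C_0)$, a contradiction. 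Hence $(\log X)^{C_0}<q\leq X^\ell(\log X)^{-C_0}$, precisely the range in which Lemma~\ref{lemma from Hua1} applies with $\sigma=C_0$. Writing $\alpha_\ell=a/q+\beta$ we obtain the key bound $|\beta|X^\ell\leq(\log X)^{C_0}/q<1$, so the function $h(t)=e(\beta t^\ell)$ has total variation $\int_0^X|h'(t)|\,dt=2\pi|\beta|X^\ell=O(1)$ on $[0,X]$.

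Set $A(u)=\sum_{p\leq u}e\bigl(\tfrac aq p^\ell+\alpha_{\ell-1}p^{\ell-1}+\dots+\alpha_1 p\bigr)$ and $S(t)=\sum_{p\leq t}(\log p)\,e\bigl(\tfrac aq p^\ell+\alpha_{\ell-1}p^{\ell-1}+\dots+\alpha_1 p\bigr)$, so that $S(t)=(\log t)A(t)-\int_2^t A(u)\,du/u$. Let $U_0=X(\log X)^{-c_0-1}$. For $t\leq U_0$ use only the trivial bound $|S(t)|\leq\theta(t)\ll t\ll X(\log X)^{-c_0}$. For $t\in(U_0,X]$, choosing $C_0\geq(c_0+1)\ell+\sigma+1$ guarantees $(\log u)^\sigma<q\leq u^\ell(\log u)^{-\sigma}$ for every $u\in[U_0,X]$, so Lemma~\ref{lemma from Hua1} gives $|A(u)|\ll u(\log u)^{-\sigma_0}$ there provided $\sigma\geq 2^{6\ell}(\sigma_0+1)$; splitting $\int_2^t A(u)\,du/u$ at $U_0$ (trivial bound below, Lemma~\ref{lemma from Hua1} above) yields $|S(t)|\ll X(\log X)^{1-\sigma_0}+X(\log X)^{-c_0-1}+X(\log X)^{-\sigma_0}$. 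Taking $\sigma_0=c_0+1$, $\sigma=2^{6\ell}(c_0+2)$ and $C_0=(c_0+1)\ell+2^{6\ell}(c_0+2)+1$ (which depends only on $c_0$ and $\ell$) gives $\sup_{2\leq t\leq X}|S(t)|\ll X(\log X)^{-c_0}$. Finally, partial summation against $h$ gives
\[
\sum_{p\leq X}(\log p)\,e(P(p))=h(X)S(X)-\int_2^X h'(t)S(t)\,dt,
\]
whence $|T_1(\alpha_\ell,\dots,\alpha_1)|\ll\sup_t|S(t)|\cdot\bigl(1+\int_0^X|h'(t)|\,dt\bigr)+X^{1/2}\log X\ll X(\log X)^{-c_0}$, as required.

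The step I expect to be the main obstacle is the bookkeeping just sketched: one must choose the cut-off $U_0$, the Dirichlet modulus $Q$, and the three parameters $\sigma_0,\sigma,C_0$ simultaneously so that (i) every appeal to Lemma~\ref{lemma from Hua1}, whether on $[0,X]$ or on a truncation $[0,u]$ with $u\geq U_0$, has its modulus $q$ inside the permitted window $\bigl((\log u)^\sigma,\,u^\ell(\log u)^{-\sigma}\bigr]$; (ii) the ranges on which only the trivial bound $\theta(t)\ll t$ is available stay of size $\ll X(\log X)^{-c_0}$; and (iii) no circularity arises from the coupling $\sigma\geq 2^{6\ell}(\sigma_0+1)$, which ties the savings in Lemma~\ref{lemma from Hua1} to the size of $C_0$. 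Everything else is routine, and one may alternatively simply invoke \cite[Ch.X, \S 5, Lemma 10.8]{H} directly.
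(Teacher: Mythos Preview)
Your proposal is correct and follows essentially the same route as the paper's proof: Dirichlet approximation to $\alpha_\ell$, the minor-arc hypothesis forcing $q$ into the window $(\log X)^{C_0} < q \leq X^\ell(\log X)^{-C_0}$, then Lemma~\ref{lemma from Hua1} together with partial summation. The paper is in fact briefer than you are---it records $|\beta_\ell|\leq X^{-\ell}$, refers to the argument of \cite[Ch.~X, \S5, Lemma~10.8]{H} for the unweighted prime sum $T_0$, and then says only that partial summation yields $T_1$, without writing out the two Abel summations (against $\log t$ and against $e(\beta t^\ell)$) or the parameter bookkeeping that you carefully track.
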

\begin{proof}
By Dirichlet's theorem on diophantine approximation, there exist $a, q \in \mathbb{Z}$ such that
$\gcd(a,q) = 1$, $1 \leq q \leq X^{\ell} (\log X)^{- C_0}$,
and
\begin{equation}
\label{ineq in lemma of Hua}
| q \alpha_{\ell} - a | < \frac{(\log X)^{C_0}}{X^{\ell}}.
\end{equation}
Since we have
\begin{equation}
\Big{|} \alpha_{\ell} - \frac{a}{q} \Big{|} < \frac{(\log X)^{C_0}}{qX^{\ell}} \leq \frac{(\log X)^{C_0}}{X^{\ell}},
\end{equation}
it follows from the definition of $\mathfrak{n}^{(\ell)}(C_0)$ that $q > (\log X)^{C_0}$.
Let $\beta_{\ell} = \alpha_{\ell} - \frac{a}{q}$.
Then we obtain from (\ref{ineq in lemma of Hua}) that
$$
|\beta_{\ell}| = \Big{|} \alpha_{\ell} - \frac{a}{q} \Big{|} < \frac{(\log X)^{C_0}}{qX^{\ell}} \leq \frac{1}{X^{\ell}}.
$$
We now have the set up to apply Lemma \ref{lemma from Hua1}.
Let us define
$$
T_0 (\alpha_{\ell}, \ldots , \alpha_1) = \sum_{\substack{1 \leq p \leq X \\ p \text{ prime }}} e(\alpha_{\ell} p^{\ell} + \ldots + \alpha_1 p ).
$$
By following the argument in the proof of \cite[Ch.X, \S 5, Lemma 10.8]{H},
we obtain that given any $c_0 > 0$, for $C_0 >0$ sufficiently large we have
$$
|T_0(\alpha_{\ell}, \ldots , \alpha_1)| \ll \frac{X}{(\log X)^{c_0} },
$$
where the implicit constant depends only on $\ell$.
From here we obtain via partial summation the required bound on $T_1(\alpha_{\ell}, \ldots , \alpha_1)$.
\end{proof}

Recall $\| \alpha \|$ is the distance from $\alpha \in \mathbb{R}$ to the closest integer.
The following is a special case of \cite[Lemma 14.1]{S}.
\begin{lem}\cite[Lemma 14.1]{S}.
\label{Birch lemma 2.3}
Suppose $\lambda \in \mathbb{R}$, $A > 1$, and $Z > 0$. Let $\mathcal{N}(Z)$ be the number of integers $v$ such that
\begin{eqnarray}
|v| \leq ZA \ \  \  \text{ and } \ \ \   \| \lambda v \| \leq Z A^{-1}.
\end{eqnarray}
Then for $0 < Z_1 \leq Z_2 < 1$ we have
$$
\mathcal{N}(Z_1) \gg (Z_1/Z_2) \ \mathcal{N}(Z_2),
$$
where the implicit constant is an absolute constant.
\end{lem}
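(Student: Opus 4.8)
The plan is to recast the counting function $\mathcal{N}(Z)$ as a count of lattice points of a fixed unimodular lattice inside a square, and then to extract the inequality from the two-dimensional geometry of numbers; the underlying point is that, up to absolute constants, $\mathcal{N}(Z)/Z$ is almost decreasing on $(0,1)$.

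First I would pass to the lattice $\Gamma = \mathbb{Z}\,(1/A,\,A\lambda) + \mathbb{Z}\,(0,\,A) \subset \mathbb{R}^2$, which has $\det\Gamma = 1$ and equals $\{(v/A,\,A(\lambda v - w)) : v,w\in\mathbb{Z}\}$. An integer $v$ is counted by $\mathcal{N}(Z)$ exactly when some integer $w$ makes $(v/A,\,A(\lambda v - w))\in[-Z,Z]^2$; since $A>1$ and $Z<1$ we have $Z/A<1$, so for each such $v$ the number of admissible $w$ is $1$ or $2$. Hence $\mathcal{N}(Z) \le M(Z) \le 2\,\mathcal{N}(Z)$, where $M(Z) := \#(\Gamma\cap[-Z,Z]^2)$, and it suffices to prove $M(Z_1)\gg (Z_1/Z_2)\,M(Z_2)$.

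Let $\mu_1\le\mu_2$ be the successive minima of $\Gamma$ with respect to the cube $[-1,1]^2$. Minkowski's theorems give $\mu_1\le 1$ and $\mu_1\mu_2\asymp 1$; in particular $\mu_2\ge 1/\sqrt 2$, which one can also see directly since two independent lattice vectors of sup-norm $\le\mu_2$ span a parallelogram whose area is $\le 2\mu_2^2$ and is at the same time a positive integer multiple of $\det\Gamma = 1$. I would then use the standard estimate — elementary in dimension two, obtained from a reduced basis $b_1,b_2$ with $|b_i|_\infty\asymp\mu_i$ by bounding the coordinates of a lattice point lying in the box — that
$$
M(Z) \;\asymp\; \Bigl(1+\tfrac{Z}{\mu_1}\Bigr)\Bigl(1+\tfrac{Z}{\mu_2}\Bigr) \;=\; \frac{(\mu_1+Z)(\mu_2+Z)}{\mu_1\mu_2}\qquad (Z>0).
$$

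Granting this, the inequality is algebra. For $0<Z_1\le Z_2<1$ the factor $\mu_1\mu_2$ cancels in the ratio and
$$
\frac{M(Z_1)}{M(Z_2)} \;\asymp\; \frac{(\mu_1+Z_1)(\mu_2+Z_1)}{(\mu_1+Z_2)(\mu_2+Z_2)} \;\ge\; \frac{Z_1}{Z_2}\cdot\frac{\mu_2}{\mu_2+Z_2} \;\ge\; \frac{Z_1}{Z_2}\cdot\frac{\mu_2}{\mu_2+1} \;\ge\; \frac{Z_1}{Z_2}\cdot\frac{1}{1+\sqrt 2},
$$
using $\tfrac{\mu_1+Z_1}{\mu_1+Z_2}\ge \tfrac{Z_1}{Z_2}$ (equivalent to $Z_2\ge Z_1$) and $\mu_2+Z_1\ge\mu_2$, then $Z_2<1$, then $\mu_2\ge 1/\sqrt2$ together with the monotonicity of $t\mapsto t/(t+1)$. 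Passing back through $\mathcal{N}(Z)\asymp M(Z)$ yields $\mathcal{N}(Z_1)\gg (Z_1/Z_2)\,\mathcal{N}(Z_2)$ with an absolute constant. The one genuinely nontrivial input is the two-sided count $M(Z)\asymp(1+Z/\mu_1)(1+Z/\mu_2)$, and there the upper bound is the main (routine) work; it is crucial that $\mu_2$ is bounded below by an absolute constant, since that is exactly what prevents the ``expanding'' factor $(\mu_2+Z_1)/(\mu_2+Z_2)$ from being too small and hence forces the correct linear dependence on $Z_1/Z_2$ — a crude pigeonhole subdividing the $Z_2$-square into $\asymp(Z_2/Z_1)^2$ smaller squares would only give the exponent $2$, and one must instead calibrate the subdivision to the short direction of $\Gamma$, the role played here by $\mu_1$.
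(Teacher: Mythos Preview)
Your argument is correct. The paper does not prove this lemma at all; it simply quotes it as \cite[Lemma 14.1]{S} and uses it as a black box, so there is no ``paper's own proof'' to compare against beyond the reference to Schmidt. Your lattice reformulation $\Gamma=\mathbb{Z}(1/A,A\lambda)+\mathbb{Z}(0,A)$, the equivalence $\mathcal{N}(Z)\asymp M(Z)$, and the successive-minima count $M(Z)\asymp(1+Z/\mu_1)(1+Z/\mu_2)$ together with the elementary inequality chain are all sound; the key structural point you isolate --- that $\mu_2$ is bounded below by an absolute constant because $\det\Gamma=1$ --- is exactly what makes the exponent of $Z_1/Z_2$ equal to $1$ rather than $2$. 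This is essentially the classical geometry-of-numbers proof of the Davenport--Birch shrinking lemma, which is also the route Schmidt takes in \cite{S}, so your approach coincides with the standard one underlying the citation.
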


We now begin with the proof of Claim 1.
Let $M_0$ be the diagonal $R \times R$ matrix where its diagonal entries from the top left corner to the
right bottom corner are $c_{d,1}, c_{d,2},\ldots,$ $c_{d,r_d}, c_{d-1,1},$ $c_{d-1,2},\ldots, c_{d-1,r_{d-1}},$ $\ldots ,$
$c_{1,1}, c_{1,2},\ldots, c_{1,r_1}$ in this order. Clearly $M_0$ is an invertible matrix.
Let $\gamma_{\ell,r} = \alpha_{\ell, r} c_{\ell, r}$.
Consider the polynomial in the exponent of (\ref{def S_0'}) as a polynomial in the $\mathbf{w}$ variables.
Then we know that the coefficient of $\mathbf{w}^{\mathbf{j}_{\ell, r} }$ of this polynomial is $\gamma_{\ell,r}$.
We also have
$$
M_0 \cdot \left[ {\begin{array}{c}
\alpha_{d,1} \\ \vdots  \\  \alpha_{1, r_1}
\end{array}} \right] =  \left[ {\begin{array}{c}
\gamma_{d,1} \\ \vdots  \\  \gamma_{1, r_1}
\end{array}} \right] \in \mathbb{R}^{R }.
$$
Suppose $\boldsymbol{\gamma} \in \mathfrak{M}^{}(C')$ for some $C'>0$, then there exist $\mathbf{a} \in \mathbb{Z}^R$ and $q \in \mathbb{N}$ such
that $\gcd(\mathbf{a}, q) = 1$, $0< q \leq (\log X)^{C'}$, and $| \gamma_{\ell,r} - a_{\ell,r} /q | \leq (\log X)^{C'}/ X^{\ell}$ $(1 \leq \ell \leq d, 1 \leq r \leq r_{\ell})$.
Let us denote
$$
M_0^{-1} \cdot \left[ {\begin{array}{c}
a_{d,1} /q \\ \vdots  \\  a_{1,r_{1}} /q
\end{array}} \right]  =  \left[ {\begin{array}{c}
a'_{d,1} /q' \\ \vdots  \\  a'_{1,r_{1} } /q'
\end{array}} \right]
\ \ \text{ and } \ \
M_0^{-1} \cdot \left[ {\begin{array}{c}
\gamma_{d,1} - a_{d,1}/ q \\ \vdots  \\  \gamma_{1,r_{1}} - a_{1,r_{1}}/q
\end{array}} \right]  =  \left[ {\begin{array}{c}
\beta'_{d,1} \\ \vdots  \\  \beta'_{1, r_{1}}
\end{array}} \right].
$$
It is easy to deduce that
$$
q' \leq (\log X)^{ C' + 1} \ \ \text{ and } \ \ |\beta'_{\ell,r}| \leq  \frac{(\log X)^{C' + 1}}{X^{\ell}} \ \ (1 \leq \ell \leq d, 1 \leq r \leq r_{\ell})
$$
for $X$ sufficiently large with respect to $c_{d,1}, \ldots,  c_{1,r_1}$.
Since $\alpha_{\ell,r} = \frac{a'_{\ell,r} }{q'} + \beta'_{\ell,r}$, we see that $ \boldsymbol{\alpha} \in \mathfrak{M}(C' + 1)$.
Now since $\boldsymbol{\alpha} \in \mathfrak{m}(C)$, it follows from this argument that $\boldsymbol{\gamma} \in \mathfrak{m}(C-1)$.
Then there exist $\ell$ and $r$ such that $\gamma_{\ell, r} \in \mathfrak{n}^{(\ell)}( C'')$, where $C'' = (C-1)/R$, by the following reason.
Suppose $\gamma_{\ell, r} \not \in \mathfrak{n}^{(\ell)}( C'')$ $(1 \leq \ell \leq d, 1 \leq r \leq r_{\ell})$.
Then for each $1 \leq \ell \leq d, 1 \leq r \leq r_{\ell}$ there exist $q_{\ell, r} \in \mathbb{N}$ and $a_{\ell, r} \in \mathbb{Z}$ such that
$$
q_{\ell,r} \leq (\log X)^{ C''} \ \ \text{ and } \ \ |\gamma_{\ell,r} - a_{\ell, r}/q_{\ell, r} | \leq  \frac{(\log X)^{C''}}{X^{\ell}}.
$$
By taking $q$ to be the appropriate factor of the lowest common multiple of
$q_{d,1}, \ldots,  q_{1,r_1}$, we see that $\boldsymbol{\gamma} \in \mathfrak{M}(C-1)$, which is a contradiction.

Throughout the remainder of this section, we fix $\ell$ and $r$ to be such that $\gamma_{\ell, r} \in \mathfrak{n}^{(\ell)}( C'')$.
Following \cite{CM}, we consider two cases depending on $\mathbf{w}^{ \mathbf{j}_{\ell,r} }$:
Case 1 is when $\mathbf{w}^{ \mathbf{j}_{\ell,r} }$ contains only one distinct variable, and Case 2 is when it has more than
one distinct variable.

Case 1: Without loss of generality, suppose $\mathbf{w}^{ \mathbf{j}_{\ell,r} } = w_{1}^{\ell}$.
We may bound $S_0(\boldsymbol{\alpha}, \mathbf{G}, \mathbf{H})$ as follows
\begin{eqnarray}
\label{S'0 case 1}
&&S_0( \boldsymbol{\alpha}, \mathbf{G}, \mathbf{H} ) \leq (\log X)^{K-1} \cdot
\\
\notag
&& \sum_{ w_K \in [0,X] } \ldots \sum_{ w_2 \in [0,X] } \Big{|} \sum_{ w_{1}  \in [0,X] } \Lambda (w_1)
\ e  \left( \gamma_{\ell,r} w_{1}^{\ell} + \tau (w_1, w_2, \ldots, w_K,  \mathbf{G}, \mathbf{H})  \right)  \Big{|},
\notag
\end{eqnarray}
where $\tau (w_1, w_2, \ldots, w_K,  \mathbf{G}, \mathbf{H})$ has degree strictly less than $\ell$ as a polynomial in $w_1$
with coefficients possibly dependent on $w_2, \ldots, w_K,  \mathbf{G}, \mathbf{H}$.
This follows from the fact that the coefficient of $w_{1}^{\ell}$ of the polynomial in the exponent of (\ref{def S_0'}) is $\gamma_{\ell,r}$, and that there are no other monomials divisible by $w_{1}^{\ell}$.

Therefore, since $\gamma_{\ell, r} \in \mathfrak{n}^{(\ell)}( C'' )$ we may apply Lemma \ref{lemma from Hua} with $c_0 = c + K-1$ to the inner sum of (\ref{S'0 case 1}) and obtain
$$
S_0( \boldsymbol{\alpha}, \mathbf{G}, \mathbf{H} ) \ll (\log X)^{K-1} X^{K-1} \frac{X}{(\log X)^{c + K-1}} = \frac{X^K}{(\log X)^c}.
$$

Case 2: We have that $\mathbf{w}^{ \mathbf{j}_{\ell,r} }$ contains at least two distinct variables. In particular, we
must have $\ell > 1$. By relabeling if necessary, let $\mathbf{w}^{ \mathbf{j}_{\ell,r} } = w_{1}^{j_{1}} \ldots w_{k}^{j_{k}}$
where $j_{1},$  \ldots, $j_{k} > 0$.
We know that the coefficient of $\mathbf{w}^{ \mathbf{j}_{\ell,r} }$ of the polynomial in the exponent of (\ref{def S_0'}) is $\gamma_{\ell,r}$.
In this case, we may bound $S_0(\boldsymbol{\alpha}, \mathbf{G}, \mathbf{H})$ as follows
\begin{eqnarray}
\label{S'0 case 2}
|S_0( \boldsymbol{\alpha}, \mathbf{G}, \mathbf{H} )| &\leq& (\log X)^{K-k} \cdot \sum_{ w_K \in [0,X] } \ldots \sum_{ w_{k + 1} \in [0,X] } | S(w_1, \ldots, w_K,  \mathbf{G}, \mathbf{H} )|,
\end{eqnarray}
where
\begin{eqnarray}
S(w_1, \ldots, w_K, \mathbf{G}, \mathbf{H} )
\notag
=
\sum_{ w_{1}  \in [0,X] } \ldots \sum_{ w_{k} \in [0,X] } \Lambda (w_1) \ldots \Lambda(w_k)
\ e  \left( \gamma_{\ell,r} \mathbf{w}^{ \mathbf{j}_{\ell,r} } + \Theta (w_1, \ldots, w_{k})  \right),
\notag
\end{eqnarray}
and $\Theta( w_1, \ldots , w_k )= $ $\Theta (w_1, \ldots, w_{k} : w_{k+1}, \ldots,  w_K,  \mathbf{G}, \mathbf{H})$ is a polynomial in $w_1, \ldots, w_k$
with coefficients possibly dependent on $w_{k+1}, \ldots, w_K,  \mathbf{G}, \mathbf{H}$. By construction,
we also know that this polynomial does not have any monomial divisible by $\mathbf{w}^{ \mathbf{j}_{\ell,r} }$.

We now apply Weyl differencing $\ell$ times, where we apply it $j_{i}$ times to the variable $w_{i}$ for each $1 \leq i \leq k$.
The point is that with this process every monomial of $\gamma_{\ell,r} \mathbf{w}^{ \mathbf{j}_{\ell,r} } +
\Theta( w_1, \ldots , w_k )$ for which at least one of $w_i$ has degree strictly less than $j_i$ will vanish,
in particular every monomial of $\Theta( w_1, \ldots , w_k )$ will vanish.
Let $\widetilde{c} = j_1! \ldots j_k!$. As a result, we obtain
\begin{eqnarray}
\label{S'_0 power}
&&|S(w_1, \ldots, w_K, \mathbf{G}, \mathbf{H} )|^{2^{\ell}}
\\
&\ll&  (\log X)^{ k 2^{\ell} } X^{k  2^{\ell}  - \ell} \sum_{  \substack{v_{i} \in [-X,X] \\ 1 \leq i \leq \ell-1  }}  \min\{ X, \| \widetilde{c}\gamma_{\ell, r} v_{1}  \ldots  v_{\ell - 1} \|^{-1} \}.
\notag
\end{eqnarray}
Since this is a standard application of Weyl differencing, and also similar to the argument in
\cite[pp. 725-726]{CM}, we leave the details to the reader.


Let
$$
\mathcal{A}_X :=\{ (v_1, \ldots, v_{\ell - 1}) \in [-X,X]^{\ell - 1} \cap \mathbb{Z}^{\ell - 1} \}
: \| \widetilde{c} \gamma_{\ell, r} v_1 \ldots v_{\ell -1} \| \leq \frac{1}{ X} \}.
$$
For any $1 \leq X' < X$, we define the set
$$
\mathcal{A}_{X,X'} :=\{ (v_1, \ldots, v_{\ell - 1}) \in [-X/X',X/X']^{\ell - 1} \cap \mathbb{Z}^{\ell - 1} \}
: \| \widetilde{c} \gamma_{\ell, r} v_1 \ldots v_{\ell -1} \| \leq \frac {1}{ X (X')^{\ell - 1}} \}.
$$
By applying Lemma \ref{Birch lemma 2.3} successively in the variables $v_1, \ldots, v_{\ell - 1}$, we obtain
$$
|\mathcal{A}_{X}| \ll (X')^{\ell -1} |\mathcal{A}_{X,X'}|.
$$
Let $X' = X (\log X)^{-C''/d}$. Suppose there exists $(v_1, \ldots, v_{\ell - 1}) \in \mathcal{A}_{X,X'}$
such that $(v_1 \ldots v_{\ell - 1}) \not = 0$. Then we have
$$
| \widetilde{c} v_1 \ldots v_{\ell - 1}| \leq (\log X)^{C''} \ \ \text{ and  } \ \ \| \widetilde{c} \gamma_{\ell, r} v_1 \ldots v_{\ell -1} \| \leq \frac {(\log X)^{C''}}{ X^{\ell}}
$$
for $X$ sufficiently large with respect to $\ell$,
and this contradicts the fact that $\gamma_{\ell, r} \in \mathfrak{n}^{(\ell)}(C'')$. Thus at least one of
$v_1, \ldots, v_{\ell - 1}$ must be $0$. Therefore, we have
$$
|\mathcal{A}_{X,X'}| \ll (\log X)^{(\ell - 2)C'' /d },
$$
and consequently,
\begin{equation}
\label{bound on AX}
|\mathcal{A}_{X}| \ll \left( \frac{X}{(\log X)^{C''/d}} \right)^{\ell -1} |\mathcal{A}_{X,X'}| \ll \frac{X^{\ell - 1}}{(\log X)^{C''/d}}.
\end{equation}

We now proceed in a similar manner as in \cite[Lemma 13.2]{S}.
First let us deal with the case $\ell > 2$.
Let $N_0(v'_1, \ldots, v'_{\ell - 2})$ be the number of points $v_{\ell-1} \in [-X, X] \cap \mathbb{Z}$
such that $( v'_1, \ldots,  v'_{\ell - 2}, v_{\ell-1}  ) \in \mathcal{A}_X$.
Then we have
\begin{equation}
\label{eqn for AX}
|\mathcal{A}_{X}| = \sum_{v_{1} \in [-X, X] } \ldots \sum_{v_{\ell - 2} \in [-X, X] } N_0(v_1, \ldots, v_{\ell-2}),
\end{equation}
and let $N_0 = |\mathcal{A}_{X}|$ when $\ell = 2$.

Let us write $\{ \alpha \}$ for the fractional part of a real number $\alpha$.
Then for any set of integers $v_1, \ldots, v_{\ell - 2}$, and $a \in \mathbb{Z}$ with $0 \leq a < X$, the inequality
\begin{equation}
\label{ineq C1 1}
\frac{a}{X}  \leq \{ \widetilde{c} \gamma_{\ell, r} v_1 \ldots v_{\ell - 1} \} < \frac{a + 1}{X}
\end{equation}
cannot hold for more than $2 N_0(v_1, \ldots, v_{\ell-2})$ integer points $v_{\ell - 1}$ lying inside $[-X,X]$
for the following reason. Suppose this is indeed the case, and let $v_{\ell - 1} \in [-X,X]$ be one integer which satisfies (\ref{ineq C1 1}).
By the pigeon hole principle, we know that at least one of
$$
\{ v'_{\ell - 1} \in [-X,X] \cap \mathbb{Z} : v'_{\ell - 1} \text{ satisfies (\ref{ineq C1 1}) and } v_{\ell - 1}  v'_{\ell - 1} \geq 0  \}
$$
or
$$
\{ v'_{\ell - 1} \in [-X,X] \cap \mathbb{Z}  : v'_{\ell - 1} \text{ satisfies (\ref{ineq C1 1}) and } v_{\ell - 1}  v'_{\ell - 1} < 0  \}
$$
has cardinality greater than $N_0(v_1, \ldots, v_{\ell-2})$.
Suppose it is the former set (we can argue in a similar fashion for the latter set as well).
If $v_{\ell - 1} $ and $v'_{\ell - 1}$ are two distinct points that satisfy (\ref{ineq C1 1}), then we have
$$
\| \widetilde{c} \gamma_{\ell, r} v_1 \ldots v_{\ell - 2} (v_{\ell - 1} - v'_{\ell - 1}) \| < \frac{1}{X}
$$
and $(v_{\ell - 1} - v'_{\ell - 1})  \in [-X, X] \cap \mathbb{Z}$.
Consequently, we have $( v_1, \ldots,  v_{\ell - 2},  v_{\ell - 1} - v'_{\ell - 1}) \in \mathcal{A}_X$
from which we can obtain contradiction.
Therefore, we obtain the following inequalities
\begin{eqnarray}
&& \sum_{v_{\ell - 1} \in [-X, X]  } \min \left( X,  \| \widetilde{c} \gamma_{\ell, r} v_1 \ldots v_{\ell - 1}   \|^{-1}   \right)
\label{bound with N0}
\\
&\ll& N_0(v_1, \ldots, v_{\ell-2}) \sum_{0 \leq a \leq X}^{}  \min \left( X,  \max \left( \frac{X}{a}, \frac{X}{|X - a -1|}  \right)   \right)
\notag
\\
&\ll& N_0(v_1, \ldots, v_{\ell-2}) X \log X.
\notag
\end{eqnarray}
Thus via (\ref{bound on AX}), (\ref{eqn for AX}), and (\ref{bound with N0}), we have the following bound for (\ref{S'_0 power}),
\begin{eqnarray}
\notag
&&|S(w_1, \ldots, w_K, \mathbf{G}, \mathbf{H} )|^{2^{\ell}}
\\
&\leq& (\log X)^{ 2^{\ell} k } X^{2^{\ell} k - \ell}
\sum_{v_{1} \in [-X, X] }
\ldots
\sum_{v_{\ell - 1} \in [-X, X] } \min \left( X,  \|\widetilde{c} \gamma_{\ell, r} v_1 \ldots v_{\ell - 1}   \|^{-1}   \right)
\notag
\\
&\ll& (\log X)^{ 2^{\ell} k } X^{2^{\ell} k - \ell}
\sum_{v_{1} \in [-X, X] }
\ldots
\sum_{v_{\ell - 2} \in [-X, X] } N_0(v_1, \ldots, v_{\ell-2}) X \log X
\notag
\\
&=& (\log X)^{2^{\ell} k } X^{2^{\ell} k - \ell} \ |\mathcal{A}_{X}| \  X \log X
\notag
\\
&\leq&  X^{2^{\ell} k} (\log X)^{2^{\ell}k + 1 - C''/d},
\notag
\end{eqnarray}
and hence
$$
|S(w_1, \ldots, w_K, \mathbf{G}, \mathbf{H} )| \ll  X^{k} (\log X)^{k  + 2^{- \ell}(1 - C''/d) }.
$$
Therefore, we obtain from (\ref{S'0 case 2}) that
\begin{eqnarray}
\notag
|S_0( \boldsymbol{\alpha}, \mathbf{G}, \mathbf{H} )| &\ll& (\log X)^{K-k} \cdot \sum_{ w_K \in [0,X] } \ldots \sum_{ w_{k + 1} \in [0,X] }  X^{k} (\log X)^{k  + 2^{- \ell}(1 - C''/d) }
\\
\notag
&\ll& (\log X)^{K} X^{K} (\log X)^{ 2^{- \ell}(1 - C''/d) }.
\end{eqnarray}
The case $\ell = 2$ can be dealt with in a similar and more simple manner.
Recall from above $C'' = (C-1)/ R$ and $K \leq dR$. Thus we make sure $C$ is sufficiently large with respect to $d$ and $R$.
This completes the proof of Claim 1, and hence the proof of Proposition \ref{prop minor arc bound} as well.
\end{proof}

\section{Technical estimates}
\label{sec tech 1}
In this section, we collect results related to Weyl differencing
that are necessary in obtaining estimates for the singular integral and the singular series defined in (\ref{defn of sing int}) and (\ref{def sigular series}), respectively.

Let us denote $\mathfrak{B}_0 = [0,1]^n$.
Let $\boldsymbol{\alpha} = (\boldsymbol{\alpha}_d, \ldots, \boldsymbol{\alpha}_1) \in \mathbb{R}^R$,
where $R = r_1 + \ldots + r_d$ and $\boldsymbol{\alpha}_{\ell} = (\alpha_{\ell, 1}, \ldots , \alpha_{\ell, r_{\ell}} ) \in \mathbb{R}^{r_{\ell}}$ $(1 \leq \ell \leq d)$.
We define
$$
\| \boldsymbol{\alpha} \| = \max_{ \substack{  1 \leq \ell \leq d  \\ 1 \leq r \leq r_{\ell}}} \| \alpha_{\ell, r} \|
\ \ \
\text{  and  } \ \  \
| \boldsymbol{\alpha} | = \max_{ \substack{  1 \leq \ell \leq d  \\ 1 \leq r \leq r_{\ell}}} | \alpha_{\ell, r} |.
$$

Let $\mathbf{u} = ( \mathbf{u}_{d}, \ldots, \mathbf{u}_{1} )$ be a system of polynomials in $\mathbb{Q}[x_1, \ldots, x_n]$, where
$\mathbf{u}_{\ell}  = ( u_{\ell,1}, \ldots, u_{\ell, r_{\ell}} )$ is the subsystem of degree $\ell$ polynomials of $\mathbf{u}$  $(1 \leq \ell \leq d)$.
We let $\mathbf{U} = ( \mathbf{U}_{d}, \ldots, \mathbf{U}_{1} )$ be the system of forms, where for each $1 \leq \ell \leq d$,
$\mathbf{U}_{\ell} = ( U_{\ell,1}, \ldots, U_{\ell, r_{\ell}} )$
and $U_{\ell, r}$ is the degree $\ell$ portion of $u_{\ell, r}$ $(1 \leq r \leq r_{\ell} )$.
We define the following exponential sum associated to  $\mathbf{u}$,
\begin{equation}
\label{def of S 1+}
S( \boldsymbol{\alpha}) = S( \mathbf{u},  \mathfrak{B}_0 ;\boldsymbol{\alpha}) := \sum_{\mathbf{x} \in P \mathfrak{B}_0 \cap \mathbb{Z}^n}
e \left( \sum_{1 \leq \ell \leq d} \sum_{ 1 \leq r \leq r_{\ell} } {\alpha}_{\ell, r} \cdot {u}_{\ell, r}  (\mathbf{x})  \right).
\end{equation}

Let $1 < \ell \leq d$ and $r_{\ell} > 0$. We let $\mathbb{M}_{\ell} = \mathbb{M}_{\ell} (\mathbf{U}_{\ell})$ be
the affine variety in $(\mathbb{C}^n)^{\ell-1}$ associated to $\mathbf{U}_{\ell}$, for which the definition
we provide in (\ref{def of MU}) of Appendix \ref{appendix B}. For $R_0>0$, we denote $z_{R_0} (\mathbb{M}_{\ell})$ to be the number of integer points $(\mathbf{x}_1, \ldots, \mathbf{x}_{\ell-1} )$ on
$\mathbb{M}_{\ell}$ such that $$\max_{1 \leq i \leq \ell -1} \max_{1 \leq j \leq n}  | x_{ij} | \leq R_0,$$
where $\mathbf{x}_i = (x_{i1}, \ldots, x_{in}) \ (1 \leq i \leq \ell-1)$. We define $g_{\ell}( \mathbf{U}_{\ell} )$
to be the largest real number such that
\begin{equation}
\label{def gd+}
z_P(\mathbb{M}_{\ell}) \ll P^{n({\ell}-1) - g_{\ell}( \mathbf{U}_{\ell} ) + \varepsilon}
\end{equation}
holds for each $\varepsilon >0$. It was proved in \cite[pp. 280, Corollary]{S} that
\begin{equation}
\label{h and g+}
h_{\ell}( \mathbf{U}_{\ell} ) < \frac{\ell!}{  (\log 2)^{\ell} }  \left( g_{\ell}( \mathbf{U}_{\ell}  ) + ({\ell}-1)r_{\ell} (r_{\ell} - 1)  \right).
\end{equation}
Let
$$
\gamma_{\ell} = \frac{2^{{\ell}-1} ({\ell}-1) r_{\ell}}{ g_{\ell}( \mathbf{U}_{\ell} ) }
$$
when $r_{\ell} >0$ and $g_{\ell}( \mathbf{U}_{\ell} ) > 0$.
We let
$\gamma_{\ell} = 0$ if $r_{\ell} = 0$, and let
$\gamma_{\ell} = + \infty$ if $r_{\ell} > 0$ and $g_{\ell}( \mathbf{U}_{\ell} ) = 0$.
For $\ell$ with $r_{\ell}>0$, we also define
\begin{equation}
\label{def gamma'+}
\gamma'_{\ell} = \frac{ 2^{{\ell}-1} }{ g_{\ell}( \mathbf{U}_{\ell} ) } = \frac{ \gamma_{\ell} }{ ({\ell}-1) r_{\ell} }.
\end{equation}

We need the following lemma to obtain estimates on the singular integral.
Let
$$
\mathcal{I}( \mathfrak{B}_0 ,  \boldsymbol{\tau}) = \int_{\mathbf{v} \in \mathfrak{B}_0 } e \left( \sum_{\ell = 1}^d \sum_{r=1}^{r_{\ell}} \tau_{\ell,r}  \cdot U_{\ell,r}(\mathbf{v})  \right) \ \mathbf{d}\mathbf{v}.
$$
\begin{lem} \cite[Lemma 2.7]{Y}
\label{Lemma 8.1 in S+}
Suppose $\mathbf{u}$ has coefficients in $\mathbb{Z}$, and that
$\mathcal{B}_1(\mathbf{u}_1)$ is sufficiently large with respect to $r_d, \ldots, r_1$, and $d$.
Furthermore, suppose  $\gamma_2, \ldots, \gamma_d$ are sufficiently small with respect to $r_d, \ldots, r_1$, and $d$.
Then we have
\begin{equation}
\label{(3.9) is S}
\mathcal{I}( \mathfrak{B}_0,  \boldsymbol{\tau}) \ll \min (1 , |\boldsymbol{\tau}|^{-R - 1} ),
\end{equation}
where the implicit constant depends at most on $d$, $r_d, \ldots, r_1$, and $\mathbf{U}$.
\end{lem}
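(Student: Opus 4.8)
The bound $|\mathcal{I}(\mathfrak{B}_0,\boldsymbol{\tau})|\le 1$ is trivial, the integrand having modulus $1$ and $\mathfrak{B}_0=[0,1]^n$ volume $1$; the content of the lemma is therefore the decay estimate $|\mathcal{I}(\mathfrak{B}_0,\boldsymbol{\tau})|\ll|\boldsymbol{\tau}|^{-R-1}$ for $|\boldsymbol{\tau}|$ large. I would establish this by running, directly on the oscillatory integral $\mathcal{I}$, the continuous analogue of the Weyl differencing procedure that controls the exponential sum $S(\boldsymbol{\alpha})$ of \eqref{def of S 1+}, following Birch \cite{B} and Schmidt \cite{S}. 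Writing $T=|\boldsymbol{\tau}|$ and assuming $T$ large, one separates two regimes: either some coordinate $\tau_{\ell,r}$ with $\ell\ge 2$ is comparable to a fixed small power of $T$, in which case the degree $\ge 2$ oscillation already forces strong decay, or all the degree $\ge 2$ coordinates are bounded and $|\boldsymbol{\tau}|\asymp|\boldsymbol{\tau}_1|$, in which case the weight is carried by the linear block and is handled separately.

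\textbf{The degree $\ge 2$ part.} Fix $\ell$ with $r_\ell>0$ and difference $\mathcal{I}$ exactly $\ell-1$ times in the integration variables; this bounds $|\mathcal{I}|^{2^{\ell-1}}$ by an integral, over $2^{\ell-1}-1$ blocks of difference variables, of a product of factors $\min(1,\|\cdot\|^{-1})$ whose arguments are the multilinear forms attached to $\mathbf{U}_\ell$ that cut out the variety $\mathbb{M}_\ell(\mathbf{U}_\ell)$ of \eqref{def of MU}. The measure of the set of difference variables on which these multilinear forms are simultaneously small is controlled --- via the interpolation Lemma \ref{Birch lemma 2.3}, which bridges the measure of a real region and the count of integer points with small values --- by the growth exponent $z_P(\mathbb{M}_\ell)$, i.e. by $g_\ell(\mathbf{U}_\ell)$. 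Since $\gamma_\ell$, equivalently $\gamma'_\ell=2^{\ell-1}/g_\ell(\mathbf{U}_\ell)$, is assumed small, $g_\ell(\mathbf{U}_\ell)$ is large, and the argument yields decay $|\tau_{\ell,r}|^{-A}$ in any coordinate carrying a non-negligible part of $\boldsymbol{\tau}$, for a prescribed $A$; one then demands $\gamma_2,\ldots,\gamma_d$ small enough --- as a function of $d$ and $r_d,\ldots,r_1$ only --- that $A$ comfortably exceeds $R+1$. This is precisely where the hypothesis on $\gamma_2,\ldots,\gamma_d$ is used, and the implicit constant depends only on $d$, the $r_\ell$, and (through the multilinear forms) $\mathbf{U}$.

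\textbf{The linear part.} When the degree $\ge 2$ coordinates are bounded, $|\boldsymbol{\tau}|\asymp|\boldsymbol{\tau}_1|$ and one must extract $R+1$ powers of decay from the linear forms $\mathbf{U}_1$. Here I would invoke the largeness of $\mathcal{B}_1(\mathbf{u}_1)$: it guarantees that $\mathbf{U}_1$ genuinely involves $\gg R$ of the variables, so after a $\mathrm{GL}_n(\mathbb{Q})$ change of variables supported on those variables and chosen in general position, $\mathcal{I}$ becomes the Fourier transform of the characteristic function of a convex polytope in $n$ variables, evaluated at $(\boldsymbol{\tau}_1,\mathbf{0})$ on an $R$-dimensional coordinate subspace. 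Integrating out the $\gg R$ "free" coordinates first produces a compactly supported, many-times differentiable density in the $R$ "form" coordinates, and repeated integration by parts then yields the required bound $|\boldsymbol{\tau}_1|^{-R-1}$, the number of available derivatives (hence the attainable exponent) growing with $\mathcal{B}_1(\mathbf{u}_1)$. Combining the two regimes with the trivial bound gives \eqref{(3.9) is S}.

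\textbf{Main obstacle.} The delicate point is the bookkeeping of the simultaneous differencing across the degrees $2,\ldots,d$: differencing away the degree-$\ell$ oscillation inevitably generates lower-degree terms, and one must check both that these do not spoil the measure/lattice-point count attached to $\mathbb{M}_\ell$ and that, when the argument finally descends to the linear forms, the change of variables used there is consistent with everything done before. One also needs the continuous incarnations of the Weyl inequality and of Lemma \ref{Birch lemma 2.3}, and must verify that the smallness demanded of $\gamma_2,\ldots,\gamma_d$ together with the largeness demanded of $\mathcal{B}_1(\mathbf{u}_1)$ can be chosen depending only on $d$ and $r_d,\ldots,r_1$, so that the exponent $R+1$ is attained with implicit constants uniform in $\boldsymbol{\tau}$.
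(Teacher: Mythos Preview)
The paper does not give its own proof of this lemma; it cites \cite{Y} and remarks only that the argument there follows Schmidt's \cite[Lemma 8.1]{S} for the higher-degree part, with additional justification needed for the linear polynomials. Your proposal is consistent with that description: the continuous Weyl-differencing you outline for the degree $\ge 2$ block is the integral-side analogue of Schmidt's method (which in \cite{S} proceeds instead by writing $\mathcal{I}$ as a limit of $P^{-n}S(\boldsymbol{\alpha})$ with $\alpha_{\ell,r}=\tau_{\ell,r}P^{-\ell}$ and invoking the sum-side dichotomy of the type in Corollary~\ref{cor 15.1 in S+}), and your separate treatment of the linear block is precisely the kind of extra work the paper flags as necessary. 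The obstacle you identify---tracking lower-degree terms generated by differencing across degrees---is indeed the main technical point beyond the single-degree case.

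One minor comment: your linear-part argument via integration by parts after a change of variables is a perfectly valid route, but a somewhat more direct way to use the hypothesis $\mathcal{B}_1(\mathbf{u}_1)$ large is to note that for any nonzero $\boldsymbol{\tau}_1$ the linear form $\sum_r \tau_{1,r} U_{1,r}(\mathbf{v})$ has at least $\mathcal{B}_1(\mathbf{U}_1)$ nonzero coefficients (by the very definition of $\mathcal{B}_1$), so the integral over $\mathfrak{B}_0$ factors through enough one-dimensional oscillatory integrals $\int_0^1 e(c_i v_i)\,dv_i\ll \min(1,|c_i|^{-1})$ to produce the required $|\boldsymbol{\tau}_1|^{-R-1}$ decay once $\mathcal{B}_1(\mathbf{U}_1)>R+1$. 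This avoids the change-of-variables step and keeps the constant-dependency transparent.
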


We refer the reader to \cite{Y} for a proof of this lemma. The proof in \cite{Y} is similar to that of \cite[Lemma 8.1]{S},
which is for systems without linear polynomials. However, due to the presence of linear polynomials it requires
some justification not available in \cite{S}.

We also need to deal with certain situations where the coefficients of $\mathbf{u}$ may depend on $P$. There are essentially two different
scenarios we have to consider, first of which we refer to as the following.
\newline

Condition $(\star')$: The polynomials of $\mathbf{u}$ have coefficients in $\mathbb{Z}$, and the coefficients of $\mathbf{U}$ do not depend on $P$.
However, for each $u_{\ell, r}(\mathbf{x})$ $(1 \leq \ell \leq d, 1 \leq r \leq r_{\ell})$ the coefficients of its monomials whose
degrees are strictly less than $\ell$ may depend on $P$.
\newline

We have the following result when $\mathbf{u}$ satisfies Condition $(\star')$.
\begin{cor} 
\label{cor 15.1 in S'+}
Suppose $\mathbf{u}$ satisfies Condition $(\star')$.
Let $S( \boldsymbol{\alpha}) $ be the sum associated to $\mathbf{u}$ as in ~(\ref{def of S 1+}).
Suppose $\varepsilon' > 0$ is sufficiently small and $Q>0$ satisfies
$$
Q \gamma'_d < 1.
$$
Then one of the following alternatives must hold:

$(i)$ $|  S( \boldsymbol{\alpha})  | \leq P^{n-Q}$.

$(ii)$ There exists $n_0 \in \mathbb{N}$ such that
$$
n_0 \ll P^{Q \gamma_d + \varepsilon'} \text{  and  } \|  n_0 \boldsymbol{\alpha}_d \| \ll P^{ -d + Q \gamma_d + \varepsilon'}.
$$
The implicit constants depend only on $n, d, r_d, \varepsilon', Q$, and $\mathbf{U}_{d}$.
\end{cor}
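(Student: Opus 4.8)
The plan is to follow the Weyl differencing scheme of Schmidt \cite[\S 13--15]{S}, in the form used there to prove \cite[Lemma 15.1]{S} and its corollaries, tracking carefully the effect of the $P$-dependence permitted by Condition $(\star')$. First I would apply the Weyl differencing process $d-1$ times to $S(\boldsymbol{\alpha})$, introducing auxiliary shift vectors $\mathbf{h}_1, \ldots, \mathbf{h}_{d-1} \in [-P,P]^n \cap \mathbb{Z}^n$. Differencing $d-1$ times annihilates every monomial of $\mathbf{u}$ of degree at most $d-2$ and reduces each degree $d-1$ monomial to a term that is constant in the remaining summation variable; the only contribution genuinely linear in that variable comes from the degree $d$ forms $\mathbf{U}_d$. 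The key observation is that, by Condition $(\star')$, the coefficients of $\mathbf{U}_d$ are independent of $P$, while the $P$-dependent coefficients of the lower degree monomials of $\mathbf{u}$ land entirely inside the term that is constant in the inner summation variable, and hence disappear on taking absolute values. Summing the inner linear exponential sum over a box of side $\ll P$ then gives the standard bound
\begin{equation}
\notag
| S(\boldsymbol{\alpha}) |^{2^{d-1}} \ll P^{(2^{d-1}-1)n} \sum_{\mathbf{h}_1, \ldots, \mathbf{h}_{d-1}} \ \prod_{j=1}^{n} \min \left( P, \ \| \Gamma_j(\mathbf{h}_1, \ldots, \mathbf{h}_{d-1}; \boldsymbol{\alpha}_d) \|^{-1} \right),
\end{equation}
in which each $\Gamma_j$ is a $\mathbb{Z}$-linear combination of the entries of $\boldsymbol{\alpha}_d$ whose coefficients are the multilinear forms attached to $\mathbf{U}_d$.

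Next I would run the dichotomy that is by now routine in this circle of ideas. If $| S(\boldsymbol{\alpha}) | \leq P^{n-Q}$ we are in alternative $(i)$ and there is nothing more to do, so suppose $| S(\boldsymbol{\alpha}) | > P^{n-Q}$. Inserting this into the displayed inequality forces the sum on the right to be bounded below by a positive power of $P$ depending on $Q$, and the usual dyadic pigeonholing on the sizes of the $\| \Gamma_j \|$ (exactly as in \cite[\S 13]{S}) produces a large number of tuples $(\mathbf{h}_1, \ldots, \mathbf{h}_{d-1})$, lying in the box of side $P$, for which the pairings $\Gamma_j(\mathbf{h}_1, \ldots, \mathbf{h}_{d-1}; \boldsymbol{\alpha}_d)$ are simultaneously close to integers. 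I would then apply Lemma \ref{Birch lemma 2.3} coordinate by coordinate to this set, shrinking the box of admissible $\mathbf{h}_i$ from side $\sim P$ down to side $\sim P^{\,Q\gamma'_d}$ at the cost of a controlled power of $P$; because $Q\gamma'_d < 1$ the shrunken box, together with the sharpened congruence condition, is still large enough to contain a tuple $(\mathbf{h}_1, \ldots, \mathbf{h}_{d-1})$ which is not identically zero.

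From such a nonzero tuple one extracts, in the standard way, a single positive integer $n_0$ --- built out of a nonvanishing multilinear value of $\mathbf{U}_d$ at the $\mathbf{h}_i$ --- satisfying $n_0 \ll P^{\,Q\gamma_d + \varepsilon'}$ and $\| n_0 \boldsymbol{\alpha}_d \| \ll P^{\,-d + Q\gamma_d + \varepsilon'}$, which is alternative $(ii)$; here the passage from $\gamma'_d$ to $\gamma_d = (d-1) r_d \gamma'_d$ accounts for the $d-1$ vector arguments and the $r_d$ forms, and the relation \eqref{h and g+} between $h_d(\mathbf{U}_d)$ and $g_d(\mathbf{U}_d)$, combined with the defining bound \eqref{def gd+} for $g_d(\mathbf{U}_d)$ in terms of the point count $z_P(\mathbb{M}_d)$, is what guarantees that a nonzero tuple must actually occur once the shrunken count is forced to be large. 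The part I expect to require the most care is not any single estimate but the bookkeeping around Condition $(\star')$: one must check that after the $d-1$ differencings every $P$-dependent coefficient has been swept into terms constant in the summation variable, so that the counting estimates --- which are formulated for the fixed forms $\mathbf{U}_d$ --- apply without change, and then keep exact track of the exponents of $P$ through the dyadic and shrinking steps so that the hypothesis $Q\gamma'_d < 1$ is precisely what is needed to keep every intermediate box nonempty.
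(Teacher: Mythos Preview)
Your approach is essentially the same as the paper's, which quotes Schmidt's Lemma~15.1 (stated here as Lemma~\ref{Lemma 15.1 in S'}) and then eliminates the third alternative by choosing $\eta = Q\gamma'_d + \varepsilon'$. The Weyl differencing, the observation that Condition~$(\star')$ makes the $P$-dependent lower-order coefficients invisible at the top level, and the shrinking step via Lemma~\ref{Birch lemma 2.3} are all exactly what the paper does.

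There is one imprecision in your dichotomy that you should fix. After shrinking, the issue is not whether the box contains a \emph{nonzero} tuple $(\mathbf{h}_1,\ldots,\mathbf{h}_{d-1})$, but whether it contains a tuple for which the $r_d\times n$ matrix $[\Gamma_{d,U_{d,r}}(\mathbf{h}_1,\ldots,\mathbf{h}_{d-1},\mathbf{e}_i)]$ has rank $r_d$; only then can you extract $n_0$ as a nonvanishing $r_d\times r_d$ minor and solve for $\|n_0\boldsymbol{\alpha}_d\|$ by Cramer's rule. A nonzero tuple can certainly have rank-deficient matrix. The correct argument (as in the paper's proof of Lemma~\ref{Lemma 15.1 in S''}) is: if \emph{every} tuple counted by $N(\eta)$ had rank $<r_d$, then $z_{P^\eta}(\mathbb{M}_d)\geq N(\eta)\gg P^{\eta n(d-1)-2^{d-1}Q-\varepsilon}$, which for $\eta>Q\gamma'_d$ contradicts the defining bound~\eqref{def gd+}. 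The hypothesis $Q\gamma'_d<1$ is what allows such an $\eta$ to also satisfy $\eta\leq 1$. Also, the relation~\eqref{h and g+} between $h_d$ and $g_d$ plays no role here; only~\eqref{def gd+} is needed.
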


Next we present the result in our second scenario for when the coefficients of $\mathbf{u}$ may depend on $P$.
Let $u^{(j)}_{\ell,r}(\mathbf{x})$ be the homogeneous degree $j$ portion of the polynomial $u_{\ell,r}(\mathbf{x})$.
In the following corollary, for $j < \ell$ the coefficients of $u^{(j)}_{\ell,r}(\mathbf{x})$ may be in $\mathbb{Q}$ and also depend on $P$, but in a controlled manner. On the other hand, the coefficients of
$U_{\ell,r}(\mathbf{x})$ do not depend on $P$.
\begin{cor}  
\label{cor 15.1 in S+}
Suppose $\mathbf{u}$ has coefficients in $\mathbb{Q}$, and further suppose $\mathbf{U}$ has coefficients in $\mathbb{Z}$.
Let $Q > 0$ and $\varepsilon >0$. Let $2 \leq \ell \leq d$ with $r_{\ell} > 0$.
If $\ell =d$, then let $\theta = 0$ and $q=1$. On the other hand, if
$2 \leq \ell < d$, then suppose $0 \leq \theta < 1/4$ and that there is $q \in \mathbb{N}$ with
$$
q \leq P^{\theta}, \ \ \    q \boldsymbol{\alpha}_{j} \in \mathbb{Z}^{r_j} \ \   (\ell < j \leq d),
$$
and
$$
q \alpha_{\ell', r} u^{(j)}_{\ell', r}(\mathbf{x}) \in \mathbb{Z}[x_1, \ldots, x_n]
$$
for every $\ell < \ell' \leq d, 0 \leq j < \ell', 1 \leq r \leq r_{\ell'}$.
Let $S( \boldsymbol{\alpha})$ be the sum associated to $\mathbf{u}$ as in ~(\ref{def of S 1+}). 
Suppose
$$
4 \theta + Q \gamma'_{\ell} < 1.
$$
Then one of the following alternatives must hold:

$(i)$ $|  S( \boldsymbol{\alpha})  | \leq P^{n-Q}$.

$(ii)$ There exists $n_0 \in \mathbb{N}$ such that
$$
n_0 \ll P^{ Q \gamma_{\ell} + \varepsilon } \text{  and  } \|  n_0 q \boldsymbol{\alpha}_{\ell} \| \ll P^{ -\ell + 4 \theta + Q \gamma_{\ell} + \varepsilon}.
$$
\newline
The implicit constants
depend at most on $n,d, r_d, \ldots, r_1, Q, \varepsilon$, and $\mathbf{U}$.
\end{cor}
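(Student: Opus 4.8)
The plan is to reduce the statement to Corollary~\ref{cor 15.1 in S'+}, the case $\ell=d$ governed by Condition $(\star')$, by peeling off the polynomials of degree strictly larger than $\ell$ via a decomposition into residue classes modulo $q$. The basic observation is that the hypotheses force $q\,\alpha_{\ell',r}u_{\ell',r}(\mathbf{x})\in\mathbb{Z}[x_1,\dots,x_n]$ for every $\ell<\ell'\le d$ and $1\le r\le r_{\ell'}$: indeed $q\boldsymbol{\alpha}_{\ell'}\in\mathbb{Z}^{r_{\ell'}}$ and $\mathbf{U}$ has integer coefficients, so $q\alpha_{\ell',r}U_{\ell',r}\in\mathbb{Z}[\mathbf{x}]$, while $q\alpha_{\ell',r}u^{(j)}_{\ell',r}\in\mathbb{Z}[\mathbf{x}]$ for $j<\ell'$ by assumption. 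Hence $e\big(\sum_{\ell'>\ell,r}\alpha_{\ell',r}u_{\ell',r}(\mathbf{x})\big)$ depends only on $\mathbf{x}\bmod q$, and splitting the sum $S(\boldsymbol{\alpha})$ in \eqref{def of S 1+} over the $q^n$ residue classes pulls this factor out of each inner sum as a constant, giving
$$
|S(\boldsymbol{\alpha})|\le \sum_{\mathbf{b}\in(\mathbb{Z}/q\mathbb{Z})^n}\Big|\sum_{\substack{\mathbf{x}\in P\mathfrak{B}_0\\ \mathbf{x}\equiv\mathbf{b}\,(q)}}e\Big(\sum_{\ell'\le\ell}\sum_r\alpha_{\ell',r}u_{\ell',r}(\mathbf{x})\Big)\Big|.
$$
When $\ell=d$ there is nothing to remove, and one applies Corollary~\ref{cor 15.1 in S'+} directly after clearing the (fixed) denominators of $\mathbf{u}$ into $\boldsymbol{\alpha}$.

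For $2\le\ell<d$ I would then substitute $\mathbf{x}=q\mathbf{y}+\mathbf{b}$ in each inner sum, turning it into an exponential sum over a box of side $\asymp P/q$ attached to a system of polynomials in $\mathbf{y}$ of degrees $\le\ell$, whose top-degree leading forms are $q^{\ell}\mathbf{U}_\ell$. Since multiplying $\mathbf{U}_\ell$ by a nonzero scalar leaves the variety $\mathbb{M}_\ell$ of \eqref{def of MU} (and hence $g_\ell$, $\gamma_\ell$ and $\gamma'_\ell$) unchanged, Corollary~\ref{cor 15.1 in S'+}, with ``$d$'' there played by $\ell$, applies to each of these sums; the remaining, possibly $P$-dependent, lower-degree coefficients are precisely what Condition $(\star')$ permits, after a final clearing of a bounded denominator.

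It remains to assemble the two alternatives. The aim is to fix an internal parameter $Q_1$ with $Q_1(1-\theta)$ just above $Q$; then $4\theta+Q\gamma'_\ell<1$ together with $\theta<1/4$ should yield $Q_1\gamma'_\ell<1$, so Corollary~\ref{cor 15.1 in S'+} is applicable with parameter $Q_1$ and a small auxiliary $\varepsilon'$. If its alternative $(i)$ holds for every residue class, then summing the $q^n\le P^{n\theta}$ bounds of size $(P/q)^{\,n-Q_1}$ gives $|S(\boldsymbol{\alpha})|\le P^{\,n-Q_1(1-\theta)}\le P^{\,n-Q}$. Otherwise some class is large, and alternative $(ii)$ produces $n_1\in\mathbb{N}$ with $n_1\ll (P/q)^{\,Q_1\gamma_\ell+\varepsilon'}$ and $\|n_1\,q^{\ell}\boldsymbol{\alpha}_\ell\|\ll (P/q)^{\,-\ell+Q_1\gamma_\ell+\varepsilon'}$; taking $n_0=n_1q^{\ell-1}$ converts $q^{\ell}\boldsymbol{\alpha}_\ell$ into $q\boldsymbol{\alpha}_\ell$, and the estimate $q\le P^{\theta}$ should turn these into the two bounds of alternative $(ii)$, with the gap $3\theta$ between $4\theta$ and $\theta$ and the freedom in $\varepsilon,\varepsilon'$ absorbing the powers of $q$ shed in the substitution.

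The hard part will be exactly this bookkeeping of exponents: one must choose $Q_1$ and $\varepsilon'$ so that the $q^n$ loss from the residue decomposition, the change of box-size from $P$ to $P/q$, and the rescaling $\mathbf{U}_\ell\mapsto q^{\ell}\mathbf{U}_\ell$ all reconcile with the stated exponents $Q\gamma_\ell+\varepsilon$ and $-\ell+4\theta+Q\gamma_\ell+\varepsilon$ while keeping the implied constants independent of $\theta$; if the naive $q^n$-estimate is too lossy here one would instead decompose $e\big(\sum_{\ell'>\ell,r}\alpha_{\ell',r}u_{\ell',r}\big)$ through its finite Fourier expansion on $(\mathbb{Z}/q\mathbb{Z})^n$ and combine the resulting full-box sums by Cauchy--Schwarz, trading the factor $q^n$ for $q^{n/2}$. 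A secondary point to verify before invoking Corollary~\ref{cor 15.1 in S'+} is that its statement, phrased for $\mathfrak{B}_0=[0,1]^n$ and for leading forms independent of $P$, remains valid for the shifted boxes $q^{-1}(P\mathfrak{B}_0-\mathbf{b})$ and for the $P$-bounded rescalings $q^{\ell}\mathbf{U}_\ell$; this follows from the scale- and translation-invariance built into the Weyl-differencing and geometry-of-numbers estimates of \cite{S} that underlie it, but should be stated explicitly.
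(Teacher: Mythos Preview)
Your approach is genuinely different from the paper's, and the exponent bookkeeping you flag as ``the hard part'' does not in fact close. After substituting $\mathbf{x}=q\mathbf{y}+\mathbf{b}$ and applying Corollary~\ref{cor 15.1 in S'+} with parameter $Q_1\approx Q/(1-\theta)$ on a box of side $\asymp P/q$, alternative~$(ii)$ there produces $n_1\ll(P/q)^{Q_1\gamma_\ell+\varepsilon'}$ together with $\|n_1q^{\ell}\boldsymbol{\alpha}_\ell\|$ small. To convert $q^{\ell}\boldsymbol{\alpha}_\ell$ into $q\boldsymbol{\alpha}_\ell$ you are forced to take $n_0=n_1q^{\ell-1}$, whence $n_0\ll P^{Q_1\gamma_\ell+\varepsilon'}q^{\ell-1}\le P^{Q_1\gamma_\ell+\varepsilon'+\theta(\ell-1)}$. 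Since $Q_1\gamma_\ell\ge Q\gamma_\ell/(1-\theta)$, the exponent exceeds $Q\gamma_\ell$ by at least $\theta\bigl(Q\gamma_\ell/(1-\theta)+\ell-1\bigr)$, which for $\theta$ bounded away from $0$ is a fixed positive quantity and cannot be absorbed into $\varepsilon$. Your Fourier/Cauchy--Schwarz fallback only attacks the $q^n$ loss in alternative~$(i)$; it still forces $Q_1>Q$ and still leaves the conversion from $\|n_1\boldsymbol{\alpha}_\ell\|$ to $\|n_0 q\boldsymbol{\alpha}_\ell\|$ costing a factor of $q$ in $n_0$. (There is also a secondary problem: Condition~$(\star')$ demands integer coefficients, while the lower-degree parts of $u_{\ell',r}$ may carry $P$-dependent rational denominators, as in the application in Lemma~\ref{to bound local factor}, so ``clearing a bounded denominator'' is not available.)

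The paper instead avoids any residue decomposition and performs Weyl differencing at level~$\ell$ directly on the full sum $S(\boldsymbol{\alpha})$, via Schmidt's Lemma~13.1 (restated here as Lemma~\ref{Lemma 13.1 in S}). That lemma already carries the hypothesis $\|qG^{(j)}\|\le cP^{\theta-j}$ for $\ell<j\le d$, and under the corollary's assumptions one has $\|qG^{(j)}\|=0$ outright. After differencing (Lemma~\ref{Lemma 14.2 in S}), the degree-$\ell$ contributions from $u_{\ell',r}$ with $\ell'>\ell$ drop out because $q\alpha_{\ell',r}\Gamma_{\ell,u^{(\ell)}_{\ell',r}}$ is integer-valued, leaving only $\sum_r q\alpha_{\ell,r}\Gamma_{\ell,U_{\ell,r}}$; this is Lemma~\ref{Lemma 15.1 in S''}. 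Choosing $\eta=Q\gamma'_\ell+\varepsilon'$ (so $\eta+4\theta<1$) rules out alternative~$(iii)$ by the definition of $g_\ell(\mathbf{U}_\ell)$, and alternative~$(ii)$ gives $n_0\ll P^{r_\ell(\ell-1)\eta}=P^{Q\gamma_\ell+\varepsilon}$ directly, with no stray powers of $q$.
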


We present the details of proof of Corollaries \ref{cor 15.1 in S'+} and \ref{cor 15.1 in S+} in Appendix \ref{appendix B}.

\section{Hardy-Littlewood Circle Method: Major Arcs}
\label{section major arcs}
For $\mathbf{x} = (x_1, \ldots, x_n)$, let us denote $ \widehat{\mathbf{x}} = (x_1, \ldots, x_{n-r_1})$.
In this section, we consider the system of equations
\begin{equation}
\label{set of eqn 3}
f_{\ell, r} (\mathbf{x}) = 0 \ \ (1 \leq \ell \leq d, 1 \leq r \leq r_{\ell}),
\end{equation}
where we assume $\mathbf{f}$ is of the shape
$$
f_{\ell, r} (\mathbf{x}) = f_{\ell, r} (\widehat{\mathbf{x}}) \in \mathbb{Z}[x_1, \ldots, x_{n-r_1}] \ \ (2 \leq \ell \leq d, 1 \leq r \leq r_{\ell}),
$$
and
$$
f_{1, r} (\mathbf{x}) = c_{1, r} x_{n - r_1 + r}  +  \widetilde{f}_{1, r} (\widehat{\mathbf{x}}) \ \ (1 \leq r \leq r_{1}),
$$
where $c_{1,r} \in \mathbb{Z} \backslash \{ 0 \}$ and $\widetilde{f}_{1, r} (\widehat{\mathbf{x}}) \in \mathbb{Z}[x_1, \ldots, x_{n-r_1}]$.
We further assume $\mathbf{f}$ satisfies the following:
$h_d(\mathbf{f}_{d}), \ldots, h_2(\mathbf{f}_{2})$, and $\mathcal{B}_1(\mathbf{f}_1)$ are all sufficiently large with respect to
$d$ and $r_d, \ldots, r_1$. Clearly systems with these assumptions contain $\mathbf{f}$ in (\ref{set of eqn 2}) as a special case. We also denote
$F_{\ell, r}$ to be the homogeneous degree $\ell$ portion of $f_{\ell, r}$ $(1 \leq \ell \leq d, 1 \leq r \leq r_{\ell})$,
and let $\mathbf{F}_{\ell} = (F_{\ell, 1}, \ldots, F_{\ell, r_{\ell}})$ $(1 \leq \ell \leq d)$.

Let $\mathfrak{B}_0 = [0,1]^n \subseteq \mathbb{R}^n$. Given $\mathbf{b} \in (\mathbb{Z} / q \mathbb{Z} )^n$, we define
$$
\boldsymbol{\psi}_{\mathbf{b}}(\mathbf{t}) = \psi_{{b}_1}({t}_1) \ldots \psi_{{b}_n}({t}_n),
$$
where
$$
\psi_{{b}_j}({t}_j) = \sum_{ \substack{ 0 \leq v \leq t_j  \\ v \equiv b_j (\text{mod } q) } } \Lambda(v).
$$
We use the notation $\mathbf{x} \equiv \mathbf{b} \ (\text{mod } q)$ to mean $x_i \equiv b_i \ (\text{mod } q)$ for each $1 \leq i \leq n$.
Suppose for $\boldsymbol{\alpha} \in [0,1)^R$, we have $\boldsymbol{\alpha} = \mathbf{a}/q + \boldsymbol{\beta}$ where $\mathbf{a} \in (\mathbb{Z}/ q\mathbb{Z})^R$.
Then we have
\begin{eqnarray}
&&T(\mathbf{f}; \boldsymbol{\alpha})
\\
&=&
\sum_{\mathbf{x} \in [0,X]^n } \Lambda(\mathbf{x}) \ e \left( \sum_{\ell = 1}^d \sum_{r=1}^{r_{\ell}} \alpha_{\ell,r} \cdot f_{\ell,r}(\mathbf{x})  \right)
\notag
\\
&=&
\sum_{\mathbf{b} \in (\mathbf{Z}/ q\mathbf{Z})^n } \ \sum_{ \substack{ \mathbf{x} \in [0,X]^n   \\ \mathbf{x} \equiv \mathbf{b}  (\text{mod } q) } } \Lambda(\mathbf{x})  \
e \left( \sum_{\ell = 1}^d \sum_{r=1}^{r_{\ell}} a_{\ell,r} \cdot f_{\ell,r} (\mathbf{b}) / q  \right)
 e \left( \sum_{\ell = 1}^d \sum_{r=1}^{r_{\ell}} \beta_{\ell,r} \cdot f_{\ell,r} (\mathbf{x})  \right)
\notag
\\
&=&
\sum_{\mathbf{b} \in (\mathbf{Z}/ q\mathbf{Z})^n } \
e \left( \sum_{\ell = 1}^d \sum_{r=1}^{r_{\ell}} a_{\ell,r} \cdot f_{\ell,r} (\mathbf{b}) / q \right)
\int_{\mathbf{t} \in X \mathfrak{B}_0 } e \left( \sum_{\ell = 1}^d \sum_{r=1}^{r_{\ell}} \beta_{\ell,r} \cdot f_{\ell,r} (\mathbf{t})  \right)
\ \mathbf{d} \boldsymbol{\psi}_{\mathbf{b}}(\mathbf{t}),
\notag
\end{eqnarray}
where $\mathbf{d} \boldsymbol{\psi}_{\mathbf{b}}(\mathbf{t})$ denotes the product measure $d{\psi}_{{b}_1}({t}_1) \times \ldots \times d{\psi}_{{b}_n}({t}_n).$

Let $\phi$ be Euler's totient function. For a positive integer $q$, recall we put $\mathbb{U}_q$ for the group of units in $\mathbb{Z}/q \mathbb{Z}$.
Lemma \ref{Lemma 6 in CM} below follows immediately from the proof of  \cite[Lemma 6]{CM} as the proof does not depend on the
fact that the polynomials of the system all have the same degree.
\begin{lem}
\label{Lemma 6 in CM}
Let $c>0$, $C>0$, $q \leq (\log X)^C$, and  $\mathbf{b} \in (\mathbb{Z} / q \mathbb{Z})^n$. Suppose $\boldsymbol{\alpha} = \mathbf{a}/q + \boldsymbol{\beta} \in \mathfrak{M}_{\mathbf{a},q}(C)$.
Then we have
\begin{eqnarray}
&&\int_{\mathbf{t} \in X \mathfrak{B}_0 } e \left( \sum_{\ell = 1}^d \sum_{r=1}^{r_{\ell}} \beta_{\ell,r} \cdot f_{\ell,r}(\mathbf{t})  \right)
\ \mathbf{d} \boldsymbol{\psi}_{\mathbf{b}}(\mathbf{t})
\notag
\\
&=& \mathbf{1}_{ \mathbf{b} \in (\mathbb{U}_q)^n } \  \frac{1}{\phi(q)^n} \int_{\mathbf{v} \in X \mathfrak{B}_0 } e \left( \sum_{\ell = 1}^d \sum_{r=1}^{r_{\ell}} \beta_{\ell,r} \cdot f_{\ell,r} (\mathbf{v})  \right) \ \mathbf{d}\mathbf{v} + O( X^n / (\log X)^c ),
\notag
\end{eqnarray}
where $\mathbf{1}_{ \mathbf{b} \in (\mathbb{U}_q)^n }$ is $1$ if $\mathbf{b} \in (\mathbb{U}_q)^n$ and $0$ otherwise.
\end{lem}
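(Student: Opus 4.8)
The plan is to reproduce the argument of \cite[Lemma 6]{CM}, observing that the only point at which the system $\mathbf{f}$ enters is through crude size bounds on the partial derivatives of the $f_{\ell,r}$, and that these automatically respect the varying degrees. The analytic engine is the Siegel--Walfisz theorem: for any $A>0$ there is a constant so that, uniformly for $1\le q\le(\log X)^C$ and $0\le t\le X$,
\[
\psi_{b_j}(t)=\frac{\mathbf{1}_{b_j\in\mathbb{U}_q}}{\phi(q)}\,t+\rho_j(t),\qquad \sup_{0\le t\le X}|\rho_j(t)|\ll \frac{X}{(\log X)^A},
\]
with $\rho_j(0)=0$. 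Substituting this into the product measure $\mathbf{d}\boldsymbol{\psi}_{\mathbf{b}}(\mathbf{t})=d\psi_{b_1}(t_1)\times\cdots\times d\psi_{b_n}(t_n)$ and expanding the product, one obtains the desired main term $\frac{\mathbf{1}_{\mathbf{b}\in(\mathbb{U}_q)^n}}{\phi(q)^n}\,\mathbf{d}\mathbf{v}$ together with $2^n-1$ error contributions, one for each nonempty $S\subseteq\{1,\dots,n\}$, in which the differential $d\rho_j$ appears precisely for $j\in S$ and the smooth differential $\frac{\mathbf{1}_{b_j\in\mathbb{U}_q}}{\phi(q)}\,dt_j$ for $j\notin S$.

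Write $g(\mathbf{t})=e\big(\sum_{\ell=1}^d\sum_{r=1}^{r_\ell}\beta_{\ell,r}f_{\ell,r}(\mathbf{t})\big)$ for the integrand. I would estimate each error term by performing Stieltjes integration by parts $|S|$ times, once in each variable $t_j$ with $j\in S$, moving every factor $d\rho_j$ onto $g$; since $\rho_j(0)=0$, the boundary contributions come only from the faces $t_j=X$ and are themselves integrals of the same shape in fewer variables, so an induction on $n$ disposes of them. The resulting interior term is, up to sign and the harmless constants $\phi(q)^{-(n-|S|)}\le 1$, an integral over $X\mathfrak{B}_0$ of $\prod_{j\in S}\rho_j(t_j)$ against the mixed partial of $g$ in the variables $t_j$, $j\in S$. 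Here the major-arc hypothesis is decisive: since $\boldsymbol{\alpha}\in\mathfrak{M}_{\mathbf{a},q}(C)$ we have $|\beta_{\ell,r}|\le X^{-\ell}(\log X)^C$, while a polynomial $f_{\ell,r}$ of degree $\ell$ with bounded integer coefficients satisfies $|\partial^T f_{\ell,r}(\mathbf{t})|\ll X^{\ell-|T|}$ for $|T|\le\ell$ and $\partial^T f_{\ell,r}\equiv0$ otherwise, on $X\mathfrak{B}_0$. Hence every partial $\partial^T\big(\sum_{\ell,r}\beta_{\ell,r}f_{\ell,r}\big)$ is $\ll X^{-|T|}(\log X)^{C}$, and by the chain rule for $e^{2\pi i(\cdot)}$ the $|S|$-fold mixed partial of $g$ is $\ll X^{-|S|}(\log X)^{O_{n,d,\mathbf{f}}(1)}$ there. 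Combining, the $S$-term is $\ll \big(X/(\log X)^A\big)^{|S|}\cdot X^{-|S|}(\log X)^{O(1)}\cdot X^{n}\ll X^n(\log X)^{O(1)}/(\log X)^{A}$; choosing $A$ sufficiently large in terms of $c$, $C$, $n$, $d$, and $\mathbf{f}$ makes every error term $\ll X^n/(\log X)^c$, which yields the lemma.

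The crucial observation, and the only thing that needs checking relative to \cite{CM}, is that the degree disparity is invisible to this argument: each $f_{\ell,r}$ is paired with a frequency $\beta_{\ell,r}$ of size at most $X^{-\ell}(\log X)^C$, exactly calibrated to its own degree $\ell$, so the product $\beta_{\ell,r}\,\partial^T f_{\ell,r}$ is $\ll X^{-|T|}(\log X)^C$ regardless of $\ell$, precisely as in the equal-degree case. Thus the \emph{main obstacle} is bookkeeping rather than conceptual: ensuring the Stieltjes integration by parts produces only lower-dimensional boundary integrals of the same form, so that the induction on $n$ closes, and tracking the uniformity in $q\le(\log X)^C$ so that the factors $\phi(q)^{-1}$ and the accumulated powers $(\log X)^{O(1)}$ are genuinely dominated by the Siegel--Walfisz saving $(\log X)^{-A}$. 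Since all of this is carried out in \cite[Lemma 6]{CM} with no appeal to the polynomials having a common degree, I would simply invoke that proof, with the trivial derivative bounds above substituted in place of the degree-$d$ ones.
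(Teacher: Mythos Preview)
Your proposal is correct and matches the paper's own treatment: the paper simply states that the lemma ``follows immediately from the proof of \cite[Lemma 6]{CM} as the proof does not depend on the fact that the polynomials of the system all have the same degree,'' and you have correctly identified the reason why---each $\beta_{\ell,r}$ is bounded by $X^{-\ell}(\log X)^C$, calibrated to its own degree $\ell$, so the derivative estimates go through uniformly. Your sketch in fact supplies more detail than the paper does.
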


Let $\varepsilon > 0$. We simplify the above integral by a change of variable as follows
\begin{eqnarray}
&& \int_{\mathbf{v} \in X \mathfrak{B}_0 } e \left( \sum_{\ell = 1}^d \sum_{r=1}^{r_{\ell}} \beta_{\ell,r} \cdot f_{\ell,r} (\mathbf{v})  \right) \ \mathbf{d}\mathbf{v}
\\
&=& \int_{\mathbf{v} \in X \mathfrak{B}_0 } e \left( \sum_{\ell = 1}^d \sum_{r=1}^{r_{\ell}} \beta_{\ell,r} \cdot F_{\ell,r} (\mathbf{v})  \right) \ \mathbf{d}\mathbf{v} + O(X^{n - 1 + \varepsilon})
\notag
\\
&=& X^n \ \mathcal{I}(\mathfrak{B}_0,  \boldsymbol{\beta}' )  + O(X^{n - 1 + \varepsilon}),
\notag
\end{eqnarray}
where
$$
\beta'_{\ell, r} = X^{\ell} \beta_{\ell, r} \ \ (1 \leq \ell \leq d, 1 \leq r \leq r_{\ell}),
$$
and
$$
\mathcal{I}( \mathfrak{B}_0 ,  \boldsymbol{\tau}) = \int_{\mathbf{v} \in \mathfrak{B}_0 } e \left( \sum_{\ell = 1}^d \sum_{r=1}^{r_{\ell}} \tau_{\ell,r}  \cdot F_{\ell,r}(\mathbf{v})  \right) \ \mathbf{d}\mathbf{v}.
$$
We define
$$
J(L) = \int_{ | \boldsymbol{\tau} |  \leq  L } \mathcal{I}( \mathfrak{B}_0,  \boldsymbol{\tau}) \ \mathbf{d} \boldsymbol{\tau}.
$$

By our assumptions on $\mathbf{f}$ and (\ref{h and g+}), we know we can apply Lemma \ref{Lemma 8.1 in S+}
and obtain $\mathcal{I}( \mathfrak{B}_0,  \boldsymbol{\tau}) \ll \min (1 , |\boldsymbol{\tau}|^{-R - 1} ).$
With this estimate, it is an easy exercise to show that
\begin{equation}
\label{defn of sing int}
\mu(\infty) = \int_{\boldsymbol{\tau} \in \mathbb{R}^R} \mathcal{I}( \mathfrak{B}_0,  \boldsymbol{\tau}) \ \mathbf{d} \boldsymbol{\tau},
\end{equation}
which is called the \textit{singular integral}, exists, and that
\begin{equation}
\label{(3.9') is S}
\Big{|} \mu(\infty) - J(L) \Big{|} \ll L^{- 1}.
\end{equation}
We note that $\mu(\infty)$ is the same as what is defined in \cite[(2.3)]{BHB}, and we have
\begin{equation}
\label{mu infty positive}
\mu(\infty) > 0
\end{equation}
provided that the system of equations
$$
F_{\ell, r}(\mathbf{x}) = 0 \ \ (1 \leq \ell \leq d, 1 \leq r \leq r_{\ell})
$$
has a non-singular real solution in $(0,1)^n$. The argument used to show this fact is standard
and we refer the reader to see for example \cite[Chapter 16]{D}, or the explanation in \cite{BHB}.

We define the following sums:
\begin{equation}
\label{defn Stilde}
\mathcal{S}_{ \mathbf{a}, q } = \sum_{\mathbf{k} \in (\mathbb{U}_q)^n} e \left(
\sum_{\ell = 1}^d \sum_{r=1}^{r_{\ell}}  f_{\ell,r} (\mathbf{k})  \cdot a_{\ell,r} /q \right),
\end{equation}
$$
B( q ) = \sum_{ \substack{ \gcd (\mathbf{a},q) = 1 \\  \mathbf{a} \in (\mathbb{Z} / q\mathbb{Z} )^R } } \frac{1}{\phi(q)^n} \ \mathcal{S}_{ \mathbf{a}, q }, 
$$
and
\begin{equation}
\label{def sigular series partial}
\mathfrak{S}(X) = \sum_{q \leq (\log X)^C}  B( q ).
\end{equation}

By combining Lemma \ref{Lemma 6 in CM} with the definitions given above, we have the following.
\begin{lem}\cite[Lemma 8]{CM}
\label{Lemma 8 in CM} Given any $c>0$, $C > 0$, and $q \leq (\log X)^C$,  we have
$$
\int_{\mathfrak{M}_{\mathbf{a}, q}(C)} T(\mathbf{f}; \boldsymbol{\alpha} ) \ \mathbf{d} \boldsymbol{\alpha}
=
\frac{X^{n - \sum_{\ell=1}^d \ell r_{\ell} }}{\phi(q)^n} \ \mathcal{S}_{\mathbf{a}, q } \  J( (\log X)^C ) + O\left(  \frac{X^{n - \sum_{\ell=1}^d \ell r_{\ell}}}{(\log X)^{c} } \right).
$$
\end{lem}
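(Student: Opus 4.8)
The plan is to start from the expansion of $T(\mathbf{f};\boldsymbol{\alpha})$ derived earlier in this section: for $\boldsymbol{\alpha} = \mathbf{a}/q + \boldsymbol{\beta} \in \mathfrak{M}_{\mathbf{a},q}(C)$ one has
$$
T(\mathbf{f};\boldsymbol{\alpha}) = \sum_{\mathbf{b} \in (\mathbb{Z}/q\mathbb{Z})^n} e\Big(\sum_{\ell,r} a_{\ell,r} f_{\ell,r}(\mathbf{b})/q\Big) \int_{\mathbf{t} \in X\mathfrak{B}_0} e\Big(\sum_{\ell,r} \beta_{\ell,r} f_{\ell,r}(\mathbf{t})\Big) \, \mathbf{d}\boldsymbol{\psi}_{\mathbf{b}}(\mathbf{t}).
$$
First I would apply Lemma \ref{Lemma 6 in CM} to replace the inner integral, for each residue $\mathbf{b}$, by $\mathbf{1}_{\mathbf{b} \in (\mathbb{U}_q)^n}\phi(q)^{-n}\int_{X\mathfrak{B}_0} e(\sum_{\ell,r} \beta_{\ell,r}f_{\ell,r}(\mathbf{v}))\,\mathbf{d}\mathbf{v}$ up to an error $O(X^n/(\log X)^{c'})$; summing over the $q^n \le (\log X)^{Cn}$ residues and recognizing $\sum_{\mathbf{b}\in(\mathbb{U}_q)^n} e(\sum_{\ell,r} a_{\ell,r}f_{\ell,r}(\mathbf{b})/q) = \mathcal{S}_{\mathbf{a},q}$, this gives
$$
T(\mathbf{f};\boldsymbol{\alpha}) = \frac{\mathcal{S}_{\mathbf{a},q}}{\phi(q)^n}\int_{X\mathfrak{B}_0} e\Big(\sum_{\ell,r}\beta_{\ell,r}f_{\ell,r}(\mathbf{v})\Big)\,\mathbf{d}\mathbf{v} + O\big(q^n X^n/(\log X)^{c'}\big).
$$
As already recorded in the section, a change of variables turns the remaining integral into $X^n\mathcal{I}(\mathfrak{B}_0,\boldsymbol{\beta}') + O(X^{n-1+\varepsilon})$ with $\beta'_{\ell,r} = X^\ell\beta_{\ell,r}$, and since $|\mathcal{S}_{\mathbf{a},q}| \le \phi(q)^n$ the $O(X^{n-1+\varepsilon})$ piece remains harmless after multiplication by $\mathcal{S}_{\mathbf{a},q}/\phi(q)^n$.

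Next I would integrate over $\mathfrak{M}_{\mathbf{a},q}(C)$. Parametrizing by $\boldsymbol{\beta}$, each coordinate $\beta_{\ell,r}$ runs over an interval of length $\asymp X^{-\ell}(\log X)^C$, so the substitution $\beta'_{\ell,r} = X^\ell\beta_{\ell,r}$ has total Jacobian $\prod_{\ell=1}^d X^{-\ell r_\ell} = X^{-\sum_\ell \ell r_\ell}$ and carries the region onto $\{|\boldsymbol{\beta}'| \le (\log X)^C\}$, which is precisely the domain in the definition of $J((\log X)^C)$. Hence the main term becomes
$$
\frac{X^{n-\sum_\ell \ell r_\ell}}{\phi(q)^n}\,\mathcal{S}_{\mathbf{a},q}\int_{|\boldsymbol{\beta}'|\le(\log X)^C}\mathcal{I}(\mathfrak{B}_0,\boldsymbol{\beta}')\,\mathbf{d}\boldsymbol{\beta}' = \frac{X^{n-\sum_\ell \ell r_\ell}}{\phi(q)^n}\,\mathcal{S}_{\mathbf{a},q}\,J((\log X)^C),
$$
which is exactly the asserted leading term.

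Finally I would bound the error contributions, using that the measure of $\mathfrak{M}_{\mathbf{a},q}(C)$ is $\asymp (\log X)^{CR} X^{-\sum_\ell \ell r_\ell}$ where $R = r_1 + \ldots + r_d$. The $O(X^{n-1+\varepsilon})$ term then contributes $O(X^{n-\sum_\ell \ell r_\ell - 1 + \varepsilon'})$, which is $\ll X^{n-\sum_\ell \ell r_\ell}/(\log X)^c$ for every $c$, while the $O(q^n X^n/(\log X)^{c'})$ term contributes $O((\log X)^{Cn + CR - c'} X^{n-\sum_\ell \ell r_\ell})$, absorbed into the stated error by taking $c' = c + C(n+R)$ at the point where Lemma \ref{Lemma 6 in CM} is invoked. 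I do not expect a genuine obstacle: the substantive input is Lemma \ref{Lemma 6 in CM} (the passage from the $\Lambda$-weighted sum over $\mathbf{t}$ to the clean archimedean integral times $\phi(q)^{-n}$), and what remains is the two changes of variables together with a careful but routine tracking of the polylogarithmic factors $q^n$, $q^n/\phi(q)^n$, and $(\log X)^{CR}$; the only point worth flagging explicitly is the trivial bound $|\mathcal{S}_{\mathbf{a},q}| \le \phi(q)^n$, which is what renders the $O(X^{n-1+\varepsilon})$ error negligible.
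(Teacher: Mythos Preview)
Your proposal is correct and is exactly the argument the paper has in mind: the paper simply says ``By combining Lemma~\ref{Lemma 6 in CM} with the definitions given above, we have the following,'' and you have spelled out precisely those routine steps (applying Lemma~\ref{Lemma 6 in CM}, summing to form $\mathcal{S}_{\mathbf{a},q}$, the change of variables producing $X^n\mathcal{I}(\mathfrak{B}_0,\boldsymbol{\beta}')$, and the Jacobian computation turning the $\boldsymbol{\beta}$-integral into $J((\log X)^C)$). Your error bookkeeping, including the use of $|\mathcal{S}_{\mathbf{a},q}|\le\phi(q)^n$ and the choice $c'=c+C(n+R)$, is correct.
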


Therefore, we obtain the following estimate as a consequence of the definition of the major arcs, (\ref{(3.9') is S}), and Lemma \ref{Lemma 8 in CM}.
\begin{lem}
\label{lemma major arc estimate}
Given any $c>0$ and $C > 0$, we have
$$
\int_{\mathfrak{M}(C) }T(\mathbf{f}; \boldsymbol{\alpha} ) \ \mathbf{d} \boldsymbol{\alpha}
=
\mathfrak{S}(X) \mu(\infty) X^{n - \sum_{\ell=1}^d \ell r_{\ell} } + O\left( \mathfrak{S}(X) \frac{ X^{n - \sum_{\ell=1}^d \ell r_{\ell}}}{(\log X)^C} +  \frac{X^{n - \sum_{\ell=1}^d \ell r_{\ell} }}{(\log X)^c} \right).
$$
\end{lem}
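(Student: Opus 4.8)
The plan is to derive this lemma by summing the single-arc asymptotic of Lemma~\ref{Lemma 8 in CM} over all of the major arcs. Recall that $\mathfrak{M}(C) = \bigcup_{q \leq (\log X)^C}\bigcup_{\gcd(\mathbf{a},q)=1}\mathfrak{M}_{\mathbf{a},q}(C)$, that these arcs are pairwise disjoint for $X$ sufficiently large (as recorded when they were defined), and that their total number is $\sum_{q \leq (\log X)^C} q^R \ll (\log X)^{C(R+1)}$. Throughout, write $D = \sum_{\ell=1}^d \ell r_{\ell}$.

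Given $c>0$ and $C>0$, the first step is to apply Lemma~\ref{Lemma 8 in CM} to each arc $\mathfrak{M}_{\mathbf{a},q}(C)$ with its secondary parameter chosen to be $c' := c + C(R+1)$ --- this is permissible since that lemma holds for every positive value of the parameter --- and then add the resulting formulas over all arcs. The factor $J((\log X)^C)$ is independent of $\mathbf{a}$ and $q$, so it pulls out of the sum over arcs, leaving the coefficient $\sum_{q \leq (\log X)^C}\sum_{\gcd(\mathbf{a},q)=1}\phi(q)^{-n}\mathcal{S}_{\mathbf{a},q}$, which by the definitions of $B(q)$ and (\ref{def sigular series partial}) is precisely $\mathfrak{S}(X)$. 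The individual errors accumulate to $\ll (\log X)^{C(R+1)} X^{n-D}(\log X)^{-c'} = X^{n-D}(\log X)^{-c}$. At this stage one has
$$
\int_{\mathfrak{M}(C)} T(\mathbf{f};\boldsymbol{\alpha})\,\mathbf{d}\boldsymbol{\alpha} = \mathfrak{S}(X)\,J\big((\log X)^C\big)\,X^{n-D} + O\!\left(\frac{X^{n-D}}{(\log X)^c}\right).
$$

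The second step is to replace $J((\log X)^C)$ by the singular integral $\mu(\infty)$. Under the standing hypotheses on $\mathbf{f}$ (that $h_d(\mathbf{f}_d),\ldots,h_2(\mathbf{f}_2)$ and $\mathcal{B}_1(\mathbf{f}_1)$ are all sufficiently large), together with (\ref{h and g+}), Lemma~\ref{Lemma 8.1 in S+} applies and yields $\mathcal{I}(\mathfrak{B}_0,\boldsymbol{\tau}) \ll \min(1,|\boldsymbol{\tau}|^{-R-1})$, whence $\mu(\infty)$ exists and, by (\ref{(3.9') is S}) with $L=(\log X)^C$, we get $J((\log X)^C) = \mu(\infty) + O((\log X)^{-C})$. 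Substituting gives $\mathfrak{S}(X)J((\log X)^C)X^{n-D} = \mathfrak{S}(X)\mu(\infty)X^{n-D} + O\big(\mathfrak{S}(X)X^{n-D}(\log X)^{-C}\big)$, and combining with the previous display yields the asserted formula. The only part that is not entirely mechanical is the bookkeeping in the first step: since there are at most $(\log X)^{C(R+1)}$ arcs, one must invoke Lemma~\ref{Lemma 8 in CM} with the inflated parameter $c'$ rather than with $c$ itself, so that the arc count is absorbed into the error term. Note that $\mathfrak{S}(X)$ is deliberately retained in both the main term and one error term here; its size --- in particular the lower bound $\mathfrak{S}(X) \gg 1$ that is needed for $\mathcal{C}(\mathbf{f})>0$ --- is analyzed separately.
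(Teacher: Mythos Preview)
Your proof is correct and follows exactly the approach the paper intends: the paper states this lemma as a direct consequence of the definition of the major arcs, Lemma~\ref{Lemma 8 in CM}, and the estimate~(\ref{(3.9') is S}), without spelling out the details. You have supplied precisely those details, including the correct bookkeeping of applying Lemma~\ref{Lemma 8 in CM} with the inflated parameter $c' = c + C(R+1)$ to absorb the $\ll (\log X)^{C(R+1)}$ arcs into the error term.
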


We still have to deal with the term $\mathfrak{S}(X)$, and this is done in the following section.
\subsection{Singular Series}
\label{section singular series}
In order to estimate the term $\mathfrak{S}(X)$, 
we begin by obtaining estimates for the exponential sum $\mathcal{S}_{ \boldsymbol{a}, q }$ defined in (\ref{defn Stilde}).
We define $g_{\ell}( \mathbf{F}_{\ell} )$ as in (\ref{def gd+}).
It then follows from (\ref{h and g+}) 
that
$$
h_{\ell}( \mathbf{F}_{\ell}  ) < (\log 2)^{-\ell} \cdot \ell! \cdot \left( g_{\ell}(\mathbf{F}_{\ell} ) + (\ell-1) r_{\ell} (r_{\ell} -1) \right)
$$
for $2 \leq \ell \leq d$ with $r_{\ell} > 0$.
From this inequality, for $2 \leq \ell \leq d$ with $r_{\ell} > 0$ we see that we can assume $g_{\ell}(\mathbf{F}_{\ell} )$ to be sufficiently large
with respect to $d$ and $r_d, \ldots, r_1$.

Let
$$
Q  = 1 +  \max \Big{\{} \  \frac{1 + R(800 d^3 + 2)}{800 d^3  + 1},  \frac{R+1}{1 - \frac{1}{800 d^3 + 1}}  \ \Big{\}}.
$$
With our assumptions in this section, $Q$ satisfies the following,
$$
4  \left( \gamma_2 Q  +  \gamma_3 Q + \ldots  + \gamma_d Q + \frac{1}{800 d } \right) < \frac{1}{100 d},
$$
\begin{eqnarray}
\label{Q bound 1'}
&&Q \cdot r_\ell (\ell-1) \cdot 2^{\ell -1} \left( \frac{ (\log 2)^{\ell} (h_{\ell}( \mathbf{F}_{\ell}  ) - (800 d^3  + 1) Q)  }{ \ell !} -  (\ell-1) r_{\ell} (r_{\ell} -1) \right)^{-1}
\\
&<& \frac{1}{1600 d^3 + 2} \ \ \ \ (2 \leq \ell \leq d),
\notag
\end{eqnarray}
and
\begin{equation}
\label{Q bound 1'''}
0 < Q < \frac{d-1}{d(r_1 + 1)} (\gamma_2 + 4 \gamma_3 + \ldots + 4^{d-2} \gamma_d)^{-1},
\end{equation}
where $\gamma_{\ell}$ is defined (with respect to $\mathbf{F}_{\ell}$ here) after (\ref{h and g+}).
We fix this value of $Q$ throughout the remainder of this section. Also since $\mathcal{B}_1(\mathbf{F}_1)$ is sufficiently large with respect to
$d$ and $r_d, \ldots, r_1$, we have $\mathcal{B}_1(\mathbf{F}_1) > Q$.

We consider two cases depending on $\mathbf{a}$ to bound $\mathcal{S}_{ \boldsymbol{a}, q }$ when $q$ is a prime power. These cases are
treated separately in Lemmas \ref{to bound local factor} and \ref{to bound local factor+}.
\begin{lem}
\label{to bound local factor}
Let $p$ be a prime and let $q = p^t$, $t \in \mathbb{N}.$ Let $\mathbf{a}  = (\mathbf{a}_d, \ldots, \mathbf{a}_1)  \in (\mathbb{Z} / q \mathbb{Z})^R$ with $\gcd(\mathbf{a},q)=1$. Furthermore, suppose there exists $\ell \in \{ 2, \ldots, d \}$ such that $\gcd(\mathbf{a}_{\ell},q)=1$. Then we have the following bounds
\begin{eqnarray}
\notag
\mathcal{S}_{ \boldsymbol{a}, q } \ll
\left\{
    \begin{array}{ll}
         q^{n-Q},
         &\mbox{if } t \leq 800 d^3 + 1 ,\\
         p^Q q^{n-Q},
         &\mbox{if } t > 800 d^3 + 1,
    \end{array}
\right.
\end{eqnarray}
where the implicit constants are independent of $p$.
\end{lem}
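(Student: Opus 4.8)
The plan is to apply Corollary \ref{cor 15.1 in S+} to the exponential sum $\mathcal{S}_{\mathbf{a},q}$, viewed as a complete exponential sum over $(\mathbb{Z}/q\mathbb{Z})^n$ (or rather over $(\mathbb{U}_q)^n$), and to convert the resulting dichotomy into the claimed bound using the hypothesis $\gcd(\mathbf{a}_\ell, q) = 1$. First I would pass from the sum over $(\mathbb{U}_q)^n$ to a sum over all of $(\mathbb{Z}/q\mathbb{Z})^n$ at an acceptable cost: by inclusion-exclusion over which coordinates are divisible by $p$, the sum $\mathcal{S}_{\mathbf{a},q}$ differs from $\sum_{\mathbf{k}\in (\mathbb{Z}/q\mathbb{Z})^n} e(\sum f_{\ell,r}(\mathbf{k}) a_{\ell,r}/q)$ by terms in which some variables are restricted to multiples of $p$, and these can be absorbed or estimated recursively. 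Alternatively, since $\Lambda$-weighted counting already forced us to primitive residues, I would simply treat $\mathcal{S}_{\mathbf{a},q}$ directly as $S(\mathbf{u}, \mathfrak{B}_0; \boldsymbol{\alpha})$ with $\boldsymbol{\alpha} = \mathbf{a}/q$, $P = q$, after checking that the standard complete-sum normalization of Corollary \ref{cor 15.1 in S+} applies to a sum over a box of side $q$.

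The core of the argument is the following: set $\boldsymbol{\alpha}_{\ell} = \mathbf{a}_\ell/q$ and suppose we are in alternative $(ii)$ of Corollary \ref{cor 15.1 in S+} (with $\theta = 0$, taking the relevant $\ell$; note the corollary's hypotheses at level $\ell$ when $\ell < d$ require a small auxiliary $q$-structure, but here $P = q$ is itself the modulus, so I expect to apply the corollary at the top level or to argue that the congruence structure is automatic). Then there is $n_0 \in \mathbb{N}$ with $n_0 \ll q^{Q\gamma_\ell + \varepsilon}$ and $\|n_0 \boldsymbol{\alpha}_\ell\| \ll q^{-\ell + Q\gamma_\ell + \varepsilon}$. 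Writing $\alpha_{\ell,r} = a_{\ell,r}/q$, the condition $\|n_0 a_{\ell,r}/q\| \ll q^{-\ell+Q\gamma_\ell+\varepsilon}$ with $q = p^t$ means $q \mid n_0 a_{\ell,r}$ up to a tiny error — more precisely, $n_0 a_{\ell,r} \equiv 0 \pmod q$ once $q^{-\ell + Q\gamma_\ell + \varepsilon} < 1/q$, which holds because $\gamma_\ell$ is small (guaranteed by the standing assumption that $g_\ell(\mathbf{F}_\ell)$ is large, hence $\gamma_\ell = 2^{\ell-1}(\ell-1)r_\ell/g_\ell$ is small, so $Q\gamma_\ell + \varepsilon < \ell - 1$). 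Since $\gcd(\mathbf{a}_\ell, q) = 1$, some $a_{\ell,r}$ is a unit mod $q$, forcing $q \mid n_0$, i.e. $n_0 \geq q$. But then $q \leq n_0 \ll q^{Q\gamma_\ell + \varepsilon}$ forces $1 \leq Q\gamma_\ell + \varepsilon$, contradicting smallness of $\gamma_\ell$ (after choosing $\varepsilon$ small). Hence alternative $(ii)$ is impossible and we must be in alternative $(i)$: $|\mathcal{S}_{\mathbf{a},q}| \leq q^{n-Q}$, which is the desired bound for all $t$ — and in particular the weaker bound $p^Q q^{n-Q}$ holds trivially.

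The split into $t \leq 800d^3+1$ versus $t > 800d^3+1$ suggests the cleaner argument above may not be uniformly available: for large $t$ the corollary's error term $P^\varepsilon = q^\varepsilon = p^{t\varepsilon}$ can itself exceed $p^Q$, so one cannot take $\varepsilon$ fixed and small relative to $Q$ while $t \to \infty$. So the honest plan is: for $t \leq 800d^3 + 1$ (a bounded range of exponents), apply Corollary \ref{cor 15.1 in S+} with $\varepsilon$ chosen small enough that $Q\gamma_\ell + \varepsilon < \min(\ell - 1, 1)$ and that $p^{\varepsilon}$ contributes negligibly over this bounded range, deriving $|\mathcal{S}_{\mathbf{a},q}| \ll q^{n-Q}$ via the contradiction argument; for $t > 800d^3+1$, one uses the same corollary but now the $P^\varepsilon$-loss is controlled against the $p^Q$ factor allowed in the statement — specifically inequality (\ref{Q bound 1'}) is exactly the arithmetic condition ensuring that the exponent $Q\gamma_\ell$ coming out of the corollary stays below the threshold $\frac{1}{1600d^3+2}$, so that $q^{Q\gamma_\ell + \varepsilon} \ll p^t \cdot p^{\text{(small)}}$ and the bound $p^Q q^{n-Q}$ survives. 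The main obstacle I anticipate is bookkeeping the interaction between the $\varepsilon$-loss, the parameter $Q$, and the quantity $\gamma_\ell$ across the two ranges of $t$, and in particular verifying that the specific choice of $Q$ (and the role of the constants like $800d^3$) makes (\ref{Q bound 1'}) deliver precisely the $p^Q$-vs-$q^{-Q}$ trade-off; the underlying Weyl-differencing input is entirely packaged in Corollary \ref{cor 15.1 in S+}, so no new analytic work is needed beyond this reconciliation of exponents.
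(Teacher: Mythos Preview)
Your overall strategy—apply the Weyl-type dichotomy of Corollary~\ref{cor 15.1 in S+} and derive a contradiction from $\gcd(\mathbf{a}_\ell,q)=1$—is the right one and matches the paper's core mechanism. But two substantive pieces are missing or misidentified.

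First, you cannot apply the corollary directly at ``the relevant $\ell$'' where $\gcd(\mathbf{a}_\ell,q)=1$. For $\ell<d$, Corollary~\ref{cor 15.1 in S+} requires an auxiliary modulus $q'\leq P^\theta$ with $q'\boldsymbol{\alpha}_j\in\mathbb{Z}^{r_j}$ for all $j>\ell$, and this is \emph{not} automatic: if some $a_{j,r}$ with $j>\ell$ is only divisible by $p^s$ for $s<t$, you would need $q'\geq p^{t-s}$, which may violate $q'\leq P^\theta$. The paper therefore iterates downward: apply the corollary at degree $d$ first (where no auxiliary hypothesis is needed), deduce that alternative~(ii) forces $q\mid n_0\mathbf{a}_d$ and hence (since $n_0<p$ when $t$ is bounded) that $\mathbf{a}_d\equiv\mathbf{0}\pmod q$; this kills the degree-$d$ contribution and one repeats at degree $d-1$, and so on down to the first $\ell$ with $\gcd(\mathbf{a}_\ell,q)=1$, where the contradiction fires. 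Your phrase ``the congruence structure is automatic'' papers over this entire descent.

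Second, your diagnosis of the case split as $P^\varepsilon$-bookkeeping is wrong; the real obstruction is the restriction to $(\mathbb{U}_q)^n$. The corollary applies only to full box sums, so one must pass from $(\mathbb{U}_q)^n$ to a box. For $t\leq 800d^3+1$, inclusion--exclusion over the set $I$ of coordinates divisible by $p$ works: terms with $|I|\geq tQ$ are trivially $\leq q^{n-Q}$, and terms with $|I|<tQ\leq(800d^3+1)Q$ give restricted systems whose $h$-invariants drop by at most $(800d^3+1)Q$, so~(\ref{Q bound 1'}) still allows the corollary to apply. For large $t$, however, $tQ>n$ eventually, so \emph{every} $I$ must be handled non-trivially, and the $h$-invariant loss $|I|$ can be as large as $n$—which is not controlled by the standing hypotheses (these make $h_\ell(\mathbf{F}_\ell)$ large relative to $d,r_1,\ldots,r_d$, not relative to $n$). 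The paper instead writes $\mathbf{k}=\mathbf{k}_0+p\mathbf{y}$ with $\mathbf{k}_0\in(\mathbb{U}_p)^n$ and $\mathbf{y}\in(\mathbb{Z}/p^{t-1}\mathbb{Z})^n$, converting the inner sum into an unrestricted box sum with $P=p^{t-1}-1$. The factor $p^Q$ in the statement is exactly the cost of the trivial outer sum over $\mathbf{k}_0$: one gets $p^n\cdot(p^{t-1})^{n-Q}=p^Q q^{n-Q}$. The threshold $t>800d^3+1$ is what guarantees $p^d<P^{1/(800d^2)}$, so that in the downward iteration the accumulated auxiliary modulus (which picks up factors of $p^\ell$ from the substitution, since the new polynomials $u_{\ell,r}(\mathbf{y})=F_{\ell,r}(\mathbf{y})+p^{-\ell}\omega_{\ell,r}(\mathbf{y})$ have rational lower-order coefficients) stays below $P^{1/4}$ as the corollary requires.
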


\begin{proof}
We consider the two cases $t \leq 800 d^3 + 1$ and $t >  800 d^3  + 1$ separately.
We begin with the case $t \leq 800 d^3 + 1$. In this case,
we apply the inclusion-exclusion principle to $\mathcal{S}_{ \mathbf{a}, q }$.
As a result, we obtain
\begin{eqnarray}
\label{bound on S tilde}
&&
\\
\mathcal{S}_{ \mathbf{a}, q }
&=&  \sum_{\mathbf{k} \in (\mathbb{U}_q)^{n}} e \left( \sum_{\ell = 1}^d \sum_{r=1}^{r_{\ell}}  f_{\ell,r}(\mathbf{k})  \cdot a_{\ell,r} /q  \right)
\notag
\\
&=&  \sum_{\mathbf{k} \in (\mathbb{Z} / q \mathbb{Z})^{n}} \prod_{i=1}^n \left( 1 -  \sum_{v_i \in \mathbb{Z} / p^{t-1} \mathbb{Z} } \mathbf{1}_{k_i = p v_i}  \right)
e \left( \sum_{\ell = 1}^d \sum_{r=1}^{r_{\ell}}  f_{\ell,r}(\mathbf{k})  \cdot a_{\ell,r} /q  \right)
\notag
\\
\notag
&=& \sum_{ I \subseteq \{ 1, 2, \ldots, n \}} (-1)^{|I|}
\sum_{ \mathbf{v} \in  (\mathbb{Z} / p^{t-1} \mathbb{Z} )^{|I|} } \
\sum_{\mathbf{k} \in (\mathbb{Z} / q \mathbb{Z})^{n} }
\mathfrak{F}_I(\mathbf{k}; \mathbf{v}) \ e \left( \sum_{\ell = 1}^d \sum_{r=1}^{r_{\ell}}  f_{\ell,r} (\mathbf{k})  \cdot a_{\ell,r} /q  \right),
\notag
\end{eqnarray}
where
$$
\mathbf{1}_{k_i = p v_i} =
\left\{
    \begin{array}{ll}
         1,
         &\mbox{if } k_i = p v_i ,\\
         0,
         &\mbox{if } k_i \not = p v_i,
    \end{array}
\right.
$$
and
$$
\mathfrak{F}_I(\mathbf{k}; \mathbf{v}) = \prod_{i \in I} \mathbf{1}_{k_i = p v_i}
$$
for $\mathbf{v} \in (\mathbb{Z} / p^{t-1} \mathbb{Z})^{|I|}$.
In other words, $\mathfrak{F}_I(\mathbf{k}; \mathbf{v})$ is the characteristic function of the set $H_{I, \mathbf{v}} = \{ \mathbf{k} \in (\mathbb{Z} / q \mathbb{Z})^{n} : k_i = p v_i \ (i \in I)\}$.
We now bound the summand in the final expression of ~(\ref{bound on S tilde}) by further considering two cases, $|I| \geq t Q$ and $|I| < t Q$.
In the first case $|I| \geq t Q$, we use the following trivial estimate
\begin{eqnarray}
\Big{|} \sum_{ \mathbf{v} \in  (\mathbb{Z} / p^{t-1} \mathbb{Z})^{|I|} } \
\sum_{\mathbf{k} \in (\mathbb{Z} / q \mathbb{Z})^n}
\mathfrak{F}_I(\mathbf{k}; \mathbf{v}) \ e \left( \sum_{\ell = 1}^d \sum_{r=1}^{r_{\ell}}  f_{\ell,r}(\mathbf{k})  \cdot a_{\ell,r} /q  \right) \Big{|}
&\leq&
p^{(t-1) |I|} (p^t)^{n  - |I|}
\notag
\\
&=& q^{n - |I|/t}
\notag
\\
&\leq& q^{n  - Q}.
\notag
\end{eqnarray}

On the other hand, suppose  $|I| < t Q$. Let us label $\mathbf{s} = (s_1, \ldots , s_{n - |I|} )$
to be the remaining variables of $\mathbf{x}$ after setting $x_i = 0$ for each $i \in I$.
For each $1 \leq \ell \leq d, 1 \leq r \leq r_{\ell}$, let
$$
\mathfrak{g}_{\ell,r}(\mathbf{s}) ={f}_{\ell,r}(\mathbf{x}) |_{x_i = p v_i ( i \in I )},
$$
or equivalently the polynomial $\mathfrak{g}_{\ell,r}(\mathbf{s})$ is obtained by
substituting $x_i = p v_i \ (i \in I)$ to the polynomial ${f}_{\ell,r}(\mathbf{x})$.
Thus $\mathfrak{g}_{\ell,r} (\mathbf{s})$ is a polynomial in $n - |I|$ variables whose coefficients may depend on $p$.
With these notations we have
\begin{eqnarray}
\sum_{\mathbf{k} \in (\mathbb{Z} / q \mathbb{Z})^{n}} \mathfrak{F}_I(\mathbf{k}; \mathbf{v}) \ e \left( \sum_{\ell = 1}^d \sum_{r=1}^{r_{\ell}}  f_{\ell,r}(\mathbf{k})  \cdot a_{\ell,r} /q  \right)
\notag
&=&
\sum_{\mathbf{s} \in (\mathbb{Z} / q \mathbb{Z})^{n - |I|}}e \left( \sum_{\ell = 1}^d \sum_{r=1}^{r_{\ell}}  \mathfrak{g}_{\ell,r} (\mathbf{s})  \cdot a_{\ell,r} /q  \right)
\\
\notag
&=&
\sum_{\mathbf{s} \in [0, q-1]^{n - |I|} }e \left( \sum_{\ell = 1}^d \sum_{r=1}^{r_{\ell}}  \mathfrak{g}_{\ell,r} (\mathbf{s})  \cdot a_{\ell,r} /q  \right).
\end{eqnarray}
We can also deduce easily that the homogeneous degree $\ell$ portion of the polynomial $\mathfrak{g}_{\ell,r}(\mathbf{s})$, which we denote
$\mathfrak{G}_{\ell,r}(\mathbf{s})$, is obtained by substituting $x_i = 0 \ (i \in I)$ to ${F}_{\ell,r}(\mathbf{x})$.
Hence, we have
$$
\mathfrak{G}_{\ell,r}(\mathbf{s}) =  {F}_{\ell,r}(\mathbf{x}) |_{x_i = 0 \ (i \in I)},
$$
and in particular, it is independent of $p$.
Thus the system of polynomials $\mathfrak{g}_{\ell,r}(\mathbf{s})$ $(1 \leq \ell \leq d, 1 \leq r \leq r_{\ell})$
satisfies Condition $(\star')$.
It also follows by Lemma \ref{h ineq 1'} that
$$
h_{\ell}( \{\mathfrak{G}_{\ell,r}: 1 \leq r \leq r_{\ell} \} ) \geq h_{\ell}( \mathbf{F}_{\ell} ) - |I| > h_{\ell}( \mathbf{F}_{\ell}  ) - (800 d^3 + 1)Q  \ \  (2 \leq \ell \leq d).
$$
By our choice of $Q$, namely (\ref{Q bound 1'}), and from (\ref{h and g+}), we have
$$
Q \gamma'_{\ell} \leq Q \gamma_{\ell} < \frac{1}{1600 d^3 + 2} < 1 \ \ \ (2 \leq \ell \leq d),
$$
where
$\gamma'_{\ell}$ and $\gamma_{\ell}$ are defined with respect to $\{\mathfrak{G}_{\ell,r} : 1 \leq r \leq r_{\ell}  \}$ here.

Take $\varepsilon > 0$ sufficiently small. Let us suppose that $p$ and $t$ are sufficiently large with respect to the coefficients of $\mathbf{F}$,
$n, d, r_d, \ldots, r_1, \varepsilon$, and $Q$,
which implies that $q$ is sufficiently large with respect to the coefficients of $\mathfrak{G}_{\ell,r_{\ell}} (\mathbf{s})$ $(1 \leq \ell \leq d, 1 \leq r \leq r_{\ell})$.
Suppose we have
\begin{equation}
\label{ineq ss 1}
\sum_{\mathbf{s} \in [0, q-1]^{n - |I|}  }e \left( \sum_{\ell = 1}^d \sum_{r=1}^{r_{\ell}}  \mathfrak{g}_{\ell,r} (\mathbf{s})  \cdot a_{\ell,r} /q  \right) > (q-1)^{n - |I| - Q}.
\end{equation}
Then by Corollary \ref{cor 15.1 in S'+} there must exist $n_0 \in \mathbb{N}$ such that
$$
n_0 \ll q^{Q \gamma_d + \varepsilon} \ \ \text{ and } \| n_0 (\mathbf{a}_{d} / q)  \| \ll q^{-d + Q \gamma_d + \varepsilon}.
$$
Since $p$ and $t$ are sufficiently large, we have
$n_0 < q^{ 1/ (1600 d^3 + 2) }$ and $\| n_0 (\mathbf{a}_{d} / q)  \| < q^{-d + 1/(1600 d^3  + 2) }$,
because $Q \gamma_d + \varepsilon < 1/(1600 d^3  + 2)$. Then it follows that $n_0 < p^{ t/(1600 d^3  + 2) } < p$.
Suppose now that not all entries of $n_0 \mathbf{a}_{d}$ is divisible by $q$.
In this case, we obtain
$$
\frac{1}{q} \leq \| n_0 ( \mathbf{a}_{d} / q)  \| < \frac{1}{q^{d - 1/(1600 d^3  + 2)}},
$$
which is a contradiction. Thus all of the entries of $n_0 \mathbf{a}_{d}$ must be divisible
by $q = p^t$ and since $\gcd(n_0, p) = 1$, it follows that all of the entries of $\mathbf{a}_{d}$
must be divisible by $q$.
Therefore, we can simplify the exponential sum in consideration since $e(m) = 1$ when $m \in \mathbb{Z}$, and the inequality (\ref{ineq ss 1}) becomes
\begin{eqnarray}
\sum_{\mathbf{s} \in [0, q-1]^{n - |I|}}e \left( \sum_{\ell = 1}^d \sum_{r=1}^{r_{\ell}}  \mathfrak{g}_{\ell,r} (\mathbf{s})  \cdot a_{\ell,r} /q  \right)
&=&
\sum_{\mathbf{s} \in [0, q-1]^{n - |I|}}e \left( \sum_{\ell = 1}^{d-1} \sum_{r=1}^{r_{\ell}}  \mathfrak{g}_{\ell,r} (\mathbf{s})  \cdot a_{\ell,r} /q  \right)
\\
\notag
&>& (q-1)^{n - |I| - Q}.
\notag
\end{eqnarray}
We may repeat the argument as above, because we are now dealing with
the system of polynomials $\mathfrak{g}_{\ell,r} (\mathbf{s})$ $(1 \leq \ell \leq d-1, 1 \leq r \leq r_{\ell})$.
Again by Corollary \ref{cor 15.1 in S'+} there must exist $n'_0 \in \mathbb{N}$ such that
$$
n'_0 \ll q^{Q \gamma_{d-1} + \varepsilon} \ \ \text{ and } \| n'_0 (\mathbf{a}_{d-1} / q)  \| \ll q^{-(d-1) + Q \gamma_{d-1} + \varepsilon}.
$$
Since $p$ and $t$ are sufficiently large, we have
$n'_0 < q^{ 1/(1600 d^3  + 2) }$ and $\| n'_0 (\mathbf{a}_{d-1} / q)  \| < q^{-(d-1) + 1/( 1600 d^3 + 2) }$,
because $Q \gamma_{d-1} + \varepsilon < 1/(1600 d^3  + 2)$. Then it follows that $n'_0 < p^{ t/(1600 d^3 + 2) } < p$.
Suppose now that not all entries of $n'_0 \mathbf{a}_{d-1}$ is divisible by $q$.
In this case, we obtain
$$
\frac{1}{q} \leq \| n'_0 ( \mathbf{a}_{d-1} / q)  \| < \frac{1}{q^{(d-1) - 1/( 1600 d^3 + 2)}},
$$
which is a contradiction. Thus all of the entries of $n'_0 \mathbf{a}_{d-1}$ must be divisible
by $q = p^t$ and since $\gcd(n'_0, p) = 1$, it follows that all of the entries of $\mathbf{a}_{d-1}$
must be divisible by $q$.
It is then clear that we can repeat the argument and keep reducing until we obtain
that all of the entries of $\mathbf{a}_{\ell}$ must be divisible by $q$ for each $2 \leq \ell \leq d$.
We remark that if there exists $\ell'$ with $r_{\ell'} = 0$, then we simply skip the case $\ell = \ell'$ during this process.
Thus we have $\gcd(\mathbf{a}_{\ell}, q) > 1$ for each $2 \leq \ell \leq d$, which is a contradiction.
As a result we must have
$$
\sum_{\mathbf{s} \in [0, q-1]^{n - |I|}}e \left( \sum_{\ell = 1}^d \sum_{r=1}^{r_{\ell}}  \mathfrak{g}_{\ell,r} (\mathbf{s})  \cdot a_{\ell,r}  /q  \right) \ll q^{n - |I| - Q}.
$$
Thus we obtain
$$
\sum_{ \mathbf{v} \in  (\mathbb{Z} / p^{t-1} \mathbb{Z})^{|I|} } \
\sum_{\mathbf{k} \in (\mathbb{Z} / q \mathbb{Z})^n}
\mathfrak{F}_I(\mathbf{k}; \mathbf{v}) \ e \left( \sum_{\ell = 1}^d \sum_{r=1}^{r_{\ell}}  f_{\ell,r}(\mathbf{k})  \cdot a_{\ell,r} /q  \right)
\ll (p^{t-1})^{|I|} q^{n - |I| - Q} \leq q^{n-Q}.
$$
Consequently, by combining the estimates for the two cases $|I| \geq tQ$ and $|I| < tQ$, we obtain
$$
\mathcal{S}_{ \mathbf{a}, q} \ll q^{n-Q}
$$
when $t \leq 800 d^2 + 1$.

We now consider the case $t> 800 d^3 + 1$. By the definition of $\mathcal{S}_{ \mathbf{a}, q }$, we have
\begin{eqnarray}
\label{widetile S part 3-1}
\mathcal{S}_{ \mathbf{a}, q } &=&  \sum_{\mathbf{k} \in (\mathbb{U}_q)^{n }} e \left( \sum_{\ell = 1}^d \sum_{r=1}^{r_{\ell}}  f_{\ell,r} (\mathbf{k})  \cdot a_{\ell,r} /q  \right)
\\
&=& \sum_{\mathbf{k} \in (\mathbb{U}_p)^{n}} \  \sum_{ \mathbf{y} \in (\mathbb{Z} / p^{t-1} \mathbb{Z} )^{n}  }
e \left( \sum_{\ell = 1}^d \sum_{r=1}^{r_{\ell}}  f_{\ell,r} (\mathbf{k} + p \mathbf{y} )  \cdot a_{\ell,r} /q  \right)
\notag
\\
&=& \sum_{\mathbf{k} \in (\mathbb{U}_p)^{n}} \  \sum_{ \mathbf{y} \in [0,p^{t-1}-1]^{n}   } e \left( \sum_{\ell = 1}^d \sum_{r=1}^{r_{\ell}}  f_{\ell,r} (\mathbf{k} + p \mathbf{y} )  \cdot a_{\ell,r} /q  \right).
\notag
\end{eqnarray}

For each fixed $\mathbf{k} \in (\mathbb{U}_p)^{n}$, we have
$$
f_{\ell,r} (\mathbf{k} + p \mathbf{y})  = p^{\ell}  F_{\ell,r} ( \mathbf{y}) + \omega_{\ell, r: p, \mathbf{k}}(\mathbf{y}) \ \ \ (1 \leq \ell \leq d, 1 \leq r \leq r_{\ell}),
$$
where $\omega_{\ell, r: p, \mathbf{k}}(\mathbf{y})$ is a polynomial in $\mathbf{y}$ of degree at most $\ell-1$ and its coefficients are integers which may depend on $p$ and $\mathbf{k}$.
We let
$$
u_{\ell,r} (\mathbf{y}) = {F}_{\ell,r} ( \mathbf{y}) + \frac{1}{p^{\ell}} \
\omega_{\ell, r: p, \mathbf{k}} (\mathbf{y}) \ \ \ (1 \leq \ell \leq d, 1 \leq r \leq r_{\ell}).
$$
We can then express the inner sum of the last expression of (\ref{widetile S part 3-1}) as
$$
\sum_{ \mathbf{y} \in [0,p^{t-1}-1]^{n}   } e \left( \sum_{\ell = 1}^d \sum_{r=1}^{r_{\ell}}  u_{\ell,r} (\mathbf{y} )  \cdot a_{\ell,r} / (q/p^{\ell})  \right).
$$
We have that $\mathbf{u}$ has coefficients in $\mathbb{Q}$, and $\mathbf{U}$ has coefficients in $\mathbb{Z}$.
Let $\alpha_{\ell,r} = a_{\ell,r} /p^{t-\ell}$ $(1 \leq \ell \leq d, 1 \leq r \leq r_{\ell})$, and $P = (p^{t-1}-1)$.

Recall we have set $Q$ to satisfy
$$
4  \left( \gamma_2 Q  +  \gamma_3 Q + \ldots  + \gamma_d Q + \frac{1}{800 d} \right) < \frac{1}{100 d},
$$
where $\gamma_{\ell}$ is defined with respect to $\mathbf{F}_{\ell}$ here.
Suppose we have
$$
\Big{|} \sum_{ \mathbf{y} \in [0, P]^{n}   } e \left( \sum_{\ell = 1}^d \sum_{r=1}^{r_{\ell}}  u_{\ell,r} (\mathbf{y} )  \cdot \alpha_{\ell,r}  \right) \Big{|} > P^{n - Q}.
$$
Then by Corollary \ref{cor 15.1 in S+}, there must exist $n_d \in \mathbb{N}$ such that
$$
n_d \ll P^{Q \gamma_d + \varepsilon} \ \ \text{ and } \  \  \| n_d \boldsymbol{ \alpha}_{d}  \| \ll P^{-d + Q \gamma_d + \varepsilon}.
$$
For $P = p^{t-1} - 1$ sufficiently large, we have
$$
n_d < p^{ (t-1)/ (100d) } \ \ \text{ and } \  \  \| n_d (\mathbf{a}_{d} / p^{t-d})  \| < p^{(t-1)(-d + 1/(100d) )},
$$
because $Q \gamma_d + \varepsilon < 1/(100d)$.
Suppose now that not all entries of $n_d \mathbf{a}_{d}$ is divisible by $p^{t-d}$.
In this case, we obtain
$$
\frac{1}{p^{t-d}} \leq \| n_d ( \mathbf{a}_{d} / p^{t-d} )  \| < \frac{1}{p^{(t-1)(d - 1/ (100d) )}},
$$
which is a contradiction. Thus all of the entries of $n_d \mathbf{a}_{d}$ must be divisible
by $p^{t-d}$. In particular,
$$
n_d \boldsymbol{\alpha}_{d} = n_d ( \mathbf{a}_{d} / p^{t-d} ) \in \mathbb{Z}^{r_d},
$$
and
we can assume without loss of generality that $n_d$ is a power of $p$ satisfying $n_d < p^{ (t-1)/ (100d) }$.
Since $t-d > (t-1)/ (100d)$, it follows that every entry of $\mathbf{a}_{d}$ is divisible by $p$.

From the inequality $t > 800 d^3 + 1$, we have $p^d < p^{ (t-1) / (800 d^2) }$.
Thus we have
$$
(n_d \ p^d)  \alpha_{d, r} \ \frac{1}{p^{d}} \ \omega_{d, r: p, \mathbf{k}} (\mathbf{y}) \in \mathbb{Z}[y_1, \ldots, y_n] \ \ (1 \leq r \leq r_{d}),
$$
and
$$
n_d \ p^d \leq  P^{Q \gamma_d + 2 \varepsilon} P^{1/ (800 d^2 )}.
$$

With this set up, we can apply Corollary \ref{cor 15.1 in S+} again with $\ell = d-1$ and
$$
\theta = Q \gamma_d +   \frac{1}{800 d^2} + \varepsilon_d < \frac{1}{100 d} < \frac{1}{4},
$$
where
$\varepsilon_d> 0$ is sufficiently small, and deduce that there must exist $n_{d-1} \in \mathbb{N}$ such that
$$
n_{d-1} \ll P^{Q \gamma_{d-1} + \varepsilon} \ \ \text{ and } \| n_{d-1} n_d \ p^d \boldsymbol{ \alpha}_{d-1}  \| \ll  P^{ -(d-1) + 4 \theta + Q \gamma_{d-1} + \varepsilon}.
$$
For $P = p^{t-1} - 1$ sufficiently large, we have
$$
n_{d-1} < p^{ (t-1)/ (100d) } \ \ \text{ and } \ \  \| n_{d-1}  n_d \  p^d (\mathbf{a}_{d-1} / p^{t-(d-1)})  \| < p^{(t-1)(-(d-1) + 1/(100d) )},
$$
because
$$
Q \gamma_{d-1} + 4 \theta +  \varepsilon = Q \gamma_{d-1} + 4 \left( Q \gamma_d +   \frac{1}{800 d^2 } + \varepsilon_d \right) +  \varepsilon < \frac{1}{100d}.
$$
Suppose now that not all entries of $(n_{d-1}  n_d \ p^d \mathbf{a}_{d-1})$ is divisible by $p^{t-(d-1)}$.
In this case, we obtain
$$
\frac{1}{p^{t-(d-1)}} \leq \| n_{d-1} n_d \ p^d ( \mathbf{a}_{d-1} / p^{t-(d-1)} )  \| < \frac{1}{p^{(t-1)( (d-1) - 1/ (100d) )}},
$$
which is a contradiction. Thus all of the entries of $(n_{d-1}  n_d \ p^d \mathbf{a}_{d-1})$ must be divisible
by $p^{t-(d-1)}$. In particular,
$$
n_{d-1} n_d \ p^d \boldsymbol{\alpha}_{d-1} = n_{d-1} n_d \ p^d ( \mathbf{a}_{d-1} / p^{t-(d-1)} ) \in \mathbb{Z}^{r_{d-1}},
$$
and
we can assume without loss of generality that
$n_{d-1}$ is a power of $p$ satisfying $n_{d-1} < p^{ (t-1)/ (100d) }$.
Since $ t - (d-1) > 2(t-1)/(100 d) + d$, it follows that every entry of $\mathbf{a}_{d-1}$ is divisible by $p$.

We have
$$
(n_{d-1} n_d \ p^{d + (d-1)})  \alpha_{d, r} \ \frac{1}{p^{d}} \ \omega_{d, r: p, \mathbf{k}} (\mathbf{y}) \in \mathbb{Z}[y_1, \ldots, y_n] \ \  (1 \leq r \leq r_d),
$$
$$
(n_{d-1} n_d \ p^{d + (d-1)})  \alpha_{d-1, r} \ \frac{1}{p^{d-1}} \ \omega_{d-1, r: p, \mathbf{k}} (\mathbf{y}) \in \mathbb{Z}[y_1, \ldots, y_n]  \ \ (1 \leq r \leq r_{d-1}),
$$
and
$$
n_{d-1} n_d \ p^{d+(d-1)} \leq P^{Q \gamma_{d-1} + 2\varepsilon}  P^{Q \gamma_d} P^{2/ (800 d^2)}.
$$
With this set up, we can apply Corollary \ref{cor 15.1 in S+} again with $\ell = d-2$ and
\begin{eqnarray}
\notag
\theta &=& Q \gamma_{d-1} + \varepsilon_{d-1} + Q \gamma_d + \frac{2}{ 800 d^2 }
\\
&<&
Q \gamma_{d-1} + \varepsilon_{d-1} + Q \gamma_d + \frac{d}{ 800 d^2 }
\\
\notag
&<& \frac{1}{100 d}
\\
\notag
&<& \frac14,
\end{eqnarray}
where $\varepsilon_{d-1} > 0$ is sufficiently small.
At this point it is clear that we can repeat the process, in fact we continue in this manner until $\ell = 2$.
We remark that if there exists $\ell'$ with $r_{\ell'} = 0$, then we simply skip the case $\ell = \ell'$ during this process.
As a result, we obtain that every entry of $\mathbf{a}_{d}, \ldots, \mathbf{a}_{2}$ is divisible by $p$.
Thus we have $\gcd(\mathbf{a}_{\ell}, q) > 1$ for each $2 \leq \ell \leq d$, which is a contradiction.
Therefore, we obtain
\begin{eqnarray}
\Big{|} \sum_{ \mathbf{y} \in [0,p^{t-1}-1]^{n}   } e \left( \sum_{\ell = 1}^d \sum_{r=1}^{r_{\ell}}  f_{\ell,r} (\mathbf{k} + p \mathbf{y} )  \cdot a_{\ell,r} /q  \right) \Big{|}
&=&
\notag
\Big{|} \sum_{ \mathbf{y} \in [0, P]^{n}   } e \left( \sum_{\ell = 1}^d \sum_{r=1}^{r_{\ell}}  u_{\ell,r} (\mathbf{y} )  \cdot \alpha_{\ell,r}  \right) \Big{|}
\\
&\ll& P^{n - Q}
\notag
\\
&\ll& (p^{t-1})^{n - Q}.
\notag
\end{eqnarray}

Thus we can bound ~(\ref{widetile S part 3-1}) as follows
\begin{eqnarray}
\mathcal{S}_{ \mathbf{a}, q } &\leq&
\sum_{\mathbf{k} \in \mathbb{U}_p^n}
\Big{|} \sum_{ \mathbf{y} \in [0, p^{t-1}-1]^n   } e \left(  \sum_{\ell = 1}^d \sum_{r=1}^{r_{\ell}} f_{\ell,r} (\mathbf{k} + p \mathbf{y} ) \cdot  a_{\ell, r}/q  \right) \Big{|}
\notag
\\
&\ll&
p^n (p^{t-1})^{n - Q}
\notag
\\
&=&
p^{Q} q^{n - Q}.
\notag
\end{eqnarray}

\end{proof}

\begin{lem}
\label{to bound local factor+}
Let $p$ be a prime and let $q = p^t$, $t \in \mathbb{N}$. Let $\mathbf{a}  =(\mathbf{a}_d, \ldots, \mathbf{a}_1)  \in (\mathbb{Z} / q \mathbb{Z})^R$ with $\gcd(\mathbf{a},q)=1$. Furthermore, suppose $\gcd(\mathbf{a}_{\ell},q)>1$ for $2 \leq \ell \leq d$, and $\gcd(\mathbf{a}_{1},q) = 1$. Then we have
\begin{eqnarray}
\notag
\mathcal{S}_{ \boldsymbol{a}, q } \ll
\left\{
    \begin{array}{ll}
         q^{n-Q},
         &\mbox{if } t \leq 800 d^3 + 1 ,\\
         p^Q q^{n-Q},
         &\mbox{if } t > 800 d^3 + 1,
    \end{array}
\right.
\end{eqnarray}
where the implicit constants depend only on $n$ and the coefficients of $\mathbf{F}_1$, and in particular they are independent of $p$.
\end{lem}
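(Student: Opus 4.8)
The plan is to exploit the special shape of $\mathbf{f}$ fixed at the beginning of Section~\ref{section major arcs}: for $2 \leq \ell \leq d$ the polynomial $f_{\ell,r}$ lies in $\mathbb{Z}[x_1,\dots,x_{n-r_1}]$, while $f_{1,r}(\mathbf{x}) = c_{1,r}x_{n-r_1+r} + \widetilde{f}_{1,r}(\widehat{\mathbf{x}})$ involves the extra variable $x_{n-r_1+r}$ only linearly. Since by hypothesis $p$ divides every entry of $\mathbf{a}_\ell$ for $2 \leq \ell \leq d$, the polynomial $g(\mathbf{x}):=\sum_{\ell,r}a_{\ell,r}f_{\ell,r}(\mathbf{x})$ can be written as $g = p\,h(\widehat{\mathbf{x}}) + L(\mathbf{x}) + c_0$ with $h\in\mathbb{Z}[\widehat{\mathbf{x}}]$ of degree at most $d$, $c_0\in\mathbb{Z}$, and $L(\mathbf{x}) = \sum_{r=1}^{r_1}a_{1,r}F_{1,r}(\mathbf{x})$ an integer linear form (recall $F_{1,r}$ is the linear part of $f_{1,r}$). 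As $x_{n-r_1+r}$ occurs in none of the $f_{\ell,r'}$ with $\ell\geq2$ and only in $F_{1,r}$, with coefficient $c_{1,r}$, summing over the block $(k_{n-r_1+1},\dots,k_n)\in(\mathbb{U}_q)^{r_1}$ factors off and yields
\[
\mathcal{S}_{\mathbf{a},q} = e(c_0/q)\left(\sum_{\widehat{\mathbf{k}}\in(\mathbb{U}_q)^{n-r_1}} e\left(\frac{h(\widehat{\mathbf{k}})}{p^{t-1}} + \frac{L(\widehat{\mathbf{k}},\mathbf{0})}{q}\right)\right)\prod_{r=1}^{r_1}c_q(a_{1,r}c_{1,r}),
\]
where $c_q(m)=\sum_{k\in\mathbb{U}_q}e(mk/q)$ is the Ramanujan sum and $L(\widehat{\mathbf{k}},\mathbf{0})$ denotes $L$ evaluated with the new variables set to $0$.

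First I would dispose of the case $t\geq2$. Fix $r_0$ with $p\nmid a_{1,r_0}$, which exists because $\gcd(\mathbf{a}_1,q)=1$; then $v_p(a_{1,r_0}c_{1,r_0})=v_p(c_{1,r_0})$. The standard evaluation of the Ramanujan sum, namely $c_{p^t}(m)=0$ whenever $p^{t-1}\nmid m$, shows that $c_q(a_{1,r_0}c_{1,r_0})=0$, hence $\mathcal{S}_{\mathbf{a},q}=0$, unless $v_p(c_{1,r_0})\geq t-1\geq1$. In the remaining situation $p\mid c_{1,r_0}$, so $p\leq C_1:=\max_r|c_{1,r}|$ and $t\leq 1+v_p(c_{1,r_0})\leq 1+\log_2 C_1$; thus $q=p^t$ is bounded by a constant depending only on the coefficients of $\mathbf{F}_1$, and the trivial estimate $|\mathcal{S}_{\mathbf{a},q}|\leq q^n = q^{n-Q}\cdot q^{Q}$ gives the asserted bound (with an implicit constant independent of $p$) in both of the ranges of $t$ appearing in the statement.

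It remains to treat $t=1$, i.e. $q=p$. Here $h(\widehat{\mathbf{k}})\in\mathbb{Z}$, so that term disappears, and the identity above collapses to $\mathcal{S}_{\mathbf{a},p} = e(c_0/p)\prod_{i=1}^{n}c_p(L_i)$, where $L_i$ is the $i$-th coefficient of $L=\sum_r a_{1,r}F_{1,r}$. Using $|c_p(m)|\leq p-1$ always and $|c_p(m)|=1$ when $p\nmid m$, this product has absolute value at most $p^{n-N_p}$, where $N_p$ is the number of coefficients of $L$ not divisible by $p$, that is, the number of nonzero coefficients of the reduction $\sum_r \bar{a}_{1,r}\bar{F}_{1,r}$ over $\mathbb{F}_p$. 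Since $\mathbf{a}_1\not\equiv\mathbf{0}\pmod p$, this reduction is a nontrivial $\mathbb{F}_p$-combination, so $N_p$ is at least the mod-$p$ Birch rank of $\mathbf{F}_1$. For all primes $p$ outside a finite set determined by $\mathbf{F}_1$ this mod-$p$ rank equals $\mathcal{B}_1(\mathbf{F}_1)$, which exceeds $Q$ by our choice of $Q$, giving $|\mathcal{S}_{\mathbf{a},p}|\leq p^{n-Q}=q^{n-Q}$; for each of the finitely many exceptional primes $q=p$ is bounded and the trivial estimate again suffices.

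The factorisation identity and the evaluation of Ramanujan sums are routine; the one point requiring care is the possible drop of the linear Birch rank modulo small primes. The key observation that handles it --- and is really the heart of the argument --- is that the Ramanujan factor $c_q(a_{1,r_0}c_{1,r_0})$ attached to the ``new'' variable $x_{n-r_1+r_0}$ forces $t$, and when $t\geq2$ also $p$, to be bounded whenever $\mathcal{S}_{\mathbf{a},q}\neq0$; hence exceptional primes can occur only with bounded modulus $q$ and are absorbed into the implicit constant. One checks at the end that this estimate uniformly covers both alternatives $t\leq 800d^3+1$ and $t> 800d^3+1$ in the statement, the factor $p^Q$ in the latter being mere slack.
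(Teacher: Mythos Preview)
Your proof is correct and follows essentially the same strategy as the paper. For $t\geq 2$ the two arguments are identical: factor off the distinguished variables $x_{n-r_1+r}$, observe that the Ramanujan-type sum attached to some $r_0$ with $p\nmid a_{1,r_0}$ vanishes unless $p^{t-1}\mid c_{1,r_0}$, and conclude that only finitely many $q$ (depending on $c_{1,1},\dots,c_{1,r_1}$) survive.

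For $t=1$ your execution is slightly different and in fact a bit more direct. The paper converts the sum over $(\mathbb{U}_p)^n$ into a sum over $(\mathbb{Z}/p\mathbb{Z})^n$ via inclusion--exclusion over subsets $I\subseteq\{1,\dots,n\}$, then bounds the $|I|\geq Q$ terms trivially and shows the $|I|<Q$ terms vanish by checking that the restricted linear forms remain linearly independent modulo large $p$. You instead keep the product form $\prod_i c_p(L_i)$ over all $n$ coordinates and use $|c_p(L_i)|=1$ whenever $p\nmid L_i$, reducing matters to showing that at least $Q$ coefficients of $L=\sum_r a_{1,r}F_{1,r}$ are nonzero modulo $p$. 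Both routes rest on the same underlying fact---that $\mathcal{B}_1(\mathbf{F}_1)>Q$ is preserved modulo all but finitely many primes---so the difference is cosmetic. Your version avoids the inclusion--exclusion bookkeeping; the paper's version has the advantage of reusing the same template as Lemma~\ref{to bound local factor}. One small remark: your closing paragraph somewhat conflates the mechanisms for $t\geq 2$ and $t=1$ (the Ramanujan factor does \emph{not} bound $p$ when $t=1$; there you rely directly on the finite set of exceptional primes for the Birch-rank drop), but the individual case analyses you gave earlier are each correct on their own.
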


\begin{proof}
First we consider the case $t> 1$.
Since $\gcd(\mathbf{a}_1,q)=1$, there exists $1 \leq r' \leq r_1$ such that $\gcd(a_{1,r'}, p) = 1$.
By our assumption on $\mathbf{f}$, we have
\begin{eqnarray}
\mathcal{S}_{ \mathbf{a}, q }
&=&  \sum_{\mathbf{k} \in (\mathbb{U}_q)^{n}} e \left( \sum_{\ell = 1}^d \sum_{r=1}^{r_{\ell}}  f_{\ell,r}(\mathbf{k})  \cdot a_{\ell,r} /q  \right)
\notag
\\
\notag
&=& \left( \prod_{1 \leq r \leq r_1} \sum_{ k_{n - r_1 + r} \in \mathbb{U}_q} e( c_{1,r} k_{n - r_1 + r} \cdot a_{1,r} /q)\right)
\sum_{ \widehat{\mathbf{k} } \in (\mathbb{U}_q)^{n-r_1}} e \Big{(} \sum_{1 \leq r \leq r_1} \widetilde{f}_{1,r}( \widehat{\mathbf{k} } ) \cdot a_{1,r} /q
\\
\notag
&+&   \sum_{\ell = 2}^d \sum_{r=1}^{r_{\ell}}  f_{\ell,r}( \widehat{\mathbf{k} } )  \cdot a_{\ell,r} /q  \Big{)}.
\end{eqnarray}
If $\gcd(p, c_{1,r'})=1$, then $\gcd( a_{1, r'} \cdot c_{1,r'}, \ q) = 1$. Consequently, we have
$$
\sum_{ k_{n - r_1 + r'} \in \mathbb{U}_q} e( (a_{1,r'} \cdot c_{1,r'}) k_{n - r_1 + r'}  /q) = \sum_{ k \in \mathbb{U}_q} e(  k  /q) =0,
$$
because $t>1$.
In this case, it follows that
$$
\mathcal{S}_{ \mathbf{a}, q } = 0.
$$
Otherwise, we have $p | c_{1,r'}$. Let $c_{1,r'} = p^{i_0} m_0$ where $p \nmid m_0$.
By a similar argument, we have
\begin{eqnarray}
\notag
\sum_{ k_{n - r_1 + r'} \in \mathbb{U}_q} e( (a_{1,r'} \cdot c_{1,r'}) k_{n - r_1 + r'}  /q) =
\left\{
    \begin{array}{lll}
         0,
         &\mbox{if }  t > i_0 + 1, \\
         - p^{i_0},
         &\mbox{if } t = i_0 + 1, \\
         \phi(q),
         &\mbox{if } t \leq i_0.
    \end{array}
\right.
\end{eqnarray}
Therefore, for all but finite possibilities (depending only on $c_{1, 1}, \ldots, c_{1, r_1})$ of $q$ 
we always have $\mathcal{S}_{ \mathbf{a}, q } = 0$.
Thus for $t>1$ we see that we can obtain the bounds in the statement of the lemma with the implicit constant depending only on  $c_{1, 1}, \ldots, c_{1, r_1}.$

In the case $t = 1$, since $e(m)=1$ for $m \in \mathbb{Z}$, we have by our hypothesis that
\begin{eqnarray}
\notag
\mathcal{S}_{ \mathbf{a}, p }
&=& \sum_{\mathbf{k} \in (\mathbb{U}_p)^{n}} e \left(  \sum_{r=1}^{r_{1}}  f_{1,r}(\mathbf{k})  \cdot a_{1,r} /p  \right).
\end{eqnarray}
We bound this sum in a similar manner as in Lemma \ref{to bound local factor}.
We apply the inclusion-exclusion principle  and obtain
\begin{eqnarray}
\label{eqn baaaahhhhh+}
&&  \sum_{\mathbf{k} \in (\mathbb{U}_p)^{n}} e \left(  \sum_{r=1}^{r_{1}}  f_{1,r}(\mathbf{k})  \cdot a_{1,r} /p  \right)
\\
&=&  \sum_{\mathbf{k} \in (\mathbb{Z} / p \mathbb{Z})^{n}} \prod_{i=1}^n \left( 1 -  \mathbf{1}_{k_i = 0}  \right) \
e \left( \sum_{r=1}^{r_{1}}  f_{1,r}(\mathbf{k})  \cdot a_{1,r} /p  \right)
\notag
\\
\notag
&=& \sum_{ I \subseteq \{ 1, 2, \ldots, n \}} (-1)^{|I|}
\sum_{\mathbf{k} \in (\mathbb{Z} / p \mathbb{Z})^{n} }
\mathfrak{F}_I(\mathbf{k}) \ e \left( \sum_{r=1}^{r_{1}}  f_{1,r} (\mathbf{k})  \cdot a_{1,r} /p  \right),
\notag
\end{eqnarray}
where
$$
\mathbf{1}_{k_i = 0} =
\left\{
    \begin{array}{ll}
         1,
         &\mbox{if } k_i = 0 ,\\
         0,
         &\mbox{if } k_i \not = 0,
    \end{array}
\right.
$$
and
$$
\mathfrak{F}_I(\mathbf{k}) = \prod_{i \in I} \mathbf{1}_{k_i = 0}.
$$
In other words, $\mathfrak{F}_I(\mathbf{k})$ is the characteristic function of the set $H_{I} = \{ \mathbf{k} \in (\mathbb{Z} / p \mathbb{Z})^{n} : k_i = 0 \ (i \in I)\}$.
We now bound the summand in the final expression of ~(\ref{eqn baaaahhhhh+}) by further considering two cases, $|I| \geq Q$ and $|I| < Q$.
In the first case $|I| \geq Q$, we use the following trivial estimate
\begin{eqnarray}
\Big{|}
\sum_{\mathbf{k} \in (\mathbb{Z} / p \mathbb{Z})^n}
\mathfrak{F}_I(\mathbf{k}) \ e \left( \sum_{r=1}^{r_1}  f_{1,r}(\mathbf{k})  \cdot a_{1,r} /p  \right) \Big{|}
\leq
p^{n  - |I|}
\leq p^{n  - Q}.
\notag
\end{eqnarray}
On the other hand, suppose  $|I| < Q$. Let us label $\mathbf{s} = (s_1, \ldots , s_{n - |I|} )$
to be the remaining variables of $\mathbf{x}$ after setting $x_i = 0$ for each $i \in I$.
For each $1 \leq r \leq r_{1}$, let
$$
\mathfrak{g}_{1,r}(\mathbf{s}) ={f}_{1,r}(\mathbf{x}) |_{x_i = 0 \ ( i \in I )},
$$
or equivalently the polynomial $\mathfrak{g}_{1,r}(\mathbf{s})$ is obtained by
substituting $x_i = 0 \ (i \in I)$ to the polynomial ${f}_{1,r}(\mathbf{x})$.
Thus $\mathfrak{g}_{1,r} (\mathbf{s})$ is a polynomial in $n - |I|$ variables.
Let us denote
$$
\mathfrak{g}_{1,r} (\mathbf{s}) = \sum_{i=1}^{n-|I|} c'_{r,i} \  s_i + c'_{r,0} \ \ \ (1 \leq r \leq r_1).
$$
With these notations, we have
\begin{eqnarray}
&&\sum_{\mathbf{k} \in (\mathbb{Z} / p \mathbb{Z})^{n}} \mathfrak{F}_I(\mathbf{k}) \ e \left( \sum_{r=1}^{r_{1}}  f_{1,r}(\mathbf{k})  \cdot a_{1,r} /p  \right)
\label{eqn baaaahhhhh}
\\
\notag
&=&
\sum_{\mathbf{s} \in [0, p-1]^{n - |I|} }e \left( \sum_{r=1}^{r_{1}}  \mathfrak{g}_{1,r} (\mathbf{s})  \cdot a_{1,r} /p  \right)
\\
&=&
e \left( \sum_{r=1}^{r_1} c'_{r,0} \ a_{1,r} /p \right)
\sum_{\mathbf{s} \in [0, p-1]^{n - |I|} }e \left( \sum_{r=1}^{r_{1}}  \sum_{i=1}^{n-|I|} c'_{r,i} s_i  \cdot a_{1,r} /p  \right)
\notag
\\
&=&
e \left( \sum_{r=1}^{r_1} c'_{r,0} \ a_{1,r} /p\right)
\sum_{\mathbf{s} \in [0, p-1]^{n - |I|} }e \left( \sum_{i=1}^{n-|I|}  \frac{s_i}{p}  \Big{(} \sum_{r=1}^{r_{1}}   c'_{r,i}  \cdot a_{1,r} \Big{)}   \right)_.
\notag
\end{eqnarray}
We can also deduce easily that the homogeneous linear portion of the polynomial $\mathfrak{g}_{1,r}(\mathbf{s})$, which we denote
$\mathfrak{G}_{1,r}(\mathbf{s}) = \sum_{i=1}^{n-|I|} c'_{r,i} \ s_i$, is obtained by substituting $x_i = 0 \ (i \in I)$ to ${F}_{1,r}(\mathbf{x})$.
Hence, we have
$$
\mathfrak{G}_{1,r}(\mathbf{s}) =  {F}_{1,r}(\mathbf{x}) |_{x_i = 0 \ (i \in I)}.
$$
It then follows by Lemma \ref{Lemma 3 in CM} that
$$
\mathcal{B}_1( \{\mathfrak{G}_{1,r}: 1 \leq r \leq r_{1} \} ) \geq \mathcal{B}_1( \mathbf{F}_{1} ) - |I| > \mathcal{B}_1( \mathbf{F}_{1} ) - Q >0.
$$
In particular, it follows that $\mathfrak{G}_{1,1}(\mathbf{s}), \ldots, \mathfrak{G}_{1,r_1}(\mathbf{s})$ are linearly independent over $\mathbb{Q}$.
Thus for $p$ sufficiently large with respect to the coefficients of $\mathbf{F}_1$, the coefficient matrix
of $\mathfrak{G}_{1,1}(\mathbf{s}),$ $\ldots,$ $\mathfrak{G}_{1,r_1}(\mathbf{s})$ has full rank modulo $p$.
Therefore, it follows that if
$$
\sum_{r=1}^{r_{1}}   c'_{r,i}  \cdot a_{1,r} \equiv 0 \ (\text{mod }p)
$$
for each $1 \leq i \leq n - |I|$, then it must be that $a_{1,1} \equiv \ldots \equiv a_{1,r_1} \equiv 0 \ (\text{mod }p)$.
Since we have $\gcd(\mathbf{a}_1, p) = 1$, this is a contradiction.
Thus without loss of generality suppose
$$
\varsigma = \sum_{r=1}^{r_{1}}   c'_{r,1}  \cdot a_{1,r} \not \equiv 0 \ (\text{mod }p).
$$
Then equation (\ref{eqn baaaahhhhh}) becomes
\begin{eqnarray}
&&\Big{|} \sum_{\mathbf{k} \in (\mathbb{Z} / p \mathbb{Z})^{n}} \mathfrak{F}_I(\mathbf{k}) \ e \left( \sum_{r=1}^{r_{1}}  f_{1,r}(\mathbf{k})  \cdot a_{1,r} /p  \right) \Big{|}
\notag
\\
&=&
\Big{|}
\left( \sum_{0 \leq s_1 \leq p-1} e (  \varsigma  s_1/ p  )  \right)
\sum_{ \substack{0 \leq s_i \leq p-1 \\ 2 \leq i \leq n - |I| } } e \left( \sum_{i=2}^{n-|I|}  \frac{s_i}{p}  \Big{(} \sum_{r=1}^{r_{1}}   c'_{r,i}  \cdot a_{1,r} \Big{)}   \right) \Big{|}
\notag
\\
&=& 0,
\notag
\end{eqnarray}
because
$$
\sum_{0 \leq s_1 \leq p-1} e (  \varsigma  s_1/ p  ) = \sum_{0 \leq s_1 \leq p-1} e (  s_1/ p  ) = 0.
$$
Therefore, by combining the estimates for the two cases $|I| \geq Q$ and $|I| < Q$, we obtain
$$
\mathcal{S}_{\mathbf{a}, p} \ll p^{n - Q},
$$
where the implicit constant depends only on $n$ and the coefficients of $\mathbf{F}_1$.
\end{proof}

By a similar argument as in \cite[Chapter VIII, \S 2, Lemma 8.1]{H}, one can show that $B(q)$
is a multiplicative function of $q$. We leave the proof of the following lemma as a basic exercise
involving the Chinese remainder theorem and manipulating summations.
\begin{lem}
Suppose $q,q' \in \mathbb{N}$ and $\gcd (q,q') = 1$. Then we have
$$
B(q q') = B(q) B(q').
$$
\end{lem}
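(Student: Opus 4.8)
The plan is to prove this multiplicativity statement by a direct application of the Chinese remainder theorem, in the same spirit as \cite[Chapter VIII, \S 2, Lemma 8.1]{H}. Fix $q, q' \in \mathbb{N}$ with $\gcd(q, q') = 1$, and choose integers $\overline{q}, \overline{q'}$ with $\overline{q'} q' \equiv 1 \pmod{q}$ and $\overline{q} q \equiv 1 \pmod{q'}$. The reduction map $\mathbf{k} \mapsto (\mathbf{k} \bmod q, \mathbf{k} \bmod q')$ is a bijection from $(\mathbb{Z}/qq'\mathbb{Z})^n$ onto $(\mathbb{Z}/q\mathbb{Z})^n \times (\mathbb{Z}/q'\mathbb{Z})^n$ which restricts to a bijection $(\mathbb{U}_{qq'})^n \to (\mathbb{U}_q)^n \times (\mathbb{U}_{q'})^n$. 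For the numerator vectors, given $\mathbf{a} \in (\mathbb{Z}/qq'\mathbb{Z})^R$ I would set $\mathbf{b} \equiv \overline{q'}\,\mathbf{a} \pmod{q}$ and $\mathbf{b}' \equiv \overline{q}\,\mathbf{a} \pmod{q'}$; since $\overline{q'}$ and $\overline{q}$ are units modulo $q$ and $q'$ respectively, the assignment $\mathbf{a} \mapsto (\mathbf{b}, \mathbf{b}')$ is a bijection from $\{\mathbf{a} \in (\mathbb{Z}/qq'\mathbb{Z})^R : \gcd(\mathbf{a}, qq') = 1\}$ onto $\{\mathbf{b} : \gcd(\mathbf{b}, q) = 1\} \times \{\mathbf{b}' : \gcd(\mathbf{b}', q') = 1\}$, because $\gcd(\mathbf{a}, qq') = 1$ is equivalent to $\gcd(\mathbf{a}, q) = 1$ and $\gcd(\mathbf{a}, q') = 1$ and each $a_{\ell, r}$ is matched entrywise to the pair $(b_{\ell, r}, b'_{\ell, r})$.

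Next I would show that the exponential sum factors. For $\mathbf{k} \in (\mathbb{U}_{qq'})^n$ corresponding to $(\mathbf{k}_1, \mathbf{k}_2)$ and for each pair $\ell, r$, one has the congruence $\frac{a_{\ell,r}}{qq'} \equiv \frac{b_{\ell,r}}{q} + \frac{b'_{\ell,r}}{q'} \pmod{1}$, and since every $f_{\ell,r}$ has integer coefficients we also have $f_{\ell,r}(\mathbf{k}) \equiv f_{\ell,r}(\mathbf{k}_1) \pmod{q}$ and $f_{\ell,r}(\mathbf{k}) \equiv f_{\ell,r}(\mathbf{k}_2) \pmod{q'}$. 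Combining these and using $e(x+y) = e(x)e(y)$ gives
\[
e\!\left( f_{\ell,r}(\mathbf{k}) \cdot \frac{a_{\ell,r}}{qq'} \right) = e\!\left( f_{\ell,r}(\mathbf{k}_1) \cdot \frac{b_{\ell,r}}{q} \right) e\!\left( f_{\ell,r}(\mathbf{k}_2) \cdot \frac{b'_{\ell,r}}{q'} \right).
\]
Taking the product over $1 \leq \ell \leq d$, $1 \leq r \leq r_{\ell}$, and then summing over $\mathbf{k}$ via the bijection above (which splits the sum into an independent sum over $\mathbf{k}_1 \in (\mathbb{U}_q)^n$ and $\mathbf{k}_2 \in (\mathbb{U}_{q'})^n$) yields $\mathcal{S}_{\mathbf{a}, qq'} = \mathcal{S}_{\mathbf{b}, q}\, \mathcal{S}_{\mathbf{b}', q'}$.

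Finally I would assemble the pieces. Using $\phi(qq')^n = \phi(q)^n \phi(q')^n$ and summing over $\mathbf{a}$ with $\gcd(\mathbf{a}, qq') = 1$ through the bijection $\mathbf{a} \leftrightarrow (\mathbf{b}, \mathbf{b}')$,
\[
B(qq') = \frac{1}{\phi(qq')^n} \sum_{\gcd(\mathbf{a}, qq')=1} \mathcal{S}_{\mathbf{a}, qq'} = \left( \frac{1}{\phi(q)^n} \sum_{\gcd(\mathbf{b}, q)=1} \mathcal{S}_{\mathbf{b}, q} \right) \left( \frac{1}{\phi(q')^n} \sum_{\gcd(\mathbf{b}', q')=1} \mathcal{S}_{\mathbf{b}', q'} \right) = B(q) B(q').
\]
There is no genuine obstacle in this argument; it is entirely bookkeeping. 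The only point that deserves a little care is checking that the correspondence $\mathbf{a} \mapsto (\mathbf{b}, \mathbf{b}')$ is a well-defined bijection matching the coprimality condition $\gcd(\mathbf{a}, qq') = 1$ with the pair of conditions $\gcd(\mathbf{b}, q) = \gcd(\mathbf{b}', q') = 1$, and that multiplying the mod-$1$ congruence for $\tfrac{a_{\ell,r}}{qq'}$ by the integer $f_{\ell,r}(\mathbf{k})$ and then replacing $\mathbf{k}$ by $\mathbf{k}_1$ or $\mathbf{k}_2$ introduces only an integer error, which is exactly where the integrality of the coefficients of $\mathbf{f}$ is used.
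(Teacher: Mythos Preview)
Your proof is correct and follows exactly the approach the paper indicates: the paper itself does not spell out a proof but states that the lemma is ``a basic exercise involving the Chinese remainder theorem and manipulating summations,'' in the style of \cite[Chapter~VIII, \S 2, Lemma~8.1]{H}, which is precisely what you have carried out.
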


Recall we defined the term $\mathfrak{S}(N)$ in (\ref{def sigular series partial}).
For each prime $p$, we define
\begin{equation}
\label{def mu p}
\mu(p) =  1  + \sum_{t=1}^{\infty} B( p^t),
\end{equation}
which converges absolutely under our assumptions on $\mathbf{f}$.
Furthermore, under our assumptions on $\mathbf{f}$
the following limit exists
\begin{equation}
\label{def sigular series}
\mathfrak{S}(\infty) := \lim_{N \rightarrow \infty} \mathfrak{S}(N) = \prod_{p \ \text{prime}} \mu(p),
\end{equation}
which is called the \textit{singular series}.
We prove these statements in the following Lemma \ref{singular series lemma}.

\begin{lem}
\label{singular series lemma}
There exists $\delta_1 > 0$ such that for each prime $p$, we have
$$
\mu(p) = 1 + O(p^{-1 - \delta_1}),
$$
where the implicit constant is independent of $p$.
Furthermore, we have
$$
\Big{|} \mathfrak{S}(N) -  \mathfrak{S}(\infty) \Big{|} \ll (\log N)^{-C \delta_2 }
$$
for some $\delta_2 > 0$.
\end{lem}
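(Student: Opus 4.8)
The plan is to reduce everything to a uniform geometric bound on $B(q)$ at prime powers $q=p^t$ coming from Lemmas \ref{to bound local factor} and \ref{to bound local factor+}, and then to propagate this through the multiplicativity of $B$. First I would observe that for $q=p^t$ the residues $\mathbf{a}\in(\mathbb{Z}/q\mathbb{Z})^R$ with $\gcd(\mathbf{a},q)=1$ fall into exactly the two cases covered by those lemmas: either some block $\mathbf{a}_{\ell}$ with $2\le\ell\le d$ satisfies $\gcd(\mathbf{a}_{\ell},q)=1$, or all of them have $\gcd(\mathbf{a}_{\ell},q)>1$, in which case necessarily $\gcd(\mathbf{a}_1,q)=1$. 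Combining the resulting bound on $\mathcal{S}_{\mathbf{a},q}$ with the trivial count of at most $p^{tR}$ admissible $\mathbf{a}$ and with $1/\phi(p^t)^n\ll p^{-tn}$, I obtain, with implicit constants independent of $p$,
\begin{equation*}
|B(p^t)|\ll p^{t(R-Q)}\ \ (t\le 800d^3+1),\qquad |B(p^t)|\ll p^{Q+t(R-Q)}\ \ (t>800d^3+1).
\end{equation*}

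Next I would unwind the definition of $Q$: since the second term in the maximum dominates the first, $Q=R+2+(R+1)/(800d^3)$, so $Q-R=2+(R+1)/(800d^3)>2$ and $Q-(800d^3+2)(Q-R)=-1600d^3-3-(R+1)/(800d^3)<-1600d^3$. Summing the two ranges of $t$ then gives $\sum_{t\ge1}|B(p^t)|\ll p^{-(Q-R)}+p^{\,Q-(800d^3+2)(Q-R)}\ll p^{-2}$; in particular the series defining $\mu(p)$ in (\ref{def mu p}) converges, and $\mu(p)=1+O(p^{-2})$, which proves the first assertion (one may take $\delta_1=1$). The finitely many small primes, where the proofs of the cited lemmas invoke "$p$, $t$ sufficiently large", contribute only bounded factors and are absorbed into the implicit constant; alternatively one uses the trivial bound $|\mathcal{S}_{\mathbf{a},q}|\le q^n$ there.

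For the singular series I would use that $B$ is multiplicative, as established above. Fixing $\delta_2=1/2$, the same two-range computation gives $\sum_{t\ge1}|B(p^t)|\,p^{t\delta_2}\ll p^{-3/2}$, so the Euler product $\prod_p\bigl(1+\sum_{t\ge1}|B(p^t)|\,p^{t\delta_2}\bigr)$ converges and hence $\sum_{q}|B(q)|\,q^{\delta_2}<\infty$. Absolute convergence of $\sum_q B(q)$ then yields $\sum_{q=1}^{\infty}B(q)=\prod_p\mu(p)$, so the limit $\mathfrak{S}(\infty)$ in (\ref{def sigular series}) exists and equals this sum; consequently, recalling $\mathfrak{S}(N)=\sum_{q\le(\log N)^C}B(q)$ from (\ref{def sigular series partial}),
\begin{equation*}
\bigl|\mathfrak{S}(N)-\mathfrak{S}(\infty)\bigr|\le\sum_{q>(\log N)^C}|B(q)|\le(\log N)^{-C\delta_2}\sum_{q}|B(q)|\,q^{\delta_2}\ll(\log N)^{-C\delta_2},
\end{equation*}
which is the claimed estimate.

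The only real difficulty is bookkeeping: one must verify that the explicit value of $Q$ produces the exponent savings asserted above, both for the $\mu(p)$ bound and for the weighted Euler product, and one must track the mild non-uniformity in the hypotheses of Lemmas \ref{to bound local factor} and \ref{to bound local factor+}. Everything else is routine manipulation of nonnegative multiplicative functions and their Euler products.
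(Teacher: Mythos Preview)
Your proof is correct and follows essentially the same approach as the paper: both split the sum over $t$ at $800d^3+1$, invoke Lemmas \ref{to bound local factor} and \ref{to bound local factor+} to bound $|B(p^t)|$, and then propagate via the multiplicativity of $B$. Your verification that the second term in the definition of $Q$ dominates, giving $Q-R>2$, is correct and in fact yields the explicit value $\delta_1=1$, slightly sharper than the paper's unspecified $\delta_1>0$.

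The only genuine difference is in the tail estimate for $\mathfrak{S}$. The paper bounds $|B(q)|$ pointwise for composite $q$: writing $q=\prod p_j^{t_j}$ and observing that $\prod_{t_j>800d^3+1}p_j^{Q}\le q^{Q/(800d^3+1)}$, one gets $|B(q)|\ll q^{R-Q+Q/(800d^3+1)}=q^{-1-\delta_2}$ directly. You instead fix $\delta_2=1/2$, show $\sum_{t\ge1}|B(p^t)|p^{t/2}\ll p^{-3/2}$, and conclude $\sum_q|B(q)|q^{1/2}<\infty$ via the Euler product. Both routes are standard and equally short; the paper's pointwise bound gives a slightly larger explicit $\delta_2$ (close to $1$), while your Euler-product argument avoids tracking the composite-$q$ bound and is arguably cleaner. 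Your caveat about finitely many small $(p,t)$ is appropriate and handled correctly.
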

Therefore, the limit in ~(\ref{def sigular series}) exists, and the product in ~(\ref{def sigular series}) converges.
We leave the details  that these two quantities are equal to the reader.
\begin{proof}
Recall our choice of $Q$ satisfies
$$
Q > \frac{1 + R(800 d^3 + 2)}{800 d^3 + 1}
\ \
\text{  and  }
\ \
Q > \frac{R+1}{1 - \frac{1}{800 d^3 + 1}} > R+1.
$$
For any $t \in \mathbb{N}$, we know that $\phi(p^t) = p^t(1 - 1/p) \geq \frac12 p^t$.
Therefore, by considering the two cases as in the statements of Lemmas \ref{to bound local factor} and \ref{to bound local factor+}, we obtain
\begin{eqnarray}
| \mu(p) - 1 |
&\leq&
\sum_{1 \leq t \leq 800 d^3 + 1} \Big{|} \sum_{ \substack{ \gcd (\mathbf{a}, p^t) = 1 \\  \mathbf{a} \in (\mathbb{Z} / p^t \mathbb{Z} )^R } } \frac{1}{\phi(p^t)^n} \  \mathcal{S}_{ \mathbf{a}, p^t }  \Big{|}
+
\sum_{ t > 800 d^3  + 1} \Big{|} \sum_{ \substack{ \gcd (\mathbf{a}, p^t) = 1 \\  \mathbf{a} \in (\mathbb{Z} / p^t \mathbb{Z} )^R } }   \frac{1}{\phi(p^t)^n} \  \mathcal{S}_{ \mathbf{a}, p^t } \Big{|}
\notag
\\
&\ll&
\sum_{1 \leq t \leq 800 d^3 + 1} p^{tR} p^{-nt} p^{nt - t Q }
+
\sum_{ t > 800 d^3  + 1} p^{tR} p^{-nt} p^{Q + nt - t Q }
\notag
\\
&\ll&
p^{R - Q }
+
p^Q p^{-(800 d^3 + 2)(Q-R)}
\notag
\\
&\ll&
p^{-1 - \delta_1},
\notag
\end{eqnarray}
for some $\delta_1 > 0$. We note that the implicit constants in $\ll$ are independent of $p$ here.

Let $q = p_1^{t_1} \ldots p_{v}^{t_{v}}$ be the prime factorization of $q \in \mathbb{N}$.
Without loss of generality, suppose we have $t_j \leq  800 d^3 + 1 \ (1 \leq j \leq v_0)$ and
$t_j >  800 d^3  + 1 \ (v_0 < j \leq v)$. By a similar calculation as above
and the multiplicativity of $B(q)$, it follows that
\begin{eqnarray}
B(q)
&=&
B(p_1^{t_1}) \ \ldots \  B(p_{v}^{t_{v}})
\notag
\\
&\ll&
\left( \prod_{j=1}^{v_0} p_j^{t_j R} p_j^{-n t_j}  p_j^{t_j (n - Q)} \right) \cdot \left(  \prod_{j=v_0 + 1}^{v}  p_j^{t_j R} p_j^{-n t_j} p_j^{Q} p_j^{t_j (n - Q)} \right)
\notag
\\
&=&
q^{R - Q} \cdot  \left(  \prod_{j=v_0 + 1}^{v}  p_j^{ Q } \right)
\notag
\\
&\leq&
q^{R - Q} \cdot  q^{Q/(800 d^3 + 1)}
\notag
\\
&\leq&
q^{- 1 - \delta_2},
\notag
\end{eqnarray}
for some $\delta_2 > 0$. We note that the implicit constant in $\ll$ is independent of $q$ here.
Therefore, we obtain
\begin{eqnarray}
\Big{|} \mathfrak{S}(N) -  \mathfrak{S}(\infty) \Big{|}
&\leq&
 \sum_{q > (\log N)^C } |  B(q) |
\notag
\\
&\ll&
\sum_{q > (\log N)^C } q^{- 1 - \delta_2}
\notag
\\
&\ll&
(\log N)^{- C \delta_2 }.
\notag
\end{eqnarray}
\end{proof}

Let $\nu_t(p)$ denote the number of solutions $\mathbf{x} \in (\mathbb{U}_{p^t})^n$
to the congruence relations
\begin{eqnarray}
f_{\ell, r}( \mathbf{x} ) \equiv 0 \  (\text{mod } p^t) \ \ (1 \leq \ell \leq d, 1 \leq r \leq r_{\ell}).
\end{eqnarray}
Then using the fact that
\begin{eqnarray}
\notag
\sum_{ {a} \in \mathbb{Z}/ p^t \mathbb{Z} } e \left(  m  \cdot {a}/p^t  \right) =
\left\{
    \begin{array}{ll}
         p^{t},
         &\mbox{if } p^t | m ,\\
         0,
         &\mbox{otherwise,}
    \end{array}
\right.
\end{eqnarray}
we deduce
\begin{eqnarray}
&&1 + \sum_{j=1}^t B(p^j)
\notag
\\
&=&
1 + \sum_{j=1}^t  \frac{1}{\phi(p^j)^n} \sum_{\mathbf{k} \in (\mathbb{U}_{p^j})^{n}}
\sum_{ \substack{ \gcd (\mathbf{a}, p^j) = 1 \\  \mathbf{a} \in (\mathbb{Z} / p^j \mathbb{Z} )^R } }
e \left(\sum_{\ell = 1}^d \sum_{r=1}^{r_{\ell}}  f_{\ell,r} (\mathbf{k})  \cdot a_{\ell,r} /p^j \right)
\notag
\\
&=& \frac{1}{\phi(p^t)^n} \sum_{\mathbf{k} \in (\mathbb{U}_{p^t})^{n}}   
\sum_{   \mathbf{a} \in (\mathbb{Z} / p^t\mathbb{Z} )^R }
e \left(\sum_{\ell = 1}^d \sum_{r=1}^{r_{\ell}}  f_{\ell,r} (\mathbf{k})  \cdot a_{\ell,r} /p^t \right)
\notag
\\
&=&  \frac{p^{tR}}{\phi(p^t)^n } \ \nu_t(p).
\notag
\end{eqnarray}
Therefore, under our assumptions on $\mathbf{f}$ we obtain
$$
\mu(p) = \lim_{t \rightarrow \infty}  \frac{  p^{tR} \ \nu_t(p) }{ \phi(p^t)^n }.
$$
We can then deduce by an application of  Hensel's lemma
that
$$
\mu(p) > 0,
$$
if the system ~(\ref{set of eqn 3}) has a non-singular solution in $\mathbb{Z}_p^{\times}$, the units of $p$-adic integers.
The details are left to the reader. From this it follows in combination with Lemma \ref{singular series lemma} that if the system ~(\ref{set of eqn 3}) has a non-singular solution in $\mathbb{Z}_p^{\times}$
for every prime $p$, then
\begin{equation}
\label{mu p positive}
\mathfrak{S}(\infty) = \prod_{p \ \text{prime}}\mu(p) > 0.
\end{equation}
By combining Lemmas \ref{lemma major arc estimate} and \ref{singular series lemma}, we obtain the following.
\begin{prop}
Let $\mathbf{f}$ be the polynomials in (\ref{set of eqn 3}).
Given any $c>0$, for sufficiently large $C>0$ we have
$$
\int_{\mathfrak{M}(C)} T(\mathbf{f}; \boldsymbol{\alpha} ) \ \mathbf{d} \boldsymbol{\alpha}
=  \mathfrak{S}(\infty) \mu(\infty) \  X^{n - \sum_{\ell = 1}^d \ell r_{\ell}  } + O\left( \frac{ X^{n - \sum_{\ell = 1}^d \ell r_{\ell}  } }{(\log X)^{c}} \right).
$$
\end{prop}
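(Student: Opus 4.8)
The proof combines the two estimates just established. First I would start from Lemma~\ref{lemma major arc estimate}, which under the standing hypotheses on $\mathbf{f}$ in this section (namely that $h_d(\mathbf{f}_d),\ldots,h_2(\mathbf{f}_2)$ and $\mathcal{B}_1(\mathbf{f}_1)$ are all sufficiently large with respect to $d$ and $r_d,\ldots,r_1$) gives, for any $c>0$ and $C>0$,
$$
\int_{\mathfrak{M}(C)} T(\mathbf{f};\boldsymbol{\alpha})\,\mathbf{d}\boldsymbol{\alpha}
= \mathfrak{S}(X)\,\mu(\infty)\, X^{n-\sum_{\ell=1}^d \ell r_\ell}
+ O\!\left( \mathfrak{S}(X)\frac{X^{n-\sum_{\ell=1}^d \ell r_\ell}}{(\log X)^C} + \frac{X^{n-\sum_{\ell=1}^d \ell r_\ell}}{(\log X)^c} \right);
$$
here $\mu(\infty)$ exists by the discussion following~(\ref{(3.9') is S}), which applies Lemma~\ref{Lemma 8.1 in S+} via~(\ref{h and g+}) and the hypotheses on the $h$-invariants. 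Then I would invoke Lemma~\ref{singular series lemma}: since $\mathfrak{S}(\infty)$ exists and $\big|\mathfrak{S}(X)-\mathfrak{S}(\infty)\big| \ll (\log X)^{-C\delta_2}$ for a fixed $\delta_2>0$ depending only on $d$ and $r_d,\ldots,r_1$, in particular $\mathfrak{S}(X)=O(1)$ with an implied constant independent of $X$.

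Next I would replace $\mathfrak{S}(X)$ by $\mathfrak{S}(\infty)$ in the main term. The substitution error is
$$
\big|\mathfrak{S}(X)-\mathfrak{S}(\infty)\big|\,\mu(\infty)\, X^{n-\sum_{\ell=1}^d \ell r_\ell}
\ll \frac{X^{n-\sum_{\ell=1}^d \ell r_\ell}}{(\log X)^{C\delta_2}},
$$
while the first error term in the displayed formula becomes $O\!\big(X^{n-\sum_{\ell=1}^d \ell r_\ell}/(\log X)^C\big)$ after using $\mathfrak{S}(X)=O(1)$. Collecting the three error contributions, the total error is $O\!\big(X^{n-\sum_{\ell=1}^d \ell r_\ell}/(\log X)^{\min(C,\,C\delta_2,\,c)}\big)$. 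Given the target exponent $c>0$, it then suffices to take $C$ large enough that $\min(C,C\delta_2)\geq c$ --- for instance $C\geq c/\min(1,\delta_2)$ --- and also at least as large as required by Lemmas~\ref{lemma major arc estimate} and~\ref{singular series lemma}; this yields the stated asymptotic, with the constant $\mathcal{C}(\mathbf{f})$ of Proposition~\ref{prop major arc bound} equal to $\mathfrak{S}(\infty)\mu(\infty)$.

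There is no substantive obstacle remaining at this point: all the real work lies in Lemma~\ref{lemma major arc estimate} (the major arc analysis through Lemma~\ref{Lemma 6 in CM} and the change of variables producing $\mathcal{I}(\mathfrak{B}_0,\boldsymbol{\beta}')$ and $J((\log X)^C)$) and in Lemma~\ref{singular series lemma} (the $p$-adic bounds of Lemmas~\ref{to bound local factor} and~\ref{to bound local factor+} together with the multiplicativity of $B(q)$). The only mild point of care is that the single parameter $C$ must be chosen to dominate simultaneously the decay rate $C\delta_2$ and the requirements of both lemmas, which is immediate since $\delta_2$ is a fixed positive constant. Finally, under the additional local solubility hypotheses one records that $\mathfrak{S}(\infty)\mu(\infty)>0$ by~(\ref{mu infty positive}) and~(\ref{mu p positive}), which is what justifies the claim $\mathcal{C}(\mathbf{f})>0$ made in the introduction.
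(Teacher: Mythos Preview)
Your proposal is correct and follows exactly the approach the paper indicates: the paper simply states that the proposition is obtained ``by combining Lemmas~\ref{lemma major arc estimate} and~\ref{singular series lemma}'', and you have supplied precisely the routine details of that combination, including the bookkeeping that $\mathfrak{S}(X)=O(1)$ and the choice of $C$ to make $\min(C,C\delta_2)\ge c$.
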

We note this proposition contains Proposition \ref{prop major arc bound} as a special case
with
\begin{equation}
\label{c f equals product}
\mathcal{C}(\mathbf{f}) = \mathfrak{S}(\infty) \mu(\infty).
\end{equation}
\section{Conclusions and further remarks}
\label{sec concln}
Let us refer to the polynomials in (\ref{set of eqn 1}) as $\mathbf{f}$,
and the polynomials in (\ref{set of eqn 2}) as $\boldsymbol{\mathfrak{f}}$ in this section. We let $\mathbf{F}$ and $\boldsymbol{\mathfrak{F}}$
be the systems of the highest degree homogeneous portions of $\mathbf{f}$ and $\boldsymbol{\mathfrak{f}}$, respectively.

As a consequence of Propositions  \ref{prop minor arc bound} and \ref{prop major arc bound}, we obtain the following asymptotic formula
for the system of equations (\ref{set of eqn 2}). We have that given any $c > 0$, there exists $C>0$ such that
\begin{eqnarray}
\mathcal{M}_{\boldsymbol{\mathfrak{f}}}(X)
&=& \int_{\mathbb{T}^R} T( \boldsymbol{\mathfrak{f}} ; \boldsymbol{\alpha} ) \ \mathbf{d}\boldsymbol{\alpha}
\label{final asymptotic}
\\
&=& \int_{\mathfrak{M}(C) }T( \boldsymbol{\mathfrak{f}} ; \boldsymbol{\alpha} )  \ \mathbf{d}\boldsymbol{\alpha} + \int_{\mathfrak{m}(C)} T( \boldsymbol{\mathfrak{f}}; \boldsymbol{\alpha} ) \ \mathbf{d} \boldsymbol{\alpha}
\notag
\\
&=&  \mathcal{C}(\boldsymbol{\mathfrak{f}}) \ X^{n - \sum_{\ell = 1}^d \ell r_{\ell}  } + O \left(  \frac{X^{n- \sum_{\ell = 1}^d \ell r_{\ell}}}{ (\log X)^c }  \right),
\notag
\end{eqnarray}
which proves Theorem \ref{main thm} for $\boldsymbol{\mathfrak{f}}$. 

Recall from Section \ref{sec initial set up} that transforming the system
$\mathbf{f}$  into $\boldsymbol{\mathfrak{f}}$ does not affect its solution set, in other words $V_{\mathbf{f}, \mathbf{0}} (\mathbb{Z})= V_{\boldsymbol{\mathfrak{f}}, \mathbf{0}} (\mathbb{Z}) $. Therefore, we in fact have
$$
\mathcal{M}_{\mathbf{f} }(X) = \mathcal{M}_{ \boldsymbol{\mathfrak{f}} }(X) = \mathcal{C}(\boldsymbol{\mathfrak{f}}) \ X^{n - \sum_{\ell = 1}^d \ell r_{\ell}  } + O \left(  \frac{X^{n- \sum_{\ell = 1}^d \ell r_{\ell}}}{ (\log N)^c }  \right).
$$
Since $\mathcal{C}(\boldsymbol{\mathfrak{f}})$ is a constant dependent only on $\boldsymbol{\mathfrak{f}}$, in turn it follows that it is a constant which depends only on $\mathbf{f}$. Thus by setting $\mathcal{C}(\mathbf{f}) = \mathcal{C}(\boldsymbol{\mathfrak{f}})$, we have obtained Theorem \ref{main thm}.

We also remark that if $V_{\mathbf{F}, \mathbf{0}}(\mathbb{R})$ has a non-singular real point in $(0,1)^n$,
then so does $V_{\boldsymbol{\mathfrak{F}}, \mathbf{0}}(\mathbb{R})$, and if the system of equations (\ref{set of eqn 1}) has a non-singular solution
in $\mathbb{Z}_p^{\times}$ for every prime $p$, then so does the system (\ref{set of eqn 2}). Under these conditions, it follows from (\ref{mu infty positive}), (\ref{mu p positive}), and (\ref{c f equals product}) that $\mathcal{C}(\mathbf{f}) = \mathcal{C}( \boldsymbol{\mathfrak{f}} ) > 0$.
We leave the details here to the reader.

Finally, we followed \cite{CM} and used the von-Mangoldt function $\Lambda$ as our weight for the exponential sum. Consequently,
$\mathcal{M}_{\mathbf{f} }(X)$ counts the number of solutions, with a logarithmic weight, to the equations $\mathbf{f} = \mathbf{0}$ whose coordinates are all prime powers.
Let $\mathbf{1}_{\mathcal{P}}$ denote the characteristic function of the set of prime numbers. For $\mathbf{x} = (x_1, \ldots, x_n)$,
we let $\mathbf{1}_{\mathcal{P}}(\mathbf{x}) = \mathbf{1}_{\mathcal{P}}(x_1) \ldots \mathbf{1}_{\mathcal{P}}(x_n)$ and $\log (\mathbf{x}) = \log(x_1) \ldots \log(x_n)$. Let us define
$$
\mathcal{M}'_{\mathbf{f}}(X) := \sum_{\mathbf{x} \in [0, X]^n} \log (\mathbf{x}) \ \mathbf{1}_{\mathcal{P}}(\mathbf{x})  \  \mathbf{1}_{V_{\mathbf{f}, \mathbf{0}}(\mathbb{C})} (\mathbf{x})
$$
with the convention that $\log (\mathbf{x})  \mathbf{1}_{\mathcal{P}}(\mathbf{x}) = 0$ if $x_i = 0$ for some $1 \leq i \leq n$.
The quantity $\mathcal{M}'_{\mathbf{f} }(X)$ counts the number of prime solutions, with a logarithmic weight, to the equations $\mathbf{f} = \mathbf{0}$.
We record the following result for $\mathcal{M}'_{\mathbf{f} }(X)$.
\begin{thm}
\label{main thm 2}
Under the same hypotheses as in Theorem \ref{main thm}, we have
$$
\mathcal{M}'_{\mathbf{f}}(X) = \mathcal{C}(\mathbf{f}) \ X^{n - \sum_{\ell = 1}^d \ell r_{\ell}  } + O \left(  \frac{X^{n- \sum_{\ell = 1}^d \ell r_{\ell}}}{ (\log N)^c }  \right),
$$
where $\mathcal{C}(\mathbf{f})$ is the same constant as in the statement of Theorem \ref{main thm}.
\end{thm}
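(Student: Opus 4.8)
The plan is to derive Theorem \ref{main thm 2} from Theorem \ref{main thm} by showing that the two weighted counts differ by a quantity which is negligible on the scale of the error term. First I would record the elementary comparison of weights: for $\mathbf{x}\in[0,X]^{n}$, if some coordinate $x_{i}$ is not a prime power then $\Lambda(x_{i})=0$ and also $\log(x_{i})\mathbf{1}_{\mathcal{P}}(x_{i})=0$ (and by the stated convention $\log(\mathbf{x})\mathbf{1}_{\mathcal{P}}(\mathbf{x})=0$ whenever some $x_{i}=0$); if every $x_{i}$ is prime then $\Lambda(\mathbf{x})=\log(\mathbf{x})=\log(\mathbf{x})\mathbf{1}_{\mathcal{P}}(\mathbf{x})$; and the only remaining case is that every coordinate is a prime power and at least one is a \emph{proper} prime power $p^{k}$ with $k\geq 2$, where $\log(\mathbf{x})\mathbf{1}_{\mathcal{P}}(\mathbf{x})=0$ while $\Lambda(\mathbf{x})$ may be nonzero. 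Hence, writing $\mathcal{E}(X)$ for the set of $\mathbf{x}\in[0,X]^{n}$ all of whose coordinates are prime powers and at least one of which is a proper prime power,
\[
0\leq \mathcal{M}_{\mathbf{f}}(X)-\mathcal{M}'_{\mathbf{f}}(X)=\sum_{\mathbf{x}\in\mathcal{E}(X)}\Lambda(\mathbf{x})\,\mathbf{1}_{V_{\mathbf{f},\mathbf{0}}(\mathbb{C})}(\mathbf{x})\leq(\log X)^{n}\,\#\big(\mathcal{E}(X)\cap V_{\mathbf{f},\mathbf{0}}(\mathbb{C})\big),
\]
using $\Lambda(\mathbf{x})\leq(\log X)^{n}$ on $\mathcal{E}(X)$.

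It therefore remains to bound $\#\big(\mathcal{E}(X)\cap V_{\mathbf{f},\mathbf{0}}(\mathbb{C})\big)$. By a union bound over the index $j$ of a coordinate that is a proper prime power, and then fibring over its value $m=p^{k}\leq X$ with $k\geq 2$ (of which there are $\ll X^{1/2}$), this count is at most $\sum_{j=1}^{n}\sum_{m}\#\big(V_{\mathbf{f}|_{x_{j}=m},\mathbf{0}}(\mathbb{Z})\cap[-X,X]^{n-1}\big)$, where $\mathbf{f}|_{x_{j}=m}$ denotes the system of $(n-1)$-variable polynomials obtained by substituting $x_{j}=m$. The crucial observation is that the degree-$\ell$ homogeneous part of $f_{\ell,r}|_{x_{j}=m}$ is exactly $F_{\ell,r}|_{x_{j}=0}$, whose coefficients do not depend on $m$; by Lemma \ref{Lemma 3 in CM} the Birch rank of the degree-$\ell$ homogeneous subsystem is at least $\mathcal{B}_{\ell}(\mathbf{F}_{\ell})-r_{\ell}-1$ (and at least $\mathcal{B}_{1}(\mathbf{F}_{1})-1$ for $\ell=1$), hence still sufficiently large with respect to $d$ and $r_{d},\ldots,r_{1}$. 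Consequently, by $(\ref{ineq of h and B})$ and the inhomogeneous forms of Theorem \ref{Schmidt main} and Corollary \ref{cor Schmidt} (justified in \cite[Section 9]{S} and the remark after Theorem \ref{Schmidt main}), the substituted system is regular, so
\[
\#\big(V_{\mathbf{f}|_{x_{j}=m},\mathbf{0}}(\mathbb{Z})\cap[-X,X]^{n-1}\big)\ll X^{\,n-1-\sum_{\ell=1}^{d}\ell r_{\ell}},
\]
and, since the bound of \cite[Theorem II]{S} depends only on $n$, $d$, $r_{d},\ldots,r_{1}$ and on the leading forms $\mathbf{F}_{\ell}|_{x_{j}=0}$, the implied constant may be taken independent of $m$. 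Summing over the $n$ choices of $j$ and the $\ll X^{1/2}$ choices of $m$ gives $\#\big(\mathcal{E}(X)\cap V_{\mathbf{f},\mathbf{0}}(\mathbb{C})\big)\ll X^{\,n-\frac{1}{2}-\sum_{\ell=1}^{d}\ell r_{\ell}}$.

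Combining these estimates with the asymptotic for $\mathcal{M}_{\mathbf{f}}(X)$ from Theorem \ref{main thm}, and using that $(\log X)^{n}X^{-1/2}\ll(\log X)^{-c}$ for every fixed $c$, we obtain
\[
\mathcal{M}'_{\mathbf{f}}(X)=\mathcal{M}_{\mathbf{f}}(X)+O\!\left(\frac{X^{\,n-\sum_{\ell=1}^{d}\ell r_{\ell}}}{(\log X)^{c}}\right)=\mathcal{C}(\mathbf{f})\,X^{\,n-\sum_{\ell=1}^{d}\ell r_{\ell}}+O\!\left(\frac{X^{\,n-\sum_{\ell=1}^{d}\ell r_{\ell}}}{(\log X)^{c}}\right),
\]
with the same constant $\mathcal{C}(\mathbf{f})$ as in Theorem \ref{main thm}. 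The one point genuinely requiring care is the uniformity in $m$ of the regularity estimate for the fibres $\mathbf{f}|_{x_{j}=m}$, and this is the expected main obstacle; it can be circumvented entirely by instead rerunning the circle method of Sections \ref{section minor arc} and \ref{section major arcs} verbatim with the weight $\log(\mathbf{x})\mathbf{1}_{\mathcal{P}}(\mathbf{x})$ in place of $\Lambda(\mathbf{x})$ throughout, noting that Lemmas \ref{lemma from Hua1}, \ref{lemma from Hua}, and \ref{Lemma 6 in CM} all admit immediate analogues for exponential sums over primes (indeed the sum $T_{0}$ in the proof of Lemma \ref{lemma from Hua} is already of this type), and that the resulting main terms agree with those for $\Lambda$ up to an error of size $O(X^{\,n-\frac{1}{2}+\varepsilon})$.
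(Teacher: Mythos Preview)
Your fallback---rerunning the circle method of Sections \ref{section minor arc} and \ref{section major arcs} verbatim with the weight $\log(\mathbf{x})\mathbf{1}_{\mathcal{P}}(\mathbf{x})$ in place of $\Lambda(\mathbf{x})$---is precisely what the paper does; its entire proof of Theorem \ref{main thm 2} is the one-line remark that this substitution changes nothing of substance.

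Your primary argument, by contrast, is a genuinely different route: you deduce Theorem \ref{main thm 2} from Theorem \ref{main thm} as a black box, by bounding the contribution of tuples with a proper prime-power coordinate via a fibrewise regularity estimate. This is more elementary in spirit and avoids revisiting any of the circle-method machinery. The cost is exactly the point you flag: you need the regularity bound for the fibres $\mathbf{f}|_{x_j=m}$ to be uniform in $m$. This uniformity is true---it is essentially the content of Condition $(\star')$ in Section \ref{sec tech 1}, and the Weyl-differencing estimates underlying Schmidt's Theorem II depend only on the leading forms---but it is not visible from the statements of Theorem \ref{Schmidt main} and Corollary \ref{cor Schmidt} as written in the paper (Definition \ref{def regular} allows the implied constant to depend on all of $\mathbf{u}$), so one would have to go back into \cite{S} to extract it. A second, smaller wrinkle: Corollary \ref{cor Schmidt} as stated assumes $\mathbf{u}_1=\mathbf{U}_1$, i.e.\ homogeneous linear polynomials, whereas your fibres $f_{1,r}|_{x_j=m}$ acquire constant terms; this is harmless (eliminate the $r_1$ linear constraints first, then apply Theorem \ref{Schmidt main} to the remaining system of degree $\geq 2$) but should be said.

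In short: your proposal is correct, and your fallback coincides with the paper's proof. Your primary approach works too and is arguably cleaner once Theorem \ref{main thm} is in hand, but making it self-contained would require either importing the uniform version of Schmidt's bound or proving a short lemma to that effect.
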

We can obtain this asymptotic formula by changing the weight from $\Lambda (\mathbf{x})$ to  $\log (\mathbf{x})  \mathbf{1}_{\mathcal{P}}(\mathbf{x})$ in the proof of Theorem \ref{main thm}.
Since the resulting changes in the proof are minimal, we leave the details to the reader.
\appendix

\section{Proofs of the results in Section \ref{sec tech 1}}
\label{appendix B}
In this appendix, we provide proof for the results presented in Section \ref{sec tech 1}.
Let us denote $\mathfrak{B}_1 = [-1,1]^n$ and $\mathfrak{B}_0 = [0,1]^n$.
Let $\mathbf{x} = (x_1, \ldots, x_n)$ and $\mathbf{x}_j = (x_{j,1}, \ldots, x_{j,n})$ for $j \geq 1$.
Given a function $G(\mathbf{x})$, we define
$$
\Gamma_{\ell, G} (\mathbf{x}_1, \ldots, \mathbf{x}_{\ell}) = \sum_{t_1=0}^1 \ldots \sum_{t_{\ell}=0}^1 (-1)^{t_1 + \ldots + t_{\ell}} \
G( t_1 \mathbf{x}_1 + \ldots+ t_{\ell} \mathbf{x}_{\ell} ).
$$
Then it follows that $\Gamma_{{\ell}, G}$ is symmetric in its ${\ell}$ arguments, and that
$\Gamma_{{\ell}, G} (\mathbf{x}_1, \ldots,\mathbf{x}_{{\ell}-1}, \mathbf{0}) = 0$ \cite[Section 11]{S}.
It is clear from the definition that if $G'(\mathbf{x})$ is another function, then
$\Gamma_{{\ell}, G} + \Gamma_{{\ell}, G'} = \Gamma_{{\ell}, G + G'}.$
We also have that if $G$ is a form of degree $d$ and ${\ell} > d > 0$, then $\Gamma_{{\ell}, G}= 0$ \cite[Lemma 11.2]{S}.

For $\alpha \in \mathbb{R}$, let $\| \alpha \|$ denote the
distance from $\alpha$ to the closest integer. Let $\boldsymbol{\alpha} = (\boldsymbol{\alpha}_d, \ldots, \boldsymbol{\alpha}_1) \in \mathbb{R}^R$,
where $R = r_1 + \ldots + r_d$ and $\boldsymbol{\alpha}_{\ell} = (\alpha_{\ell, 1}, \ldots , \alpha_{\ell, r_{\ell}} ) \in \mathbb{R}^{r_{\ell}}$ $(1 \leq \ell \leq d)$.
We define
$$
\| \boldsymbol{\alpha} \| = \max_{ \substack{  1 \leq \ell \leq d  \\ 1 \leq r \leq r_{\ell}}} \| \alpha_{\ell, r} \|
\ \ \ \text{ and } \ \ \
| \boldsymbol{\alpha} | = \max_{ \substack{  1 \leq \ell \leq d  \\ 1 \leq r \leq r_{\ell}}} | \alpha_{\ell, r} |.
$$

We have the following standard results related to Weyl differencing.
\begin{lem}\cite[Lemma 13.1]{S}
\label{Lemma 13.1 in S}
Suppose $G(\mathbf{x}) = G^{(0)} + G^{(1)}(\mathbf{x} ) + \ldots + G^{(d)}(\mathbf{x} )$, where $G^{(j)}$ is a form of degree $j$
with real coefficients $(1 \leq j \leq d)$ and $G^{(0)} \in \mathbb{R}$. Let $P > 1$, and put
$$
S' = S'( G, P , \mathfrak{B}_0 ) := \sum_{\mathbf{x} \in P \mathfrak{B}_0 \cap \mathbb{Z}^n } e(G(\mathbf{x})).
$$
Let $\mathbf{e}_1, \ldots , \mathbf{e}_{n}$ be the standard basis vectors of $\mathbb{R}^n$.
Let $\varepsilon > 0$ and $2 \leq \ell \leq d $. If $\ell =d$, then let $\theta = 0$ and $q=1$. On the other hand,
if $2 \leq \ell < d$, then suppose $0 \leq \theta < 1/4$ and that there is $q \in \mathbb{N}$ with
$$
q \leq P^{\theta} \text{  and  }  \|  q G^{(j)} \| \leq c P^{\theta - j}  \ \  (\ell < j \leq d).
$$
Then we have
$$
|S'|^{2^{\ell -1}} \ll P^{ (2^{\ell-1} -\ell + 2 \theta )n + \varepsilon} \sum \left(
\prod_{i=1}^n \min ( P^{1 - 2 \theta}, \| q  \Gamma_{\ell, G^{(\ell)}} ( \mathbf{x}_1, \ldots , \mathbf{x}_{\ell-1}, \mathbf{e}_i  ) \|^{-1}   )
 \right),
$$
where the sum $\sum$ is over $(\ell-1)$-tuples of integer points $\mathbf{x}_1, \ldots , \mathbf{x}_{\ell-1}$ in $P \mathfrak{B}_1$,
and the implicit constant in $\ll$ depends only on $n,d$, $c$, and $\varepsilon$.
\end{lem}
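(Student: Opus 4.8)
The plan is to prove this by iterated Weyl differencing, carried out $\ell-1$ times, following the classical argument of Schmidt (see \cite[\S 11, \S 13]{S}). The elementary ingredient is the differencing inequality
$$
\left| \sum_{\mathbf{y} \in \mathcal{B} \cap \mathbb{Z}^n} e(F(\mathbf{y})) \right|^2 \le \sum_{\mathbf{h} \in 2P\mathfrak{B}_1 \cap \mathbb{Z}^n} \left| \sum_{\mathbf{y} \in \mathcal{B}_{\mathbf{h}} \cap \mathbb{Z}^n} e\big( F(\mathbf{y}+\mathbf{h}) - F(\mathbf{y}) \big) \right|,
$$
valid for any polynomial $F$ and any box $\mathcal{B} \subseteq P\mathfrak{B}_0$, with $\mathcal{B}_{\mathbf{h}} = \mathcal{B} \cap (\mathcal{B}-\mathbf{h})$. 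First I would apply this to $S'$ a total of $\ell-1$ times, at each stage squaring the bound obtained so far and applying Cauchy--Schwarz to the sum over the difference vectors already introduced. This produces difference vectors $\mathbf{x}_1, \dots, \mathbf{x}_{\ell-1}$, each confined to $\ll P\mathfrak{B}_1$, replaces $G$ by the iterated forward difference $D_{\mathbf{x}_1,\dots,\mathbf{x}_{\ell-1}}(\mathbf{y})$, and — after a routine accounting of the powers of $P$ arising from the squarings and the Cauchy--Schwarz steps — leads to
$$
|S'|^{2^{\ell-1}} \ll P^{(2^{\ell-1}-\ell)n} \sum_{\mathbf{x}_1, \dots, \mathbf{x}_{\ell-1} \in P\mathfrak{B}_1} \left| \sum_{\mathbf{y} \in \mathcal{B}'} e\big( D_{\mathbf{x}_1,\dots,\mathbf{x}_{\ell-1}}(\mathbf{y}) \big) \right|
$$
for a box $\mathcal{B}' \subseteq P\mathfrak{B}_0$.

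Next I would decompose $D_{\mathbf{x}_1,\dots,\mathbf{x}_{\ell-1}}$ according to the homogeneous parts $G^{(j)}$ of $G$. By linearity of the difference operator and \cite[Lemma 11.2]{S} (the $(\ell-1)$-fold difference of a form of degree $<\ell-1$ vanishes identically), the parts $G^{(0)}, \dots, G^{(\ell-2)}$ contribute nothing, $G^{(\ell-1)}$ contributes a term independent of $\mathbf{y}$ that has modulus $1$ and is therefore absorbed, $G^{(\ell)}$ contributes the linear form in $\mathbf{y}$ whose coefficient in the $i$-th coordinate is exactly $\Gamma_{\ell,G^{(\ell)}}(\mathbf{x}_1,\dots,\mathbf{x}_{\ell-1},\mathbf{e}_i)$, and each $G^{(j)}$ with $j>\ell$ contributes a polynomial in $\mathbf{y}$ of degree $j-\ell+1 \ge 2$. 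When $\ell=d$ there are no parts of degree $>\ell$, $\theta=0$ and $q=1$, and the inner sum is immediately $\ll \prod_{i=1}^n \min(P, \|\Gamma_{d,G^{(d)}}(\mathbf{x}_1,\dots,\mathbf{x}_{d-1},\mathbf{e}_i)\|^{-1})$ by the classical one-dimensional estimate $\sum_{y \in I} e(\beta y) \ll \min(|I|,\|\beta\|^{-1})$, which already yields the claim.

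For $\ell<d$ the point — and the main obstacle — is to show that the contributions of the degree-$\ge 2$ pieces of $D_{\mathbf{x}_1,\dots,\mathbf{x}_{\ell-1}}$ are negligible after a suitable sub-box decomposition. Here the hypotheses $q\le P^\theta$ and $\|qG^{(j)}\|\le cP^{\theta-j}$ for $j>\ell$ enter: writing $G^{(j)} = q^{-1}A^{(j)} + B^{(j)}$ with $A^{(j)}$ having integer coefficients and $B^{(j)}$ having coefficients that are $\ll q^{-1}P^{\theta-j}$ in absolute value, one partitions $\mathcal{B}'$ into $\ll P^{2\theta n}$ sub-boxes of side $\asymp P^{1-2\theta}$ and, within each, restricts $\mathbf{y}$ to residue classes modulo $q$. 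On such a piece the $q^{-1}A^{(j)}$-part of $D_{\mathbf{x}_1,\dots,\mathbf{x}_{\ell-1}}$ of degree $\ge 2$ in $\mathbf{y}$ contributes a phase depending only on $\mathbf{y}\bmod q$, hence constant, while one arranges, via the choice of sub-box side and modulus and using $0\le\theta<1/4$, that the residual contribution of the $B^{(j)}$-parts is, modulo $1$, negligibly small on each piece. Thus one is left with a pure linear exponential sum in $\mathbf{y}$, whose evaluation over the sub-box and summation over the $q^n$ residue classes produces $\ll \prod_{i=1}^n \min(P^{1-2\theta}, \|q\,\Gamma_{\ell,G^{(\ell)}}(\mathbf{x}_1,\dots,\mathbf{x}_{\ell-1},\mathbf{e}_i)\|^{-1})$ — the factor $q$ appearing because on a residue class modulo $q$ the linear variable runs through an arithmetic progression of common difference $q$. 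Summing over the $\ll P^{2\theta n}$ sub-boxes and over the difference vectors, and combining the resulting power of $P$ with the factor $P^{(2^{\ell-1}-\ell)n}$ from the differencing, yields the stated bound $P^{(2^{\ell-1}-\ell+2\theta)n+\varepsilon}$, the $P^\varepsilon$ absorbing all divisor-type losses from the sub-box and residue bookkeeping. Making the negligibility of the super-linear terms fully rigorous — in particular, estimating the coefficients of $q\,\Gamma_{\ell,G^{(j)}}(\mathbf{x}_1,\dots,\mathbf{x}_{\ell-1},\cdot)$ for $j>\ell$ using $|\mathbf{x}_i|\le P$, the bound on $\|qG^{(j)}\|$, and the fact that each monomial of $\Gamma_{\ell,\cdot}$ is divisible by each of its $\ell$ arguments, and verifying that all errors remain controlled thanks to $\theta<1/4$ — is the technical heart of the argument; the remaining steps (the differencing identity, the arithmetic of $\Gamma_{\ell,\cdot}$ recorded above, and the evaluation of one-dimensional linear exponential sums) are routine.
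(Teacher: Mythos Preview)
The paper does not give its own proof of this lemma; it is quoted directly as \cite[Lemma 13.1]{S}, with only the remark that the extra constant $c$ does not affect the argument. Your sketch is precisely the classical Schmidt argument from \cite[\S 11, \S 13]{S}: iterated Weyl differencing $\ell-1$ times, identification of the linear part via $\Gamma_{\ell,G^{(\ell)}}$, and, for $\ell<d$, sub-box/congruence decomposition to kill the higher-degree terms using the hypotheses $q\le P^{\theta}$, $\|qG^{(j)}\|\le cP^{\theta-j}$ and $\theta<1/4$. So your proposal is correct and coincides with the approach referenced by the paper.
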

We remark that the term $c$ which appears in the statement of Lemma \ref{Lemma 13.1 in S} is not present in the statement of \cite[Lemma 13.1]{S}.
However, it can be seen from the proof of \cite[Lemma 13.1]{S} that this change does not affect the result, or see the explanation given in \cite[pp. 275, line 5]{S}.

\begin{lem}\cite[Lemma 14.2]{S}
\label{Lemma 14.2 in S}
Make all the assumptions of Lemma \ref{Lemma 13.1 in S}. Suppose further that
$$
|S'| \geq P^{n-Q}
$$
where $Q>0.$ Let $\eta > 0$ and $\eta + 4 \theta \leq 1$.
Then the number $N(\eta)$ of integral $(\ell - 1)$-tuples
$$
\mathbf{x}_1, \ldots, \mathbf{x}_{\ell - 1} \in P^{\eta} \mathfrak{B}_1
$$
with
$$
\| q \Gamma_{\ell, G^{(\ell)}} ( \mathbf{x}_1, \ldots , \mathbf{x}_{\ell -1}, \mathbf{e}_i  ) \| < P^{-\ell + 4 \theta +  (\ell - 1) \eta } \ (i=1,\ldots, n)
$$
satisfies
$$
N(\eta) \gg P^{ n(\ell - 1)\eta - 2^{\ell - 1} Q - \varepsilon },
$$
where the implicit constant in $\gg$ depends only on $n,d, c, \eta,$ and $\varepsilon$.
\end{lem}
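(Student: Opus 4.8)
The plan is to derive this from the Weyl differencing inequality of Lemma~\ref{Lemma 13.1 in S} together with the one‑dimensional shrinking estimate of Lemma~\ref{Birch lemma 2.3}, with a dyadic pigeonhole step bridging the two. First I would feed the hypothesis $|S'|\ge P^{n-Q}$ into Lemma~\ref{Lemma 13.1 in S}: since that lemma bounds $|S'|^{2^{\ell-1}}$ by $P^{(2^{\ell-1}-\ell+2\theta)n+\varepsilon}$ times the sum
\[
\Sigma \;=\; \sum \;\prod_{i=1}^{n} \min\!\Big( P^{1-2\theta},\; \big\| q\,\Gamma_{\ell,G^{(\ell)}}(\mathbf{x}_1,\dots,\mathbf{x}_{\ell-1},\mathbf{e}_i)\big\|^{-1}\Big)
\]
over integer $(\ell-1)$‑tuples $\mathbf{x}_1,\dots,\mathbf{x}_{\ell-1}$ in $P\mathfrak{B}_1$, one obtains at once the lower bound $\Sigma \gg P^{\,n(\ell-2\theta)-2^{\ell-1}Q-\varepsilon}$.

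Next I would run a dyadic dissection of each factor in the product defining $\Sigma$ according to the size of $\|q\,\Gamma_{\ell,G^{(\ell)}}(\cdot,\mathbf{e}_i)\|$, the top range (fractional part $\le P^{-(1-2\theta)}$) accounting for the cap $P^{1-2\theta}$. There are $O((\log P)^n)$ such cells, so by pigeonhole there exist scales $P^{-(1-2\theta)}\le \delta_1,\dots,\delta_n\le \tfrac12$ for which the number $M$ of integer tuples in $P\mathfrak{B}_1$ satisfying $\|q\,\Gamma_{\ell,G^{(\ell)}}(\mathbf{x}_1,\dots,\mathbf{x}_{\ell-1},\mathbf{e}_i)\|\le \delta_i$ for all $i$ obeys $M\prod_i\delta_i^{-1}\gg \Sigma (\log P)^{-n}$, hence $M\gg \big(\prod_i\delta_i\big)\,P^{\,n(\ell-2\theta)-2^{\ell-1}Q-\varepsilon}$.

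The final and most delicate step is the contraction of the box of summation from $P\mathfrak{B}_1$ down to $P^{\eta}\mathfrak{B}_1$. Since $q\,\Gamma_{\ell,G^{(\ell)}}(\mathbf{x}_1,\dots,\mathbf{x}_{\ell-1},\mathbf{e}_i)$ is multilinear of degree one in each of the vectors $\mathbf{x}_1,\dots,\mathbf{x}_{\ell-1}$, freezing all but one of the $n(\ell-1)$ coordinates of $(\mathbf{x}_1,\dots,\mathbf{x}_{\ell-1})$ reduces each of these $n$ expressions to a single linear form in the remaining coordinate, so Lemma~\ref{Birch lemma 2.3} can be applied one coordinate at a time. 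Contracting each coordinate from range $\asymp P$ to range $\asymp P^{\eta}$ shrinks the count by a controlled factor and simultaneously tightens each tolerance $\delta_i$; carrying out all $n(\ell-1)$ contractions and using $\prod_i\delta_i\ge P^{-n(1-2\theta)}$, the count of the resulting tuples in $P^{\eta}\mathfrak{B}_1$ is $\gg P^{\,n(\ell-1)\eta-2^{\ell-1}Q-\varepsilon}$, while the tightened tolerances are, by the hypothesis $\eta+4\theta\le 1$, all at most $P^{-\ell+4\theta+(\ell-1)\eta}$. I expect the main obstacle to be precisely this bookkeeping: one must check that at every stage of the iteration the parameters $Z_1\le Z_2<1$ required by Lemma~\ref{Birch lemma 2.3} are available (this is where the precise constant $4\theta$ in the exponent, and the constraint $\eta+4\theta\le1$, are forced), and that the cumulative loss in the count never exceeds the claimed $P^{n(\ell-1)\eta-2^{\ell-1}Q-\varepsilon}$, uniformly over the dyadic scales $\delta_i$.
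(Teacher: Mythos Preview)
Your proposal is correct and follows exactly the standard route due to Birch and Schmidt: feed the lower bound $|S'|\ge P^{n-Q}$ into the Weyl differencing inequality of Lemma~\ref{Lemma 13.1 in S}, perform a dyadic pigeonhole over the $n$ factors, and then apply the one-variable shrinking lemma (Lemma~\ref{Birch lemma 2.3}) iteratively across the $n(\ell-1)$ coordinates, exploiting the multilinearity of $\Gamma_{\ell,G^{(\ell)}}$. The present paper does not supply its own proof of this lemma but simply cites \cite[Lemma~14.2]{S}, whose argument is precisely the one you outline; your bookkeeping concerns about the constraint $\eta+4\theta\le 1$ and the parameters in Lemma~\ref{Birch lemma 2.3} are exactly the points Schmidt addresses there.
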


Let $\mathbf{u} = ( \mathbf{u}_{d}, \ldots, \mathbf{u}_{1} )$ be a system of polynomials in $\mathbb{Q}[x_1, \ldots, x_n]$, where
$\mathbf{u}_{\ell}  = ( u_{\ell,1}, \ldots, u_{\ell, r_{\ell}} )$ is the subsystem of degree $\ell$ polynomials of $\mathbf{u}$ $(1 \leq \ell \leq d)$.
We let $\mathbf{U} = ( \mathbf{U}_{d}, \ldots, \mathbf{U}_{1} )$ be the system of forms, where for each $1 \leq \ell \leq d$,
$\mathbf{U}_{\ell} = ( U_{\ell,1}, \ldots, U_{\ell, r_{\ell}} )$
and $U_{\ell, r}$ is the degree $\ell$ portion of $u_{\ell, r}$ $(1 \leq r \leq r_{\ell} )$.
We define the following exponential sum associated to $\mathbf{u}$,
\begin{equation}
\label{def of S 1}
S( \boldsymbol{\alpha}) = S( \mathbf{u},  \mathfrak{B}_0 ;\boldsymbol{\alpha}) := \sum_{\mathbf{x} \in P \mathfrak{B}_0 \cap \mathbb{Z}^n}
e \left( \sum_{1 \leq \ell \leq d} \sum_{ 1 \leq r \leq r_{\ell} } {\alpha}_{\ell, r} \cdot {u}_{\ell, r}  (\mathbf{x})  \right).
\end{equation}

Let $\mathbf{e}_1, \ldots, \mathbf{e}_n$ be the standard basis vectors of $\mathbb{C}^n$. Let $1 < \ell \leq d$.
We define $\mathbb{M}_{\ell} = \mathbb{M}_{\ell} (\mathbf{U}_{\ell}) $ to be the set of $(\ell-1)$-tuples $(\mathbf{x}_1, \ldots, \mathbf{x}_{\ell-1} ) \in (\mathbb{C}^n)^{\ell-1}$ for which the matrix
\begin{equation}
\label{def of MU}
[m_{r i}] = [ \Gamma_{\ell, U_{\ell, r}} ( \mathbf{x}_1, \ldots , \mathbf{x}_{d-1}, \mathbf{e}_i ) ] \ \ \ \  (1 \leq r \leq r_{\ell}, 1\leq i \leq n)
\end{equation}
has rank strictly less than $r_{\ell}$. For $R_0>0$, we denote $z_{R_0} (\mathbb{M}_{\ell})$ to be the number of integer points $(\mathbf{x}_1, \ldots, \mathbf{x}_{\ell-1} )$ on
$\mathbb{M}_{\ell}$ such that $$\max_{1 \leq i \leq \ell -1} \max_{1 \leq j \leq n}  | x_{ij} | \leq R_0.$$

Given a degree $\ell$ polynomial
$$
u(\mathbf{x}) = \sum_{ \substack{ i_j \in \mathbb{N} \cup \{0\}  (1 \leq j \leq n) \\0 \leq i_1 +  \ldots +  i_n \leq \ell   }} A_{i_1, \ldots, i_n } x_1^{i_1} \ldots  x_n^{i_n}
$$
with real coefficients, we denote
$$
|u| = \max_{\substack{ i_j \in \mathbb{N} \cup \{0\}  (1 \leq j \leq n) \\ 0 \leq i_1 +  \ldots +  i_n \leq \ell
} } |A_{i_1, \ldots, i_n }|
\ \ \
\text{ and }
\ \ \
\| u \| = \max_{\substack{ i_j \in \mathbb{N} \cup \{0\}  (1 \leq j \leq n) \\ 0 \leq i_1 +  \ldots +  i_n \leq \ell   } } \| A_{i_1, \ldots, i_n } \|.
$$
\begin{lem}\cite[Lemma 11.3]{S} \label{lemma 11.3 in S}
Suppose $U(\mathbf{x})$ is a form of degree $\ell$. Then we have
$$
\|  \Gamma_{\ell, U} \| \leq 2^{\ell} \ell^{\ell} \ \|  U  \|. 
$$
\end{lem}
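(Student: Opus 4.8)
The plan is to reproduce the elementary computation from \cite{S}. First I would write the form as $U(\mathbf{x}) = \sum_{\mathbf{i}} A_{\mathbf{i}}\, x_1^{i_1}\cdots x_n^{i_n}$, the sum being over multi-indices $\mathbf{i} = (i_1,\ldots,i_n)$ with $i_1 + \cdots + i_n = \ell$, so that by definition $\|U\| = \max_{\mathbf{i}} \|A_{\mathbf{i}}\|$ and $|U| = \max_{\mathbf{i}} |A_{\mathbf{i}}|$. By the definition of $\Gamma_{\ell,U}$,
\[
\Gamma_{\ell,U}(\mathbf{x}_1,\ldots,\mathbf{x}_\ell) = \sum_{\mathbf{t} \in \{0,1\}^\ell} (-1)^{t_1+\cdots+t_\ell}\, U\!\left( \sum_{j=1}^\ell t_j \mathbf{x}_j \right),
\]
a polynomial of degree $\ell$ in the $n\ell$ variables $x_{j,k}$ ($1 \le j \le \ell$, $1 \le k \le n$). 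The next step is to expand each summand: substituting $\mathbf{y} = \sum_j t_j \mathbf{x}_j$ into $U$ and expanding the $k$-th coordinate power $\big(\sum_{j=1}^\ell t_j x_{j,k}\big)^{i_k}$ by the multinomial theorem writes $U(\sum_j t_j \mathbf{x}_j)$ as an integer-coefficient linear combination of the $A_{\mathbf{i}}$ times monomials in the $x_{j,k}$. Hence, after collecting terms, every coefficient of $\Gamma_{\ell,U}$ has the shape $\sum_{\mathbf{i}} c_{\mathbf{i}}\, A_{\mathbf{i}}$ with $c_{\mathbf{i}} \in \mathbb{Z}$ (in fact, since a monomial in the $x_{j,k}$ remembers $\mathbf{i}$ through its total degree in $x_{1,k},\ldots,x_{\ell,k}$ for each $k$, only one $\mathbf{i}$ occurs for each monomial, but this refinement is not needed).

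The quantitative heart of the argument is to bound $\sum_{\mathbf{i}} |c_{\mathbf{i}}|$ for each coefficient of $\Gamma_{\ell,U}$ by $2^\ell \ell^\ell$. I would argue as follows: for a fixed $\mathbf{t}$ and a fixed $\mathbf{i}$ with $|\mathbf{i}| = \ell$, the expansion of $\prod_{k=1}^n \big(\sum_{j=1}^\ell t_j x_{j,k}\big)^{i_k}$ has coefficients that are products of multinomial coefficients, and the sum of their absolute values is at most $\prod_{k=1}^n \ell^{i_k} = \ell^{|\mathbf{i}|} = \ell^\ell$ (each factor is a sum of at most $\ell$ terms raised to an exponent at most $\ell$, and the $t_j \in \{0,1\}$ only delete terms). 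Summing over the $2^\ell$ values of $\mathbf{t}$, the total absolute weight attached to $A_{\mathbf{i}}$ across all coefficients of $\Gamma_{\ell,U}$ is at most $2^\ell \ell^\ell$; in particular $\sum_{\mathbf{i}} |c_{\mathbf{i}}| \le 2^\ell \ell^\ell$ for each individual coefficient of $\Gamma_{\ell,U}$.

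Finally I would conclude with the triangle inequalities $\|a+b\| \le \|a\| + \|b\|$ and $\|m a\| \le |m|\,\|a\|$ for $m \in \mathbb{Z}$ (and their analogues for $|\cdot|$): each coefficient $\sum_{\mathbf{i}} c_{\mathbf{i}} A_{\mathbf{i}}$ of $\Gamma_{\ell,U}$ satisfies $\big\|\sum_{\mathbf{i}} c_{\mathbf{i}} A_{\mathbf{i}}\big\| \le \sum_{\mathbf{i}} |c_{\mathbf{i}}|\,\|A_{\mathbf{i}}\| \le 2^\ell \ell^\ell \|U\|$, and taking the maximum over all coefficients gives $\|\Gamma_{\ell,U}\| \le 2^\ell \ell^\ell \|U\|$, which is exactly the assertion (the same computation also yields $|\Gamma_{\ell,U}| \le 2^\ell \ell^\ell |U|$, should that be needed elsewhere). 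I expect the only genuinely delicate point to be the bookkeeping in the middle paragraph: one must check that expanding a degree-$\ell$ monomial into at most $\ell$ blocks of variables contributes multinomial weight at most $\ell^\ell$, and that the $2^\ell$ terms of the finite-difference sum cannot conspire to push any single coefficient beyond weight $2^\ell \ell^\ell$. Everything else is routine.
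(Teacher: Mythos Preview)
Your argument is correct and is exactly the elementary computation behind \cite[Lemma~11.3]{S}; the paper itself offers no proof but simply quotes the result from Schmidt, so there is nothing further to compare against. One small point of exposition: the parenthetical observation that a monomial in the $x_{j,k}$ determines $\mathbf{i}$ (via $i_k = \sum_j e_{j,k}$) is in fact \emph{needed} for your ``in particular'' step, since your preceding sentence bounds $\sum_M |c_{\mathbf{i},M}|$ for fixed $\mathbf{i}$, whereas you want $\sum_{\mathbf{i}} |c_{\mathbf{i},M}|$ for fixed $M$; uniqueness of $\mathbf{i}$ is what lets you pass from one to the other. With that clarification the proof is clean, and as you note it simultaneously gives the absolute-value version $|\Gamma_{\ell,U}| \le 2^{\ell}\ell^{\ell}|U|$ recorded in the paper as~(\ref{ineq coeff of Gam}).
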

By a similar proof as in \cite[Lemma 11.3]{S}, we can also show that for a degree $\ell$ form $U(\mathbf{x})$ the following holds
\begin{equation}
\label{ineq coeff of Gam}
| \Gamma_{\ell, U } | \leq 2^{\ell} \ell^{\ell} |U|.
\end{equation}

Let $1 < \ell \leq d$ and $r_{\ell} > 0$. We define $g_{\ell}( \mathbf{U}_{\ell} )$
to be the largest real number such that
\begin{equation}
\label{def gd}
z_P(\mathbb{M}_{\ell}) \ll P^{n({\ell}-1) - g_{\ell}( \mathbf{U}_{\ell} ) + \varepsilon}
\end{equation}
holds for each $\varepsilon >0$. It was proved in \cite[pp. 280, Corollary]{S} that
\begin{equation}
\label{h and g}
h_{\ell}( \mathbf{U}_{\ell} ) < \frac{\ell!}{  (\log 2)^{\ell} }  \left( g_{\ell}( \mathbf{U}_{\ell}  ) + ({\ell}-1)r_{\ell} (r_{\ell} - 1)  \right).
\end{equation}
Let
$$
\gamma_{\ell} = \frac{2^{{\ell}-1} ({\ell}-1) r_{\ell}}{ g_{\ell}( \mathbf{U}_{\ell} ) }
$$
when $r_{\ell} >0$ and $g_{\ell}( \mathbf{U}_{\ell} ) > 0$.
We let
$\gamma_{\ell} = 0$ if $r_{\ell} = 0$, and let
$\gamma_{\ell} = + \infty$ if $r_{\ell} > 0$ and $g_{\ell}( \mathbf{U}_{\ell} ) = 0$.
For $\ell$ with $r_{\ell}>0$, we also define
\begin{equation}
\label{def gamma'}
\gamma'_{\ell} = \frac{ 2^{{\ell}-1} }{ g_{\ell}( \mathbf{U}_{\ell} ) } = \frac{ \gamma_{\ell} }{ ({\ell}-1) r_{\ell} }.
\end{equation}

We have to deal with the cases when the coefficients of  $\mathbf{u}$ may depend on $P$. There are essentially two different
scenarios we have to consider, the first of which we refer to as follows.
\newline

Condition $(\star')$: The polynomials of $\mathbf{u}$ have coefficients in $\mathbb{Z}$, and the coefficients of $\mathbf{U}$ do not depend on $P$. However, given $u^{}_{\ell, r}(\mathbf{x})$ $(1 \leq \ell \leq d, 1 \leq r \leq r_{\ell})$ the coefficients of its monomials whose
degrees are strictly less than $\ell$ may depend on $P$.
\newline

The following lemma is essentially \cite[Lemma 15.1]{S}. The point here is that
if we are only considering the case $\ell = d$, then the implicit constants may depend
on $\mathbf{U}_d$ but not on $\mathbf{u}$ (Note for the case $\ell < d$ the implicit constants may depend
on $\mathbf{u}$, see \cite[Lemma 2.2]{Y}).

\begin{lem}\cite[Lemma 15.1]{S}
\label{Lemma 15.1 in S'}
Suppose $\mathbf{u}$ satisfies Condition $(\star')$.
Let $Q > 0$, $\varepsilon >0$, and let $P$ be sufficiently large with respect
to $d$ and $r_d, \ldots, r_1$.
Let $S( \boldsymbol{\alpha})$ be the sum associated to $\mathbf{u}$ as in ~(\ref{def of S 1}).
Given $0 < \eta \leq 1$, one of the following three alternatives must hold:

$(i)$ $|  S( \boldsymbol{\alpha})  | \leq P^{n-Q}$.

$(ii)$ There exists $n_0 \in \mathbb{N}$ such that
$$
n_0 \ll P^{r_{d}(d-1) \eta} \text{  and  } \|  n_0  \boldsymbol{\alpha}_{d} \| \ll P^{ -d + r_{d} (d-1) \eta}.
$$

$(iii)$ $ z_{R_0} (\mathbb{M}_{\ell}) \gg R_0^{ (d-1)n - 2^{d-1}(Q/ \eta) - \varepsilon}$
holds with $R_0 = P^{\eta}$.
\newline
\newline
The implicit constants
depend at most on $n,d, r_d, \eta, \varepsilon$, and $\mathbf{U}_{d}$.
\end{lem}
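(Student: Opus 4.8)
The plan is to follow the proof of \cite[Lemma 15.1]{S} essentially verbatim, the only new task being to track the dependence of the implicit constants; the one point that makes this bookkeeping nontrivial is that $d$-fold Weyl differencing annihilates every contribution coming from the homogeneous parts of $\mathbf{u}$ of degree strictly less than $d$ — which under Condition $(\star')$ are exactly the parts whose coefficients are allowed to depend on $P$. Concretely, I would set $G(\mathbf{x}) = \sum_{1\le \ell\le d}\sum_{1\le r\le r_\ell}\alpha_{\ell,r}u_{\ell,r}(\mathbf{x})$, so that $S(\boldsymbol{\alpha}) = \sum_{\mathbf{x}\in P\mathfrak{B}_0\cap\mathbb{Z}^n}e(G(\mathbf{x}))$, and decompose $G = G^{(0)}+\cdots+G^{(d)}$ into its homogeneous parts, noting that $G^{(d)} = \sum_{1\le r\le r_d}\alpha_{d,r}U_{d,r}$ depends only on $\boldsymbol{\alpha}_d$ and $\mathbf{U}_d$ and not on $P$, while all $P$-dependence of the coefficients of $G$ is confined to $G^{(0)},\dots,G^{(d-1)}$. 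Since $\Gamma_{d,G^{(j)}}=0$ for $j<d$, we have $\Gamma_{d,G}=\Gamma_{d,G^{(d)}}=\sum_{r}\alpha_{d,r}\Gamma_{d,U_{d,r}}$, so everything downstream of the differencing step sees only $\mathbf{U}_d$.

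Assuming alternative $(i)$ fails, that is $|S(\boldsymbol{\alpha})|\ge P^{n-Q}$, I would apply Lemma \ref{Lemma 14.2 in S} (which itself invokes Lemma \ref{Lemma 13.1 in S}) with $\ell=d$, $\theta=0$ and $q=1$, so that the hypotheses involving $q$ and the constant $c$ are vacuous, and with the given $\eta\le 1$. The lemma yields that the number $N(\eta)$ of integral $(d-1)$-tuples $\mathbf{x}_1,\dots,\mathbf{x}_{d-1}\in P^{\eta}\mathfrak{B}_1$ satisfying
$$\Big\|\sum_{r=1}^{r_d}\alpha_{d,r}\,\Gamma_{d,U_{d,r}}(\mathbf{x}_1,\dots,\mathbf{x}_{d-1},\mathbf{e}_i)\Big\|<P^{-d+(d-1)\eta}\qquad(1\le i\le n)$$
obeys $N(\eta)\gg P^{n(d-1)\eta-2^{d-1}Q-\varepsilon}$, with implicit constant depending only on $n,d,\eta,\varepsilon$ — here the constant $c$ of Lemma \ref{Lemma 13.1 in S} may be taken to depend only on $\mathbf{U}_d$, since it enters only through bounds on $\|q G^{(j)}\|$ for $j>\ell$, a condition that is empty when $\ell=d$. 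Then I would run the rank dichotomy on these tuples: for each, form the integer matrix $[m_{ri}]=[\Gamma_{d,U_{d,r}}(\mathbf{x}_1,\dots,\mathbf{x}_{d-1},\mathbf{e}_i)]$, whose entries satisfy $|m_{ri}|\ll (P^{\eta})^{d-1}$ by the coefficient bound (\ref{ineq coeff of Gam}) and the size restriction $\mathbf{x}_j\in P^{\eta}\mathfrak{B}_1$. If some counted tuple makes this matrix have rank $r_d$, pick an invertible $r_d\times r_d$ integer submatrix $M$; then $\|M^{\mathsf T}\boldsymbol{\alpha}_d-\mathbf{n}\|\ll P^{-d+(d-1)\eta}$ for some $\mathbf{n}\in\mathbb{Z}^{r_d}$, and multiplying through by $\mathrm{adj}(M^{\mathsf T})$ and setting $n_0=|\det M|$ gives, via $|\det M|\ll (P^{(d-1)\eta})^{r_d}$ and $\|\mathrm{adj}(M^{\mathsf T})\|\ll (P^{(d-1)\eta})^{r_d-1}$, an $n_0\in\mathbb{N}$ with $n_0\ll P^{r_d(d-1)\eta}$ and $\|n_0\boldsymbol{\alpha}_d\|\ll P^{-d+r_d(d-1)\eta}$, which is alternative $(ii)$. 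Otherwise every tuple counted by $N(\eta)$ lies on $\mathbb{M}_d$ by the definition (\ref{def of MU}), and since each has coordinates bounded in absolute value by $R_0=P^{\eta}$ we obtain $z_{R_0}(\mathbb{M}_d)\ge N(\eta)\gg P^{n(d-1)\eta-2^{d-1}Q-\varepsilon}=R_0^{(d-1)n-2^{d-1}(Q/\eta)-\varepsilon/\eta}$; applying Lemma \ref{Lemma 14.2 in S} with $\eta\varepsilon$ in place of $\varepsilon$ turns this into alternative $(iii)$.

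The argument is therefore routine modulo the two quoted lemmas, and the step I expect to require the most care is precisely the constant-dependence audit: one must confirm that the constant $c$ of Lemmas \ref{Lemma 13.1 in S} and \ref{Lemma 14.2 in S} is inert when $\ell=d$, and that neither the passage from $S(\boldsymbol{\alpha})$ to $N(\eta)$, nor the extraction of $n_0$ from a full-rank tuple, nor the comparison of $N(\eta)$ with $z_{R_0}(\mathbb{M}_d)$, ever reintroduces dependence on the lower-degree (possibly $P$-dependent) coefficients of $\mathbf{u}$. Granting this, all implicit constants depend at most on $n,d,r_d,\eta,\varepsilon$, and $\mathbf{U}_d$, which is the content of the lemma; I would also remark that for the case $\ell<d$ (not needed here but used in the corollaries via \cite{Y}) the analogous statement must allow the constants to depend on $\mathbf{u}$, because then the $q$-hypothesis of Lemma \ref{Lemma 13.1 in S} is active and the relevant bounds on $\|qG^{(j)}\|$ involve the lower-degree coefficients.
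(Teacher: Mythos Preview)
Your proposal is correct and follows essentially the same approach as the paper: both decompose $G=\sum_{\ell,r}\alpha_{\ell,r}u_{\ell,r}$ into homogeneous parts, observe that $G^{(d)}=\sum_r\alpha_{d,r}U_{d,r}$ depends only on $\mathbf{U}_d$ and not on the $P$-dependent lower-order coefficients, and then invoke the argument of \cite[Lemma~15.1]{S} for $\ell=d$ while tracking constant dependence. Your write-up is in fact more explicit than the paper's, which simply states the key observation about $G^{(d)}$ and then refers the reader to \cite{S}; your detailed rank-dichotomy sketch and the remark on the $\ell<d$ case are entirely in line with what the paper intends but leaves unwritten.
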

\begin{proof} We have $\boldsymbol{\alpha} \in \mathbb{R}^R$.
Let us denote
$$
\sum_{\ell = 1}^d \sum_{r=1}^{r_{\ell}} \alpha_{\ell, r} u_{\ell, r} (\mathbf{x}) = G^{(0)}+ G^{(1)}(\mathbf{x}) + \ldots +  G^{(d)}(\mathbf{x}),
$$
where
$G^{(j)}$ is a form of degree $j \ (1 \leq  j \leq d)$ and $G^{(0)} \in \mathbb{R}$.
Then it is clear that $G^{(d)}(\mathbf{x}) = \sum_{r=1}^{r_d} \alpha_{d,r} U_{d, r}(\mathbf{x})$,
and it depends on $\mathbf{U}_{d}$ only, and not on $\mathbf{u}$. With this observation,
by following through the proof of \cite[Lemma 15.1]{S} for the case $\ell = d$ while keeping track
of the constant dependency, we obtain the result.
\end{proof}

From Lemma \ref{Lemma 15.1 in S'},  we obtain the following corollary in a similar manner as in \cite[pp.276, Corollary]{S}.
\begin{cor}\cite[pp.276, Corollary]{S}
\label{cor 15.1 in S'}
Suppose $\mathbf{u}$ satisfies Condition $(\star')$.
Let $S( \boldsymbol{\alpha}) $ be the sum associated to $\mathbf{u}$ as in ~(\ref{def of S 1}).
Suppose $\varepsilon' > 0$ is sufficiently small and $Q>0$ satisfies
$$
Q \gamma'_d < 1.
$$
Then one of the following two alternatives must hold:

$(i)$ $|  S( \boldsymbol{\alpha})  | \leq P^{n-Q}$.

$(ii)$ There exists $n_0 \in \mathbb{N}$ such that
$$
n_0 \ll P^{Q \gamma_d + \varepsilon'} \text{  and  } \|  n_0 \boldsymbol{\alpha}_d \| \ll P^{ -d + Q \gamma_d + \varepsilon'}.
$$
The implicit constants depend at most on $n, d, r_d, \varepsilon', Q$, and $\mathbf{U}_{d}$.
\end{cor}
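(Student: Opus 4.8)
The plan is to deduce the corollary from Lemma \ref{Lemma 15.1 in S'} by the standard Davenport--Schmidt argument, choosing the parameter $\eta$ of that lemma just above the critical value $Q\gamma'_d$ so that its third alternative becomes incompatible with the definition (\ref{def gd}) of $g_d(\mathbf{U}_d)$, while its second alternative translates cleanly into alternative (ii) of the corollary.

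First I note that one may assume $r_d > 0$: if $r_d = 0$ then $\boldsymbol{\alpha}_d$ is empty and there is nothing to prove, while the hypothesis $Q\gamma'_d < 1$ (together with the conventions following (\ref{h and g})) already forces $g_d(\mathbf{U}_d) > 0$, so that $\gamma'_d = 2^{d-1}/g_d(\mathbf{U}_d)$ and $\gamma_d = (d-1)r_d\gamma'_d$ are finite and positive. Since $Q\gamma'_d < 1$, I fix a real number $\eta$ with
$$
Q\gamma'_d \;<\; \eta \;\le\; \min\!\Big(1,\ Q\gamma'_d + \tfrac{\varepsilon'}{(d-1)r_d}\Big),
$$
which is a nonempty range. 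Because $\eta > Q\gamma'_d = 2^{d-1}Q/g_d(\mathbf{U}_d)$, we have $2^{d-1}(Q/\eta) < g_d(\mathbf{U}_d)$, so I may in addition fix $\varepsilon > 0$ small enough that $2^{d-1}(Q/\eta) + 2\varepsilon < g_d(\mathbf{U}_d)$. Both $\eta$ and $\varepsilon$ are then functions of $n, d, r_d, Q, \varepsilon'$, and $\mathbf{U}_d$ only.

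Next I apply Lemma \ref{Lemma 15.1 in S'} to $S(\boldsymbol{\alpha})$ with these $\eta$ and $\varepsilon$, using that $\mathbf{u}$ satisfies Condition $(\star')$; only the case $\ell = d$ is relevant, so all implicit constants depend at most on $n, d, r_d, \eta, \varepsilon, \mathbf{U}_d$, hence ultimately on $n, d, r_d, Q, \varepsilon'$, and $\mathbf{U}_d$. If alternative (i) holds we are done. If alternative (iii) held, then with $R_0 = P^{\eta}$ we would have simultaneously
$$
z_{R_0}(\mathbb{M}_d) \;\gg\; R_0^{(d-1)n - 2^{d-1}(Q/\eta) - \varepsilon}
\qquad\text{and}\qquad
z_{R_0}(\mathbb{M}_d) \;\ll\; R_0^{(d-1)n - g_d(\mathbf{U}_d) + \varepsilon},
$$
the second estimate by (\ref{def gd}); since $-2^{d-1}(Q/\eta) - \varepsilon > -g_d(\mathbf{U}_d) + \varepsilon$ by the choice of $\varepsilon$, these are incompatible for $P$ sufficiently large. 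Hence only alternative (ii) of the lemma remains, giving $n_0 \in \mathbb{N}$ with $n_0 \ll P^{r_d(d-1)\eta}$ and $\|n_0\boldsymbol{\alpha}_d\| \ll P^{-d + r_d(d-1)\eta}$. Finally, by the choice of $\eta$ one has $r_d(d-1)\eta \le r_d(d-1)Q\gamma'_d + \varepsilon' = Q\gamma_d + \varepsilon'$ (using $\gamma_d = (d-1)r_d\gamma'_d$), which converts these into $n_0 \ll P^{Q\gamma_d + \varepsilon'}$ and $\|n_0\boldsymbol{\alpha}_d\| \ll P^{-d + Q\gamma_d + \varepsilon'}$, i.e.\ alternative (ii) of the corollary.

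The argument is essentially bookkeeping, so I do not expect a genuine obstacle; the only point requiring care is arranging the two auxiliary parameters $\eta$ and $\varepsilon$ so that, simultaneously, $\eta \le 1$, the inequality $2^{d-1}(Q/\eta) + 2\varepsilon < g_d(\mathbf{U}_d)$ rules out the third alternative of Lemma \ref{Lemma 15.1 in S'}, and the exponent $r_d(d-1)\eta$ produced by its second alternative does not exceed $Q\gamma_d + \varepsilon'$. This is exactly where the hypothesis $Q\gamma'_d < 1$ (equivalently $Q\gamma_d < (d-1)r_d$) is used, and where the words ``$\varepsilon'$ sufficiently small'' enter.
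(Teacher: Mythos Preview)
Your proof is correct and follows essentially the same approach as the paper: choose $\eta$ just above $Q\gamma'_d$ so that $2^{d-1}Q/\eta < g_d(\mathbf{U}_d)$, which rules out alternative (iii) of Lemma \ref{Lemma 15.1 in S'} via the defining bound (\ref{def gd}), and then read off alternative (ii) with $r_d(d-1)\eta \le Q\gamma_d + \varepsilon'$. The only cosmetic difference is that the paper sets $\eta = Q\gamma'_d + \varepsilon_1$ first and then defines $\varepsilon' = r_d(d-1)\varepsilon_1$, whereas you fix $\varepsilon'$ at the outset and constrain $\eta$ accordingly; the logic is identical.
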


Now we move on to our next scenario of when the coefficients of $\mathbf{u}$ may depend on $P$.
Let $u^{(j)}_{\ell,r}(\mathbf{x})$ be the homogeneous degree $j$ portion of the polynomial $u_{\ell,r}(\mathbf{x})$.
In the following lemma, for $j < \ell$ the coefficients of $u^{(j)}_{\ell,r}(\mathbf{x})$ may be in $\mathbb{Q}$ and also depend on $P$, but in a controlled manner. On the other hand, the coefficients of $U_{\ell,r}(\mathbf{x})$ do not depend on $P$.
We also note the implicit constants may depend on $\mathbf{U}$ but not on $\mathbf{u}$

\begin{lem}\cite[Lemma 15.1]{S}
\label{Lemma 15.1 in S''}
Suppose $\mathbf{u}$ has coefficients in $\mathbb{Q}$, and further suppose $\mathbf{U}$ has coefficients in $\mathbb{Z}$.
Let $Q > 0$ and $\varepsilon >0$. Let $2 \leq \ell \leq d$ with $r_{\ell} > 0$.
If $\ell =d$, then let $\theta = 0$ and $q=1$.
On the other hand, if $2 \leq \ell < d$, then suppose $0 \leq \theta < 1/4$ and that there is $q \in \mathbb{N}$ with
$$
q \leq P^{\theta}, \ \ \    q \boldsymbol{\alpha}_{\ell'} \in \mathbb{Z}^{r_{\ell'}}  \ \ (\ell < \ell' \leq d),
$$
and
$$
q \alpha_{j, r} u^{(\ell')}_{j, r}(\mathbf{x}) \in \mathbb{Z}[x_1, \ldots, x_n]
$$
for every $\ell < j \leq d, 0 \leq \ell' < j, 1 \leq r \leq r_{j}$.

Let $S( \boldsymbol{\alpha})$ be the sum associated to $\mathbf{u}$ as in ~(\ref{def of S 1}). 
Given $\eta > 0$ with $\eta + 4 \theta \leq 1$, one of the following three alternatives must hold:

$(i)$ $|  S( \boldsymbol{\alpha})  | \leq P^{n-Q}$.

$(ii)$ There exists $n_0 \in \mathbb{N}$ such that
$$
n_0 \ll P^{r_{\ell}(\ell-1) \eta} \text{  and  } \|  q n_0  \boldsymbol{\alpha}_{\ell} \| \ll P^{ -\ell + 4 \theta + r_{\ell} (\ell-1) \eta}.
$$

$(iii)$ $ z_{R_0} (\mathbb{M}_{\ell}) \gg R_0^{ (\ell-1)n - 2^{\ell-1}(Q/ \eta) - \varepsilon}$
holds with $R_0 = P^{\eta}$.
\newline
\newline
The implicit constants depend at most on $n,d, r_d, \ldots, r_1, \eta, \varepsilon$, and $\mathbf{U}$.
\end{lem}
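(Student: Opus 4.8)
The plan is to follow the proof of \cite[Lemma 15.1]{S}, feeding the Weyl differencing estimate of Lemma \ref{Lemma 13.1 in S} into the counting lemma \ref{Lemma 14.2 in S}, the only real extra point being to keep every implicit constant free of dependence on $\mathbf{u}$ (allowing only $n$, $d$, $r_d,\ldots,r_1$, $\eta$, $\varepsilon$, and $\mathbf{U}$). First I would decompose
\[
\sum_{\ell'=1}^{d}\sum_{r=1}^{r_{\ell'}}\alpha_{\ell',r}\,u_{\ell',r}(\mathbf{x}) = G^{(0)}+G^{(1)}(\mathbf{x})+\cdots+G^{(d)}(\mathbf{x}),
\]
so that $G^{(j)}(\mathbf{x})=\sum_{\ell'\geq j}\sum_{r}\alpha_{\ell',r}\,u^{(j)}_{\ell',r}(\mathbf{x})$. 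For $j>\ell$ the summand with $\ell'=j$ is $\sum_r\alpha_{j,r}U_{j,r}$ with $q\boldsymbol{\alpha}_j\in\mathbb{Z}^{r_j}$ and $U_{j,r}\in\mathbb{Z}[\mathbf{x}]$, while for $\ell'>j$ the hypothesis gives $q\alpha_{\ell',r}u^{(j)}_{\ell',r}\in\mathbb{Z}[\mathbf{x}]$; hence $qG^{(j)}\in\mathbb{Z}[\mathbf{x}]$ and $\|qG^{(j)}\|=0\leq cP^{\theta-j}$ for $j>\ell$, which verifies the hypotheses of Lemma \ref{Lemma 13.1 in S} (these are vacuous when $\ell=d$, where $\theta=0$, $q=1$). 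Writing $G^{(\ell)}=\sum_r\alpha_{\ell,r}U_{\ell,r}+W$ with $W$ a degree-$\ell$ form satisfying $qW\in\mathbb{Z}[\mathbf{x}]$, and using the additivity of $\Gamma_{\ell,\cdot}$ together with the fact that $\Gamma_{\ell,qW}$ takes integer values at integer arguments, I obtain
\[
\bigl\|q\,\Gamma_{\ell,G^{(\ell)}}(\mathbf{x}_1,\ldots,\mathbf{x}_{\ell-1},\mathbf{e}_i)\bigr\| = \Bigl\|q\sum_{r=1}^{r_\ell}\alpha_{\ell,r}\,\Gamma_{\ell,U_{\ell,r}}(\mathbf{x}_1,\ldots,\mathbf{x}_{\ell-1},\mathbf{e}_i)\Bigr\|
\]
for all integral $\mathbf{x}_1,\ldots,\mathbf{x}_{\ell-1}$, the right-hand side involving only $\mathbf{U}_\ell$ and $\boldsymbol{\alpha}_\ell$.

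Assuming alternative $(i)$ fails, so that $|S(\boldsymbol{\alpha})|>P^{n-Q}$, I would then invoke Lemma \ref{Lemma 14.2 in S} (whose hypotheses are precisely those just checked, with the given $\eta$ and $\theta$ and using $\eta+4\theta\leq 1$): the number $N(\eta)$ of integral $(\ell-1)$-tuples $\mathbf{x}_1,\ldots,\mathbf{x}_{\ell-1}\in P^\eta\mathfrak{B}_1$ with $\bigl\|q\sum_r\alpha_{\ell,r}\Gamma_{\ell,U_{\ell,r}}(\mathbf{x}_1,\ldots,\mathbf{x}_{\ell-1},\mathbf{e}_i)\bigr\|<P^{-\ell+4\theta+(\ell-1)\eta}$ for $i=1,\ldots,n$ satisfies $N(\eta)\gg P^{n(\ell-1)\eta-2^{\ell-1}Q-\varepsilon}$, with an admissible implicit constant. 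Set $R_0=P^\eta$; by \eqref{ineq coeff of Gam} and the size bound on the $\mathbf{x}_j$, each $m_{ri}:=\Gamma_{\ell,U_{\ell,r}}(\mathbf{x}_1,\ldots,\mathbf{x}_{\ell-1},\mathbf{e}_i)$ is an integer with $|m_{ri}|\ll R_0^{\ell-1}$.

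Finally I would split these tuples according to the rank of the matrix $[m_{ri}]$ defining $\mathbb{M}_\ell=\mathbb{M}_\ell(\mathbf{U}_\ell)$. If at least half of them have rank $<r_\ell$, they are integer points of $\mathbb{M}_\ell$ inside $P^\eta\mathfrak{B}_1$, so $z_{R_0}(\mathbb{M}_\ell)\gg P^{n(\ell-1)\eta-2^{\ell-1}Q-\varepsilon}=R_0^{(\ell-1)n-2^{\ell-1}(Q/\eta)-\varepsilon/\eta}$, which after relabelling $\varepsilon$ is alternative $(iii)$. Otherwise more than half have full rank $r_\ell$; fixing one such tuple and an $r_\ell\times r_\ell$ submatrix with nonzero determinant $n_0$ (so $|n_0|\ll R_0^{r_\ell(\ell-1)}\ll P^{r_\ell(\ell-1)\eta}$ by Hadamard's inequality), Cramer's rule expresses $n_0\,q\,\alpha_{\ell,r}$ as an integer combination, with cofactor coefficients of size $\ll R_0^{(r_\ell-1)(\ell-1)}$, of the quantities $q\sum_r\alpha_{\ell,r}m_{ri}$; this yields $\|n_0\,q\,\boldsymbol{\alpha}_\ell\|\ll R_0^{(r_\ell-1)(\ell-1)}P^{-\ell+4\theta+(\ell-1)\eta}\ll P^{-\ell+4\theta+r_\ell(\ell-1)\eta}$, which with $|n_0|$ in the role of the natural number is alternative $(ii)$. (If $r_{\ell'}=0$ for some $\ell'$, the corresponding hypotheses and conclusions are empty and are simply skipped.)

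The main obstacle — and essentially the only thing separating this from a verbatim citation of \cite[Lemma 15.1]{S} — is the constant bookkeeping in the first paragraph: one must check that the $P$-dependent data, namely the lower-degree coefficients of $\mathbf{u}$, are harmless, since the homogeneous pieces of degree $<\ell$ do not appear in the differenced sum at all while those of degree $>\ell$ become integral after multiplication by $q$ by the hypotheses on $q\boldsymbol{\alpha}_{\ell'}$ and $q\alpha_{j,r}u^{(\ell')}_{j,r}$, so that every fractional part occurring in the argument — and hence every implicit constant — depends only on $\mathbf{U}$ (in fact only on $\mathbf{U}_\ell$). The differencing estimates themselves, and the closing linear-algebra manipulation, are then routine.
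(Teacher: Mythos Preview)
Your proposal is correct and follows essentially the same route as the paper's proof: decompose the phase into homogeneous pieces, verify that $qG^{(j)}$ is integral for $j>\ell$ so Lemma~\ref{Lemma 14.2 in S} applies, reduce the resulting fractional-part condition to one involving only $\sum_r q\alpha_{\ell,r}\Gamma_{\ell,U_{\ell,r}}$, and then split according to the rank of $[m_{ri}]$ with Cramer's rule handling the full-rank case. The only cosmetic difference is that the paper takes the dichotomy as ``all tuples have rank $<r_\ell$'' versus ``at least one has full rank,'' whereas you split at ``at least half,'' which is immaterial since either way the contribution to $z_{R_0}(\mathbb{M}_\ell)$ absorbs the constant.
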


\begin{proof}
We have $\boldsymbol{\alpha} \in \mathbb{R}^{R}$.
Let us denote
$$
\sum_{\ell = 1}^d \sum_{r=1}^{r_{\ell}} \alpha_{\ell, r} u_{\ell, r} (\mathbf{x}) = G^{(0)}+ G^{(1)}(\mathbf{x}) + \ldots +  G^{(d)}(\mathbf{x}),
$$
where
$G^{(\ell')}$ is a form of degree $\ell' \ (1 \leq  \ell' \leq d)$ and $G^{(0)} \in \mathbb{R}$.
Then it is clear that $G^{(d)}(\mathbf{x}) = \sum_{r=1}^{r_d} \alpha_{d,r} U_{d, r}(\mathbf{x})$.
Recall we denote $u^{(\ell')}_{j,r}(\mathbf{x})$ to be  the homogeneous degree $\ell'$ portion of the polynomial $u_{j,r}(\mathbf{x})$.
Then we have
$$
G^{(\ell')}(\mathbf{x}) = \sum_{r=1}^{r_{\ell'}} \alpha_{\ell', r} U_{\ell', r} (\mathbf{x}) + \sum_{j = \ell' + 1}^d \sum_{r=1}^{r_{j}} \alpha_{j, r} u^{(\ell')}_{j, r} (\mathbf{x}) \ \ (1 \leq \ell' < d).
$$
If $\ell< d$, then it is clear from our hypothesis that we have
$$
\| q G^{(\ell')}\| = 0 \leq  P^{\theta - \ell'}
$$ for each $\ell < \ell' \leq d$. 

Suppose the alternative $(i)$ fails. In this case, we may apply Lemma \ref{Lemma 14.2 in S} and obtain that the number
$N(\eta)$ of integral $(\ell-1)$-tuples $\mathbf{x}_1$, \ldots, $\mathbf{x}_{\ell-1}$ in $P^{\eta} \mathfrak{B}_1$ with
\begin{equation}
\label{(15.2) in S'}
\| q  \Gamma_{\ell, G^{(\ell)}} ( \mathbf{x}_1, \ldots , \mathbf{x}_{\ell-1}, \mathbf{e}_i  ) \| < P^{-\ell + 4 \theta + (\ell-1) \eta } \ (i=1,\ldots, n)
\end{equation}
satisfies
$$
N(\eta) \gg R_0^{n(\ell -1) - 2^{\ell-1} (Q/ \eta) - \varepsilon },
$$
where $R_0 = P^{\eta}$, and the implicit constant in $\gg$ depends only on $n,d, \eta,$ and $\varepsilon$.
We have
\begin{eqnarray}
&& \| q  \Gamma_{\ell, G^{(\ell)}} ( \mathbf{x}_1, \ldots , \mathbf{x}_{\ell-1}, \mathbf{e}_i  ) \|
\notag
\\
&=& \| \sum_{r=1}^{r_{\ell}} q \alpha_{\ell, r} \Gamma_{\ell,  U_{\ell, r}}  ( \mathbf{x}_1, \ldots , \mathbf{x}_{\ell-1}, \mathbf{e}_i  )
+ \sum_{j = \ell+1}^d \sum_{r=1}^{r_{j}} q \alpha_{j, r} \Gamma_{ \ell, u^{(\ell)}_{j, r}  }  ( \mathbf{x}_1, \ldots , \mathbf{x}_{\ell-1}, \mathbf{e}_i  ) \|
\notag
\\
&=& \| \sum_{r=1}^{r_{\ell}} q \alpha_{\ell, r} \Gamma_{\ell,  U_{\ell, r}}  ( \mathbf{x}_1, \ldots , \mathbf{x}_{\ell-1}, \mathbf{e}_i  )
\|,
\notag
\end{eqnarray}
because
\begin{eqnarray}
q \alpha_{j, r} \Gamma_{ \ell, u^{(\ell)}_{j, r}  }  ( \mathbf{x}_1, \ldots , \mathbf{x}_{\ell-1}, \mathbf{e}_i  )  \in \mathbb{Z}
\end{eqnarray}
for
each $\ell <  j \leq d, 1 \leq r \leq r_{j}$.
Thus we see that (\ref{(15.2) in S'}) implies
\begin{equation}
\label{(15.2) in S}
\| \sum_{r=1}^{r_{\ell}} q \alpha_{\ell, r} \Gamma_{\ell,  U_{\ell, r}}  ( \mathbf{x}_1, \ldots , \mathbf{x}_{\ell-1}, \mathbf{e}_i  ) \|
<  P^{-\ell + 4 \theta + (\ell-1) \eta } \ \ (i=1,\ldots, n).
\end{equation}

Given $\mathbf{x}_1$, $\ldots$, $\mathbf{x}_{\ell-1}$ as above, we form a matrix
$$
[m_{ri}]_{\mathbf{x}_1, \ldots , \mathbf{x}_{\ell-1}},
$$
where its entries are
$$
m_{r i}= \Gamma_{ \ell, U_{\ell,r} } ( \mathbf{x}_1, \ldots, \mathbf{x}_{\ell-1}, \mathbf{e}_i  )  \  \  \  (1 \leq r \leq r_{\ell}, 1 \leq i \leq n).
$$
Now if this matrix $[m_{r i}]_{\mathbf{x}_1, \ldots , \mathbf{x}_{\ell-1}}$ has rank strictly less than $r_{\ell}$ for each of the $(\ell-1)$-tuples counted by $N(\eta)$, then by the definition of
$z_{R_0}(\mathbb{M}_{\ell})$ we have
$$
z_{R_0}(\mathbb{M}_{\ell}) \geq N(\eta) \gg R_0^{n(\ell-1) - 2^{\ell-1} (Q/ \eta) - \varepsilon },
$$
where the implicit constant in $\gg$ depends only on $n,d, \eta,$ and $\varepsilon$.
Thus we have the alternative $(iii)$ in this case. Hence, we may suppose that at least one of these matrices, which we denote by $[m_{ri}]$, has rank $r_{\ell}$.
Without loss of generality, suppose the submatrix $M_0$ formed by taking the first $r_{\ell}$ columns of $[m_{ri}]$
has rank $r_{\ell}$. 

It follows from the definition of $\Gamma_{\ell, U_{\ell,r} }$ that every monomial occurring in
$\Gamma_{\ell, U_{\ell,r} } (\mathbf{z}_1, \ldots, \mathbf{z}_{\ell})$ has some component of $\mathbf{z}_i = (z_{i,1}, \ldots, z_{i,n})$ as a factor for each $1 \leq i \leq \ell$ \cite[Proof of Lemma 11.2]{S}. Recall we also have
$$
| \Gamma_{\ell, U_{\ell,r} } | \leq 2^{\ell} \ell^{\ell} |U_{\ell, r}|
$$
from (\ref{ineq coeff of Gam}).
Therefore, we have
$$
m_{ri}= \Gamma_{ \ell, U_{\ell,r} } ( \mathbf{x}_1, \ldots, \mathbf{x}_{\ell-1}, \mathbf{e}_i  ) \ll R_0^{\ell -1},
$$
and also
$$
n_0 := \det (M_0) \ll R_0^{r_{\ell}(\ell-1)} = P^{r_{\ell}(\ell-1)\eta},
$$
where the implicit constants in $\ll$ depend only on $n$, $\ell,$ $r_{\ell}$, and $\mathbf{U}_{\ell}$.
Hence, from ~(\ref{(15.2) in S}) we may write
$$
q\sum_{r=1}^{r_{\ell}} \alpha_{\ell,r} m_{ri} = c_i + \beta'_i \ \ (1 \leq i \leq n),
$$
where $c_i$ are integers and $\beta'_i$ are real numbers satisfying
$$
| \beta'_i | < P^{-\ell + 4 \theta + (\ell-1) \eta } \ \ (1 \leq i \leq n).
$$
Let $v_1, \ldots, v_{r_{\ell}}$ be the solution to the system of linear equations
\begin{equation}
\label{(15.3) in S}
\sum_{r=1}^{r_{\ell} } v_r m_{ri} = n_0 c_i \ (1 \leq i \leq r_{\ell}).
\end{equation}
Then we have
\begin{equation}
\label{(15.4) in S}
\sum_{r=1}^{r_{\ell}} ( q n_0 \alpha_{\ell,r} - v_r  )m_{r i} = n_0 \beta'_i \ (1 \leq i \leq r_{\ell}).
\end{equation}
By applying Cram\'{e}r's rule to ~(\ref{(15.3) in S}), it follows that  $v_r \in \mathbb{Z}$ $(1 \leq r \leq r_{\ell})$. Also by applying Cram\'{e}r's rule to ~(\ref{(15.4) in S}),
we obtain
\begin{eqnarray}
\| q n_0 \alpha_{\ell,r} \| \leq | q n_0 \alpha_{\ell,r} - v_r | \ll R_0^{ (\ell-1)(r_{\ell} - 1) }  P^{-\ell + 4 \theta+  (\ell-1) \eta} = P^{ - \ell + 4 \theta +  r_{\ell} (\ell-1)  \eta } ,
\end{eqnarray}
where the implicit constant in $\ll$ depends only on $n$, $\ell$, $r_{\ell}$, and $\mathbf{U}_{\ell}$.
This completes the proof of Lemma \ref{Lemma 15.1 in S''}.
\end{proof}

We then have the following corollary.
\begin{cor}\cite[pp.276, Corollary]{S}
\label{cor 15.1 in S''}
Suppose $\mathbf{u}$ has coefficients in $\mathbb{Q}$, and further suppose $\mathbf{U}$ has coefficients in $\mathbb{Z}$.
Let $Q > 0$ and $\varepsilon >0$. Let $2 \leq \ell \leq d$ with $r_{\ell} > 0$.
If $\ell =d$, then let $\theta = 0$ and $q=1$.
On the other hand, if $2 \leq \ell < d$, then suppose $0 \leq \theta < 1/4$ and that there is $q \in \mathbb{N}$ with
$$
q \leq P^{\theta}, \ \ \    q \boldsymbol{\alpha}_{j} \in \mathbb{Z}^{r_j} \ \   (\ell < j \leq d),
$$
and
$$
q \alpha_{\ell', r} u^{(j)}_{\ell', r}(\mathbf{x}) \in \mathbb{Z}[x_1, \ldots, x_n]
$$
for every $\ell < \ell' \leq d, 0 \leq j < \ell', 1 \leq r \leq r_{\ell'}$.

Let $S( \boldsymbol{\alpha})$ be the sum associated to $\mathbf{u}$ as in ~(\ref{def of S 1}). 
Suppose
$$
4 \theta + Q \gamma'_{\ell} < 1.
$$
Then one of the following two alternatives must hold:

$(i)$ $|  S( \boldsymbol{\alpha})  | \leq P^{n-Q}$.

$(ii)$ There exists $n_0 \in \mathbb{N}$ such that
$$
n_0 \ll P^{ Q \gamma_{\ell} + \varepsilon } \text{  and  } \|  n_0 q \boldsymbol{\alpha}_{\ell} \| \ll P^{ -\ell + 4 \theta + Q \gamma_{\ell} + \varepsilon}.
$$
\newline
The implicit constants
depend at most on $n,d, r_d, \ldots, r_1, Q, \varepsilon$, and $\mathbf{U}$.
\end{cor}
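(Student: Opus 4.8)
Corollary~\ref{cor 15.1 in S''} is the standard passage from Lemma~\ref{Lemma 15.1 in S''} (which offers a trichotomy depending on a free parameter $\eta$) to a dichotomy, by choosing $\eta$ optimally so that alternative $(iii)$ becomes vacuous. I would proceed as follows.

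\emph{Step 1: dispose of the trivial case.} If $g_\ell(\mathbf{U}_\ell)=0$ then $\gamma'_\ell=+\infty$, so the hypothesis $4\theta+Q\gamma'_\ell<1$ fails and there is nothing to prove; hence we may assume $g_\ell(\mathbf{U}_\ell)>0$, so that $\gamma_\ell$ and $\gamma'_\ell$ are finite positive reals. Then I would choose $\eta$ so that the exponent in alternative $(iii)$ of Lemma~\ref{Lemma 15.1 in S''}, namely $(\ell-1)n-2^{\ell-1}(Q/\eta)-\varepsilon$, matches (up to $\varepsilon$) the defining exponent of $g_\ell(\mathbf{U}_\ell)$ in~(\ref{def gd}); that is, I would take $\eta$ slightly below the critical value $\eta_0=2^{\ell-1}Q/g_\ell(\mathbf{U}_\ell)$ so that $2^{\ell-1}(Q/\eta)>g_\ell(\mathbf{U}_\ell)$. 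With that choice, alternative $(iii)$ would read $z_{R_0}(\mathbb{M}_\ell)\gg R_0^{(\ell-1)n-g_\ell(\mathbf{U}_\ell)-\varepsilon'}$ for some $\varepsilon'>0$ as small as we like, contradicting~(\ref{def gd}) once $R_0=P^\eta$ (hence $P$) is large enough. So alternative $(iii)$ is impossible.

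\emph{Step 2: check the admissibility constraint $\eta+4\theta\le 1$.} This is exactly where the hypothesis $4\theta+Q\gamma'_\ell<1$ is used: since $\eta_0=2^{\ell-1}Q/g_\ell(\mathbf{U}_\ell)=Q\gamma'_\ell$, the strict inequality $4\theta+Q\gamma'_\ell<1$ gives us room to pick $\eta$ with $\eta_0-\delta<\eta<\eta_0$ for some small $\delta>0$ while still keeping $\eta+4\theta<1$. Thus Lemma~\ref{Lemma 15.1 in S''} applies with this $\eta$.

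\emph{Step 3: translate alternative $(ii)$.} Having ruled out $(iii)$, Lemma~\ref{Lemma 15.1 in S''} leaves alternatives $(i)$ and $(ii)$. Alternative $(i)$ is literally alternative $(i)$ of the Corollary. For $(ii)$, the lemma gives $n_0\in\mathbb{N}$ with $n_0\ll P^{r_\ell(\ell-1)\eta}$ and $\|qn_0\boldsymbol{\alpha}_\ell\|\ll P^{-\ell+4\theta+r_\ell(\ell-1)\eta}$. Since $\eta<\eta_0+(\eta-\eta_0)$ and $r_\ell(\ell-1)\eta_0=r_\ell(\ell-1)Q\gamma'_\ell=Q\gamma_\ell$ by~(\ref{def gamma'}), choosing $\delta$ small enough so that $r_\ell(\ell-1)\delta<\varepsilon$ yields $r_\ell(\ell-1)\eta< Q\gamma_\ell+\varepsilon$, whence $n_0\ll P^{Q\gamma_\ell+\varepsilon}$ and $\|n_0q\boldsymbol{\alpha}_\ell\|\ll P^{-\ell+4\theta+Q\gamma_\ell+\varepsilon}$, which is precisely alternative $(ii)$ of the Corollary. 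Tracking the constant dependence through Lemma~\ref{Lemma 15.1 in S''} (which depends at most on $n,d,r_d,\dots,r_1,\eta,\varepsilon,\mathbf{U}$) and noting that $\eta$ is now a function of $Q,\varepsilon,g_\ell(\mathbf{U}_\ell)$ — the last being determined by $\mathbf{U}$ — the implicit constants depend at most on $n,d,r_d,\dots,r_1,Q,\varepsilon,\mathbf{U}$, as claimed.

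\emph{Expected main obstacle.} There is no deep difficulty here: the whole content is the parameter optimization and the bookkeeping of $\varepsilon$'s. The one point requiring care is to make the two smallness requirements on $\delta$ — namely $\eta+4\theta<1$ and $r_\ell(\ell-1)\delta<\varepsilon$ — simultaneously satisfiable, which is guaranteed by the \emph{strict} inequality $4\theta+Q\gamma'_\ell<1$; and to ensure that "$P$ sufficiently large" in Lemma~\ref{Lemma 15.1 in S''} together with the largeness needed to contradict~(\ref{def gd}) is absorbed into a single hypothesis, which is harmless since the conclusion is an asymptotic statement. The argument is essentially identical to the deduction of \cite[pp.276, Corollary]{S} from \cite[Lemma~15.1]{S}, with $\ell$ in place of $d$ and with the extra parameter $\theta$ carried along.
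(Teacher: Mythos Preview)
Your overall strategy is exactly the paper's: apply Lemma~\ref{Lemma 15.1 in S''} with a carefully chosen $\eta$ so that alternative $(iii)$ is killed by the definition~(\ref{def gd}) of $g_\ell(\mathbf{U}_\ell)$, leaving only $(i)$ and $(ii)$. However, in Step~1 you have the direction of $\eta$ relative to $\eta_0=Q\gamma'_\ell$ reversed. With $\eta<\eta_0$ you get $2^{\ell-1}Q/\eta>g_\ell(\mathbf{U}_\ell)$, so alternative $(iii)$ reads
\[
z_{R_0}(\mathbb{M}_\ell)\gg R_0^{(\ell-1)n-2^{\ell-1}(Q/\eta)-\varepsilon},
\]
and the exponent here is \emph{smaller} than $(\ell-1)n-g_\ell(\mathbf{U}_\ell)+\varepsilon_1$ for every $\varepsilon_1>0$. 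A lower bound with smaller exponent is perfectly compatible with the upper bound~(\ref{def gd}); there is no contradiction.

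The fix is to take $\eta=\eta_0+\varepsilon'$ slightly \emph{above} the critical value (this is what the paper does). Then $2^{\ell-1}Q/\eta<g_\ell(\mathbf{U}_\ell)$, and choosing the $\varepsilon$ in Lemma~\ref{Lemma 15.1 in S''} small enough that $2^{\ell-1}Q/\eta+\varepsilon<g_\ell(\mathbf{U}_\ell)$, alternative $(iii)$ would give $z_{R_0}\gg R_0^{(\ell-1)n-g_\ell(\mathbf{U}_\ell)+c}$ for some fixed $c>0$, genuinely contradicting~(\ref{def gd}) for $P$ large. Your Steps~2 and~3 then go through with this corrected choice: the strict inequality $4\theta+Q\gamma'_\ell<1$ still leaves room for $\eta+4\theta<1$, and $r_\ell(\ell-1)\eta=Q\gamma_\ell+r_\ell(\ell-1)\varepsilon'<Q\gamma_\ell+\varepsilon$ once $\varepsilon'$ is small enough.
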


\begin{proof}
The proof is similar to that of \cite[pp.276, Corollary]{S}.
If we have
$$
2^{\ell - 1} Q/\eta < g_{\ell}(\mathbf{U}_{\ell}),
$$
then it is clear that the alternative $(iii)$ of Lemma \ref{Lemma 15.1 in S''} can not occur
for $P$ sufficiently large with respect to $n,d, r_d, \ldots, r_1, \eta, \varepsilon$, and  $\mathbf{U}$.
In particular, this is the case with $\eta = Q \gamma'_{\ell} + \varepsilon'$ where $\varepsilon' > 0$ is sufficiently small.
Note we also have
$$
\eta + 4 \theta < 1,
$$
given $4 \theta + Q \gamma'_{\ell} < 1$. 
\end{proof}

\end{document}